\newtheorem{lemma}{Lemma}
\newtheorem{proposition}{Proposition}
\newtheorem{corollary}{Corollary}
\newtheorem{theorem}{Theorem}
\theoremstyle{definition}
\newtheorem{definition}{Definition}
\newtheorem{example}{Example}
\newtheorem{remark}{Remark}
\newcommand{\sets}{{\rm Set}}											
\newcommand{\set}{{S}}												
\newcommand{\element}{{s}}											
\newcommand{\ordina}{{\alpha}}										
\newcommand{\finor}[1]{{\underline{#1}}}									
\newcommand{\colim}{{\rm colim}}										
\newcommand{\id}{{\rm Id}}											
\newcommand{\cardinal}{{\aleph}}										
\newcommand{\cardint}[1]{{\underline{#1}}}								
\newcommand{\poset}{{L}}											
\newcommand{\cardi}[1]{{|#1|}}											
\newcommand{\siqu}{{\sigma}}											
\newcommand{\abel}{{\rm Ab}}											
\newcommand{\yoneda}[2]{{\hom_{#2}(#1,-)}}								
\newcommand{\caty}{{\mathcal C}}										
\newcommand{\adica}{{\mathcal C}}										
\newcommand{\adifu}{{\mathcal F}}										
\newcommand{\mor}{{f}}												
\newcommand{\cokernel}[1]{{{\rm Coker}(#1)}}								
\newcommand{\image}{{\rm Im}}										
\newcommand{\coimage}{{\rm Coim}}									
\newcommand{\K}{{\rm K}}											
\newcommand{\obj}{{C}}												
\newcommand{\pobj}{{P}}												
\newcommand{\qobj}{{Q}}												
\newcommand{\emor}{{p}}												
\newcommand{\momor}{{i}}											
\newcommand{\quabi}{{\mathcal Q}}										
\newcommand{\abi}{{\mathcal  A}}										
\newcommand{\abion}[1]{{\mathbb A(#1)}}									
\newcommand{\abiono}{{\mathbb A}}										
\newcommand{\gaugenorm}[1]{{\gamma_{#1}}}								
\newcommand{\linearspan}[2]{{\langle #1\rangle}}							
\newcommand{\disk}{{D}}												
\newcommand{\vectorspace}{{V}}										
\newcommand{\banachspace}{{B}}										
\newcommand{\bornospace}{{\mathcal B}}								
\newcommand{\bornologybase}{{\mathfrak D}}								
\newcommand{\boudis}[1]{{\mathfrak D_{#1}}}						
\newcommand{\bornomor}{{\phi}}										
\newcommand{\coconbo}{{\rm CBorn}}									
\newcommand{\banco}{{\rm Ban}}										
\newcommand{\monobor}{{\coconbo_\mu}}								
\newcommand{\ctensor}{{\,\widehat{\otimes}\,}}								
\newcommand{\norm}[2]{{\|#1\|_{#2}}}									
\newcommand{\summable}[1]{{l^1(#1)}}									
\newcommand{\indo}[1]{{\rm Ind(#1)}}									
\newcommand{\internalclosure}[1]{{\overline{#1}}}							
\newcommand{\avector}{{v}}											
\newcommand{\dechar}[1]{{\Xi_{#1}}}									
\newcommand{\spherun}[1]{{{\rm S}_{#1}}}								
\newcommand{\recoco}[1]{{{\rm CBorn}^{<#1}}}								
\newcommand{\coubo}{{\underline{\rm CBorn}}}							
\newcommand{\samcou}{{l^1(\mathbb N)}}								
\newcommand{\resize}[2]{{\mathcal R_{#2}(#1)}}							
\newcommand{\Hom}{{\rm Hom}}										
\newcommand{\wball}[4]{{\rm B_{#1,#2}^{#3,#4}}}							
\newcommand{\jet}[2]{{{\rm J}^{#2}(#1)}}									
\newcommand{\mindex}[1]{{\underline{#1}}}								
\newcommand{\diffo}[2]{{D^{#2}(#1)}}									
\newcommand{\cpt}[1]{{{\rm Cpt}(#1)}}									
\newcommand{\cpto}{{\rm Cpt}}											
\newcommand{\fpace}{{\mathcal F}}										
\newcommand{\fcati}{{\mathfrak F}}										
\newcommand{\qcoh}[1]{{{\rm QCoh}(#1)}}								
\newcommand{\monoca}{{\mathcal C}}									
\newcommand{\monone}{{\mathbf 1}}									
\newcommand{\monote}{{\bullet}}										
\newcommand{\monoid}{{\rm A}}										
\newcommand{\monopro}[1]{{\mu_{#1}}}									
\newcommand{\monou}[1]{{\iota_{#1}}}									
\newcommand{\ucori}[1]{{\rm Comm({#1})}}								
\newcommand{\freecou}{{\rm F}}										
\newcommand{\forget}{{\rm U}}											
\newcommand{\simpo}[1]{{{\rm S}#1}}										
\newcommand{\simo}[1]{{#1_{\bullet}}}									
\newcommand{\ssets}{{\rm SSet}}										
\newcommand{\sset}{{\mathbb S}}										
\newcommand{\simplex}[1]{{\Delta[#1]}}									
\newcommand{\boundary}[1]{{\partial\Delta[#1]}}							
\newcommand{\boun}[1]{{\partial_{#1}}}									
\newcommand{\degen}[1]{{s^{#1}}}										
\newcommand{\boundaryo}{{\partial}}									
\newcommand{\horn}[2]{{\Lambda[#1,#2]}}								
\newcommand{\gencof}{{I}}											
\newcommand{\gentrico}{{J}}											
\newcommand{\compo}[1]{{\rm K#1}}									
\newcommand{\como}[1]{{#1^\bullet}}									
\newcommand{\normach}[1]{{{\mathcal  N}(#1)}}							
\newcommand{\normacho}{{\mathcal N}}									
\newcommand{\simcho}[1]{{\Gamma(#1)}}									
\newcommand{\simchoo}{{\Gamma}}										
\newcommand{\cinfty}{{\mathcal C^\infty}}									
\newcommand{\allrings}{{\rm C^\infty R}}									
\newcommand{\cring}{{A}}												
\newcommand{\cmor}{{\phi}}											
\newcommand{\kernel}[1]{{{\rm Ker}(#1)}}									
\newcommand{\frifig}{{\rm FC^\infty R_{f.g.}}}								
\newcommand{\freci}{{\rm FC^\infty R}}									
\newcommand{\alfreci}{{\widetilde{\rm SC^\infty R}}}							
\newcommand{\resolution}[1]{{\widetilde{#1}}}								
\newcommand{\manifold}{{\mathcal X}}									
\newcommand{\modules}[1]{{{\rm Mod}({#1})}}								
\newcommand{\module}{{M}}											
\newcommand{\dqcoh}[1]{{{\rm DQcoh}(#1)}}								
\newcommand{\ideal}{{\mathfrak A}}						
\newcommand{\rpt}{{\rm p}}							
\newcommand{\rqt}{{\rm q}}							
\newcommand{\blue}{\color{blue}}
\newcommand{\hide}[1]{\ifbool{hidedetails}{}{{\blue #1}}}
\begin{document}

\title{Quasi-coherent sheaves in differential geometry}
\author{\small\parbox{.5\linewidth}{Dennis Borisov\\ University of Goettingen, Germany\\ Bunsenstr.\@ 3-4, 37073 G\"ottingen\\ dennis.borisov@gmail.com}
\parbox{.5\linewidth}{Kobi Kremnizer\\ University of Oxford, UK\\ Woodstock Rd, Oxford OX2 6GG\\ yakov.kremnitzer@maths.ox.ac.uk}}
\date{\today}
\maketitle

\begin{abstract} It is proved that the category of simplicial complete bornological spaces over $\mathbb R$ carries a combinatorial monoidal model structure satisfying the monoid axiom. For any commutative monoid in this category the category of modules is also a monoidal model category with all cofibrant objects being flat. In particular, weak equivalences between these monoids induce Quillen equivalences between the corresponding categories of modules. On the other hand, it is also proved that the functor of pre-compact bornology applied to simplicial $\cinfty$-rings preserves and reflects weak equivalences, thus assigning stable model categories of modules to simplicial $\cinfty$-rings.

\smallskip

\noindent{\bf MSC codes:} 58A05, 46A08

\noindent{\bf Keywords:} $\cinfty$-rings, bornological spaces, pre-compact bornology, quasi-coherent sheaves, quasi-abelian categories

\end{abstract}

\tableofcontents

\section{Introduction}

Our goal (beyond this paper) is to adapt some very useful techniques from algebraic geometry (e.g.\@ \cite{BD04}) to differential geometry. Among other things we are particularly interested in constructing chiral algebras using factorization spaces within $\cinfty$-context (e.g.\@ \cite{FB-Z04} \S20.3). An essential requirement for this is having a good theory of quasi-coherent and coherent sheaves. Here ``good'' means having the usual functoriality: inverse and direct images with or without proper support, base change formulas, etc.

Our goal in this paper is to provide the quasi-coherent part of the theory. One might ask why would sheaves appear in the $\cinfty$-setting, where the functors of global sections are exact. They appear because chiral algebras come from very large geometric objects, that are $Ind$-schemes at best. 

Good functoriality of the theory of modules is indispensable when dealing with such objects. In other words, while for a $\cinfty$-scheme a sheaf of modules is given by its global sections, in general our sheaves live on a site of $\cinfty$-schemes. From this point of view our construction is not a definition but a theorem: we show that with the right notion of modules over $\cinfty$-rings one obtains the expected behaviour on a global site.\footnote{We have the Zariski topology in mind, but others might work as well.}

\smallskip

Our plan then is to study the geometry of $\cinfty$-rings, merging algebraic geometry with techniques of functional analysis. The analytical techniques are crucial, since $\cinfty$-rings are much more than just commutative algebras. For example, modules over $\cinfty$-rings without additional structure are not very useful: even for a vector bundle $\module$ of finite rank the canonical base change morphism $\Phi''^*(\Phi'_*(\module))\rightarrow\pi''_*(\pi'^*(\module))$ in a cartesian diagram
	\begin{equation*}\xymatrix{\manifold'\underset{\manifold}\times\manifold''\ar[rr]^{\pi''}\ar[d]_{\pi'} && \manifold''\ar[d]^{\Phi''}\\
	\manifold'\ar[rr]_{\Phi'} && \manifold}\end{equation*}
is usually not an isomorphism. The solution is well known: view $\cinfty$-rings as commutative monoids in a closed monoidal category coming from analysis and take the corresponding categories of modules (cf.\@ \cite{BK15} for an overview).

Following \cite{KM97}, \cite{WN05}, \cite{Meyer2007} and others we choose this monoidal category to be the category $\coconbo$ of complete convex bornological $\mathbb R$-spaces, together with the completed projective tensor product. It is important for these spaces and for the tensor product to be completed to provide a meaningful commutative algebra of $\cinfty$-rings (cf.\@ \cite{Meyer08} for a similar effect). However, this also creates a problem: some very important $\cinfty$-rings are not complete as bornological spaces. For example rings of germs are usually not complete. Another example is $\cinfty(\mathbb R)/(e^{-\frac{1}{x^2}})$, such $\cinfty$-rings substitute Artinian rings in the theory of $\infty$-nilpotent elements in $\cinfty$-algebra (\cite{BK2017}).

\smallskip

Of course any $\cinfty$-ring that is not complete as a bornological space can be given a resolution in terms of bornologically complete $\cinfty$-rings (e.g.\@ the free resolution). Thus we are led to work with simplicial objects in $\coconbo$. If we were interested only in the linear constructions, we could have used the derived category corresponding to the quasi-abelian structure on $\coconbo$ (\cite{Schneiders99}, \cite{ProsmansSchneiders00}), whose left heart is the well known category of bornological quotients (e.g.\@ \cite{WN05}). Since we would like to do commutative algebra we need to have more control, and need to develop homotopical algebra in $\coconbo$. Thus, even if we want just the categories of quasi-coherent sheaves in $\cinfty$-geometry, we need to start with derived geometry.

The weak equivalences in this derived geometry are given by evaluating morphisms at projective objects of $\coconbo$, i.e.\@ the spaces of sequences $\summable{\set}$. This is the well known construction of a model structure on a category of simplicial objects (\cite{Qu67}). Truncations of these evaluation functors are the ``miracle functors'' in the theory of quotient spaces (e.g.\@ \cite{WN05}).

\smallskip

To have a meaningful commutative algebra of $\cinfty$-rings in this setting we need to show that the category $\simpo{\ucori{\coconbo}}$ of simplicial commutative monoids in $\coconbo$ also carries a model structure, for each such monoid the category of modules inherits a model structure, and two weakly equivalent monoids have Quillen equivalent categories of modules. Then we need to show that the category of simplicial $\cinfty$-rings embeds into $\simpo{\ucori{\coconbo}}$, thus equipping every simplicial $\cinfty$-ring with a category of modules, uniquely defined up to a Quillen equivalence.

To make all this possible we need to take seriously the size of the Banach spaces involved. A complete bornological space $\bornospace$ is a vector space that is locally Banach. These Banach pieces supplement the mere vector space structure on $\bornospace$ with some convergence processes. How much convergence is allowed depends on the density characters of Banach pieces. 

For example, if we allow only finite dimensional pieces, $\bornospace$ is just a vector space. The natural bornology on $\cinfty$-rings is locally separable, i.e.\@ we also allow quotients of $\samcou$. They let us compute infinite sums, leading to completed tensor products and meaningful commutative algebra of $\cinfty$-rings. 

\smallskip

Having obtained the category of modules for each simplicial $\cinfty$-ring $\simo{\cring}$, we define the corresponding derived category of quasi-coherent sheaves $\dqcoh{\simo{\cring}}$ to be a stabilization of the model category of modules. We use the stabilized model categories of modules because we would also like to have a meaningful theory of {\it coherent sheaves}, which we define as objects in $\dqcoh{\simo{\cring}}$ homotopically of finite presentation. This forces us to view sub-objects of Banach bundles as quotients, i.e.\@ to stabilize.
	
In order to keep this paper within reasonable size we postpone the analysis of stabilization and functorial properties of coherent sheaves to a future work, but we look at the heart of the standard $t$-structure on $\dqcoh{\cring}$ in case $\cring$ is a Fr\'echet $\cinfty$-ring and of finite presentation (e.g.\@ the ring of functions on a manifold). As one would expect after \cite{Schneiders99}, it is just the left abelian envelope of the category of complete bornological $\cring$-modules.

\smallskip

We should mention that the problem of defining quasi-coherent sheaves and quasi-coherent $D$-modules in $\cinfty$-geometry can be addressed also in another way. One can  consider diffeological and eventually differentiable vector spaces (\cite{KKM87}, \cite{CostelloGwilliam}). In this approach modules are seen as sheaves on a site of $\cinfty$-rings together with additional structure (linearity, connection, etc.). 

\medskip

Here is the plan of the paper:

\smallskip

In section \ref{QuasiAbelianStructure} we recall the basic facts concerning complete bornological spaces over $\mathbb R$ and the quasi-abelian structure on their category. In section \ref{ProjectiveObjectsMonoidalStructure} we recall the properties of projective Banach and bornological spaces, and the completed projective tensor product.

In section \ref{LocallySeparableSpaces} we introduce the filtration on the category of bornological spaces according to the density characters of their Banach pieces, and in section \ref{LocalPresentabilitySmallObjects} we analyze the presentability properties of bornological spaces and show that every component of the filtration from Section \ref{LocallySeparableSpaces} is a locally presentable category.

In section \ref{ModelStructures} we recall the basics of constructing model structures using projective classes and show that the categories of bornological spaces with bounds on cardinalities are combinatorial. Moreover, we show that the completed projective tensor product satisfies the axioms of monoidal model categories, giving us model structures on the categories of modules over commutative monoids. We also look at how adjunctions between quasi-abelian categories induce Quillen adjunctions and equivalences between the corresponding categories of simplicial objects.

In section \ref{CofibrantResolutionsFlatness} we show that under some assumptions on projective objects in a quasi-abelian category (satisfied by $\coconbo$) every cofibrant module over a commutative monoid is flat, and moreover tensoring with flat modules preserves weak equivalences. This implies that the categories of modules are well defined on weak equivalence classes of commutative monoids.

In section \ref{PrecompactBornology} we extend the pre-compact bornology construction from finitely generated $\cinfty$-rings to arbitrary freely generated $\cinfty$-rings, s.t.\@ the resulting functor is fully faithful and preserves finite coproducts. In section \ref{ModelStructureFunctions} we show that $\cpto$ preserves and reflects weak equivalences. Finally in section \ref{QuasiCoherentSheaves} we use $\cpto$ to define categories of quasi-coherent sheaves, show that the base change morphism is a natural weak equivalence and consider the example of Fr\'echet $\cinfty$-rings of finite presentation.

\bigskip

{\Large Acknowledgements}

\smallskip

Part of this work was done when the first author was visiting Max-Planck Institut f\"ur Mathematik in Bonn, several short visits to Oxford were also very helpful. Hospitality and financial support from both institutions are greatly appreciated. This work was presented in a series of talks in G\"ottingen, and the first author is grateful for the comments from C.Zhu and other participants. Very helpful comments also came from D.Christensen and R.Meyer.

\section{The category of complete bornological $\mathbb R$-spaces}

In this section we assemble the necessary tools to deal with homotopical algebra of commutative bornological rings and their modules. Much of what we have collected here is very well known. To a certain extent we follow \cite{OrenKobi}.

\begin{definition} Let $\vectorspace$ be an $\mathbb R$-vector space, possibly infinite dimensional.\begin{itemize}
\item[1.] (E.g.\@ \cite{Hogbe-Nlend1977} \S0.A.3) {\it A disk} in $\vectorspace$ is a subset $\disk\subseteq\vectorspace$, s.t.\@ $\disk$ is {\it convex}, i.e.\@ $t b_0+(1-t)b_1\in \disk$ for all $b_0,b_1\in\disk$, $t\in[0,1]$, and $\disk$ is {\it circled}, i.e.\@ $t\disk\subseteq\disk$ for all $t\in[-1,1]$.
\item[2.] (E.g.\@ \cite{Hogbe-Nlend1977} \S0.A.5) The {\it semi-normed subspace generated by a disk} $\disk\subseteq\vectorspace$ is the linear span $\linearspan{\disk}{\vectorspace}:=\mathbb R\cdot\disk$ equipped with the {\it gauge semi-norm} 
	\begin{equation*}\gaugenorm{\disk}(v):=\inf\,\{r\in\mathbb R_{>0}\,|\, v\in r\disk\}.\end{equation*}
\item[3.] (E.g.\@ \cite{Hogbe-Nlend1977} \S III.3.2, \S I.1.1) {\it A base of a complete convex bornology} on $\vectorspace$ is a set of disks $\bornologybase=\{\disk_i\}_{i\in I}$ in $\vectorspace$, s.t.\@ $\vectorspace=\underset{i\in I}\bigcup\disk_i$; $\disk',\disk''\in\bornologybase\Rightarrow\disk'+\disk''\in\bornologybase$; $\disk\in\bornologybase\text{ and }r\in\mathbb R_{>0}\Rightarrow r\cdot\disk\in\bornologybase$; $\forall\disk\in\bornologybase$ the semi-normed space $(\linearspan{\disk}{\vectorspace},\gaugenorm{\disk})$ is a Banach space. {\it A complete convex bornology} on $\vectorspace$ is given by an equivalence class of bases, where $\bornologybase'\sim\bornologybase''$, if $\forall\disk'\in\bornologybase'$ $\exists\disk''\in\bornologybase''$, s.t.\@ $\disk'\subseteq\disk''$, and $\forall\disk''\in\bornologybase''\,\exists\disk'\in\bornologybase'\;\disk''\subseteq\disk'$. {\it A subset $S\subseteq\vectorspace$ is bounded} with respect to $\bornologybase$, if $\exists\disk\in\bornologybase$ such that $S\subseteq\disk$.
\item[4.] (E.g.\@ \cite{Hogbe-Nlend1977} \S I.1.2) {\it A morphism between complete convex bornological spaces} is a morphism of $\mathbb R$-vector spaces $\vectorspace\rightarrow\vectorspace'$, that maps every bounded subset of $\vectorspace$ to a bounded subset of $\vectorspace'$. We will denote the category of complete convex bornological spaces by $\coconbo$.
\end{itemize}\end{definition}
In \cite{Meyer2007} \S1.1.1 there is an additional condition for a subset to be a disk: $\disk$ has to be {\it internally closed}, meaning that $\forall r\in[0,1)$ $r v\in\disk\Rightarrow v\in\disk$. The operation of internal closure $\internalclosure{\disk}:=\{\avector\in\vectorspace\,|\,\forall r\in[0,1)\,r\avector\in\disk\}$ is idempotent, \hide{indeed let $\avector\in\internalclosure{\internalclosure{\disk}}$, $r\in [0,1)$ we can find $r',r''\in[0,1)$ s.t.\@ $r=r' r''$, then by definition $r''\avector\in\internalclosure{\disk}$ and $r\avector=r' r''\avector\in\disk$, hence $\avector\in\internalclosure{\disk}$, }%
$\internalclosure{\disk}$ is internally closed, \hide{(let $\avector\in\vectorspace$ be s.t.\@ $\forall r\in[0,1)\, r\avector\in\internalclosure{\disk}$ and let $r', r''\in[0,1)$, then $r''\avector\in\internalclosure{\disk}$ and hence $r' r''\avector\in\disk$; since every $r\in[0,1)$ can be written as $r' r''$, we conclude that $\avector\in\internalclosure{\disk}$)
}%
$\linearspan{\disk}{\vectorspace}=\linearspan{\internalclosure{\disk}}{\vectorspace}$, \hide{indeed $\avector\in\internalclosure{\disk}\Rightarrow\frac{1}{2}\avector\in\disk$, i.e.\@ $\internalclosure{\disk}\subseteq\mathbb R_{>0}\cdot\disk$,
}%
and the gauge norms on $\linearspan{\disk}{\vectorspace}$ defined by $\disk$ and $\internalclosure{\disk}$ coincide, \hide{indeed let $\avector\in\linearspan{\disk}{\vectorspace}=\linearspan{\internalclosure{\disk}}{\vectorspace}$, since $\disk\subseteq\internalclosure{\disk}$ it is clear that $\gaugenorm{\disk}(\avector)\geq\gaugenorm{\internalclosure{\disk}}(\avector)$; suppose $\gaugenorm{\disk}(\avector)>\gaugenorm{\internalclosure{\disk}}(\avector)$, in particular $\gaugenorm{\disk}(\avector)>0$, let $\avector':=\frac{1}{\gaugenorm{\disk}(\avector)}\avector$, clearly $\gaugenorm{\disk}(\avector')=1$, i.e.\@ $\forall r\in[0,1)\,r\avector'\in\disk$ hence $\avector'\in\internalclosure{\disk}$ and $\gaugenorm{\internalclosure{\disk}}(\avector')=1$ implying $\gaugenorm{\internalclosure{\disk}}(\avector)=\gaugenorm{\disk}(\avector)$,
}%
therefore the notion of closed convex bornology defined by disks coincides with the one defined by internally closed disks.

\begin{example} Let $(\banachspace,\norm{-}{\banachspace})$ be a Banach space, the disks $\{\disk_r:=\{b\,|\,\norm{b}{\banachspace}\leq r\}\}_{r\in\mathbb R_{>0}}$ define a complete convex bornology on $\banachspace$. Bounded maps between Banach spaces coincide with morphisms between the corresponding complete bornological spaces. Thus the category $\banco$ of Banach spaces and bounded maps is a full subcategory of $\coconbo$.\end{example}

When discussing sub-objects in $\coconbo$ we will need the following notion.

\begin{definition}\label{BornologicalTopology} (E.g.\@ \cite{Meyer2007} Def.\@ 1.35, 1.37) Let $\bornospace\in\coconbo$, a sequence $\{x_k\}_{k\in\mathbb N}$ in $\bornospace$ is {\it bornologically convergent}, if there is $\disk\in\bornologybase$ s.t.\@ $\{x_k\}_{k\in\mathbb N}\subseteq\linearspan{\disk}{\bornospace}$ and converges with respect to the gauge norm. A subset $\set\subseteq\bornospace$ is {\it bornologically closed}, if the limit of every convergent sequence in $\set$ also belongs to $\set$.\end{definition}

\subsection{Quasi-abelian structure}\label{QuasiAbelianStructure}

Recall (\cite{Qu67} \S 4) that in a finitely complete category a morphism $X\rightarrow Y$ is {\it an effective epimorphism}, if it is a co-equalizer of $X\underset{Y}\times X\rightrightarrows X$. In a finitely complete and cocomplete category effective epimorphisms coincide with the co-equalizers (e.g.\@ \cite{KashiwaraSchapira06} Prop.\@ 5.1.5). In an additive category coequalizers are the same as cokernels, and they are often called {\it strict epimorphisms}. Dually kernels are called {\it strict monomorphisms} (e.g.\@ \cite{Schneiders99} Rem.\@ 1.1.2).

\begin{definition} (E.g.\@ \cite{Schneiders99}, Def.\@ 1.1.3) A finitely complete and cocomplete additive category is called {\it quasi-abelian}, if pullbacks (pushouts) of strict epimorphisms (strict monomorphisms) are morphisms of the same kind.\end{definition}
Both $\banco$ and $\coconbo$ are quasi-abelian (e.g.\@ \cite{Prosmans95} Prop.\@ 3.1.7 and \cite{ProsmansSchneiders00} Prop.\@ 5.6). A morphism $\bornomor\colon\bornospace\rightarrow\bornospace'$ in $\coconbo$ is a strict epimorphism, if and only if $\bornomor$ is surjective and for every bounded $\disk'\subseteq\bornospace'$ there is a bounded $\disk\subseteq\bornospace$, s.t.\@ $\disk'\subseteq\bornomor(\disk)$ (e.g.\@ \cite{Meyer2007} Def.\@ 1.59, Lemma 1.61).

\begin{proposition}\label{StrictMonomorphisms} (E.g.\@ \cite{ProsmansSchneiders00} Prop.\@ 5.6, Cor.\@ 4.7) A morphism $\bornospace\rightarrow\bornospace'$ in $\coconbo$ is a strict monomorphism, iff it is injective, its image is closed (Def.\@ \ref{BornologicalTopology}), and the bornology on $\bornospace$ is induced by the bornology on $\bornospace'$.\end{proposition}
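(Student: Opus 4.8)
The plan is to unwind the definition of a strict monomorphism in terms of kernels and cokernels and then to compute these explicitly in $\coconbo$. Recall that a strict monomorphism is by definition a kernel, hence in particular a monomorphism, which in $\coconbo$ is the same thing as an injective morphism (this is immediate once we know, as recorded below, that the kernel of a morphism is computed on underlying vector spaces as the usual kernel). Moreover, for a monomorphism $\bornomor$ the coimage $\coimage(\bornomor)=\cokernel{\kernel{\bornomor}}$ is $\bornospace$ itself, so $\bornomor$ is a strict monomorphism if and only if $\bornomor$ is injective \emph{and} the canonical morphism $\coimage(\bornomor)=\bornospace\rightarrow\image(\bornomor):=\kernel{\cokernel{\bornomor}}$ is an isomorphism in $\coconbo$. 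Thus the whole statement reduces to identifying the object $\kernel{\cokernel{\bornomor}}$ together with its map from $\bornospace$.

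I would first record the concrete form of kernels and cokernels. For $g\colon\bornospace'\rightarrow\bornospace''$, the kernel is the vector subspace $N:=g^{-1}(0)$ with the bornology induced from $\bornospace'$; this object is complete and is bornologically closed in $\bornospace'$ (Def.\@ \ref{BornologicalTopology}). Completeness is the only non-formal point: for a bounded disk $\disk'\subseteq\bornospace'$ one has $\linearspan{\disk'\cap N}{\bornospace'}=\linearspan{\disk'}{\bornospace'}\cap N$ with $\gaugenorm{\disk'}$ restricting to $\gaugenorm{\disk'\cap N}$, and $g$ restricts to a bounded, hence continuous, linear map out of the Banach space $\linearspan{\disk'}{\bornospace'}$, so $\linearspan{\disk'}{\bornospace'}\cap N$ is a closed subspace of a Banach space, hence Banach; the universal property and the bornological closedness of $N$ then follow from this same continuity of $g$ on each Banach piece. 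Dually, the cokernel of $\bornomor\colon\bornospace\rightarrow\bornospace'$ is the quotient vector space $\bornospace'/\closu{\bornomor(\bornospace)}$ (bornological closure) with the quotient bornology, in which a disk is bounded iff it lies in the image under the projection $\pi$ of a bounded disk of $\bornospace'$; here completeness holds because $\linearspan{\pi(\disk')}{\bornospace'}$ is the quotient of the Banach space $\linearspan{\disk'}{\bornospace'}$ by the subspace $\linearspan{\disk'}{\bornospace'}\cap\closu{\bornomor(\bornospace)}$, which is norm-closed precisely because $\closu{\bornomor(\bornospace)}$ is bornologically closed and bornological convergence takes place inside a single Banach piece. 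The universal property of the cokernel follows from that of the quotient of vector spaces, using that any bounded $h$ with $h\circ\bornomor=0$ has bornologically closed kernel and hence kills $\closu{\bornomor(\bornospace)}$.

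Putting the two formulas together, $\image(\bornomor)=\kernel{\cokernel{\bornomor}}$ is, as a subset of $\bornospace'$, the bornologically closed subspace $\closu{\bornomor(\bornospace)}$ equipped with the bornology induced from $\bornospace'$, and the canonical morphism $\bornospace\rightarrow\image(\bornomor)$ is $\bornomor$ corestricted to $\closu{\bornomor(\bornospace)}$. Hence $\bornomor$ is a strict monomorphism iff it is injective and this corestriction is a bornological isomorphism, which is to say: $\bornomor$ is injective; $\bornomor(\bornospace)=\closu{\bornomor(\bornospace)}$, i.e.\@ the image is bornologically closed; and the bornology on $\bornospace$ agrees, under the bijection $\bornomor$, with the bornology induced from $\bornospace'$. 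This is the asserted characterization (for the ``if'' direction one may also argue directly: under these hypotheses the projection $g\colon\bornospace'\rightarrow\bornospace'/\bornomor(\bornospace)$ has $\kernel{g}=\bornospace$ via $\bornomor$, exhibiting $\bornomor$ as a kernel). I expect the main obstacle to be precisely the two completeness verifications in the middle step — that the induced bornology on a bornologically closed subspace, and the corresponding quotient bornology, are again complete. Both come down to the single fact that bornological convergence is convergence within one Banach piece, so that bornological closedness of a subspace upgrades to norm-closedness of its intersection with every $\linearspan{\disk'}{\bornospace'}$; the rest is formal manipulation of universal properties, and all of it is already contained in \cite{ProsmansSchneiders00}.
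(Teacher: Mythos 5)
Your argument is correct, and in fact the paper offers no proof of its own here: the proposition is quoted directly from Prosmans--Schneiders (Prop.\@ 5.6, Cor.\@ 4.7), and your reconstruction --- reduce strictness of a monomorphism to the canonical map $\coimage(\bornomor)\rightarrow\kernel{\cokernel{\bornomor}}$ being an isomorphism, then compute kernels and cokernels explicitly in $\coconbo$ --- is essentially the argument in that reference. The one point where you should be more careful is the meaning of $\closu{\bornomor(\bornospace)}$: if you define it as the set of limits of bornologically convergent sequences in $\bornomor(\bornospace)$, this one-step sequential closure need not itself be bornologically closed (a diagonal argument fails because a countable family of bounded disks need not have a bounded sum), so your completeness verification for the quotient would not apply to it. The correct object is the smallest bornologically closed subspace containing the image (equivalently, the intersection of all bornologically closed subsets containing it, which one checks is a subspace); with that reading your key observation --- that bornological closedness of a subspace is exactly norm-closedness of its trace on every Banach piece $\linearspan{\disk'}{\bornospace'}$, whence the quotient pieces are Banach --- goes through, and both the universal property of the cokernel and the final identification of $\image(\bornomor)$ are as you state. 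The direct verification of the ``if'' direction at the end is also fine and is the quickest route to that half of the statement.
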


A morphism in a quasi-abelian category, that is simultaneously a strict epimorphism and a strict monomorphism, has to be an isomorphism (e.g.\@ \cite{Schneiders99} Prop.\@ 1.1.4). This is not true without the strictness assumption: any injective morphism between Banach spaces, that has a dense image, is simultaneously a monomorphism and an epimorphism.

\begin{definition} In a quasi-abelian category $\quabi$ a morphism $\mor\colon\qobj_1\rightarrow\qobj_2$ is {\it strict}, if $\mor=\momor\circ\emor$, where $\emor$ is a strict epimorphism and $\momor$ is a strict monomorphism (e.g.\@ \cite{Schneiders99} Rem.\@ 1.1.2(c)). A null sequence $\qobj_1\overset{\mor_1}\longrightarrow\qobj_2\overset{\mor_2}\longrightarrow\qobj_3$ is {\it strictly exact}, if $\qobj_1\rightarrow\kernel{\mor_2}$ is a strict epimorphism (e.g.\@ \cite{Schneiders99} Def.\@ 1.1.9). \hide{
In the loc.\@ cit.\@ such a sequence is called strictly exact, if $\mor_1$ is strict and $\image(\mor_1)\rightarrow\kernel{\mor_2}$ is an isomorphism. Since $\kernel{\mor_2}\rightarrow\qobj_2$ is a strict monomorphism, also in our definition $\mor_1$ has to be strict. By definition (loc.\@ cit.\@ Def.\@ 1.1.1) $\image(\mor_1)$ is the kernel of $\qobj_2\rightarrow\cokernel{\mor_1}$, therefore, if $\qobj_1\rightarrow\kernel{\mor_2}$ is a strict epimorphism, so is $\image(\mor_1)\rightarrow\kernel{\mor_2}$ (e.g.\@ loc.\@ cit.\@ Prop.\@ 1.1.8). In the same way from $\image(\mor_1)\rightarrow\qobj_2$ being a strict monomorphism, it follows that $\image(\mor_1)\rightarrow\kernel{\mor_2}$ is a strict monomorphism. Thus we conclude that $\image(\mor_1)\rightarrow\kernel{\mor_2}$ has to be an isomorphism also according to our definition. Conversely, suppose that the sequence is strictly exact according to loc.\@ cit.  As $\mor_1$ is strict, $\coimage(\mor_1)\rightarrow\image(\mor_1)$ is an isomorphism, hence $\qobj_1\rightarrow\image(\mor_1)$ is a strict epimorphism (loc.\@ cit.\@ Prop.\@ 1.1.4). Composing with the isomorphism $\image(\mor_1)\rightarrow\kernel{\mor_2}$ we obtain our definition. }%
A null sequence $\ldots\rightarrow\qobj\rightarrow\qobj'\rightarrow\ldots$ of any length is {\it strictly exact}, if every consecutive pair of morphisms is strictly exact. \end{definition}
In $\banco$ a morphism is strict, if and only if its image is closed. This is true more generally for all Fr\'echet spaces (\cite{BourTopSpaces1-5}, Thm.\@ IV.2.1). In particular a morphism in $\banco$ is a strict epimorphism, if and only if it is surjective. 

\begin{example} Let $\banachspace\rightarrow\banachspace'$ be an injective but not surjective morphism of Banach spaces, s.t.\@ the image is dense in $\banachspace'$. Clearly this morphism is not strict, however $0\rightarrow\banachspace\rightarrow\banachspace'$ is strictly exact.\end{example}

\begin{definition} (E.g.\@ \cite{Schneiders99} Def.\@ 1.1.18) A functor $\adifu\colon\quabi\rightarrow\quabi'$ between quasi-abelian categories is {\it strictly exact}, if for every strictly exact sequence $\qobj_1\rightarrow\qobj_2\rightarrow\qobj_3$, the sequence $\adifu(\qobj_1)\rightarrow\adifu(\qobj_2)\rightarrow\adifu(\qobj_3)$ is strictly exact in $\quabi'$.\end{definition}
A morphism $\qobj_1\rightarrow\qobj_2$ is a strict epimorphism, iff $\qobj_1\rightarrow\qobj_2\rightarrow 0$ is a strictly exact sequence. Hence strictly exact functors preserve strict epimorphisms. Given any $\mor_2\colon\qobj_2\rightarrow\qobj_3$, a morphism $\mor_1\colon\qobj_1\rightarrow\qobj_2$ is a kernel of $\mor_2$, iff in $0\rightarrow\qobj_1\overset{\mor_1}\longrightarrow\qobj_2\overset{\mor_2}\longrightarrow\qobj_3$ is strictly exact. \hide{Indeed, exactness of the left pair implies that $\kernel{\mor_1}=0$, i.e.\@ $\mor_1$ is a monomorphism, but then $\qobj_1\rightarrow\kernel{\mor_2}$ has to be a monomorphism as well, now using exactness of the right pair we see that $\qobj_1\rightarrow\kernel{\mor_2}$ is both a strict monomorphism and a strict epimorphism, i.e.\@ it is an isomorphism. }%
Thus strictly exact functors preserve kernels, and being additive they preserve all finite limits. 

\begin{proposition}\label{ConditionsStrictlyExact} An additive functor $\adifu\colon\quabi\rightarrow\quabi'$ between quasi-abelian categories is strictly exact, iff it preserves finite limits and strict epimorphisms.\end{proposition}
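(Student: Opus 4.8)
The plan is to prove both implications. One direction is essentially already established in the discussion preceding the statement: if $\adifu$ is strictly exact, then it preserves strict epimorphisms (since $\qobj_1\to\qobj_2\to 0$ is strictly exact iff $\qobj_1\to\qobj_2$ is a strict epimorphism) and it preserves kernels (since $0\to\qobj_1\to\qobj_2\to\qobj_3$ is strictly exact iff $\qobj_1$ is the kernel of $\qobj_2\to\qobj_3$); being additive, preserving kernels upgrades to preserving all finite limits. So for this direction I would just assemble those observations into one sentence.

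For the converse, assume $\adifu$ is additive, preserves finite limits, and preserves strict epimorphisms; I must show it sends a strictly exact null sequence $\qobj_1\overset{\mor_1}\to\qobj_2\overset{\mor_2}\to\qobj_3$ to a strictly exact one. By definition this means $\qobj_1\to\kernel{\mor_2}$ is a strict epimorphism. First I would form $\K:=\kernel{\mor_2}$, so that the canonical $\kappa\colon\K\to\qobj_2$ is a (strict) monomorphism and $\qobj_1\to\qobj_2$ factors through $\K$. Applying $\adifu$: since $\adifu$ preserves finite limits, $\adifu(\K)$ is the kernel of $\adifu(\mor_2)\colon\adifu(\qobj_2)\to\adifu(\qobj_3)$, with the monomorphism $\adifu(\kappa)$ realizing it. Since $\adifu$ preserves strict epimorphisms and $\qobj_1\to\K$ is one, $\adifu(\qobj_1)\to\adifu(\K)$ is a strict epimorphism. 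Now $\adifu(\qobj_1)\to\adifu(\qobj_2)$ factors as $\adifu(\qobj_1)\to\adifu(\K)\overset{\adifu(\kappa)}\to\adifu(\qobj_2)$, and $\adifu(\qobj_1)\to\adifu(\K)=\kernel{\adifu(\mor_2)}$ is a strict epimorphism — which is exactly the definition of strict exactness of $\adifu(\qobj_1)\to\adifu(\qobj_2)\to\adifu(\qobj_3)$. The general (longer) case follows by applying this to each consecutive pair.

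I should also check that the hypotheses genuinely apply to an arbitrary consecutive pair inside a long strictly exact sequence; but that is immediate from the definition, which declares a long null sequence strictly exact precisely when every consecutive pair is. The one point needing a line of care is that in a quasi-abelian category "$\qobj_1\to\kernel{\mor_2}$ is a strict epimorphism" really is the right reformulation of strict exactness — this is recorded in the definition above (following \cite{Schneiders99}), so I can cite it directly rather than re-derive it.

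The main obstacle, such as it is, is bookkeeping rather than mathematics: making sure the factorization $\adifu(\qobj_1)\to\adifu(\K)\to\adifu(\qobj_2)$ is the one induced functorially from $\qobj_1\to\K\to\qobj_2$, and that $\adifu$ preserving the pullback $\K=\qobj_2\times_{\qobj_3}0$ is exactly what identifies $\adifu(\K)$ with $\kernel{\adifu(\mor_2)}$ compatibly with the monomorphism. Both are formal consequences of functoriality and of $\adifu$ commuting with the relevant limit cone, so I expect no real difficulty; the proof is short.
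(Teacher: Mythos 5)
Your proposal is correct and follows essentially the same route as the paper: the forward direction is assembled from the observations preceding the statement (strict epimorphisms and kernels are detected by strictly exact sequences, and additivity upgrades kernel-preservation to all finite limits), while the converse factors $\mor_1$ through $\kernel{\mor_2}$, uses preservation of finite limits to identify $\adifu(\kernel{\mor_2})$ with $\kernel{\adifu(\mor_2)}$, and applies preservation of strict epimorphisms. No gaps.
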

\begin{proof} Suppose $\adifu$ preserves finite limits and strict epimorphisms, let $\qobj_1\overset{\mor_1}\longrightarrow\qobj_2\overset{\mor_2}\longrightarrow\qobj_3$ in $\quabi$ be strictly exact. Then $\mor_1$ factors into $\qobj_1\rightarrow\kernel{\mor_2}\rightarrow\qobj_2$, and the first arrow is a strict epimorphism. By assumption $\adifu(\kernel{\mor_2})\cong\kernel{\adifu(\mor_2)}$, and $\adifu(\qobj_1)\rightarrow\kernel{\adifu(\mor_2)}$ is a strict epimorphism.\end{proof}

Finite colimits do not have to be preserved by strictly exact functors. On the other hand, if $\adifu$ preserves both finite limits and finite colimits, it has to be strictly exact. Indeed, $\qobj_1\overset{\mor_1}\longrightarrow\qobj_2\overset{\mor_2}\longrightarrow\qobj_3$ is strictly exact, if $\qobj_1\rightarrow\kernel{\mor_2}$ is a cokernel. Then $\adifu(\qobj_1)\rightarrow\adifu(\kernel{\mor_2})$ is a cokernel as well, i.e.\@ a strict epimorphism, and $\adifu(\kernel{\mor_2})\cong\kernel{\adifu(\mor_2)}$.

\smallskip

In performing homological computations in quasi-abelian categories we will use the following obvious facts.

\begin{proposition}\label{ManipulationWithKernels} Let $\quabi$ be a quasi-abelian category.\begin{enumerate}
\item Let $\qobj_1\rightarrow\qobj$, $\qobj_2\rightarrow\qobj$ be strict monomorphisms in $\quabi$, then $\qobj_1\underset{\qobj}\times\qobj_2\rightarrow\qobj$ is a strict monomorphism, which we denote by $\qobj_1\cap\qobj_2$.
\item Let $\{\phi_i\colon\underset{0\leq j\leq n}\bigoplus\qobj_j\rightarrow\qobj'_i\}_{1\leq i\leq n}$ be s.t.\@ $\phi_i|_{\qobj_{j\neq i}}=0$ and $\phi_i|_{\qobj_i}$ is a monomorphism. Then  $\underset{1\leq i\leq n}\bigcap\kernel{\phi_i}\cong\qobj_0$.
\end{enumerate}\end{proposition}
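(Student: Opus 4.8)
The plan is to prove both statements by reducing them to the defining properties of kernels and the basic manipulation of strict monomorphisms in a quasi-abelian category, exactly as recalled in the excerpt.

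For part (1), recall that a strict monomorphism is the same as a kernel: if $\qobj_1\rightarrow\qobj$ is a strict monomorphism, it is the kernel of the cokernel map $\qobj\rightarrow\cokernel{\qobj_1\rightarrow\qobj}$, and similarly for $\qobj_2\rightarrow\qobj$. Consider the morphism $\psi\colon\qobj\rightarrow\cokernel{\qobj_1\rightarrow\qobj}\oplus\cokernel{\qobj_2\rightarrow\qobj}$ obtained by pairing the two cokernel projections. I claim $\qobj_1\underset{\qobj}\times\qobj_2$ is precisely $\kernel{\psi}$. Indeed, a test morphism $T\rightarrow\qobj$ factors through $\kernel{\psi}$ iff both composites to the two cokernels vanish, iff it factors through $\qobj_1$ and through $\qobj_2$ compatibly, iff it factors through $\qobj_1\underset{\qobj}\times\qobj_2$; so the universal properties agree and the pullback is a kernel, hence a strict monomorphism. (Alternatively, one observes directly that the pullback of the strict monomorphism $\qobj_2\rightarrow\qobj$ along $\qobj_1\rightarrow\qobj$ is again a strict monomorphism $\qobj_1\cap\qobj_2\rightarrow\qobj_1$ by the quasi-abelian axiom, and composition of strict monomorphisms is a strict monomorphism — this last fact follows because a composite of kernels is a kernel in a quasi-abelian category.) I would pick whichever of these two routes reads most cleanly; the second is probably the shortest given that the excerpt has just recalled the pullback stability axiom.

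For part (2), I would argue by induction on $n$ using part (1), but the base case already contains the whole idea, so I will describe that. Fix $i$ and consider $\phi_i\colon\underset{0\leq j\leq n}\bigoplus\qobj_j\rightarrow\qobj'_i$. Since $\phi_i$ kills all summands $\qobj_{j}$ with $j\neq i$ and is a monomorphism on $\qobj_i$, a morphism $T\rightarrow\underset{j}\bigoplus\qobj_j$, written in components as $(t_0,\dots,t_n)$, lies in $\kernel{\phi_i}$ iff $\phi_i|_{\qobj_i}\circ t_i=0$, iff $t_i=0$ (using that $\phi_i|_{\qobj_i}$ is mono). Hence $\kernel{\phi_i}$ is the subobject $\underset{j\neq i}\bigoplus\qobj_j$, and it is a strict monomorphism (a kernel) into the direct sum. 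Intersecting over $1\leq i\leq n$, by iterating part (1), a morphism $T\rightarrow\underset{j}\bigoplus\qobj_j$ factors through $\underset{i}\bigcap\kernel{\phi_i}$ iff $t_i=0$ for every $1\leq i\leq n$, i.e.\@ iff it factors through the summand $\qobj_0$. Thus the universal property of $\underset{i}\bigcap\kernel{\phi_i}$ coincides with that of $\qobj_0\hookrightarrow\underset{j}\bigoplus\qobj_j$, giving the claimed isomorphism.

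The only genuine point requiring care — and thus the main (small) obstacle — is justifying in part (2) that $\kernel{\phi_i}$ really is the coproduct subobject $\bigoplus_{j\neq i}\qobj_j$ on the nose, and that the finite intersections computed via part (1) keep track of the components correctly; this is where one uses that a finite direct sum is simultaneously a product, so testing against it amounts to testing each component independently, and that $\phi_i|_{\qobj_i}$ being a monomorphism lets one cancel it. Everything else is formal manipulation of universal properties. I would write part (1) in full and then present part (2) as an induction whose inductive step is an immediate application of (1), spelling out only the identification of each $\kernel{\phi_i}$.
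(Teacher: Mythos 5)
Your proof is correct and follows essentially the same route as the paper: part (1) is exactly the paper's identification of the pullback with $\kernel{\qobj\rightarrow(\qobj/\qobj_1)\oplus(\qobj/\qobj_2)}$, and part (2) rests on the same two facts the paper uses, namely that a finite direct sum is also a product (so maps out of it are tested componentwise) and that $\phi_i|_{\qobj_i}$ being a monomorphism forces the $i$-th component to vanish. The only cosmetic difference is that you first compute each $\kernel{\phi_i}$ as $\underset{j\neq i}\bigoplus\qobj_j$ and then intersect, whereas the paper works directly with the components of the inclusion of the intersection; both are fine.
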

\begin{proof} 1. $\qobj_1\underset{\qobj}\times\qobj_2$ is the kernel of $\qobj\rightarrow(\qobj/\qobj_1)\oplus(\qobj/\qobj_2)$. 

\noindent 2. $\underset{1\leq i\leq n}\bigcap\kernel{\phi_i}\rightarrow\underset{0\leq j\leq n}\bigoplus\qobj_j$ is a sum of $\{\iota_j\colon\kernel{\phi}\rightarrow\qobj_j\}_{0\leq j\leq n}$. The composite $\underset{1\leq i\leq n}\bigcap\kernel{\phi_i}\rightarrow\underset{0\leq j\leq n}\bigoplus\qobj_j\rightarrow\underset{1\leq i\leq n}\bigoplus\qobj'_i$ is $\underset{1\leq i\leq n}\sum\phi_i\circ\iota_i=0$. Thus $\underset{1\leq i\leq n}\bigcap\kernel{\phi_i}$ factors through $\qobj_0$. Obviously the inclusion of $\qobj_0$ factors through $\underset{1\leq i\leq n}\bigcap\kernel{\phi_i}$. Both inclusions are strict monomorphisms.\end{proof}

Often it is useful to pass from quasi-abelian categories to abelian ones. There are two universal ways of doing this, and since we will be using simplicial homotopy theory (i.e.\@ left resolutions) we choose the left one.

\begin{definition}\label{LeftAbelianEnvelope} (\cite{Schneiders99} Prop.\@ 1.2.35) Let $\quabi$ be a quasi-abelian category. {\it A left abelian envelope} of $\quabi$ is given by an abelian category $\abi$ and a functor $\abiono\colon\quabi\rightarrow\abi$, s.t.\@ $\abiono$ is full and faithful; $\forall Q\in\quabi$ and for any monomorphism $A\rightarrow\abion{Q}$ there is $Q'\in\quabi$ $A\cong\abion{Q'}$; $\forall A\in\abi$ there is $Q\in\quabi$ and an epimorphism $\abion{Q}\rightarrow A$.\end{definition}
As it is shown in loc.\@ cit.\@ any quasi-abelian category has a left abelian envelope, which is unique up to an equivalence. The left abelian envelope of $\coconbo$ was explicitly constructed by L.Waelbroeck (e.g.\@ \cite{WN05}).

\begin{proposition}\label{GoodEnvelopes} (\cite{Schneiders99} Prop.\@ 1.2.26, Cor.\@ 1.2.27) Let $\quabi$ be a quasi-abelian category, and let $\abiono\colon\quabi\rightarrow\abi$ be a left abelian envelope. Then $\abiono\colon\quabi\rightarrow\abi$ has a left adjoint, and $Q_1\rightarrow Q_2\rightarrow Q_3$ in $\quabi$ is strictly exact, if and only if $\abion{Q_1}\rightarrow\abion{Q_2}\rightarrow\abion{Q_3}$ is exact in $\abi$.\end{proposition}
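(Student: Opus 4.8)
The plan is to reproduce, from the three defining properties of a left abelian envelope, the two ingredients of the statement: first construct the left adjoint, then use it to transport strict exactness to exactness and back. Write $\abiono\colon\quabi\to\abi$ for the envelope and $L$ for the sought left adjoint. For $A\in\abi$, the third property gives an epimorphism $\abion{P_0}\twoheadrightarrow A$; its kernel in $\abi$ is a subobject of $\abion{P_0}$, hence by the second property isomorphic to $\abion{P_1}$ for some $P_1\in\quabi$, and by fullness the induced monomorphism $\abion{P_1}\to\abion{P_0}$ is $\abion{u}$ for a (unique) $u\colon P_1\to P_0$. Then $A\cong\cokernel{\abion{u}}$ in $\abi$, so for every $Q\in\quabi$, applying $\Hom_\abi(-,\abion{Q})$ to $\abion{P_1}\xrightarrow{\abion{u}}\abion{P_0}\to A\to 0$ and invoking full faithfulness (and $\abion{g}=0\iff g=0$, which is faithfulness) identifies $\Hom_\abi(A,\abion{Q})$ with $\{f\in\Hom_\quabi(P_0,Q)\mid fu=0\}=\Hom_\quabi(\cokernel{u},Q)$, naturally in $Q$. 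Thus $Q\mapsto\Hom_\abi(A,\abion{Q})$ is represented by $L(A):=\cokernel{u}$, independently of the presentation, and the usual Yoneda argument makes $A\mapsto L(A)$ into a functor left adjoint to $\abiono$; since $\abiono$ is fully faithful the counit $L\abiono\to\id_\quabi$ is an isomorphism.

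\emph{Exactness properties of $\abiono$.} As a right adjoint $\abiono$ preserves all finite limits, so it preserves the zero object and finite biproducts (hence is additive) and kernels, giving a canonical isomorphism $\abion{\kernel{f}}\cong\kernel{\abion{f}}$ for every $f$ of $\quabi$; in particular $\abiono$ preserves monomorphisms, faithfulness lets it reflect monomorphisms, and full faithfulness lets it reflect isomorphisms. The crucial claim is that $f$ is a strict epimorphism in $\quabi$ iff $\abion{f}$ is an epimorphism in $\abi$. For ``if'', let $f\colon Q\to Q'$ be a strict epimorphism and set $J:=\image(\abion{f})\hookrightarrow\abion{Q'}$; by the second property $J\cong\abion{Q''}$, the inclusion is $\abion{m}$ with $m\colon Q''\to Q'$ (monic, being reflected), and by fullness the image factorization of $\abion{f}$ is $\abion{m}\circ\abion{g}$ for some $g\colon Q\to Q''$, so $f=m\circ g$. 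Since $m$ is monic, $\kernel{g}=\kernel{f}$; since $f$, being a strict epimorphism, is the cokernel of $\kernel{f}\to Q$, the map $g$ factors as $h\circ f$; then $m\circ h\circ f=f$ with $f$ epic forces $m\circ h=\id_{Q'}$, so the monomorphism $m$ is a split epimorphism, hence an isomorphism, hence $\abion{f}$ is epic. For ``only if'', factor $f$ in $\quabi$ as $Q\xrightarrow{p}\coimage(f)\xrightarrow{c}\image(f)\xrightarrow{m}Q'$ with $p$ a strict epimorphism, $c$ monic and epic, $m$ a strict monomorphism; applying $\abiono$, the maps $\abion{m},\abion{c}$ are monic, and if $\abion{f}$ is epic then so is $\abion{m}$, whence $\abion{m}$ is both monic and epic in the abelian category $\abi$, hence an isomorphism, hence $m$ an isomorphism; then $\abion{c}$ is epic as well, hence iso, hence $c$ iso, and $f=m\circ c\circ p$ is a strict epimorphism.

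\emph{Conclusion, and the main difficulty.} A null sequence $Q_1\xrightarrow{f_1}Q_2\xrightarrow{f_2}Q_3$ is strictly exact exactly when $Q_1\to\kernel{f_2}$ is a strict epimorphism, and $\abion{Q_1}\to\abion{Q_2}\to\abion{Q_3}$ is exact exactly when $\abion{Q_1}\to\kernel{\abion{f_2}}$ is an epimorphism; under the isomorphism $\abion{\kernel{f_2}}\cong\kernel{\abion{f_2}}$ the latter arrow is $\abiono$ applied to the former, so the two conditions match by the crucial claim, which proves the proposition. I expect the real obstacle to be the ``if'' half of that claim: the fact that a strict epimorphism admits no nontrivial factorization through a subobject is where quasi-abelianness genuinely enters (via the splitting trick), and this is exactly what makes $\abiono$ detect \emph{strict} epimorphisms rather than merely epimorphisms. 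By contrast, the functoriality and well-definedness bookkeeping for $L$ in the first step is routine.
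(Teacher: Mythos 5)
The paper gives no proof of this proposition at all: it is quoted verbatim from Schneiders (Prop.\@ 1.2.26, Cor.\@ 1.2.27) and used as a black box. Your argument is a correct, self-contained derivation from the three defining properties of a left abelian envelope, and it follows the same strategy as the cited source: build the left adjoint $L$ by presenting $A\in\abi$ as $\cokernel{\abion{u}}$ and checking representability of $Q\mapsto\Hom_{\abi}(A,\abion{Q})$, then reduce strict exactness to the key equivalence ``$f$ is a strict epimorphism iff $\abion{f}$ is an epimorphism'' via the preservation of kernels. Both halves of that equivalence are argued correctly: the splitting trick using that a strict epimorphism is the cokernel of its kernel, and the $\coimage$--$\image$ factorization combined with the fact that monic plus epic implies iso in the abelian target. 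Two cosmetic remarks only: your labels ``if''/``only if'' in the crucial claim are swapped relative to the direction you actually prove under each, and in the final reduction one should note (as you implicitly do) that exactness of $\abion{Q_1}\rightarrow\abion{Q_2}\rightarrow\abion{Q_3}$ forces the original sequence to be null by faithfulness, so the comparison of the two conditions is legitimate in both directions. Neither affects correctness.
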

As $\abiono$ has a left adjoint it commutes with all limits, since it is also full and faithful it can be used to translate the classical Dold--Kan correspondence into the quasi-abelian setting.

\begin{proposition}\label{DoldKan} (\cite{Kelly16}, Cor.\@ 5.18) Let $\quabi$ be a quasi-abelian category, and let $\simpo{\quabi}$, $\compo{\quabi}$ be the categories of simplicial objects and non-positively graded complexes in $\quabi$ respectively. For any $\simo{Q}\in\simpo{\quabi}$ define $\normach{\simo{Q}}^{-k}:=\underset{0\leq i< k}\bigcap\kernel{\boundaryo_i}$ with $\boundaryo_k$'s providing the differentials. Conversely, for any $\como{Q}\in\compo{\quabi}$ define $\simcho{\como{Q}}_k:=\underset{\finor{k}\twoheadrightarrow\finor{m}}\bigoplus Q^{-m}$. Then $(\normacho,\simchoo)$ is an equivalence of categories.\end{proposition}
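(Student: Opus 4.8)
The plan is to deduce this from the classical Dold--Kan correspondence by transporting it along the left abelian envelope $\abiono\colon\quabi\rightarrow\abi$ of Proposition~\ref{GoodEnvelopes}. Recall from there that $\abiono$ is full and faithful and, having a left adjoint, preserves all limits; being a limit-preserving functor between additive categories it is automatically additive and hence also preserves finite biproducts. I would first note that $\normacho$ and $\simchoo$ really are defined at the level of $\quabi$: since $\quabi$ is quasi-abelian it is finitely complete and additive, so the iterated kernels $\underset{0\leq i<k}\bigcap\kernel{\boundaryo_i}$ exist, and the index set $\{\finor{k}\twoheadrightarrow\finor{m}\}$ is finite, so the biproducts defining $\simcho{\como Q}_k$ exist; the differentials, faces and degeneracies are then given by the standard universal formulas, assembled only from these kernels, from biproduct injections and projections, and from the differentials of $\como Q$.

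Next I would check that applying $\abiono$ degreewise is compatible with these constructions. The degreewise extensions $\simpo{\quabi}\rightarrow\simpo{\abi}$ and $\compo{\quabi}\rightarrow\compo{\abi}$ of $\abiono$ are again full and faithful. Since $\abiono$ preserves finite limits and commutes with the face operators, $\abiono\bigl(\normach{\simo Q}\bigr)$ is naturally isomorphic to the classical normalized complex of $\abiono\circ\simo Q$; since $\abiono$ preserves finite biproducts and commutes with differentials, $\abiono\bigl(\simcho{\como Q}\bigr)$ is naturally isomorphic to the classical $\Gamma$-construction applied to $\abiono\circ\como Q$. Thus the square relating $\normacho$, $\simchoo$ to the degreewise $\abiono$'s commutes up to natural isomorphism.

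Finally I would invoke classical Dold--Kan for the abelian category $\abi$: the functors $N\colon\simpo{\abi}\rightarrow\compo{\abi}$ and $\Gamma\colon\compo{\abi}\rightarrow\simpo{\abi}$ are mutually quasi-inverse. For inputs coming from $\quabi$, every object appearing in the unit and counit of this equivalence lies in the image of the (full, faithful) degreewise $\abiono$; hence each of their components is the $\abiono$-image of a unique morphism in $\simpo{\quabi}$, resp.\@ $\compo{\quabi}$, faithfulness promotes the naturality squares over $\abi$ to naturality squares over $\quabi$, and a full and faithful functor reflects isomorphisms, so those components are isomorphisms already in $\quabi$. Transported through the intertwining isomorphisms of the previous paragraph, this exhibits $\simchoo\,\normacho\cong\id_{\simpo{\quabi}}$ and $\normacho\,\simchoo\cong\id_{\compo{\quabi}}$, i.e.\@ the desired equivalence.

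The step I expect to be the real obstacle is the compatibility claimed in the second paragraph --- that $\abiono$ carries the $\quabi$-level functors $\normacho$, $\simchoo$ to the classical abelian ones \emph{with all structure maps matching}. This comes down to observing that, for objects of $\quabi$, every object and every structure morphism occurring in Dold--Kan is produced from objects of $\quabi$ by precisely the finite limits and finite biproducts that $\abiono$ preserves, so nothing escapes its image; this is exactly where the properties of the left abelian envelope are used. A more self-contained alternative bypasses the envelope: a quasi-abelian category is additive with kernels, hence idempotent-complete, and the Dold--Kan correspondence is available for any idempotent-complete additive category --- the Eilenberg--Zilber splitting $Q_n\cong\underset{\finor n\twoheadrightarrow\finor m}\bigoplus\normach{\simo Q}^{-m}$ and the simplicial identities go through verbatim, since only finite additive (co)limits intervene; but the envelope argument above, which is the route taken in \cite{Kelly16}, Cor.~5.18, is the shorter one.
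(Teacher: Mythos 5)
Your argument is correct and follows exactly the route the paper indicates: the paper gives no proof of its own, merely citing \cite{Kelly16} Cor.\@ 5.18 after remarking that $\abiono$ commutes with all limits and is fully faithful, ``so it can be used to translate the classical Dold--Kan correspondence into the quasi-abelian setting'' --- which is precisely the envelope argument you flesh out (the key points being that $\normacho$ is built from finite limits and $\simchoo$ from finite biproducts, both preserved by $\abiono$, and that a fully faithful functor reflects isomorphisms and naturality). Your closing observation that one could instead invoke Dold--Kan for idempotent-complete additive categories is also sound, but it is a remark rather than a divergence from the paper's approach.
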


\subsection{Projective objects and monoidal structure}\label{ProjectiveObjectsMonoidalStructure}

\begin{definition} (E.g.\@ \cite{Schneiders99} Def.\@ 1.3.18) An object $\obj$ in an additive category $\adica$ is {\it projective}, if $\yoneda{\obj}{\adica}$ maps strict epimorphisms to surjections in the category $\abel$ of abelian groups. An additive category {\it has enough projectives}, if every object is a quotient of a projective one.\end{definition}
In any category $\adica$ and for any object $\obj$ the functor $\yoneda{\obj}{\adica}$ preserves all limits. Therefore in a quasi-abelian category $\quabi$ an object $\qobj$ is projective, iff $\yoneda{\qobj}{\quabi}\colon\quabi\rightarrow\abel$ is strictly exact (Prop.\@ \ref{ConditionsStrictlyExact}).

\smallskip

Now we look at the projective objects in $\banco$ and $\coconbo$. 

\begin{proposition} (\cite{Koethe66}, \S 3(7)) A Banach space is projective in $\banco$, if and only if it is isomorphic to $\summable{\set}$ for some set $\set$. Here $\summable{\set}$ is the space of absolutely summable $\set$-families of real numbers, i.e.\@ maps $a\colon\set\rightarrow\mathbb R$, s.t.\@ $\cardi{\{\element\in\set\,|\,a(\element)\neq 0\}}\leq\aleph_0$ and $\underset{\element\in\set}\sum |a(\element)|<\infty$.\end{proposition}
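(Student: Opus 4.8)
The plan is to treat the two implications separately. For the implication ``$\Leftarrow$'' I would show directly that each $\summable{\set}$ is projective. Recall from the discussion above that the strict epimorphisms in $\banco$ are exactly the surjective bounded linear maps. So let $\mor\colon\banachspace\to\banachspace'$ be such a map and let $g\colon\summable{\set}\to\banachspace'$ be an arbitrary morphism in $\banco$; I must lift $g$ through $\mor$. By the open mapping theorem $\mor$ is open, so there is a constant $C>0$ such that every $y\in\banachspace'$ admits a preimage $x\in\banachspace$ with $\norm{x}{\banachspace}\leq C\norm{y}{\banachspace'}$. Applying this to the images $g(e_\element)$ of the canonical unit vectors $e_\element\in\summable{\set}$, choose $x_\element\in\banachspace$ with $\mor(x_\element)=g(e_\element)$ and $\norm{x_\element}{\banachspace}\leq C\|g\|$. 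Then $\widetilde g(a):=\underset{\element\in\set}\sum a(\element)\,x_\element$ defines an absolutely convergent series with $\norm{\widetilde g(a)}{\banachspace}\leq C\|g\|\cdot\norm{a}{\summable{\set}}$, hence a bounded linear map $\widetilde g\colon\summable{\set}\to\banachspace$; since $\mor\circ\widetilde g$ agrees with $g$ on the finitely supported elements, it equals $g$ by continuity. Thus $\yoneda{\summable{\set}}{\banco}$ carries strict epimorphisms to surjections, i.e.\@ $\summable{\set}$ is projective.

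For the implication ``$\Rightarrow$'' the first step is to exhibit every projective Banach space as a retract of some $\summable{\set}$. Given $\banachspace$ projective, put $\set:=\{b\in\banachspace\,|\,\norm{b}{\banachspace}\leq 1\}$ and consider $\pi\colon\summable{\set}\to\banachspace$, $\pi(a):=\underset{b\in\set}\sum a(b)\cdot b$. This series converges absolutely, $\pi$ is bounded of norm $\leq 1$, and $\pi$ is surjective since $x=\pi\bigl(\norm{x}{\banachspace}\,e_{x/\norm{x}{\banachspace}}\bigr)$ for $x\neq 0$; hence $\pi$ is a strict epimorphism. Projectivity of $\banachspace$ provides a lift $\siqu\colon\banachspace\to\summable{\set}$ of $\id_\banachspace$ along $\pi$, so $\banachspace$ is a direct summand, i.e.\@ a complemented subspace, of $\summable{\set}$.

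It remains to prove that a complemented subspace $\banachspace$ of $\summable{\set}$ is itself isomorphic to $\summable{\set'}$ for some set $\set'$ (one can take $\cardi{\set'}\leq\cardi{\set}$, and $\set'$ finite when $\banachspace$ is finite-dimensional). This is precisely the structure theorem for $\ell^1$-spaces of \cite{Koethe66}, and it is the one genuinely nontrivial ingredient. I would organize its proof in two moves: first, that an infinite-dimensional complemented subspace $\banachspace\subseteq\summable{\set}$ contains a further subspace isomorphic to some $\summable{\set'}$ which is again complemented in $\summable{\set}$ --- for separable $\banachspace$ this is the ``primeness'' of $\ell^1$, obtained by a gliding-hump construction of an $\ell^1$-equivalent basic sequence spanning a norm-one complemented subspace, and the general case follows by transfinite induction on the density character $\cardi{\set}$; and second, Pe\l czy\'nski's decomposition method applied to the pair $\banachspace$, $\summable{\set'}$ --- each complemented in the other --- using the self-absorption $\summable{\set'}\cong\summable{\set'\times\mathbb N}\cong\bigl(\bigoplus_{\mathbb N}\summable{\set'}\bigr)_{\ell^1}$ to conclude $\banachspace\cong\summable{\set'}$. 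The main obstacle is thus entirely this classification of complemented subspaces of $\summable{\set}$; the projectivity of $\summable{\set}$ and the retract argument are elementary consequences of the open mapping theorem.
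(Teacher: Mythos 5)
Your proposal is correct in outline, but note that the paper does not prove this statement at all: it is quoted with a citation to K\"othe's \emph{Hebbare lokalkonvexe R\"aume}, \S 3(7), and the genuinely hard step you isolate --- that every complemented subspace of $\summable{\set}$ is again isomorphic to some $\summable{\set'}$ --- is exactly the content of that citation. Your two easy reductions are standard and sound: projectivity of $\summable{\set}$ by lifting the unit vectors with a uniform bound from the open mapping theorem (surjections being precisely the strict epimorphisms of $\banco$, as the paper records), and the exhibition of any projective $\banachspace$ as a retract of $\summable{\diskun{\banachspace}}$ via the tautological strict epimorphism. Your sketch of the classification itself follows the standard route (Pe\l czy\'nski's primeness of $\ell^1$ in the separable case via a gliding-hump basic sequence, K\"othe's transfinite extension for larger density character, then the decomposition method using $\summable{\set'}\cong\bigl(\bigoplus_{\mathbb N}\summable{\set'}\bigr)_{\ell^1}$). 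One point you should make explicit when running the decomposition method: you need $\banachspace$ complemented in $\summable{\set'}$ with $\cardi{\set'}=\dechar{\banachspace}$, not merely in the possibly much larger $\summable{\set}$; this follows because $\banachspace$ is contained in a subspace $\summable{\set_0}\subseteq\summable{\set}$ with $\cardi{\set_0}=\dechar{\banachspace}$ (supports of a dense subset), and restricting the projection $\summable{\set}\rightarrow\banachspace$ to $\summable{\set_0}$ still gives a bounded projection onto $\banachspace$. Also recall that in $\banco$ isomorphism means topological (not isometric) isomorphism, so the finite-dimensional case is trivial. With these remarks your argument is a complete, if partly delegated, proof, whereas the paper simply outsources the whole statement to the reference.
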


To analyze projective objects in $\coconbo$ we need to decompose bornological spaces into Banach pieces. This is called {\it dissection} (e.g.\@ \cite{Meyer2007} \S 1.5). Explicitly, every $\bornospace\in\coconbo$ equals a filtered union of Banach spaces $\{\linearspan{\disk}{\bornospace}\}_{\disk\in\boudis{\bornospace}}$ where $\boudis{\bornospace}$ consists of bounded disks $\disk\subseteq\bornospace$, s.t.\@ $(\linearspan{\disk}{\bornospace},\gaugenorm{\disk})$ is Banach.

\begin{proposition} (E.g.\@ \cite{Meyer2007}, Thm.\@ 1.139 (1)-(7)) The dissection functor defines an equivalence between $\coconbo$ and a full reflective subcategory of the category $\indo{\banco}$ of Ind-Banach spaces. The left adjoint functor is given by computing the colimits in $\coconbo$.\end{proposition}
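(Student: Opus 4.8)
The plan is to realize $\coconbo$ as the full subcategory of $\indo{\banco}$ spanned by those Ind-objects whose structure maps are monomorphisms (i.e. the \emph{essentially monomorphic} or \emph{monomorphic} Ind-systems), and to show this subcategory is reflective with the reflector computed by the colimit. First I would make the dissection functor $\dissect{-}\colon\coconbo\rightarrow\indo{\banco}$ precise: to $\bornospace$ it assigns the filtered diagram $\{\linearspan{\disk}{\bornospace}\}_{\disk\in\boudis{\bornospace}}$ indexed by the poset of bounded Banach disks, with transition maps the norm-continuous inclusions $\linearspan{\disk}{\bornospace}\hookrightarrow\linearspan{\disk'}{\bornospace}$ for $\disk\subseteq\disk'$; a morphism $\bornospace\rightarrow\bornospace'$ sends a bounded disk into a bounded disk and hence induces a morphism of Ind-systems. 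One checks functoriality is routine.

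The heart of the argument is full faithfulness. For $\bornospace,\bornospace'\in\coconbo$ one computes
\begin{equation*}
\Hom_{\indo{\banco}}(\dissect{\bornospace},\dissect{\bornospace'})=\varprojlim_{\disk\in\boudis{\bornospace}}\varinjlim_{\disk'\in\boudis{\bornospace'}}\Hom_{\banco}(\linearspan{\disk}{\bornospace},\linearspan{\disk'}{\bornospace'}),
\end{equation*}
and I would identify this with the set of linear maps $\bornospace\rightarrow\bornospace'$ carrying bounded sets to bounded sets. An element of the right side is a compatible family of bounded linear maps $\linearspan{\disk}{\bornospace}\rightarrow\linearspan{\disk'(\disk)}{\bornospace'}$; since $\bornospace=\bigcup_{\disk}\linearspan{\disk}{\bornospace}$, compatibility glues these to a single linear map, bounded because the image of each bounded disk lands in some $\linearspan{\disk'}{\bornospace'}$ hence is bounded in $\bornospace'$. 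Conversely a bornological morphism restricts to such a compatible family. Injectivity of the transition maps of $\dissect{\bornospace'}$ (each $\linearspan{\disk'}{\bornospace'}\hookrightarrow\linearspan{\disk''}{\bornospace'}$ is an inclusion of subspaces of the vector space $\bornospace'$) is what makes the colimit over $\disk'$ behave well, i.e. guarantees the map on $\Hom$-sets is a bijection and not merely a surjection — this is the point where ``complete convex bornology'' (rather than arbitrary Ind-Banach data) is used.

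Next I would characterize the essential image: $\dissect{\bornospace}$ is an Ind-system indexed by a filtered poset with injective transition maps, so I would show conversely that any Ind-Banach space $\{B_i\}_{i\in I}$ representable by such a monomorphic system lies in the image — pass to the (possibly non-filtered) colimit vector space $\vectorspace:=\varinjlim_i B_i$ in $\mathbb R$-vector spaces, equip it with the bornology whose base consists of the (internal closures of the) images of the unit balls $\disk_{B_i}\subseteq B_i$, check these form a base of a complete convex bornology (completeness of each Banach piece is inherited, the disks cover, and the filtered poset structure gives the closure of the base under sums and scalings), and verify $\dissect{\vectorspace}$ is isomorphic to the original system. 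Finally, for reflectivity: given an arbitrary $\{B_i\}_{i\in I}\in\indo{\banco}$, form the colimit $\vectorspace$ in $\coconbo$ — concretely the image-bornology construction above applied after replacing each $B_i$ by the image of $B_i\to\vectorspace$ — and exhibit the universal arrow $\{B_i\}\to\dissect{\vectorspace}$; the adjunction $\Hom_{\coconbo}(\vectorspace,\bornospace')\cong\Hom_{\indo{\banco}}(\{B_i\},\dissect{\bornospace'})$ follows from the full faithfulness computation above together with the universal property of the vector-space colimit, since a compatible family $B_i\to\linearspan{\disk'}{\bornospace'}$ is exactly a bounded linear map $\vectorspace\to\bornospace'$.

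\textbf{Main obstacle.} The subtle step is the bijectivity (not just surjectivity) in the full faithfulness computation: one must show that two bornological morphisms agreeing as elements of the double (co)limit are literally equal as maps on $\bornospace$, which forces careful use of the injectivity of the transition maps on the target side and the fact that every element of $\bornospace$ lies in some Banach piece; equivalently, one must check that the colimit $\varinjlim_{\disk'}$ over the \emph{filtered} poset of target disks does not identify distinct morphisms. A secondary technical point is confirming that the ad hoc bornology put on the colimit vector space $\vectorspace$ genuinely satisfies all four clauses of a complete convex bornology base (in particular that the Banach condition $(\linearspan{\disk}{\vectorspace},\gaugenorm{\disk})$ complete survives taking images), for which the internal-closure remark recalled after the first Definition is what one leans on.
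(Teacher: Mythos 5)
The paper does not actually prove this proposition; it imports it from \cite{Meyer2007} (Thm.\@ 1.139), and your outline is essentially the standard argument given there: full faithfulness via the double (co)limit formula for $\Hom$ in $\indo{\banco}$, identification of the essential image with the (essentially) monomorphic filtered systems, and reflectivity via the universal property of colimits in $\coconbo$. Those three steps are sound as you present them, and your emphasis on the injectivity of the transition maps on the \emph{target} side as the mechanism turning surjectivity into bijectivity on $\Hom$-sets is exactly the right point.

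The one claim to push back on is the parenthetical description of the reflector as ``the image-bornology construction applied after replacing each $B_i$ by the image of $B_i\to\vectorspace$.'' For a general (non-monomorphic) filtered system the kernel of $B_i\to\vectorspace$ is the increasing union $\bigcup_j\ker(B_i\to B_j)$, which need not be bornologically closed in $B_i$; consequently the image of the unit ball need not be a completant disk and the proposed bornology on $\vectorspace$ need not be complete, so the colimit in $\coconbo$ requires a further separation/completion step beyond what you describe. (This is precisely the issue that forces Lemmas \ref{FullKernels} and \ref{IntoMonomorphisms} of the paper to assume $\aleph_1$-directed posets, where the union of kernels is closed.) Fortunately this does not damage the proof: your adjunction argument only needs the abstract universal property, since full faithfulness together with formula (\ref{homdiss}) gives $\Hom_{\indo{\banco}}(\{B_i\},\dissect{\bornospace'})\cong\underset{i}\varprojlim\,\Hom_{\coconbo}(B_i,\bornospace')$, which is the set of cocones on the diagram and hence equals $\Hom_{\coconbo}(\underset{i}\colim\,B_i,\bornospace')$ for the colimit computed in $\coconbo$, whatever its concrete form. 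Either delete the concrete description or restrict it to the monomorphic case, where your gauge-norm argument does show the image disks are completant.
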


Categories of Ind-objects are equivalent to categories of Ind-representable pre-sheaves (\cite{SGA4.1}, \S I.8.2). Therefore, since colimits of pre-sheaves are computed object-wise, $\forall\banachspace\in\banco$, $\forall\bornospace\in\coconbo$
	\begin{equation}\label{homdiss}\underset{\disk\in\boudis{\bornospace}}\colim\hom_{\banco}(\banachspace,\linearspan{\disk}{\bornospace})
	\overset{\cong}{\longrightarrow}\hom_{\coconbo}(\banachspace,\bornospace),\end{equation}
A morphism $\bornomor\colon\bornospace\rightarrow\bornospace'$ in $\coconbo$ is a strict epimorphism, if and only if it is surjective and $\{\bornomor(\disk)\}_{\disk\in\boudis{\bornospace}}$ is cofinal in $\boudis{\bornospace'}$. In this case
	\begin{equation}\label{homstrong}
	\underset{\disk\in\boudis{\bornospace}}\colim\hom_{\banco}(\banachspace,\linearspan{\bornomor(\disk)}{\bornospace'})
	\cong\underset{\disk'\in\boudis{\bornospace'}}\colim\hom_{\banco}(\banachspace,\linearspan{\disk'}{\bornospace'}).\end{equation}
The functor $\colim\colon\abel^{\boudis{\bornospace}}\rightarrow\abel$ has a right adjoint, therefore, if $\banachspace$ is projective in $\banco$, $\underset{\disk\in\boudis{\bornospace}}\colim\hom_{\banco}(\banachspace,\linearspan{\disk}{\bornospace})\rightarrow\underset{\disk\in\boudis{\bornospace}}\colim\hom_{\banco}(\banachspace,\linearspan{\bornomor(\disk)}{\bornospace'})$ is surjective. Together (\ref{homdiss}) and (\ref{homstrong}) imply the following result.

\begin{proposition} (E.g.\@ proof of Lemma 2.12 in \cite{ProsmansSchneiders00}) If $\banachspace\in\banco$ is projective in $\banco$, it is also projective in $\coconbo$.\end{proposition}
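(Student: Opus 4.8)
The plan is to unwind the two isomorphisms (\ref{homdiss}) and (\ref{homstrong}) and thereby reduce projectivity of $\banachspace$ in $\coconbo$ to its projectivity in $\banco$, checked level-wise along a dissection. Fix a strict epimorphism $\bornomor\colon\bornospace\rightarrow\bornospace'$ in $\coconbo$; by the criterion for strict epimorphisms recalled above this means $\bornomor$ is surjective and $\{\bornomor(\disk)\}_{\disk\in\boudis{\bornospace}}$ is cofinal in $\boudis{\bornospace'}$. We must show that $\hom_{\coconbo}(\banachspace,\bornomor)$ is surjective.

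First I would note that for each $\disk\in\boudis{\bornospace}$ the morphism $\bornomor$ restricts to a bounded linear map $\linearspan{\disk}{\bornospace}\rightarrow\linearspan{\bornomor(\disk)}{\bornospace'}$ of Banach spaces (it is norm non-increasing for the gauge norms), and it is onto because $\bornomor(\mathbb R\cdot\disk)=\mathbb R\cdot\bornomor(\disk)$. Since a surjective morphism in $\banco$ is a strict epimorphism, projectivity of $\banachspace$ in $\banco$ gives that $\hom_{\banco}(\banachspace,\linearspan{\disk}{\bornospace})\rightarrow\hom_{\banco}(\banachspace,\linearspan{\bornomor(\disk)}{\bornospace'})$ is surjective for every $\disk$. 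These maps assemble into a morphism of $\boudis{\bornospace}$-shaped diagrams in $\abel$ that is object-wise surjective.

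Next I would apply the functor $\colim\colon\abel^{\boudis{\bornospace}}\rightarrow\abel$. It has a right adjoint, hence preserves epimorphisms, so the induced map on colimits is surjective; by (\ref{homstrong}) its target is $\colim_{\disk'\in\boudis{\bornospace'}}\hom_{\banco}(\banachspace,\linearspan{\disk'}{\bornospace'})$, and by (\ref{homdiss}), applied to $\bornospace$ and to $\bornospace'$, the source and target are identified with $\hom_{\coconbo}(\banachspace,\bornospace)$ and $\hom_{\coconbo}(\banachspace,\bornospace')$. Thus we obtain a surjection between exactly the two groups we want.

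The point that needs care — and I expect it to be the only real obstacle — is verifying that, under these identifications, the surjection produced above is precisely $\hom_{\coconbo}(\banachspace,\bornomor)$ and not merely some surjection between the correct groups. To see this I would trace an element through (\ref{homdiss}): a bounded map $\banachspace\rightarrow\bornospace$ factors through some $\linearspan{\disk}{\bornospace}$ with $\disk\in\boudis{\bornospace}$, and composing with $\bornomor$ factors it through $\linearspan{\bornomor(\disk)}{\bornospace'}$, which is exactly the image of the chosen representative under the level-wise map of the previous step. This is a routine naturality check, using that the dissection equivalence and the identification of Ind-objects with Ind-representable presheaves are functorial. With this compatibility established, $\hom_{\coconbo}(\banachspace,\bornomor)$ is surjective, and as $\bornomor$ was an arbitrary strict epimorphism, $\banachspace$ is projective in $\coconbo$.
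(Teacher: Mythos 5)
Your argument is correct and is essentially the paper's own proof: level-wise surjectivity of $\hom_{\banco}(\banachspace,\linearspan{\disk}{\bornospace})\rightarrow\hom_{\banco}(\banachspace,\linearspan{\bornomor(\disk)}{\bornospace'})$ from projectivity in $\banco$, preservation of epimorphisms by $\colim$ (since it has a right adjoint), and the identifications (\ref{homdiss}) and (\ref{homstrong}). The naturality check you flag is indeed the only point left implicit in the paper, and your treatment of it is fine.
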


By definition $\hom_{\coconbo}(\underset{i\in I}\bigoplus\banachspace_i,\bornospace)\cong\underset{i\in I}\prod\hom_{\coconbo}(\banachspace_i,\bornospace)$, therefore $\underset{i\in I}\bigoplus\banachspace_i$ is projective in $\coconbo$, if each $\banachspace_i$ is projective in $\banco$.

Given a family of objects in $\coconbo$ their direct sum is the direct sum of the underlying vector spaces together with the smallest bornology containing images of the bounded disks in the summands (e.g.\@ \cite{Meyer2007} \S 1.3.5). Explicitly a base of the direct sum bornology consists of finite sums of images of elements of bases in the summands. Therefore, for any $\bornospace\in\coconbo$ the canonical morphism $\underset{\disk\in\boudis{\bornospace}}\bigoplus\linearspan{\disk}{\bornospace}\rightarrow\bornospace$ is a strict epimorphism. 

Let $\banachspace\in\banco$, recall (\cite{Hewitt43} Def.\@ 18) that the {\it density character} of $\banachspace$ is the smallest cardinal $\dechar{\banachspace}$, s.t.\@ there is a dense subset $\set\subseteq{\banachspace}$ with $\cardi{\set}=\dechar{\banachspace}$. As $\mathbb Q\subset\mathbb R$ is dense and countable, it is clear that, if $\dim\banachspace\geq\infty$, $\dechar{\banachspace}$ equals the smallest cardinality of a subset $\set$ of the unit sphere $\spherun{\banachspace}\subseteq\banachspace$, s.t.\@ $\forall\avector\in\banachspace$ $\exists\{\avector_k\}_{k\in\mathbb N}\subseteq\set$ $\avector=\underset{k\in\mathbb N}\sum a_k\avector_k$ with $\{a_k\}_{k\geq 1}\subset\mathbb R$. Having chosen such $\set$ we have a strict epimorphism $\summable{\set}\rightarrow\banachspace$ defined by the identity on $\set$. In particular $\banco$ has enough projectives (e.g.\@ \cite{Prosmans95} Prop.\@ 3.2.2). \hide{Since norms of elements of $\set$ in $\banachspace$ are $1$, any summable sequence in $\summable{\set}$ is also summable in $\banachspace$, this defines a bounded linear morphism. This morphism is surjective since every element in $\banachspace$ is a sum of elements in $\set$.
}%

Choosing such $\set$ in each $\disk\in\boudis{\bornospace}$ we obtain a family $\{\summable{\set_\disk}\rightarrow\linearspan{\disk}{\bornospace}\}_{\disk\in\boudis{\bornospace}}$ of strict epimorphisms. Since direct sums preserve cokernels, altogether we have the following fact.

\begin{proposition} (E.g.\@ \cite{ProsmansSchneiders00} Prop.\@ 5.8) Every $\bornospace\in\coconbo$ is a quotient of some $\underset{i\in I}\bigoplus\,\summable{\set_i}$. In particular $\coconbo$ has enough projective objects.\end{proposition}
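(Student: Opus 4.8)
\section*{Proof proposal}

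The plan is simply to chain together the two strict epimorphisms that the preceding paragraph has already produced. Fix $\bornospace\in\coconbo$. First I would invoke the dissection: the canonical morphism $q\colon\underset{\disk\in\boudis{\bornospace}}\bigoplus\linearspan{\disk}{\bornospace}\rightarrow\bornospace$ is a strict epimorphism, as recorded above. Second, for each $\disk\in\boudis{\bornospace}$ I would pick, via the density character argument, a subset $\set_\disk$ of the unit sphere of the Banach space $\linearspan{\disk}{\bornospace}$ (or a linear basis if that space is finite dimensional) together with the resulting strict epimorphism $p_\disk\colon\summable{\set_\disk}\rightarrow\linearspan{\disk}{\bornospace}$ given by the identity on $\set_\disk$.

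Next I would form the direct sum $\underset{\disk\in\boudis{\bornospace}}\bigoplus p_\disk$. Since the coproduct functor is a left adjoint, it preserves colimits, hence cokernels; and in an additive category strict epimorphisms are exactly the cokernels, so $\underset{\disk}\bigoplus p_\disk\colon\underset{\disk}\bigoplus\summable{\set_\disk}\rightarrow\underset{\disk}\bigoplus\linearspan{\disk}{\bornospace}$ is again a strict epimorphism. Composing it with $q$ and using that strict epimorphisms are stable under composition --- either by \cite{Schneiders99} Prop.\@ 1.1.8, or directly from the criterion above, since a surjection $\bornomor$ with $\{\bornomor(\disk)\}$ cofinal in the target bornology composes with another such --- yields a strict epimorphism $\underset{\disk\in\boudis{\bornospace}}\bigoplus\summable{\set_\disk}\rightarrow\bornospace$.

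It remains to note that the source is projective. Each $\summable{\set_\disk}$ is projective in $\banco$, hence in $\coconbo$, and since $\hom_{\coconbo}(\underset{i}\bigoplus\banachspace_i,-)\cong\underset{i}\prod\hom_{\coconbo}(\banachspace_i,-)$, the coproduct $\underset{\disk}\bigoplus\summable{\set_\disk}$ is projective in $\coconbo$. Thus $\bornospace$ is a quotient of a projective object, and as $\bornospace$ was arbitrary, $\coconbo$ has enough projectives. There is no genuine obstacle here: every ingredient has already been established in the text, and the only point deserving a word of justification is that a coproduct of strict epimorphisms is a strict epimorphism, which follows at once from the coproduct preserving colimits together with the identification of strict epimorphisms with cokernels.
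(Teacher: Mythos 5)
Your proposal is correct and follows exactly the route the paper takes: dissection gives the strict epimorphism $\underset{\disk}\bigoplus\linearspan{\disk}{\bornospace}\rightarrow\bornospace$, the density-character argument gives strict epimorphisms $\summable{\set_\disk}\rightarrow\linearspan{\disk}{\bornospace}$, and the conclusion follows because direct sums preserve cokernels and strict epimorphisms compose. The observation that $\underset{i}\bigoplus\summable{\set_i}$ is projective via $\hom(\bigoplus\banachspace_i,-)\cong\prod\hom(\banachspace_i,-)$ is likewise the one the paper records just before the statement.
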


Every projective object in $\coconbo$ is a retract of a direct sum of $\summable{\set}$'s. We would like to formalize this property.

\begin{definition}\label{GeneratingProjectives} Let $\adica$ be an additive category. A class of projective objects $\mathcal P\subseteq\adica$ is {\it sufficiently large}, if it is closed with respect to finite direct sums and every projective object in $\adica$ is a retract of some object in $\mathcal P$. {\it A generating projective class} in $\adica$, is a class of projectives $\mathcal P$, s.t.\@ the class of all small direct sums of objects in $\mathcal P$ is sufficiently large.\end{definition}

\medskip

Now we look at the monoidal structure and monoids in $\coconbo$.

\begin{proposition} (E.g.\@ \cite{Meyer2007} \S1.3.6) The category $\coconbo$ of complete convex bornological spaces has a closed symmetric monoidal structure, where $\bornospace_1\ctensor\bornospace_2$ represents the functor of bounded bilinear morphisms out of $\bornospace_1\oplus\bornospace_2$.\end{proposition}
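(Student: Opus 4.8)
The plan is to exhibit $\bornospace_1\ctensor\bornospace_2$ as an explicit corepresenting object and then deduce the whole closed symmetric monoidal structure formally, by Yoneda. Call a bilinear map $\beta\colon\bornospace_1\times\bornospace_2\rightarrow\bornospace_3$ between objects of $\coconbo$ \emph{bounded} if $\beta(\disk_1\times\disk_2)$ is a bounded subset of $\bornospace_3$ whenever $\disk_i\subseteq\bornospace_i$ is bounded (this is the notion of bilinear morphism ``out of $\bornospace_1\oplus\bornospace_2$''); write $\mathrm{Bil}(\bornospace_1,\bornospace_2;\bornospace_3)$ for the set of such maps, functorial in $\bornospace_3$. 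What has to be shown is that $\bornospace_3\mapsto\mathrm{Bil}(\bornospace_1,\bornospace_2;\bornospace_3)$ is corepresentable, that the corepresenting objects $\bornospace_1\ctensor\bornospace_2$ are associative, unital and symmetric up to coherent natural isomorphism, and that each $-\ctensor\bornospace_2$ has a right adjoint.

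First I would construct the corepresenting object. Take the algebraic tensor product $\bornospace_1\otimes\bornospace_2$ of the underlying $\mathbb R$-vector spaces and equip it with the convex bornology whose base consists of the convex circled hulls of the sets $\disk_1\otimes\disk_2:=\{x\otimes y\,|\,x\in\disk_1,\ y\in\disk_2\}$ with $\disk_i\in\boudis{\bornospace_i}$; this is closed under sums and positive scalings because $\boudis{\bornospace_i}$ is. The Banach pieces $(\linearspan{\disk_1\otimes\disk_2}{\bornospace_1\otimes\bornospace_2},\gaugenorm{\disk_1\otimes\disk_2})$ of this space need not be complete, so I set $\bornospace_1\ctensor\bornospace_2$ to be its completion, using that $\coconbo$ is reflective in the category of convex bornological spaces. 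The map $(x,y)\mapsto x\otimes y$ is a bounded bilinear map $\bornospace_1\times\bornospace_2\rightarrow\bornospace_1\ctensor\bornospace_2$ by construction, and for any bounded bilinear $\beta\colon\bornospace_1\times\bornospace_2\rightarrow\bornospace_3$ the induced linear map on $\bornospace_1\otimes\bornospace_2$ is bounded for the above bornology — this is precisely the content of ``$\beta$ bounded'' — and therefore extends uniquely to a bounded linear map on the completion, because $\bornospace_3$ is complete. This gives the natural bijection $\Hom_{\coconbo}(\bornospace_1\ctensor\bornospace_2,\bornospace_3)\cong\mathrm{Bil}(\bornospace_1,\bornospace_2;\bornospace_3)$, so $\ctensor$ is a functor $\coconbo\times\coconbo\rightarrow\coconbo$ corepresenting bounded bilinear maps.

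The monoidal axioms are then formal. Symmetry follows from the natural isomorphism $\mathrm{Bil}(\bornospace_1,\bornospace_2;-)\cong\mathrm{Bil}(\bornospace_2,\bornospace_1;-)$ given by swapping the two arguments; the unit is $\mathbb R$ with its usual bounded-subset bornology, since $\mathrm{Bil}(\mathbb R,\bornospace;-)\cong\Hom_{\coconbo}(\bornospace,-)$. For associativity I would show that both $(\bornospace_1\ctensor\bornospace_2)\ctensor\bornospace_3$ and $\bornospace_1\ctensor(\bornospace_2\ctensor\bornospace_3)$ corepresent the functor of bounded \emph{trilinear} maps $\bornospace_1\times\bornospace_2\times\bornospace_3\rightarrow(-)$: precomposition with the universal bounded bilinear map $\bornospace_1\times\bornospace_2\rightarrow\bornospace_1\ctensor\bornospace_2$ turns a bounded bilinear map on $(\bornospace_1\ctensor\bornospace_2)\times\bornospace_3$ into a bounded trilinear map, and conversely a bounded trilinear map, held fixed in its third argument, is bounded bilinear hence factors through $\bornospace_1\ctensor\bornospace_2$, the required equiboundedness in the third argument coming from the bound on $\disk_1\otimes\disk_2\otimes\disk_3$. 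The coherence diagrams (pentagon, triangle, hexagon) commute because all the structure isomorphisms are induced by Yoneda from manifestly compatible reindexings of multilinear maps, so they already agree after composition with the universal multilinear maps.

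It remains to prove closedness. I would take the internal hom $\underline{\Hom}(\bornospace_2,\bornospace_3)$ to be the vector space of bounded linear maps $\bornospace_2\rightarrow\bornospace_3$ with the \emph{equibounded} bornology: $H\subseteq\Hom_{\coconbo}(\bornospace_2,\bornospace_3)$ is bounded iff $\bigcup_{h\in H}h(\disk)$ is bounded in $\bornospace_3$ for every bounded $\disk\subseteq\bornospace_2$. This is a convex bornology, and it is complete, since the Banach piece associated to such an $H$ embeds as a closed subspace of a product of Banach pieces of $\bornospace_3$, hence is Banach. Currying gives a natural bijection $\mathrm{Bil}(\bornospace_1,\bornospace_2;\bornospace_3)\cong\Hom_{\coconbo}(\bornospace_1,\underline{\Hom}(\bornospace_2,\bornospace_3))$ — one only checks that a bilinear map is bounded exactly when its curried form lands in bounded linear maps and is itself bounded — and combining this with the representability above yields the adjunction isomorphism $\Hom_{\coconbo}(\bornospace_1\ctensor\bornospace_2,\bornospace_3)\cong\Hom_{\coconbo}(\bornospace_1,\underline{\Hom}(\bornospace_2,\bornospace_3))$, natural in all variables. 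I expect the main obstacle to be keeping control of the completion: checking that completing the projective-bornology tensor product does not disturb the universal property, and in particular that the two triple tensor products — each built from two successive completions — still corepresent the same bounded trilinear functor, which is what makes the associator well defined.
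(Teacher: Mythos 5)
Your proposal is correct and follows essentially the same route as the source the paper cites for this statement (the paper itself gives no proof, deferring to Meyer's construction of the completed projective bornological tensor product): algebraic tensor product with the bornology generated by hulls of $\disk_1\otimes\disk_2$, completion via the reflection into $\coconbo$, the equibounded internal hom, and associativity via corepresenting bounded trilinear maps. The one point worth being explicit about in a written-up version is that only the universal property of the completion (left adjointness), not any concrete description of it, is used to extend maps and to identify the two triple tensor products, which is exactly how you have arranged the argument.
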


The following definition is standard.

\begin{definition} {\it A commutative monoid} in a symmetric monoidal category $(\monoca,\monote,\monone)$ is a triple $(\monoid,\monopro{\monoid},\monou{\monoid})$, where $\monopro{\monoid}\colon\monoid\monote\monoid\rightarrow\monoid$, $\monou{\monoid}\colon\monone\rightarrow\monoid$ satisfy the usual axioms of associativity, commutativity and unitality (e.g.\@ \cite{Marty09} Def.\@ 1.2.8). The category of such monoids will be denoted by $\ucori{\monoca}$.\end{definition}

If $\monoca$ is {\it closed} symmetric monoidal, the forgetful functor $\forget\colon\ucori{\monoca}\rightarrow\monoca$ has a left adjoint $\freecou\colon\monoca\rightarrow\ucori{\monoca}$ (e.g.\@ \cite{Marty09} Prop.\@ 1.3.1). Any limits and colimits that might exist in $\monoca$ also exist in $\ucori{\monoca}$ (e.g.\@ \cite{Marty09} Prop.\@ 1.2.14 and \cite{Johnstone02} Lemma 1.1.8 \S C1.1 and discussion thereafter). The monoidal structure being closed also implies the following well known facts.

\begin{proposition}\label{EffectiveStrict} Let $\monoca$ be a quasi-abelian, closed symmetric monoidal category with all finite limits and colimits.\begin{enumerate}
\item For any morphism $\phi\colon\monoid'\rightarrow\monoid''$ in $\ucori{\monoca}$ we have $\phi=i\circ\pi$, s.t.\@ $\forget(i)$ is a monomorphism and  $\forget(\pi)$ is a strict epimorphism in $\monoca$.
\item A morphism $\phi$ in $\ucori{\monoca}$ is an effective epimorphism, if and only if $\forget(\phi)$ is a strict epimorphism.\end{enumerate}\end{proposition}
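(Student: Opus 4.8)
The plan is to reduce both statements to facts about the underlying symmetric monoidal category $\monoca$ together with the standard interaction between the free--forgetful adjunction $\freecou\dashv\forget$ and the factorization system on $\monoca$. Throughout I will use that $\forget$ creates all limits and all filtered/reflexive colimits (indeed all colimits, by the cited results of Marty and Johnstone), and that $\freecou$, being a left adjoint, preserves colimits and in particular epimorphisms.

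\textbf{Part 1.} First I would construct the factorization. Given $\phi\colon\monoid'\to\monoid''$ in $\ucori{\monoca}$, form in $\monoca$ the strict epi--mono factorization $\forget(\phi)=\forget(\monoid')\overset{p}\twoheadrightarrow I\overset{j}\hookrightarrow\forget(\monoid'')$, where $I$ is the object $\coimage(\forget\phi)$ and $p$ is a strict epimorphism (it is the cokernel of the kernel pair of $\forget\phi$, which exists since $\monoca$ is quasi-abelian; here one uses that in a quasi-abelian category the coimage map is a strict epimorphism and the image inclusion a strict monomorphism). The main point is to put a monoid structure on $I$ making $p$ and $j$ morphisms of monoids. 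For this I would note that $\monopro{\monoid'}\colon\monoid'\monote\monoid'\to\monoid'$ composed with $p$ must factor through $p\monote p\colon\monoid'\monote\monoid'\twoheadrightarrow I\monote I$: indeed $p\monote p$ is a strict epimorphism (the tensor product is closed, hence $-\monote X$ is a left adjoint and preserves strict epimorphisms, and strict epis compose), so it suffices to check that the two composites $\monoid'\monote\monoid'\rightrightarrows I\monote I\to\monoid'\monote\monoid'$ agreeing after $p\monote p$... more precisely, that $p\circ\monopro{\monoid'}$ coequalizes the kernel pair of $p\monote p$. This follows because $p\circ\monopro{\monoid'}$ equals $\monopro{\monoid''}\circ(j\monote j)^{-1}$-style manipulations; cleanly, one uses that $p\monote p$ is the coequalizer of its kernel pair (strict epis are effective in a quasi-abelian category with finite limits/colimits, as recalled in the excerpt) and that $j\monote j$ is a monomorphism, so $\monopro{\monoid''}\circ(\forget\phi\monote\forget\phi)=\forget\phi\circ\monopro{\monoid'}$ rewritten via $\forget\phi=j\circ p$ forces the desired factorization $\monopro{I}\colon I\monote I\to I$. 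The unit $\monou{I}\colon\monone\to I$ is $p\circ\monou{\monoid'}$. The monoid axioms for $(\I,\monopro I,\monou I)$ follow by precomposing with the relevant strict epimorphisms $p\monote p\monote p$, etc., and using that these are epimorphisms in $\monoca$; that $j$ is a monoid map follows similarly by postcomposition with the mono $j$. Thus $\phi$ factors in $\ucori{\monoca}$ as $\monoid'\overset\pi\to I\overset i\to\monoid''$ with $\forget(\pi)=p$ a strict epimorphism and $\forget(i)=j$ a monomorphism.

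\textbf{Part 2.} For the second statement, the ``if'' direction: suppose $\forget(\phi)$ is a strict epimorphism. I would show $\phi$ is an effective epimorphism in $\ucori{\monoca}$, i.e.\ the coequalizer of its kernel pair. The kernel pair $\monoid'\times_{\monoid''}\monoid'\rightrightarrows\monoid'$ is computed in $\monoca$ by $\forget$ (it preserves limits). Since $\forget(\phi)$ is a strict (= effective) epimorphism in $\monoca$, $\forget(\phi)$ is the coequalizer in $\monoca$ of $\forget$ of that kernel pair. Now $\forget$ also creates this coequalizer: the coequalizer of a reflexive pair in $\ucori{\monoca}$ exists and is created by $\forget$ (reflexive coequalizers of monoids are computed on underlying objects — this is the content of the cited Johnstone lemma, since the kernel pair is reflexive). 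Hence the coequalizer of the kernel pair in $\ucori{\monoca}$ is carried by $\forget(\phi)$, i.e.\ $\phi$ is effective. Conversely, if $\phi$ is an effective epimorphism in $\ucori{\monoca}$, then $\phi=\coeq(\monoid'\times_{\monoid''}\monoid'\rightrightarrows\monoid')$; applying the colimit-preserving left adjoint... here instead I apply $\forget$ to the coequalizer, which it preserves (reflexive coequalizer, created by $\forget$ as above), so $\forget(\phi)$ is the coequalizer in $\monoca$ of a pair of maps, hence an effective = strict epimorphism in $\monoca$.

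\textbf{Main obstacle.} The routine-looking but genuinely delicate point is Part 1: transporting the multiplication $\monopro{\monoid'}$ and the monoid axioms across the strict epimorphism $p$. This works only because $p\monote p$ (and $p\monote p\monote p$) are again strict epimorphisms — which needs the monoidal structure to be closed so that $-\monote X$ preserves strict epis — and because strict epimorphisms in a quasi-abelian category are effective (coequalizers of their kernel pairs), so that maps out of the target can be recognized by checking a coequalizing condition on the source. I would make sure to state the ``$-\monote X$ preserves strict epimorphisms'' lemma explicitly, as it is the linchpin of the whole argument and is exactly the hypothesis that the monoidal structure is closed.
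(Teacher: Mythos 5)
Your Part 1 follows essentially the paper's own route: factor $\forget(\phi)$ through its coimage, use closedness of $\monote$ to see that $p\monote p$ (and $p\monote p\monote p$) are strict epimorphisms, and push the multiplication and the monoid axioms down along them. One claim in it is wrong as stated: you invoke that $j\monote j$ is a monomorphism. Tensoring does not preserve monomorphisms in a quasi-abelian category (this failure is precisely why flatness is a nontrivial condition in Section \ref{CofibrantResolutionsFlatness}), so this step is unjustified. It is also unnecessary: to see that $p\circ\monopro{\monoid'}$ kills the kernel (pair) of $p\monote p$, post-compose with the \emph{single} monomorphism $j$ and use $j\circ p\circ\monopro{\monoid'}=\monopro{\monoid''}\circ(j\monote j)\circ(p\monote p)$; the right-hand side visibly coequalizes the kernel pair of $p\monote p$, and since $j$ is a monomorphism so does $p\circ\monopro{\monoid'}$. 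This is what the paper does, phrased via kernels. (A smaller quibble: your preamble asserts $\forget$ creates \emph{all} colimits; it does not — coproducts of commutative monoids are tensor products — but you never use this.)

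Part 2 is correct but genuinely different from the paper. The paper proves "effective $\Rightarrow$ strict epi" by applying Part 1 and showing the mono factor $i$ acquires a one-sided inverse from the coequalizer's universal property, hence is invertible; and "strict epi $\Rightarrow$ effective" by comparing $\phi$ with the actual coequalizer $\overline\phi$ of its kernel pair and showing the comparison map $\psi$ is a strict epimorphism with a one-sided inverse, hence invertible. You instead reduce both directions to the single statement that $\forget$ creates reflexive coequalizers (kernel pairs being reflexive via the diagonal). That statement is true here — the free commutative monoid monad preserves reflexive coequalizers because $-\monote X$ is a left adjoint in each variable — but it is not literally the content of the cited Johnstone lemma (which is about existence of limits and colimits in $\ucori{\monoca}$), so you would need to prove it or give a precise reference to the standard fact that a monad preserving reflexive coequalizers creates them for its algebras. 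Your route is more conceptual and gives both implications at once; the paper's is more elementary and self-contained, using only the factorization of Part 1 and Schneiders' facts about strict epimorphisms.
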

\begin{proof} 1. Let $K\rightarrow\forget(\monoid')$ be the kernel of $\forget(\phi)$, consider the diagram of solid arrows
	\begin{equation*}\xymatrix{\forget(\monoid')\monote\forget(\monoid')\ar[d]_{\monopro{\monoid'}}\ar[r]_{\pi\monote\pi\qquad} & 
	(\forget(\monoid')/K)\monote(\forget(\monoid')/K)\ar@{.>}[d]_{\monopro{\monoid'/K}}\ar[r]_{\qquad i\monote i} & 
	\forget(\monoid'')\monote\forget(\monoid'')\ar[d]^{\monopro{\monoid''}}\\
	\forget(\monoid')\ar[r]^\pi & \forget(\monoid')/K\ar[r]^i & \forget(\monoid'').}\end{equation*}
Since $\forget(\monoid')\monote-$ preserves cokernels, $\forget(\monoid')\monote\forget(\monoid')\rightarrow\forget(\monoid')\monote(\forget(\monoid')/K)$ is a cokernel of $\forget(\monoid')\monote K\rightarrow\forget(\monoid')\monote\forget(\monoid')$, i.e.\@ it is a strict epimorphism, similarly $\forget(\monoid')\monote(\forget(\monoid')/K)\rightarrow(\forget(\monoid')/K)\monote(\forget(\monoid')/K)$ is a cokernel of $K\monote(\forget(\monoid')/K)\rightarrow\forget(\monoid')\monote(\forget(\monoid')/K)$. Since strict epimorphisms are closed with respect to composition (\cite{Schneiders99} Prop.\@ 1.1.7) we conclude that $\pi\monote\pi$ is a strict epimorphism. 

Commutativity of the diagram of solid arrows implies that the kernel of $\pi\monote\pi$ factors through the kernel of $\phi\circ\monopro{\monoid'}$. Working in $\monoca/\forget(\monoid'')$ and using functoriality of cokernels, we obtain $\monopro{\monoid'/K}$ making the entire diagram commutative. It is easy to see that $\monopro{\monoid'/K}$ and $\monone\rightarrow\forget(\monoid')\rightarrow\forget(\monoid')/K$ make $\forget(\monoid')/K$ into a commutative monoid. \hide{ Indeed, associativity of $\monopro{\monoid'}$ implies that the two possible morphisms $\forget(\monoid')\monote\forget(\monoid')\monote\forget(\monoid')\rightarrow\forget(\monoid')/K$ are equal. This morphism factors through $(\forget(\monoid')/K)\monote(\forget(\monoid')/K)\monote(\forget(\monoid')/K)$ in two possible ways, which have to be equal because of the universal property of cokernels. Similarly for the unit and commutativity axioms. }%
Moreover $\pi$ and $i$ are morphisms of monoids. Using \cite{Schneiders99} Prop.\@ 1.1.4 we are done.

\noindent 2. Suppose $\phi\colon\monoid'\rightarrow\monoid''$ is an effective epimorphism, i.e.\@ it is a coequalizer of some $\monoid\rightrightarrows\monoid'$. Applying part {\bf 1.\@} we obtain $\phi=i\circ\pi$. As $\forget(i)$ is a monomorphism and $\forget$ is faithful, $\pi$ is a co-cone on $\monoid\rightrightarrows\monoid'$, thus the universal property of coequalizers implies that $i$ has a right inverse. Since $\forget(i)$ is a monomorphism it has to be invertible.

Suppose that $\forget(\phi)\colon\forget(\monoid')\rightarrow\forget(\monoid'')$ is a strict epimorphism. Let $\monoid:=\monoid'\underset{\monoid''}\times\monoid'$, and let $\overline\phi\colon\monoid'\rightarrow\overline{\monoid'}$ be the coequalizer of $\monoid\rightrightarrows\monoid'$. Since $\phi$ is a co-cone on $\monoid\rightrightarrows\monoid'$ we have $\phi=\psi\circ\overline{\phi}$. Since $\forget$ has a left adjoint we have $\forget(\monoid)\cong\forget(\monoid')\underset{\forget(\monoid'')}\times\forget(\monoid')$. Being a strict epimorphism $\forget(\phi)$ is a coequalizer of $\forget(\monoid')\underset{\forget(\monoid'')}\times\forget(\monoid')\rightrightarrows\forget(\monoid')$, therefore $\forget(\psi)$ has a left inverse. Since $\forget(\psi)$ is a strict epimorphism (\cite{Schneiders99} Prop.\@ 1.1.8) it must be invertible.\end{proof}

\subsection{Locally separable spaces}\label{LocallySeparableSpaces}

It is often important to limit the possible density characters of Banach spaces under consideration. Arguably the most important class of bornological spaces are the locally separable ones. 

\begin{definition} For a cardinal\footnote{We view a cardinal as the smallest ordinal of a given cardinality.} $\cardinal$ let $\recoco{\cardinal}\subseteq\coconbo$ be the full subcategory consisting of $\bornospace\in\recoco{\cardinal}$, s.t.\@ there is a strict epimorphism $\underset{i\in I}\bigoplus\,\summable{\cardinal_i}\rightarrow\bornospace$ with $\cardinal_i<\cardinal$ for every $i\in I$.\end{definition}
The property $\bornospace\in\recoco{\cardinal}$ can be alternatively expressed as follows: in the dissection of $\bornospace$ into a filtered union of Banach spaces, there is a cofinal family consisting of quotients of $\summable{\cardinal'}$'s with $\cardinal'<\cardinal$.

\begin{example}\label{SizeExamples}\begin{enumerate}
\item Clearly $\recoco{\aleph_0}$ consists of {\it fine bornological spaces}, whose bounded disks are bounded disks in finite dimensional subspaces.
\item By definition $\recoco{\aleph_1}$ consists of {\it locally separable} bornological spaces (e.g.\@ \cite{Meyer2007} Def.\@ 1.162).\end{enumerate}\end{example}

\begin{proposition}\label{CoreflectiveRestriction} Let $\cardinal\geq\aleph_0$, then $\recoco{\cardinal}\hookrightarrow\coconbo$ preserves finite limits and has a right adjoint. The product $\ctensor\colon\coconbo\times\coconbo\rightarrow\coconbo$ maps $(\recoco{\cardinal})^{\times^2}$ to $\recoco{\cardinal}$, making $\recoco{\cardinal}$ into a closed symmetric monoidal subcategory.\end{proposition}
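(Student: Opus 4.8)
The plan is to establish the three assertions --- preservation of finite limits, existence of a right adjoint, and stability of $\recoco{\cardinal}$ under $\ctensor$ --- more or less separately. The recurring technical ingredient is the elementary metric-space fact that the density character of a Banach space is not increased by passing to a closed subspace, to a Banach quotient, or to the closure of a bounded linear image; together with $\summable{\set}\ctensor\summable{\set'}\cong\summable{\set\times\set'}$ and the observation that $\cardi{\set},\cardi{\set'}<\cardinal\geq\aleph_0$ forces $\cardi{\set\times\set'}<\cardinal$, this is essentially all that is needed beyond facts already collected in this section. I will freely use the reformulation of membership noted after the definition of $\recoco{\cardinal}$: $\bornospace\in\recoco{\cardinal}$ iff the dissection of $\bornospace$ admits a cofinal family of Banach pieces of density character $<\cardinal$.

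\textbf{Finite limits.} Since $\recoco{\cardinal}$ is a full subcategory of the additive category $\coconbo$, it suffices to show it is closed under the finite direct sums and kernels computed in $\coconbo$. Direct sums are immediate: a finite direct sum of strict epimorphisms $\underset{i}\bigoplus\summable{\cardinal_i}\rightarrow\bornospace_k$ with $\cardinal_i<\cardinal$ is again a strict epimorphism, as direct sums preserve cokernels. A kernel in $\coconbo$ is a strict monomorphism, so it is enough that $\recoco{\cardinal}$ be closed under strict subobjects. Given a strict monomorphism $\bornospace'\hookrightarrow\bornospace$ with $\bornospace\in\recoco{\cardinal}$, Proposition \ref{StrictMonomorphisms} identifies $\bornospace'$ with a bornologically closed subspace carrying the induced bornology; choosing a cofinal family of bounded Banach disks $\disk$ of $\bornospace$ with $\dechar{\linearspan{\disk}{\bornospace}}<\cardinal$, the disks $\disk\cap\bornospace'$ are cofinal in $\boudis{\bornospace'}$ and $\linearspan{\disk\cap\bornospace'}{\bornospace'}=\linearspan{\disk}{\bornospace}\cap\bornospace'$ is a closed subspace of the Banach space $\linearspan{\disk}{\bornospace}$ (a norm-convergent sequence there is bornologically convergent in $\bornospace$, so its limit stays in the bornologically closed set $\bornospace'$), hence Banach of density $<\cardinal$. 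Thus $\bornospace'\in\recoco{\cardinal}$, and the inclusion preserves finite limits.

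\textbf{Right adjoint.} For $\bornospace\in\coconbo$ I would let $\boudis{\bornospace}^{<\cardinal}\subseteq\boudis{\bornospace}$ consist of the bounded Banach disks $\disk$ with $\dechar{\linearspan{\disk}{\bornospace}}<\cardinal$; one checks this is directed, since the closure --- inside a common Banach piece --- of the sum of two such disks is again a bounded Banach disk whose span has density $<\cardinal$. Put $\resize{\bornospace}{\cardinal}:=\underset{\disk\in\boudis{\bornospace}^{<\cardinal}}\bigcup\linearspan{\disk}{\bornospace}$, the corresponding bornological subspace of $\bornospace$; as a filtered union of Banach spaces of density $<\cardinal$ it lies in $\recoco{\cardinal}$. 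For $A\in\recoco{\cardinal}$ and $f\colon A\rightarrow\bornospace$, each Banach piece $\linearspan{\disk_A}{A}$ of $A$ has density $<\cardinal$, hence so do the image $f(\linearspan{\disk_A}{A})$ and its closure $E$ inside the Banach piece of $\bornospace$ receiving it; therefore $E\subseteq\resize{\bornospace}{\cardinal}$ and the restriction of $f$ to $\linearspan{\disk_A}{A}$ factors boundedly through $\resize{\bornospace}{\cardinal}$. Passing to the filtered colimit over the Banach pieces of $A$ yields the factorization of $f$ through the injection $\resize{\bornospace}{\cardinal}\hookrightarrow\bornospace$, which is necessarily unique; hence $\resize{-}{\cardinal}$ is right adjoint to the inclusion.

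\textbf{Closed monoidal substructure.} Given $\bornospace_1,\bornospace_2\in\recoco{\cardinal}$, choose strict epimorphisms $P_k:=\underset{i}\bigoplus\summable{\cardinal_{k,i}}\rightarrow\bornospace_k$ with $\cardinal_{k,i}<\cardinal$. Since $\coconbo$ is closed symmetric monoidal, $-\ctensor-$ preserves colimits in each variable and, being a left adjoint, preserves cokernels and hence strict epimorphisms; thus $P_1\ctensor P_2\rightarrow\bornospace_1\ctensor\bornospace_2$ is a strict epimorphism and
\begin{equation*}P_1\ctensor P_2\;\cong\;\underset{i,j}\bigoplus\summable{\cardinal_{1,i}}\ctensor\summable{\cardinal_{2,j}}\;\cong\;\underset{i,j}\bigoplus\summable{\cardinal_{1,i}\times\cardinal_{2,j}},\qquad \cardi{\cardinal_{1,i}\times\cardinal_{2,j}}<\cardinal,\end{equation*}
whence $\bornospace_1\ctensor\bornospace_2\in\recoco{\cardinal}$. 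The unit $\mathbb R$ lies in $\recoco{\cardinal}$ and the coherence isomorphisms of $\coconbo$ restrict, so $(\recoco{\cardinal},\ctensor,\mathbb R)$ is symmetric monoidal with the inclusion strong monoidal; it is closed, with internal hom $\resize{\underline{\hom}(X,Y)}{\cardinal}$, since for $X,Y,Z\in\recoco{\cardinal}$ the adjunction above, closedness of $\coconbo$, and $Z\ctensor X\in\recoco{\cardinal}$ give $\hom_{\recoco{\cardinal}}(Z,\resize{\underline{\hom}(X,Y)}{\cardinal})\cong\hom_{\coconbo}(Z,\underline{\hom}(X,Y))\cong\hom_{\coconbo}(Z\ctensor X,Y)\cong\hom_{\recoco{\cardinal}}(Z\ctensor X,Y)$. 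I expect the main obstacle to be the right adjoint --- specifically, keeping control of density characters under sub-objects, bounded images and closures, which is the step where the Banach, rather than merely bornological, structure does real work; granting that, the rest is the cofinality and strict-epimorphism bookkeeping already set up above.
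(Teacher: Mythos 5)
Your argument is correct for $\cardinal\geq\aleph_1$ and follows essentially the same route as the paper: the coreflection is obtained by keeping only the ``small'' Banach pieces of the dissection (your filtered union of pieces of small density is the same object as the paper's ``same vector space, restricted bornology,'' since the finite-dimensional disks already exhaust the space), kernels are handled by observing that a bornologically closed subspace meets each Banach piece in a norm-closed subspace of no larger density (density $=$ weight in metric spaces), and stability under $\ctensor$ comes from $\summable{\set}\ctensor\summable{\set'}\cong\summable{\set\times\set'}$ together with preservation of strict epimorphisms, with the internal hom given by coreflecting the ambient one.

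The one point that would actually fail as written is the blanket reformulation ``$\bornospace\in\recoco{\cardinal}$ iff the dissection has a cofinal family of pieces of density character $<\cardinal$'' in the case $\cardinal=\aleph_0$, which the proposition includes. A nonzero finite-dimensional Banach space has $\dechar{\banachspace}=\aleph_0$, so your $\boudis{\bornospace}^{<\aleph_0}$ contains only the zero disk and your $\resize{\bornospace}{\aleph_0}$ collapses to $0$, whereas $\recoco{\aleph_0}$ is the (nontrivial) category of fine bornological spaces. The fix is to revert to the paper's actual criterion — pieces that are quotients of $\summable{\cardinal'}$ with $\cardinal'<\cardinal$, i.e.\@ ``finite-dimensional, or infinite-dimensional of density character $<\cardinal$'' — under which every step of your argument goes through unchanged (images and closed subspaces of finite-dimensional spaces are finite-dimensional, so the adjunction and kernel arguments still close up).
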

\begin{proof} First we construct the right adjoint. Let $\bornospace\in\coconbo$ and define $\resize{\bornospace}\cardinal$ to have the same underlying $\mathbb R$-vector space as $\bornospace$, with the bornology given by $\disk\in\boudis{\bornospace}$ s.t.\@ there is a strict epimorphism $\summable{\cardinal'}\rightarrow\linearspan{\disk}{\bornospace}$ with $\cardinal'<\cardinal$. We claim this family of disks defines a bornology.

Indeed, it is clear that if $\disk$ is bounded in $\resize{\bornospace}{\cardinal}$, $\forall r\in\mathbb R_{>0}$ $r\cdot\disk$ is also bounded in $\resize{\bornospace}{\cardinal}$. Since $\summable{\cardinal'\sqcup\cardinal''}\cong\summable{\cardinal'}\oplus\summable{\cardinal''}$ and $\cardinal$ is infinite, it is clear that if $\disk'$, $\disk''$ are bounded in $\resize{\bornospace}{\cardinal}$, so is $\disk'+\disk''$. Finally if $\disk\subseteq\bornospace$ is a bounded disk in a finite dimensional subspace of $\bornospace$, it is bounded in $\resize{\bornospace}{\cardinal}$, thus the new family of bounded disks exhausts $\bornospace$. 

\smallskip

Let $\bornomor\colon\bornospace\rightarrow\bornospace'$ be a morphism in $\coconbo$, and let $\disk\in\boudis{\bornospace}$ be s.t.\@ we have a strict epimorphism $\summable{\cardinal'}\rightarrow\linearspan{\disk}{\bornospace}$ with $\cardinal'<\cardinal$. We claim $\exists\disk'\in\boudis{\bornospace'}$ s.t.\@ $\bornomor(\disk)\subseteq\disk'$ and $\disk'$ is bounded in $\resize{\bornospace'}{\cardinal}$. Indeed, as $\bornomor$ is bounded $\exists\disk''\in\boudis{\bornospace'}$ s.t.\@ $\bornomor(\disk)\subseteq\disk''$. If $\bornomor(\linearspan{\disk}{\bornospace})$ is closed in $\linearspan{\disk''}{\bornospace'}$, it is a Banach subspace, and the composite $\summable{\cardinal'}\rightarrow\bornomor(\linearspan{\disk}{\bornospace})$ is a strict epimorphism. Taking $\disk'$ to be the unit ball in $\bornomor(\linearspan{\disk}{\bornospace})$ we are done. 

If $\bornomor(\linearspan{\disk}{\bornospace})$ is not closed in $\linearspan{\disk''}{\bornospace'}$, $\cardinal'$ has to be infinite and we can find a dense $\set\subseteq\linearspan{\disk}{\bornospace}$ s.t.\@ $\cardi{\set}\leq\cardinal'$. Let $\disk'$ be the unit ball in the closure of the linear span of the image of $\set$ in $\linearspan{\disk''}{\bornospace'}$. Clearly $\disk'$ has a dense subset of cardinality $\leq\cardinal'$, and $\bornomor(\disk)\subseteq\disk'$.

Therefore the construction $\bornospace\mapsto\resize{\bornospace}\cardinal$ is a functor, right adjoint to the inclusion $\recoco{\cardinal}\subseteq\coconbo$. Indeed, any bounded linear map $\bornospace\rightarrow\bornospace'$ in $\coconbo$ is also bounded as a map $\resize{\bornospace}\cardinal\rightarrow\resize{\bornospace'}\cardinal$, and since $\resize{\bornospace}\cardinal=\bornospace$, if $\bornospace\in\recoco{\cardinal}$, we have $\hom_{\coconbo}(\bornospace,\bornospace')=\hom_{\recoco{\cardinal}}(\bornospace,\resize{\bornospace'}{\cardinal})$.

\smallskip

Being a full coreflective subcategory of $\coconbo$, $\recoco{\cardinal}$ is closed with respect to all colimits in $\coconbo$, and moreover they coincide with the colimits computed within $\recoco{\cardinal}$ itself. It also has all limits, which can be computed by taking them within $\coconbo$ and then applying the right adjoint.

As finite sums equal finite products in $\coconbo$, $\recoco{\cardinal}\hookrightarrow\coconbo$ preserves finite direct products. It also preserves kernels. To show this we need to compare strict monomorphisms in $\coconbo$ and $\recoco{\cardinal}$. Let $\bornospace'\subseteq\bornospace$ be a strict monomorphism in $\coconbo$ with $\bornospace\in\recoco{\cardinal}$. The bornology on $\bornospace'$ is generated by $\{\bornospace'\cap\disk\}_{\disk\in\boudis{\bornospace}}$. Let $\disk\in\boudis{\bornospace}$ s.t.\@ we have a strict epimorphism $\summable{\cardinal'}\rightarrow\linearspan{\disk}{\bornospace}$ with $\cardinal'<\cardinal$. If $\cardinal'<\aleph_0$, clearly $\linearspan{\bornospace'\cap\disk}{\bornospace'}$ is finite dimensional. If $\cardinal'\geq\aleph_0$, then $\dechar{\linearspan{\disk\cap\bornospace'}{\bornospace'}}\leq\cardinal'$ since in metric spaces density equals weight (i.e.\@ the least cardinality of a base of the topology). Therefore $\bornospace'=\resize{\bornospace'}{\cardinal}$.

\smallskip

In $\banco$ we have $\summable{\set}\ctensor\summable{\set'}\cong\summable{\set\times\set'}$ (e.g.\@ \cite{Prosmans95} Prop.\@ 3.3.5), on the other hand each Banach space constitutes an inductive system in $\banco$, therefore, using e.g.\@ \cite{Meyer2007} Thm.\@ 1.139(6-7), we conclude that the inclusion $\banco\subset\coconbo$ is symmetric monoidal, hence $\summable{\set}\ctensor\summable{\set'}\cong\summable{\set\times\set'}$ also in $\coconbo$. Since $\ctensor$ is a closed symmetric monoidal structure, it preserves colimits, and in particular strict epimorphisms. Therefore $\recoco{\cardinal}$ is closed in $\coconbo$ with respect to $\ctensor$. It is immediate to see that $\resize{\Hom(\bornospace,\bornospace')}{\cardinal}$ completes $\ctensor$ to a closed symmetric monoidal structure on $\recoco{\cardinal}$.\end{proof}%

\smallskip

The following lemma is obvious.

\begin{lemma}\label{CardinalRetract} Let $\cardinal'\leq\cardinal$ be two cardinals. Then $\summable{\cardinal'}$ is a retract of $\summable{\cardinal}$. A morphism in $\coconbo$ has the right lifting property with respect to $\summable{\cardinal'}$ for every $\cardinal'\leq\cardinal$, iff it has the right lifting property with respect to $\summable{\cardinal}$.\end{lemma}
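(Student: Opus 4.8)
The plan is to establish the retract statement first and then deduce the lifting equivalence from it by the usual stability of the right lifting property under retracts, so that the second claim becomes essentially formal.

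For the retract claim, regard the cardinals as ordinals, so that $\cardinal'\leq\cardinal$ gives an inclusion of index sets $\cardinal'\subseteq\cardinal$ and a decomposition $\cardinal=\cardinal'\sqcup(\cardinal\setminus\cardinal')$. Extension of sequences by zero defines a morphism $\iota\colon\summable{\cardinal'}\rightarrow\summable{\cardinal}$ in $\coconbo$ (an isometry of Banach spaces), and forgetting the coordinates outside $\cardinal'$ defines a morphism $\pi\colon\summable{\cardinal}\rightarrow\summable{\cardinal'}$ of norm $\leq 1$; the latter is well defined because a member of $\summable{\cardinal}$ has countable support, hence so does its restriction, and the $l^1$-norm only decreases on passing to a subfamily. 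Since $\pi\circ\iota=\id$, the space $\summable{\cardinal'}$ is a retract of $\summable{\cardinal}$. (Alternatively, one may invoke $\summable{\cardinal'\sqcup\cardinal''}\cong\summable{\cardinal'}\oplus\summable{\cardinal''}$, already used in the proof of Proposition \ref{CoreflectiveRestriction}, and note that a direct summand is a retract.)

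For the lifting equivalence, read ``right lifting property with respect to $\summable{\cardinal}$'' as the right lifting property against the map $0\rightarrow\summable{\cardinal}$, i.e.\@ surjectivity of $\hom_{\coconbo}(\summable{\cardinal},X)\rightarrow\hom_{\coconbo}(\summable{\cardinal},Y)$ for a given morphism $p\colon X\rightarrow Y$. The implication ``$\Rightarrow$'' is immediate, by taking $\cardinal'=\cardinal$. For ``$\Leftarrow$'', let $p$ have this property and let $f\colon\summable{\cardinal'}\rightarrow Y$ with $\cardinal'\leq\cardinal$; precompose with $\pi$, lift $f\circ\pi\colon\summable{\cardinal}\rightarrow Y$ to some $g\colon\summable{\cardinal}\rightarrow X$ with $p\circ g=f\circ\pi$, and note $p\circ(g\circ\iota)=f\circ\pi\circ\iota=f$, so that $g\circ\iota$ is the desired lift of $f$. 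There is no real obstacle here: the only points worth a word are that $\pi$ is well defined and bounded (countability of supports together with monotonicity of the norm) and the bookkeeping that reading ``right lifting property with respect to an object'' as lifting against the zero map into that object is precisely what makes the retract argument apply.
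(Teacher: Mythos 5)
Your proof is correct and follows essentially the same route as the paper's: decompose $\cardinal=\cardinal'\sqcup\cardinal''$, extend by zero and restrict to exhibit the retract, then deduce the lifting equivalence by the standard retract argument. Nothing to add.
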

\hide{\begin{proof} As $\cardinal\geq\cardinal'$ we can decompose $\cardinal=\cardinal'\sqcup\cardinal''$ with $\cardinal''\leq\cardinal$. Every absolutely summable $a'\colon\cardinal'\rightarrow\mathbb R$ extends by $0$ on $\cardinal''$ to an absolutely summable $a\colon\cardinal\rightarrow\mathbb R$. This extension is clearly linear and norm preserving, i.e.\@ it is a morphism of Banach spaces. 

Given an absolutely summable $a\colon\cardinal\rightarrow\mathbb R$ and restricting to $\cardinal'$ we obtain a linear map $\summable{\cardinal}\rightarrow\summable{\cardinal'}$ since restrictions of functions are linear and sub-series of absolutely convergent series are themselves absolutely convergent. Clearly the norm of a restriction is not greater than the norm of the whole series, i.e.\@ this is a morphism of Banach spaces.

Obviously the extension morphism is a section of the restriction morphism.

The only if direction is obvious. The if direction follows from the standard argument for retracts.\end{proof}}%

The following statement follows immediately from Prop.\@ \ref{CoreflectiveRestriction}.

\begin{proposition}\label{RestrictedProjectiveClass} Let $\cardinal$ be an infinite cardinal. A morphism in $\recoco{\cardinal}$ is a cokernel, if and only if it has the right lifting property with respect to $\summable{\cardinal'}$ for each $\cardinal'<\cardinal$. An object in $\recoco{\cardinal}$ has the l.l.p.\@ with respect to all cokernels in $\recoco{\cardinal}$, if and only if it is projective, i.e.\@ it is a retract of $\underset{i\in I}\bigoplus\,\summable{\cardinal_i}$ with $\cardinal_i<\cardinal$ for each $i\in I$.\end{proposition}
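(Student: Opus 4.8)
The plan is to deduce both assertions formally from Proposition~\ref{CoreflectiveRestriction}, once we identify cokernels in $\recoco{\cardinal}$ with strict epimorphisms of $\coconbo$. First I would record the necessary bookkeeping. By (the proof of) Proposition~\ref{CoreflectiveRestriction} colimits in $\recoco{\cardinal}$ coincide with those computed in $\coconbo$, and the inclusion $\recoco{\cardinal}\hookrightarrow\coconbo$ preserves kernels. Hence a morphism $\mor$ between objects of $\recoco{\cardinal}$ is a cokernel in $\recoco{\cardinal}$ if and only if it is a strict epimorphism in $\coconbo$: cokernels in $\coconbo$ are strict epimorphisms by definition, and conversely a strict epimorphism in a quasi-abelian category is the cokernel of its own kernel, that kernel lies in $\recoco{\cardinal}$ by preservation of kernels, and the cokernel of a morphism of $\recoco{\cardinal}$ is the same whether formed in $\coconbo$ or in $\recoco{\cardinal}$. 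I would also note that for every $\cardinal'<\cardinal$ the space $\summable{\cardinal'}$ lies in $\recoco{\cardinal}$ and, being projective in $\banco$, is projective in $\coconbo$ by the propositions of \S\ref{ProjectiveObjectsMonoidalStructure}; and since $\recoco{\cardinal}$ is full, $\hom$ out of $\summable{\cardinal'}$ is unchanged by the inclusion.

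For the first assertion: if $\mor$ is a cokernel in $\recoco{\cardinal}$ it is a strict epimorphism in $\coconbo$, so projectivity of $\summable{\cardinal'}$ makes $\hom(\summable{\cardinal'},\mor)$ surjective, i.e.\@ $\mor$ has the right lifting property with respect to $\summable{\cardinal'}$. Conversely, let $\mor\colon\bornospace\rightarrow\bornospace'$ in $\recoco{\cardinal}$ have this lifting property for all $\cardinal'<\cardinal$, and pick, from the definition of $\recoco{\cardinal}$, a strict epimorphism $p\colon\underset{i\in I}\bigoplus\summable{\cardinal_i}\rightarrow\bornospace'$ with every $\cardinal_i<\cardinal$. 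Applying the lifting property to each summand and using $\hom(\underset{i\in I}\bigoplus\summable{\cardinal_i},-)\cong\underset{i\in I}\prod\hom(\summable{\cardinal_i},-)$, lift $p$ to $q\colon\underset{i\in I}\bigoplus\summable{\cardinal_i}\rightarrow\bornospace$ with $\mor\circ q=p$. Then $\mor$ is surjective because $p=\mor\circ q$ is, and for a bounded disk $\disk'\subseteq\bornospace'$ the criterion for strict epimorphisms recalled in \S\ref{QuasiAbelianStructure} produces a bounded disk $\mathcal E$ in $\underset{i\in I}\bigoplus\summable{\cardinal_i}$ with $\disk'\subseteq p(\mathcal E)=\mor(q(\mathcal E))$, where $q(\mathcal E)$ is bounded; so $\mor$ is a strict epimorphism in $\coconbo$, hence a cokernel in $\recoco{\cardinal}$.

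For the second assertion: if $\pobj\in\recoco{\cardinal}$ is a retract of $\underset{i\in I}\bigoplus\summable{\cardinal_i}$ with all $\cardinal_i<\cardinal$, then by the first assertion every cokernel in $\recoco{\cardinal}$ has the right lifting property with respect to each $\summable{\cardinal_i}$, hence with respect to $\underset{i\in I}\bigoplus\summable{\cardinal_i}$ and therefore with respect to its retract $\pobj$ by the usual closure properties of left lifting classes; thus $\pobj$ has the left lifting property with respect to all cokernels. Conversely, if $\pobj\in\recoco{\cardinal}$ has the left lifting property with respect to all cokernels in $\recoco{\cardinal}$, choose as above a strict epimorphism $p\colon\underset{i\in I}\bigoplus\summable{\cardinal_i}\rightarrow\pobj$ with $\cardinal_i<\cardinal$; it is a cokernel in $\recoco{\cardinal}$ by the first paragraph, so $\id_{\pobj}$ lifts along $p$ and exhibits $\pobj$ as a retract of $\underset{i\in I}\bigoplus\summable{\cardinal_i}$.

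The only substantive step is the identification of cokernels in $\recoco{\cardinal}$ with strict epimorphisms of $\coconbo$ between objects of $\recoco{\cardinal}$; after that everything is formal, relying only on projectivity of the $\summable{\cardinal'}$ and on the defining presentations of objects of $\recoco{\cardinal}$ by strict epimorphisms from direct sums of $\summable{\cardinal_i}$'s. The one thing to watch is that all limits and colimits are formed in the correct category, which is exactly what Proposition~\ref{CoreflectiveRestriction} and its proof guarantee; no analytic input beyond that is needed.
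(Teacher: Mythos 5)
Your proof is correct and follows essentially the same route as the paper's: identify cokernels in $\recoco{\cardinal}$ with strict epimorphisms of $\coconbo$, reduce lifting against arbitrary direct sums of the $\summable{\cardinal_i}$'s to lifting against the individual summands, and use the defining presentation of objects of $\recoco{\cardinal}$ both for the converse of the first assertion and for the retract argument in the second. The only cosmetic difference is that you finish the converse by checking the explicit bounded-disk criterion for strict epimorphisms on the identity $p=\mor\circ q$, whereas the paper deduces the right lifting property against all projectives of $\coconbo$ and invokes the abstract characterization; both are valid.
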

\begin{proof} Cokernels in $\recoco{\cardinal}$ can be equivalently computed in $\coconbo$, thus a morphism in $\recoco{\cardinal}$ is a cokernel, if and only if it is a strict epimorphism in $\coconbo$. Strict epimorphisms in $\coconbo$ are exactly the ones that have the right lifting property with respect to all projective objects in $\coconbo$. Since $\summable{\cardinal}$ is projective, the only if direction is clear.

Suppose $\bornomor\colon\bornospace\rightarrow\bornospace'$ in $\recoco{\cardinal}$ has the right lifting property with respect to $\summable{\cardinal'}$ $\forall\cardinal'<\cardinal$. Let $I$ be any set, and let $\bornomor'\colon\underset{i\in I}\bigoplus\,\summable{\cardinal_i}\rightarrow\bornospace'$ be any morphism in $\recoco{\cardinal}$ with $\forall i\,\cardinal_i<\cardinal$. Since it can be equivalently described as a family $\{\bornomor'_i\colon\summable{\cardinal_i}\rightarrow\bornospace'\}_{i\in I}$, it is clear that $\bornomor$ factors $\bornomor'$, i.e.\@ $\bornomor$ has the right lifting property with respect to $\underset{i\in I}\bigoplus\,\summable{\cardinal_i}$.

Let $\bornomor''\colon\bornospace''\rightarrow\bornospace'$ be any morphism in $\coconbo$ with $\bornospace''$ being projective. Since $\bornospace'\in\recoco{\cardinal}$ there is a strict epimorphism $\bornomor'\colon\underset{i\in I}\bigoplus\,\summable{\cardinal_i}\rightarrow\bornospace'$ with $\forall i\,\cardinal_i<\cardinal$. Since $\bornospace''$ is projective $\bornomor'$ factors $\bornomor''$, hence $\bornomor$ factors $\bornomor''$, i.e.\@ $\bornomor$ has the right lifting property with respect to all projective objects in $\coconbo$.

Let $\bornospace\in\recoco{\cardinal}$ have the l.l.p.\@ with respect to all cokernels. There is $I$ and a strict epimorphism $\underset{i\in I}\bigoplus\,\summable{\cardinal_i}\rightarrow\bornospace$ with $\forall i\,\cardinal_i<\cardinal$. By assumption this epimorphism has a section, realizing $\bornospace$ as a retract of $\underset{i\in I}\bigoplus\,\summable{\cardinal_i}$.\end{proof}

\begin{example} In $\recoco{\aleph_0}$ every object is projective. Indeed, every $\mathbb R$-vector space is an infinite sum of copies of $\mathbb R$.\end{example}

\begin{proposition}\label{OneProjective} Let $\cardinal\geq\aleph_0$. The category $\recoco{\cardinal}$ is quasi-abelian, complete and cocomplete with enough projectives. The set $\{\summable{\cardinal'}\}_{\cardinal'<\cardinal}$ is a generating projective class in $\recoco{\cardinal}$ (Def.\@ \ref{GeneratingProjectives}). The completed projective tensor product $\ctensor$ makes $\recoco{\cardinal}$ into a closed symmetric monoidal category.\end{proposition}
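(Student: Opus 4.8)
The plan is to assemble the statement from Propositions~\ref{CoreflectiveRestriction} and~\ref{RestrictedProjectiveClass}; the closed symmetric monoidal part is in fact already the last assertion of Proposition~\ref{CoreflectiveRestriction}. First I would observe that $\recoco{\cardinal}$ is a full additive subcategory of $\coconbo$: it contains the zero object, and it is closed under finite direct sums, since a finite direct sum of objects of $\recoco{\cardinal}$ again receives a strict epimorphism from a direct sum of spaces $\summable{\cardinal'}$ with $\cardinal'<\cardinal$ (reindex the disjoint union of the index sets). By Proposition~\ref{CoreflectiveRestriction} the inclusion $\recoco{\cardinal}\hookrightarrow\coconbo$ preserves finite limits; being coreflective, $\recoco{\cardinal}$ is closed under all colimits of $\coconbo$, these compute its colimits, and its limits are obtained by applying the right adjoint to limits in $\coconbo$. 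Thus $\recoco{\cardinal}$ is additive, complete and cocomplete, with finite limits and all colimits agreeing with those of $\coconbo$.

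For the quasi-abelian axioms the one thing to check is that strict epimorphisms and strict monomorphisms in $\recoco{\cardinal}$ coincide with those of $\coconbo$ between objects of $\recoco{\cardinal}$. For strict epimorphisms this is contained in the proof of Proposition~\ref{RestrictedProjectiveClass}: a morphism of $\recoco{\cardinal}$ is a cokernel there exactly when it is a strict epimorphism in $\coconbo$. For strict monomorphisms, if $\bornospace'\hookrightarrow\bornospace$ is a strict monomorphism in $\coconbo$ with $\bornospace\in\recoco{\cardinal}$, then $\bornospace/\bornospace'$ lies in $\recoco{\cardinal}$ (compose the defining strict epimorphism onto $\bornospace$ with $\bornospace\rightarrow\bornospace/\bornospace'$), and since the inclusion preserves kernels, $\bornospace'$ is the kernel of $\bornospace\rightarrow\bornospace/\bornospace'$ in $\recoco{\cardinal}$ as well, hence a strict monomorphism there. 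Now a pullback in $\recoco{\cardinal}$ of a strict epimorphism is computed in $\coconbo$, where it is a strict epimorphism by quasi-abelianness of $\coconbo$, between objects of $\recoco{\cardinal}$, hence a strict epimorphism of $\recoco{\cardinal}$; dually, a pushout of a strict monomorphism is a colimit, so computed in $\coconbo$, where it is a strict monomorphism, and its vertices lie in $\recoco{\cardinal}$. This establishes quasi-abelianness.

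For the remaining assertions I would quote Proposition~\ref{RestrictedProjectiveClass}: the projective objects of $\recoco{\cardinal}$ are precisely the retracts of direct sums $\underset{i\in I}\bigoplus\summable{\cardinal_i}$ with $\cardinal_i<\cardinal$. By the very definition of $\recoco{\cardinal}$ every object admits a strict epimorphism from such a direct sum, which is projective, so there are enough projectives. Each $\summable{\cardinal'}$ with $\cardinal'<\cardinal$ is projective; the small direct sums of members of $\{\summable{\cardinal'}\}_{\cardinal'<\cardinal}$ are exactly the objects $\underset{i\in I}\bigoplus\summable{\cardinal_i}$ (with $\cardinal_i<\cardinal$), they are closed under finite direct sums, and every projective is a retract of one of them, so this family is sufficiently large and hence $\{\summable{\cardinal'}\}_{\cardinal'<\cardinal}$ is a generating projective class in the sense of Definition~\ref{GeneratingProjectives}. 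Finally, the closed symmetric monoidal structure, with tensor $\ctensor$ restricted from $\coconbo$ and internal hom $\resize{\Hom(-,-)}{\cardinal}$, is exactly the content of the last sentence of Proposition~\ref{CoreflectiveRestriction}, the unit $\mathbb R=\summable{1}$ lying in $\recoco{\cardinal}$ since $1<\aleph_0\leq\cardinal$. The only mildly delicate point of the whole argument is the identification of strict monomorphisms and epimorphisms in $\recoco{\cardinal}$ with those of $\coconbo$; that identification is what lets the stability of strict epimorphisms (resp.\ monomorphisms) under pullback (resp.\ pushout) be imported from $\coconbo$, and the rest is direct bookkeeping over the cited results.
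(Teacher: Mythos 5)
Your proposal is correct and follows essentially the same route as the paper: completeness, cocompleteness and the monoidal structure from Proposition~\ref{CoreflectiveRestriction}, quasi-abelianness from the fact that the inclusion preserves finite limits and colimits (so strict epimorphisms/monomorphisms and their pullbacks/pushouts are computed in $\coconbo$), and the projective class from Proposition~\ref{RestrictedProjectiveClass} together with the definition of $\recoco{\cardinal}$. Your explicit identification of strict monomorphisms in $\recoco{\cardinal}$ with those of $\coconbo$ is a welcome elaboration of a step the paper leaves implicit, but it is not a different argument.
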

\begin{proof} From Prop.\@ \ref{CoreflectiveRestriction} we know that $\recoco{\cardinal}$ is complete and cocomplete (because $\coconbo$ is such). Also $\recoco{\cardinal}\hookrightarrow\coconbo$ preserves finite limits and colimits, hence pullbacks of strict epimorphisms are strict epimorphisms in $\recoco{\cardinal}$, similarly for strict monomorphisms and pushouts. So $\recoco{\cardinal}$ is quasi-abelian. The rest follows from Prop.\@ \ref{RestrictedProjectiveClass} and the fact that objects of $\recoco{\cardinal}$ are by definition quotients of direct sums of $\summable{\cardinal_i}$'s for $\cardinal_i<\cardinal$.\end{proof}

\begin{example} We are mostly interested in $\recoco{\aleph_1}$. The singleton $\{\summable{\aleph_0}\}$ is a generating projective class in $\recoco{\aleph_1}$. We will denote this Banach space by $\samcou$ and instead of $\recoco{\aleph_1}$ we will write $\coubo$.\end{example}

\subsection{Local presentability and small objects}\label{LocalPresentabilitySmallObjects}

Estimating the size of objects is important in defining closed model structures. In our case this will appear when we will be transferring a model structure to the category of modules over monoids. In a later work we will be also concerned with objects of finite presentation.

\smallskip

For an ordinal $\ordina$ we denote by $\cardint{\ordina}$ the category of ordinals $<\ordina$ (a non-identity morphism $\ordina_i\rightarrow\ordina_j$ is $\ordina_i\in\ordina_j$). Let $\caty$ be a category, {\it an $\ordina$-sequence} in $\caty$ is any functor $\cardint{\ordina}\rightarrow\caty$ that preserves colimits. Recall that a subcategory $\caty'\subseteq\caty$ is {\it closed under transfinite compositions}, if $\forall\ordina$ and for any $\ordina$-sequence $\siqu\colon\cardint{\ordina}\rightarrow\caty'$, s.t.\@ $\underset{\cardint{\ordina}}\colim\,\siqu$ exists in $\caty$, the morphism $\siqu(\emptyset)\rightarrow\underset{\cardint{\ordina}}\colim\,\siqu$ is in $\caty'$.

\begin{example} The subcategory $\monobor$ of monomorphisms in $\coconbo$ is closed under transfinite composition. To see this let $\ordina$ be a limit ordinal and let $\siqu\colon\cardint{\ordina}\rightarrow\coconbo$ be a diagram consisting of monomorphisms, i.e.\@ injective morphisms of bornological spaces. As a vector space $\underset{\cardint{\ordina}}\colim\,\siqu$ is $\underset{i\in\cardint{\ordina}}\bigcup\bornospace_i$, a disk in this union is bounded, iff it is a bounded disk in one of the $\bornospace_i$'s. \hide{
Indeed, since sums and dilations of disks do not take us out of a vector subspace, it is clear that this is a bornology. Every bounded disk whose gauge norm in some $\bornospace_i$ is complete, still generates the same Banach subspace in the union. Therefore this is a complete bornological space. The fact that it is the colimit is obvious. }%

A similar argument shows that for any diagram $\siqu\colon\poset\rightarrow\monobor$, where $\poset$ is a directed poset,\footnote{A poset is {\it directed}, if every finite subset has an upper bound (e.g.\@ \cite{AdamekRosicky94} \S 1.A).} and any $i\in\poset$ the morphism $\siqu(i)\rightarrow\underset{\poset}\colim\,\siqu$ is in $\monobor$ (the colimit is computed in $\coconbo$). \hide{%
Indeed, computing the colimit in the category of all $\mathbb R$-vector spaces we get $\underset{i\in\poset}\bigoplus\siqu(i)$ divided by an equivalence relation, where two vectors are equivalent, if their images are equal in some $\siqu(i)$. Clearly no two vectors from the same $\siqu(i)$ are equivalent in this way, i.e.\@ every $\siqu(i)$ is mapped injectively into the colimit computed in $\mathbb R$-vector spaces. The colimit in $\coconbo$ is obtained from the colimit in $\mathbb R$-vector spaces by declaring a disk to be bounded, if it is a bounded disk in the image of some $\siqu(i)$. The requirement that $\poset$ is filtered ensures that this is a bornology.
}%
\end{example}

Recall (\cite{SchwedeShipley00} \S1) that a cardinal is {\it regular}, if it equals its own cofinality, for example for any $\cardinal\geq\aleph_0$ the successor $\cardinal^+$ is regular (e.g.\@ \cite{KomTo06} \S 10). Let $\cardinal$ be a regular cardinal and let $\caty'\subseteq\caty$ be a subcategory closed under transfinite compositions, {\it $\obj\in\caty$ is $\cardinal$-small relative to $\caty'$} (e.g.\@ \cite{SchwedeShipley00} \S1), if for any regular $\cardinal'\geq\cardinal$ and any $\cardinal'$-sequence $\{\ldots\obj_i\overset{d_i}\longrightarrow\obj_{i+1}\ldots\}$ in $\caty'$ the natural morphism
	\begin{equation}\label{SmallnessDiagram}\underset{\cardint{\cardinal'}}\colim\hom_{\caty}(\obj,\obj_i)\longrightarrow
	\hom_{\caty}(\obj,\underset{\cardint{\cardinal'}}\colim\,\obj_i)\end{equation}
is a bijection. An object is {\it small relative to $\caty'$}, if it is $\cardinal$-small for some regular cardinal $\cardinal$. An object in $\caty$ is {\it small}, if it is small relative to $\caty$. 

\begin{remark} Instead of the sequences $\cardint{\cardinal'}$ for regular $\cardinal'\geq\cardinal$ we can consider {\it $\cardinal$-directed categories}, i.e.\@ posets where every subset of cardinality $<\cardinal$ has an upper bound. For a regular $\cardinal$ an object $\obj\in\caty$ is {\it $\cardinal$-presentable} (e.g.\@ \cite{AdamekRosicky94} Def.\@ 1.13), if $\yoneda{\obj}{\caty}$ preserves colimits over $\cardinal$-directed categories. Considering only diagrams that factor through $\caty'\subseteq\caty$ we obtain the notion of {\it $\cardinal$-presentable objects relative to a subcategory}. For a regular $\cardinal'\geq\cardinal$, the sequence $\cardint{\cardinal'}$ is $\cardinal$-directed. Therefore $\cardinal$-presentable objects are $\cardinal$-small.\end{remark}

Instead of $\aleph_0$-presentable one usually says {\it finitely presentable} (e.g.\@ \cite{AdamekRosicky94} Def.\@ 1.1). We will call $\aleph_0$-small objects {\it compact}. Let $\ordina$ be a limit ordinal, and let $\cardinal$ be the cofinality of $\ordina$, i.e.\@ the smallest cardinality of a cofinal subcategory in $\cardint{\ordina}$. Clearly $\cardinal$ is regular and infinite. Therefore an object $\obj\in\caty$ is compact, iff $\yoneda{\obj}{\caty}$ preserves all sequential colimits. Thus the notions of finitely presentable and compact objects coincide (e.g.\@ \cite{AdamekRosicky94} Cor.\@ 1.7).

\begin{example}\label{SmallMonomorphisms} Every Banach space $\banachspace$ is compact relative to $\monobor$. Indeed, let $\ordina$ be a limit ordinal and let $\siqu\colon\cardint{\ordina}\rightarrow\coconbo$ be a diagram in $\monobor$. A morphism $\banachspace\rightarrow\underset{i\in\cardint{\ordina}}\bigcup\bornospace_i$ maps the unit ball in $\banachspace$ to a bounded disk, which has to be in the image of one of $\bornospace_i$'s. 

If $\bornospace\in\coconbo$ is not Banach, it cannot be compact relative to $\monobor$. Indeed, let $\siqu$ be the diagram of all Banach subspaces of $\bornospace$ given by the dissection. The identity on $\bornospace$ is represented by some $\bornospace\rightarrow\siqu(i)$, iff there is a maximal element in the diagram, i.e.\@ $\bornospace$ is Banach.\end{example}

Not even $\mathbb R$ is compact relative to all of $\coconbo$: 

\begin{example}\label{NotCompact} Let $\samcou\rightarrow\samcou\rightarrow\ldots$ be the $\aleph_0$-sequence corresponding to $\mathbb N\supset\mathbb N_{\geq 2}\supset\mathbb N_{\geq 3}\supset\ldots$. The colimit of this sequence is $0$, but $\{2^{-k}\}_{k\in\mathbb N}$ in the first copy of $\samcou$ is not identified with $0$ at any step of the sequence.\end{example}
However, $\mathbb R$ is $\aleph_1$-presentable in $\coconbo$. First we need a lemma.

\begin{lemma}\label{FullKernels} Let $\poset$ be an $\aleph_1$-directed poset, and consider a functor $\siqu\colon\poset\rightarrow\coconbo$. Given $i\rightarrow j$ in $\poset$ let $\K_{i,j}\rightarrow\siqu(i)$ be the kernel of $\siqu(i)\rightarrow\siqu(j)$. The subset $\K_i:=\underset{j}\bigcup\,\K_{i,j}$ is a closed vector subspace of $\siqu(i)$.\end{lemma}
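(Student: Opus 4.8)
The plan is to check the two assertions of the lemma separately: that $\K_i$ is a linear subspace of $\siqu(i)$, and that it is bornologically closed in the sense of Definition \ref{BornologicalTopology}.

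Linearity is the easy half and only uses finite directedness of $\poset$. If $x\in\K_{i,j}$ and $y\in\K_{i,j'}$, choose $k\in\poset$ above both $j$ and $j'$; since the structure maps of $\siqu$ compose, both $x$ and $y$ map to $0$ in $\siqu(k)$, so $x,y\in\K_{i,k}$ and hence $x+y\in\K_{i,k}\subseteq\K_i$. Scalar multiples are immediate because each $\K_{i,j}$ is already a subspace of $\siqu(i)$.

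For closedness the key idea is that a bornologically convergent sequence involves only countably many of the indices $j$, so the $\aleph_1$-directedness of $\poset$ lets us absorb all of them into a single $j$. Concretely, let $\{x_k\}_{k\in\mathbb N}\subseteq\K_i$ converge bornologically to some $x\in\siqu(i)$: by Definition \ref{BornologicalTopology} there is $\disk\in\boudis{\siqu(i)}$ with $\{x_k\}_{k\in\mathbb N}\subseteq\linearspan{\disk}{\siqu(i)}$ and $x_k\to x$ with respect to the gauge norm $\gaugenorm{\disk}$; since $(\linearspan{\disk}{\siqu(i)},\gaugenorm{\disk})$ is Banach, the limit $x$ lies in $\linearspan{\disk}{\siqu(i)}$ as well. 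For each $k$ pick $j_k\in\poset$ with $x_k\in\K_{i,j_k}$, and using that $\poset$ is $\aleph_1$-directed pick $j\in\poset$ above the countable family $\{j_k\}_{k\in\mathbb N}$; then $x_k\in\K_{i,j}$ for every $k$. Now the structure morphism $\mor:=\siqu(i\to j)\colon\siqu(i)\to\siqu(j)$ is bounded, so it carries $\disk$ into some bounded disk $\disk'\subseteq\siqu(j)$ and therefore restricts to a norm non-increasing, in particular continuous, linear map $\linearspan{\disk}{\siqu(i)}\to\linearspan{\disk'}{\siqu(j)}$. Applying it to $x_k\to x$ yields $\mor(x_k)\to\mor(x)$ in $\linearspan{\disk'}{\siqu(j)}$; but $\mor(x_k)=0$ for all $k$, so $\mor(x)=0$, i.e.\ $x\in\K_{i,j}\subseteq\K_i$. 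Hence $\K_i$ is bornologically closed.

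The only point needing care is the interface between the bornology and the diagram: one must observe that a bounded linear map is automatically continuous on the Banach pieces produced by dissection, so that bornological convergence is transported along $\siqu(i\to j)$, and that the countability built into Definition \ref{BornologicalTopology} is matched exactly by the $\aleph_1$-directedness hypothesis (every countable subset of $\poset$ has an upper bound). Everything else is routine bookkeeping.
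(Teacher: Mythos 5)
Your proposal is correct and follows essentially the same route as the paper: use $\aleph_1$-directedness to place all countably many terms of a convergent sequence into a single kernel $\K_{i,j}$, then conclude the limit lies there too. The paper simply invokes closedness of the kernel (a strict monomorphism has closed image, Prop.\@ \ref{StrictMonomorphisms}), whereas you re-derive this by pushing the convergent sequence through the bounded map $\siqu(i)\to\siqu(j)$ on Banach pieces; both are fine.
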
 
\begin{proof}\hide{ %
Let $a_j,a_{j'}\in\K_i$, i.e.\@ there are $i\rightarrow j$, $i\rightarrow j'$ in $\poset$ and $a_j\in\K_{i,j}$, $a_{j'}\in\K_{i,j'}$. As $\poset$ is $\aleph_1$-directed there is $i\rightarrow j''$, that factors through $j$ and $j'$. Clearly $a_j,a_{j'}\in\K_{i,j''}$ and hence $a_j+a_{j'}\in\K_i$.}%
Let $\{b_k\}_{k\in\mathbb N}\subseteq\K_i$ be a sequence that converges to $b\in\siqu(i)$. We have $b_k\in\K_{i,j_k}$ for some $\{i\rightarrow j_k\}_{k\in\mathbb N}$. As $\poset$ is $\aleph_1$-directed there is $i\rightarrow j$ that factors through each $j_k$. Then $\{b_k\}_{k\in\mathbb N}\subseteq\K_{i,j}$ and hence $b\in\K_{i,j}\subseteq\K_i$.\end{proof}%

Now we divide by these closed subspaces $\{\K_i\}$. 

\begin{lemma}\label{IntoMonomorphisms} Let $\poset$ be an $\aleph_1$-directed poset, and let $\siqu\colon\poset\rightarrow\coconbo$. For each $i\in\poset$ let $\K_i\rightarrow\siqu(i)$ be as in Lemma \ref{FullKernels}, and let $\overline{\bornospace}_i:=\siqu(i)/\K_i$. The projections $\siqu(i)\rightarrow\overline{\bornospace}_i$ extend to a natural transformation $\siqu\rightarrow\overline{\siqu}$. Moreover $\underset{\poset}\colim\,\siqu\cong\underset{\poset}\colim\,\overline{\siqu}$ and $\overline{\bornospace}_i\rightarrow\overline{\bornospace}_j$ is a monomorphism for each $i\rightarrow j$ in $\poset$. \end{lemma}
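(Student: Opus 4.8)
The plan is to verify the three assertions in the order stated, each by an elementary chase with the kernels $\K_{i,j}$ from Lemma~\ref{FullKernels}.

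First I would check that $i\mapsto\K_i$ is a subfunctor of $\siqu$, i.e.\@ that the structure map $\siqu(i)\rightarrow\siqu(j)$ carries $\K_i$ into $\K_j$ for every $i\rightarrow j$ in $\poset$. Indeed, given $a\in\K_i$ there is $i\rightarrow k$ with $a\in\K_{i,k}$, so $a$ maps to $0$ in $\siqu(k)$; choosing an upper bound $l$ of $j$ and $k$ in $\poset$ (possible since $\poset$ is directed), $a$ maps to $0$ in $\siqu(l)$ as well, hence the image of $a$ in $\siqu(j)$ dies in $\siqu(l)$, i.e.\@ lies in $\K_{j,l}\subseteq\K_j$. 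By Lemma~\ref{FullKernels} each $\K_i$ is a bornologically closed vector subspace of $\siqu(i)$, so with the induced bornology $\K_i\hookrightarrow\siqu(i)$ is a strict monomorphism (Prop.~\ref{StrictMonomorphisms}), and therefore $\overline{\bornospace}_i=\siqu(i)/\K_i$ is the cokernel of $\K_i\hookrightarrow\siqu(i)$. The universal property of cokernels then promotes the subfunctor inclusion $\K\hookrightarrow\siqu$ to a functor $\overline{\siqu}$ together with the natural transformation $\siqu\rightarrow\overline{\siqu}$ whose components are the quotient projections; this is the first assertion.

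For the monomorphism claim I would argue on representatives. If $x\in\siqu(i)$ represents a class $\bar x\in\overline{\bornospace}_i$ that maps to $0$ in $\overline{\bornospace}_j$, then the image of $x$ in $\siqu(j)$ lies in $\K_j$, hence vanishes in $\siqu(l)$ for some $j\rightarrow l$; but then $x$ itself vanishes in $\siqu(l)$, so $x\in\K_{i,l}\subseteq\K_i$ and $\bar x=0$. Thus $\overline{\bornospace}_i\rightarrow\overline{\bornospace}_j$ is injective, and since the forgetful functor $\coconbo\rightarrow\sets$ is faithful, an injective morphism is a monomorphism. For the colimit statement I would use that cokernels in $\coconbo^{\poset}$ are computed objectwise, so $\overline{\siqu}$ is the cokernel of $\K\hookrightarrow\siqu$ there, and that $\colim\colon\coconbo^{\poset}\rightarrow\coconbo$ preserves cokernels, being a left adjoint; hence $\underset{\poset}\colim\,\overline{\siqu}$ is the cokernel of the induced map $\underset{\poset}\colim\,\K\rightarrow\underset{\poset}\colim\,\siqu$. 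That map is zero, because each $a\in\K_i$ maps to $0$ in some $\siqu(j)$ and hence to $0$ in $\underset{\poset}\colim\,\siqu$, so all composites $\K_i\rightarrow\underset{\poset}\colim\,\K\rightarrow\underset{\poset}\colim\,\siqu$ vanish, and since the coprojections $\K_i\rightarrow\underset{\poset}\colim\,\K$ are jointly epimorphic the map vanishes. The cokernel of a zero map being an isomorphism, the natural transformation induces the desired isomorphism $\underset{\poset}\colim\,\siqu\cong\underset{\poset}\colim\,\overline{\siqu}$.

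None of these steps is deep. The only mild subtleties are that functoriality of $i\mapsto\K_i$ must be established separately — Lemma~\ref{FullKernels} only provides closedness of each individual $\K_i$, and directedness of $\poset$ is invoked once more here — and that cokernels in $\coconbo$ are quotients by the \emph{bornological closure} of the image; the latter is harmless, since every image occurring above is already a closed subspace or is zero. I do not expect a genuine obstacle: the whole argument is diagram-chasing on representatives, legitimised by the fact that each $\siqu(i)\rightarrow\overline{\bornospace}_i$ is an honest strict epimorphism in $\coconbo$.
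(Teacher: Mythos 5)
Your proof is correct, and for the functoriality of $i\mapsto\K_i$ and the monomorphism claim it is essentially the paper's argument: directedness of $\poset$ gives a common refinement $l$ showing that the structure maps carry $\K_i$ into $\K_j$, and the same refinement shows that the kernel of $\siqu(i)\rightarrow\overline{\bornospace}_{j}$ is exactly $\K_i$, which is the content of the monomorphism statement. The one place where you diverge is the identification of the colimits: the paper shows that composition with the projections $\siqu(i)\rightarrow\overline{\bornospace}_i$ is an equivalence between the categories of co-cones on $\overline{\siqu}$ and on $\siqu$ (full and faithful since the projections are strict epimorphisms, essentially surjective since every co-cone on $\siqu$ kills each $\K_{i,j}$ and hence each closed $\K_i$), whereas you realize $\overline{\siqu}$ as the objectwise cokernel of $\K\hookrightarrow\siqu$, use that $\colim$ preserves cokernels, and observe that $\colim\K\rightarrow\colim\siqu$ is the zero map. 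Both mechanisms are sound and rest on the same fact, namely that every element of $\K_i$ already dies at some finite stage; your remark that the quotient $\siqu(i)/\K_i$ really is the cokernel because $\K_i$ is bornologically closed (so no further closure of the image is needed) is exactly the point that legitimises either version.
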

\begin{proof} Existence of $\siqu\rightarrow\overline{\siqu}$ is obvious. \hide{%
Indeed, let $i\rightarrow i'$ in $\poset$, and let $\K_{i,j}\rightarrow\siqu(i)$ be the kernel of $\siqu(i)\rightarrow\siqu(j)$ for some $i\rightarrow j$ in $\poset$. As $\poset$ is $\aleph_1$-directed there is a commutative diagram in $\poset$:
	\begin{equation*}\xymatrix{i\ar[r]\ar[d] & j\ar[d]\\ i'\ar[r] & j'.}\end{equation*}
Clearly $\K_{i,j}\subseteq\K_{i,j'}$ and the composite $\K_{i,j'}\rightarrow\siqu(i')$ factors through $\K_{i',j'}$. Therefore also the composite $\K_i\rightarrow\siqu(i')$ factors through $\K_{i'}$ and hence there is a unique $\overline{d}_{i,i'}$ making the following diagram commutative
	\begin{equation*}\xymatrix{\siqu(i)\ar[d]\ar[rr] && \siqu(i')\ar[d]\\
	\overline{\bornospace}_i\ar[rr]^{\overline{d}_{i,i'}} && \overline{\bornospace}_{i'}.}\end{equation*}
Uniqueness of $\overline{d}_{i,i'}$ implies that $\overline{\siqu}$ is a functor $\poset\rightarrow\coconbo$. }%
Consider the functor from the category of co-cones on $\overline{\siqu}$ to the category of co-cones on $\siqu$. As it is given by strict epimorphisms, this functor is obviously full and faithful. Conversely, having any co-cone $\{\phi_i\}_{i\in\poset}$ on $\siqu$, we see that $\phi_i$ maps $\K_{i,j}\rightarrow\siqu(i)$ to $0$ for every $i\rightarrow j$, therefore it factors through $\overline{\bornospace}_i$. So the functor is essentially surjective, and hence the initial co-cones for $\siqu$ and $\overline{\siqu}$ are isomorphic. Let $i\rightarrow i'$ in $\poset$, by construction an element of $\siqu(i)$ is in $\K_i$, iff its image in $\siqu(i')$ is in $\K_{i'}$. This means that $\K_i\rightarrow\siqu(i)$ is the kernel of $\siqu(i)\rightarrow\overline{\bornospace}_{i'}$, and then $\siqu(i)\rightarrow\overline{\bornospace}_i\rightarrow\overline{\bornospace}_{i'}$ is a strict epimorphism, followed by a monomorphism. \end{proof}%

Now we can show that $\mathbb R$ is $\aleph_1$-presentable in $\coconbo$. We prove a more general statement for all projective Banach spaces.

\begin{proposition} Let $\cardinal\geq\aleph_1$ be a regular cardinal, and let $\cardi{\set}<\cardinal$. The Banach space $\summable{\set}$ is $\cardinal$-presentable in $\coconbo$.\end{proposition}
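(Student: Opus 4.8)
The plan is to verify directly that $\yoneda{\summable{\set}}{\coconbo}$ sends $\cardinal$-directed colimits to colimits: for every $\cardinal$-directed poset $\poset$ and every $\siqu\colon\poset\rightarrow\coconbo$ the canonical map $\underset{\poset}\colim\,\hom_{\coconbo}(\summable{\set},\siqu(i))\rightarrow\hom_{\coconbo}(\summable{\set},\underset{\poset}\colim\,\siqu)$ is a bijection, and I will treat surjectivity and injectivity separately, using that $\summable{\set}$ is a \emph{projective} Banach space in $\coconbo$ and that the $\mathbb R$-span of its standard generators $\{\delta_\element\}_{\element\in\set}$ is a \emph{dense} subspace of cardinality $\cardi{\set}<\cardinal$. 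Since $\cardinal\geq\aleph_1$, Lemma \ref{IntoMonomorphisms} applies to $\siqu$: replacing it by $\overline{\siqu}\colon i\mapsto\overline{\bornospace}_i:=\siqu(i)/\K_i$ leaves the colimit unchanged and turns all transition maps into monomorphisms. Then by the Example in Section \ref{LocalPresentabilitySmallObjects} each structure map $\overline{\bornospace}_i\rightarrow C:=\underset{\poset}\colim\,\overline{\siqu}$ is a monomorphism, $C=\underset{i\in\poset}\bigcup\overline{\bornospace}_i$ as an $\mathbb R$-vector space, and a disk in $C$ is bounded iff it is contained in a bounded disk of some $\overline{\bornospace}_i$.

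\emph{Surjectivity.} Given $f\colon\summable{\set}\rightarrow C$, the image of the unit ball of $\summable{\set}$ is bounded in $C$, hence is contained in a bounded disk of some $\overline{\bornospace}_i$; since $\summable{\set}$ is Banach, $f$ therefore factors through a bounded linear map $\summable{\set}\rightarrow\overline{\bornospace}_i$. Projectivity of $\summable{\set}$ in $\coconbo$ lifts this along the strict epimorphism $\siqu(i)\rightarrow\overline{\bornospace}_i$ to a map $\widetilde f\colon\summable{\set}\rightarrow\siqu(i)$, and, because $\overline{\bornospace}_i\rightarrow C$ is a monomorphism and the cocones of $\siqu$ and $\overline{\siqu}$ are compatible, $\widetilde f$ represents $f$ in the source of the comparison map.

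\emph{Injectivity.} By $\cardinal$-directedness one reduces to two maps $f,g\colon\summable{\set}\rightarrow\siqu(k)$ whose images in $C$ coincide. Put $h:=f-g$; the composite $\summable{\set}\overset{h}\rightarrow\siqu(k)\rightarrow\overline{\bornospace}_k$ vanishes, since it vanishes after the further monomorphism $\overline{\bornospace}_k\rightarrow C$ and $f,g$ induce the same map to $C$. Hence $h$ takes values in $\K_k=\ker(\siqu(k)\rightarrow\overline{\bornospace}_k)=\underset{j}\bigcup\K_{k,j}$ (notation of Lemmas \ref{FullKernels} and \ref{IntoMonomorphisms}). For each $\element\in\set$ choose $k\rightarrow j_\element$ in $\poset$ with $h(\delta_\element)\in\K_{k,j_\element}$; since $\cardi{\set}<\cardinal$ and the objects of $\poset$ under $k$ again form a $\cardinal$-directed poset, there is a single $k\rightarrow j$ dominating all the $j_\element$, so $h$ maps the dense subspace $\linearspan{\{\delta_\element\}_{\element\in\set}}{\summable{\set}}$ into $\K_{k,j}$. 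To upgrade this to $h(\summable{\set})\subseteq\K_{k,j}$, pick $\disk\in\boudis{\siqu(k)}$ containing the image under $h$ of the unit ball of $\summable{\set}$, so that $h$ restricts to a bounded --- hence continuous --- linear map $\summable{\set}\rightarrow\linearspan{\disk}{\siqu(k)}$ between Banach spaces. Being a kernel, $\K_{k,j}$ is a strict monomorphism, hence bornologically closed (Prop.\@ \ref{StrictMonomorphisms}), so $\K_{k,j}\cap\linearspan{\disk}{\siqu(k)}$ is norm-closed in $\linearspan{\disk}{\siqu(k)}$; containing the image of a dense subspace, it contains $h(\summable{\set})$. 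Therefore $h$ composed with the transition map $\siqu(k)\rightarrow\siqu(j)$ is $0$, which is exactly what injectivity demands.

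The step I expect to be the main obstacle is this concluding argument of injectivity. It is where the hypothesis $\cardinal\geq\aleph_1$ is genuinely used, on two counts: Lemmas \ref{FullKernels} and \ref{IntoMonomorphisms} require $\aleph_1$-directedness, and one must be able to dominate the --- possibly countably infinite --- set $\set$ (the failure when $\cardinal=\aleph_0$ and $\set$ is infinite is precisely Example \ref{NotCompact}). It is also the place that needs care: density of $\linearspan{\{\delta_\element\}}{\summable{\set}}$ does not suffice by itself; one must first restrict $h$ to a single Banach piece $\linearspan{\disk}{\siqu(k)}$ so that density and the bornological closedness of $\K_{k,j}$ can be combined into an honest topological closure argument. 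Everything else is routine diagram chasing once the reduction to a diagram of monomorphisms is in place.
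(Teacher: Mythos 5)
Your proof is correct and follows essentially the same route as the paper's: reduce via Lemma \ref{IntoMonomorphisms} and Example \ref{SmallMonomorphisms} to comparing $\siqu$ with $\overline{\siqu}$, get surjectivity from projectivity of $\summable{\set}$, and get injectivity by pushing the generators $\{\delta_\element\}$ into a single $\K_{k,j}$ using $\cardinal$-directedness and then closing up. Your explicit restriction to a Banach piece $\linearspan{\disk}{\siqu(k)}$ in the last step is a welcome elaboration of the paper's terser appeal to $\K_{i,j}$ being a strict monomorphism.
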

\begin{proof} Let $\poset$ be an $\cardinal$-directed poset, and let $\siqu\colon\poset\rightarrow\coconbo$. Using Lemma \ref{IntoMonomorphisms} we have $\overline{\siqu}\colon\poset\rightarrow\monobor\subset\coconbo$ and a natural transformation $\siqu\rightarrow\overline{\siqu}$ consisting of strict epimorphisms. Lemma \ref{IntoMonomorphisms} tells us that $\underset{\poset}\colim\,\siqu\cong\underset{\poset}\colim\,\overline{\siqu}$ and from Example \ref{SmallMonomorphisms} we know that $\underset{i\in\poset}\colim\,\hom_{\coconbo}(\summable{\set},\overline{\siqu}(i))\cong\hom_{\coconbo}(\summable{\set},\underset{\poset}\colim\,\overline{\siqu})$. Therefore it is enough to show that 
	\begin{equation}\label{QuotientColimit}\underset{i\in\poset}\colim\,\hom_{\coconbo}(\summable{\set},\siqu(i))\longrightarrow
	\underset{i\in\poset}\colim\,\hom_{\coconbo}(\summable{\set},\overline{\siqu}(i))\end{equation}
is a bijection. As each $\siqu(i)\rightarrow\overline{\siqu}(i)$ is a strict epimorphism and $\summable{\set}$ is projective, it is clear that (\ref{QuotientColimit}) is surjective. Let $\phi_i\colon\summable{\set}\rightarrow\siqu(i)$ represent $\phi\in\underset{i\in\poset}\colim\,\hom_{\coconbo}(\summable{\set},\siqu(i))$ that is mapped to $0\in\underset{i\in\poset}\colim\,\hom_{\coconbo}(\summable{\set},\overline{\siqu}(i))$. As $\overline{\siqu}$ factors through $\monobor$, the composite $\summable{\set}\overset{\phi_i}\longrightarrow\siqu(i)\rightarrow\overline{\siqu}(i)$ must be $0$, i.e.\@ $\phi_i$ factors through $\K_i$. Since $\K_i=\underset{j}\bigcup\,\K_{i,j}$, for each $k\in\set$ there is $i\rightarrow j_k$, s.t.\@ $\phi_i(k)\in\K_{i,j_k}$. As $\poset$ is $\cardinal$-directed there is $i\rightarrow j$ that factors through each $i\rightarrow j_k$. Since $\K_{i,j}\rightarrow\siqu(i)$ is a strict monomorphism, we see that $\phi_i(\summable{\set})\subseteq\K_{i,j}$, i.e.\@ $\phi=0$ in $\underset{i\in\poset}\colim\,\hom_{\coconbo}(\summable{\set},\siqu(i))$.\end{proof}%

\begin{corollary}\label{AllProjectivePresentable} Let $\cardinal\geq\aleph_1$ be a regular cardinal, and let $\banachspace$ be a Banach space, s.t.\@ $\dechar{\banachspace}<\cardinal$. Then $\banachspace$ is $\cardinal$-presentable in $\coconbo$.\end{corollary}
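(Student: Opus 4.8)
The plan is to present $\banachspace$ as a cokernel of a morphism between two projective Banach spaces of density character $<\cardinal$, and then to conclude from the Proposition just proved together with the stability of $\cardinal$-presentable objects under finite colimits.

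First I would choose, exactly as in the construction showing that $\banco$ (and hence $\coconbo$) has enough projectives, a set $\set$ with $\cardi{\set}=\dechar{\banachspace}<\cardinal$ and a strict epimorphism $\emor\colon\summable{\set}\rightarrow\banachspace$ (take $\set$ finite if $\dim\banachspace<\infty$). Let $\momor\colon\K\rightarrow\summable{\set}$ be the kernel of $\emor$. By Proposition \ref{StrictMonomorphisms} it is a strict monomorphism, so $\K$ is a closed vector subspace of $\summable{\set}$ carrying the induced bornology, in particular a Banach space; since passing to a subspace of a metric space does not increase its weight and weight equals density for metric spaces, $\dechar{\K}\leq\dechar{\summable{\set}}=\cardi{\set}<\cardinal$. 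Applying the same construction to $\K$ yields a set $\set'$ with $\cardi{\set'}<\cardinal$ and a strict epimorphism $\emor'\colon\summable{\set'}\rightarrow\K$; set $g:=\momor\circ\emor'\colon\summable{\set'}\rightarrow\summable{\set}$. In a quasi-abelian category a strict epimorphism is the cokernel of its kernel, so $\banachspace\cong\cokernel{\momor}$, and since $\emor'$ is an epimorphism, precomposing $\momor$ with it leaves the cokernel unchanged; hence $\banachspace\cong\cokernel{g}$, i.e.\@ $\banachspace$ is the coequalizer of $g$ and $0$.

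By the Proposition just proved, both $\summable{\set}$ and $\summable{\set'}$ are $\cardinal$-presentable in $\coconbo$. The full subcategory of $\cardinal$-presentable objects is closed under finite colimits --- this reduces to the fact that $\cardinal$-directed, hence filtered, colimits commute with finite limits in $\sets$ (cf.\@ \cite{AdamekRosicky94} Prop.\@ 1.16) --- and a cokernel is a finite colimit, so $\banachspace$ is $\cardinal$-presentable. The only step that is not purely formal is the density-character estimate $\dechar{\K}\leq\cardi{\set}$ for the closed subspace $\K\subseteq\summable{\set}$; once that is in place the argument is a routine assembly of the previous proposition with standard closure properties of $\cardinal$-presentable objects, so I do not expect any real difficulty.
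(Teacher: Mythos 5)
Your proposal is correct and follows essentially the same route as the paper: present $\banachspace$ as a coequalizer of two projective Banach spaces $\summable{\set'}\rightrightarrows\summable{\set}$ with $\cardi{\set},\cardi{\set'}<\cardinal$ and invoke closure of $\cardinal$-presentable objects under finite colimits (the paper's Lemma \ref{SmallProjectives}). You additionally spell out the density-character estimate for the kernel, which the paper asserts without comment but justifies by the same weight-equals-density argument elsewhere.
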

\begin{proof} As $\dechar{\banachspace}<\cardinal$ there is a set $\set$ with $\cardi{\set}<\cardinal$, and a strict epimorphism $\summable{\set}\rightarrow\banachspace$. Let $\banachspace'\rightarrow\summable{\set}$ be the kernel of this morphism, then $\dechar{\banachspace'}<\cardinal$ as well, and there is $\set'$ with $\cardi{\set'}<\cardinal$, and a strict epimorphism $\summable{\set'}\rightarrow\banachspace'$. Altogether we have $\banachspace$ as the co-equalizer of $\summable{\set'}\rightrightarrows\summable{\set}$. 

\begin{lemma}\label{SmallProjectives} Let $\obj$ be a co-equalizer of $\obj''\rightrightarrows\obj'$ in a category $\caty$, let $\cardinal\geq\aleph_0$ be a regular cardinal s.t.\@ $\obj'$, $\obj''$ are $\cardinal$-presentable. Then $\obj$ is $\cardinal$-presentable.\end{lemma}
\begin{proof} Finite limits and filtered colimits commute in $\sets$. \hide{%
Let $\poset$ be an $\cardinal$-directed poset, and let $\siqu\colon\poset\rightarrow\caty$. We have a commutative diagram
	\begin{equation*}\xymatrix{\underset{i\in\poset}\colim\hom_{\caty}(\obj,\siqu(i))\ar[r]\ar[d] & 
	\hom_{\caty}(\obj,\underset{\poset}\colim\,\siqu)\ar[d]\\
	\underset{i\in\poset}\colim\hom_{\caty}(\obj',\siqu(i))\ar[r]^{\cong}\ar@<-.5ex>[d]\ar@<.5ex>[d] &
	\hom_{\caty}(\obj',\underset{\poset}\colim\,\siqu)\ar@<-.5ex>[d]\ar@<.5ex>[d]\\
	\underset{i\in\poset}\colim\hom_{\caty}(\obj'',\siqu(i))\ar[r]^{\cong} &
	\hom_{\caty}(\obj'',\underset{\poset}\colim\,\siqu).}\end{equation*}
Since $\obj$ is a colimit of $\obj''\rightrightarrows\obj'$, it is clear that the upper right vertical arrow is an equalizer of the right parallel pair. The same is true on the left at each stage $i\in\poset$. In $\sets$ filtered colimits commute with finite limits, hence the left upper vertical arrow is an equalizer of the left parallel pair. Now the claim follows from uniqueness of equalizers.}\end{proof}

Applying Lemma \ref{SmallProjectives} to $\summable{\set'}\rightrightarrows\summable{\set}$ we are done.\end{proof}%

We have seen (Ex.\@ \ref{SizeExamples}) that $\recoco{\aleph_0}$ is the category of $\mathbb R$-vector spaces, hence it is locally finitely presentable. According to Prop.\@ \ref{OneProjective} for $\cardinal\geq\aleph_0$ the category $\recoco{\cardinal}$ is cocomplete, and according to Prop.\@ \ref{CoreflectiveRestriction} the inclusion $\recoco{\cardinal}\subset\coconbo$ preserves colimits. Thus, since dissections of bornological spaces are filtered diagrams, applying Cor.\@ \ref{AllProjectivePresentable} we obtain the following. 

\begin{theorem}\label{LocalPresentability} Let $\cardinal\geq\aleph_0$ be regular, then $\recoco{\cardinal}$ is locally $\cardinal$-presentable.\footnote{A category $\caty$ is {\it locally $\cardinal$-presentable} (e.g.\@ \cite{AdamekRosicky94} Def.\@ 1.17), if it is cocomplete and there is a set of $\cardinal$-presentable objects $\{\pobj_i\}_{i\in I}\subseteq\caty$ s.t.\@ $\forall\obj\in\caty$ is an $\cardinal$-directed colimit of $\pobj_i$'s.}\end{theorem}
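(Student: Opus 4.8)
The plan is to verify the definition in the footnote directly, by exhibiting a \emph{set} $\mathcal P\subseteq\recoco{\cardinal}$ of $\cardinal$-presentable objects such that every object of $\recoco{\cardinal}$ is an $\cardinal$-directed colimit of members of $\mathcal P$. The case $\cardinal=\aleph_0$ is already disposed of (there $\recoco{\aleph_0}$ is the category of $\mathbb R$-vector spaces, which is locally finitely presentable), so from now on I assume $\cardinal\geq\aleph_1$; this is exactly the range in which Cor.\@ \ref{AllProjectivePresentable} applies, and the reason the two ranges of $\cardinal$ must be handled separately is that $\mathbb R$ fails to be compact in $\coconbo$ (Ex.\@ \ref{NotCompact}) although it is finitely presentable in $\recoco{\aleph_0}$. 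Recall from the discussion preceding the theorem that $\recoco{\cardinal}$ is cocomplete (Prop.\@ \ref{OneProjective}) and that the inclusion $\recoco{\cardinal}\hookrightarrow\coconbo$ is fully faithful and preserves all colimits (Prop.\@ \ref{CoreflectiveRestriction}), these colimits coinciding with the ones computed inside $\recoco{\cardinal}$.

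First I would let $\mathcal P$ be a set of representatives of the isomorphism classes of all objects of $\recoco{\cardinal}$ that arise as the colimit of some $\cardinal$-small diagram with values in the generating set $\{\summable{\cardinal'}\}_{\cardinal'<\cardinal}$; this really is a set, since there is only a set of $\cardinal$-small diagram shapes and, each of them being $\cardinal$-small, only a set of functors from it into the essentially small full subcategory spanned by the $\summable{\cardinal'}$'s. Next I would check that every member of $\mathcal P$ is $\cardinal$-presentable. Each generator has $\dechar{\summable{\cardinal'}}=\max(\cardinal',\aleph_0)<\cardinal$, so $\summable{\cardinal'}$ is $\cardinal$-presentable in $\coconbo$ by Cor.\@ \ref{AllProjectivePresentable}, and hence also in $\recoco{\cardinal}$: for a $\cardinal$-directed diagram in $\recoco{\cardinal}$ the colimit is computed the same way in $\coconbo$, and by full faithfulness $\hom_{\recoco{\cardinal}}(\summable{\cardinal'},-)$ agrees with $\hom_{\coconbo}(\summable{\cardinal'},-)$ on it. Since $\cardinal$-presentable objects are closed under $\cardinal$-small colimits — the standard fact that $\hom(\colim_J G_j,-)=\lim_J\hom(G_j,-)$ together with the commutation of $\cardinal$-small limits with $\cardinal$-directed colimits in $\sets$ — every member of $\mathcal P$ is $\cardinal$-presentable; for the same reason, and because $\cardinal$ is regular, $\mathcal P$ is also closed under $\cardinal$-small colimits.

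The core step is to present each $\bornospace\in\recoco{\cardinal}$ as a $\cardinal$-directed colimit of members of $\mathcal P$. I would begin from the dissection $\bornospace\cong\underset{\disk\in\boudis{\bornospace}}\colim\,\linearspan{\disk}{\bornospace}$, a filtered colimit of Banach spaces; using the characterization of $\recoco{\cardinal}$ one may restrict $\boudis{\bornospace}$ to a cofinal subposet on which every $\linearspan{\disk}{\bornospace}$ is a Banach space of density character $<\cardinal$, and then, by the construction in the proof of Cor.\@ \ref{AllProjectivePresentable}, each such Banach space is a coequalizer of a pair $\summable{\set'}\rightrightarrows\summable{\set}$ with $\cardi{\set},\cardi{\set'}<\cardinal$, so it lies in $\mathcal P$. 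To upgrade this filtered colimit to a $\cardinal$-directed one, let $\mathcal T$ be the poset, ordered by inclusion, of directed subsets $T\subseteq\boudis{\bornospace}$ with $\cardi{T}<\cardinal$, and put $\bornospace_T:=\underset{\disk\in T}\colim\,\linearspan{\disk}{\bornospace}$. Each $\bornospace_T$ is a $\cardinal$-small colimit of members of $\mathcal P$, hence again in $\mathcal P$; the $\bornospace_T$ assemble into a diagram over $\mathcal T$; and since every element and every finite subdiagram of $\boudis{\bornospace}$ lies in some $T\in\mathcal T$, one obtains $\underset{T\in\mathcal T}\colim\,\bornospace_T\cong\underset{\disk\in\boudis{\bornospace}}\colim\,\linearspan{\disk}{\bornospace}\cong\bornospace$.

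The one substantive point — where I expect essentially all the real work to lie — is that $\mathcal T$ is $\cardinal$-directed: given fewer than $\cardinal$ members $T_\lambda$ of $\mathcal T$, their union has cardinality $<\cardinal$ because $\cardinal$ is regular, and its directed closure inside $\boudis{\bornospace}$ (obtained by iteratively adjoining a chosen upper bound for every finite subset, an operation that does not increase the cardinality once it is infinite) again belongs to $\mathcal T$ and dominates every $T_\lambda$. Granting this, $\mathcal P$ witnesses the local $\cardinal$-presentability of $\recoco{\cardinal}$, and everything else is routine bookkeeping assembled from Prop.\@ \ref{OneProjective}, Prop.\@ \ref{CoreflectiveRestriction} and Cor.\@ \ref{AllProjectivePresentable}.
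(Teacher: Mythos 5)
Your proposal is correct and takes essentially the same route as the paper, which proves the theorem by combining cocompleteness (Prop.\@ \ref{OneProjective}), preservation of colimits by $\recoco{\cardinal}\hookrightarrow\coconbo$ (Prop.\@ \ref{CoreflectiveRestriction}), the dissection into Banach pieces of density character $<\cardinal$, and their $\cardinal$-presentability (Cor.\@ \ref{AllProjectivePresentable}). The only addition is your explicit upgrade of the merely filtered dissection to a $\cardinal$-directed colimit of $\cardinal$-small subcolimits (the poset $\mathcal T$ and regularity of $\cardinal$), which is the standard bookkeeping the paper leaves implicit.
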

Theorem \ref{LocalPresentability} does not apply to the category $\coconbo$ of all complete bornological spaces because the sizes of cardinals are not bounded. However, $\coconbo$ is a nested union of locally presentable categories, and this has important consequences. The proof of the following lemma is straightforward. 
 
\begin{lemma} Let $\quabi$ be a co-complete and finitely complete category, and suppose $\forall\qobj\in\quabi$ there is an effective epimorphism $\underset{i\in I}\coprod\,\qobj_i\rightarrow\qobj$, where $I$ is a set and each $\qobj_i$ is small. Then every object in $\quabi$ is small.\end{lemma}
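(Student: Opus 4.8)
The goal is to show that under the stated hypotheses every object of $\quabi$ is small (relative to $\quabi$ itself). The plan is to reduce the smallness of an arbitrary object $\qobj$ to the smallness of the summands $\qobj_i$ in the given effective epimorphism $p\colon\underset{i\in I}\coprod\qobj_i\rightarrow\qobj$, by choosing a regular cardinal $\cardinal$ large enough to simultaneously witness the smallness of all the $\qobj_i$ and dominate $\cardi{I}$, and then checking the bijection \eqref{SmallnessDiagram} directly for $\qobj$ along any $\cardinal$-directed (equivalently, $\cardinal^+$-sequence) diagram $\siqu$ whose colimit exists in $\quabi$.

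First I would fix, for each $i\in I$, a regular cardinal $\cardinal_i$ witnessing smallness of $\qobj_i$, set $\cardinal$ to be a regular cardinal with $\cardinal>\cardi{I}$ and $\cardinal\geq\cardinal_i$ for all $i$ (such a $\cardinal$ exists, e.g.\@ take the successor of $\sup_i\cardinal_i+\cardi{I}$), and let $\siqu\colon\cardint{\cardinal'}\rightarrow\quabi$ be any $\cardinal'$-sequence for regular $\cardinal'\geq\cardinal$ with colimit $\obj_\infty:=\underset{\cardint{\cardinal'}}\colim\,\obj_j$. I would then take an element $\psi\in\hom_{\quabi}(\qobj,\obj_\infty)$ and precompose with $p$ to get $\psi\circ p\colon\underset{i\in I}\coprod\qobj_i\rightarrow\obj_\infty$, i.e.\@ a family $\{\psi_i\colon\qobj_i\rightarrow\obj_\infty\}_{i\in I}$. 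Since each $\qobj_i$ is $\cardinal$-small and $\cardinal'$ is regular $\geq\cardinal$, each $\psi_i$ factors through some stage $\obj_{j_i}$; because $\cardi{I}<\cardinal\leq\cardinal'$ and $\cardint{\cardinal'}$ is $\cardinal'$-directed (so in particular $\cardinal$-directed), there is a single stage $j$ with $j\geq j_i$ for all $i$, through which the whole family $\{\psi_i\}$ factors. This yields a cocone on $\underset{i\in I}\coprod\qobj_i$ with target $\obj_j$; I must now promote it to a map $\qobj\rightarrow\obj_j$.

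To do that I would use that $p$ is an \emph{effective} epimorphism, hence a coequalizer of its kernel pair $\underset{i\in I}\coprod\qobj_i\underset{\qobj}\times\underset{i\in I}\coprod\qobj_i\rightrightarrows\underset{i\in I}\coprod\qobj_i$. The composite of our cocone with the two kernel-pair projections gives two maps into $\obj_j$; their composites with $\obj_j\rightarrow\obj_\infty$ agree (both equal $\psi\circ p$ precomposed with a projection), so by $\cardinal$-smallness of the (coproduct) kernel-pair object — which is again a coproduct of at most $\cardi{I}\cdot\cardi{I}<\cardinal$ many $\cardinal$-small objects, hence $\cardinal$-small by the same argument, after possibly enlarging $\cardinal$ once more so it is closed under the relevant finite products/pullbacks in $\quabi$, or simply by re-running the coequalizer/limit bookkeeping — the two maps already agree after passing to a further stage $j'\geq j$. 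Then the coequalizer property of $p$ produces a unique $\overline\psi\colon\qobj\rightarrow\obj_{j'}$ with $\overline\psi\circ p=(\text{cocone at }j')$, and chasing diagrams shows $\overline\psi$ maps to $\psi$ in $\hom_{\quabi}(\qobj,\obj_\infty)$; this proves surjectivity of \eqref{SmallnessDiagram}. Injectivity is the same style of argument: if $\overline\psi_1,\overline\psi_2\colon\qobj\rightarrow\obj_j$ become equal after $\obj_j\rightarrow\obj_\infty$, then $\overline\psi_1\circ p$ and $\overline\psi_2\circ p$ become equal after $\obj_j\rightarrow\obj_\infty$; each restricted map out of a summand $\qobj_i$ is equalized at some stage, a common stage $j'$ equalizes all of them since $\cardi{I}<\cardinal'$, and then $\overline\psi_1\circ p=\overline\psi_2\circ p$ at stage $j'$ forces $\overline\psi_1=\overline\psi_2$ at $j'$ because $p$, being an effective epimorphism, is in particular an epimorphism.

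\textbf{Main obstacle.} The routine part is the cofinality bookkeeping (finding common stages), which works because $\cardi{I}$ is strictly below the regular cardinal $\cardinal'$. The delicate point I expect to have to handle carefully is the passage from a \emph{cocone} on the coproduct to an actual \emph{morphism out of} $\qobj$: this is exactly where effectivity of the epimorphism $p$ (as opposed to mere epimorphism) is used, via its description as a coequalizer of the kernel pair, and it requires knowing that the kernel-pair object is itself small — which I would get either by arranging $\cardinal$ to be closed under the finitely many limit operations involved (using that $\quabi$ is finitely complete) or, more cheaply, by noting that in the coequalizer argument one only ever needs to equalize a family of maps indexed by $\cardi{I}\times\cardi{I}<\cardinal'$ many summands, so no separate smallness hypothesis on pullbacks is actually needed. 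Making this last reduction precise, so that no hypothesis beyond the stated ones is used, is the subtle step.
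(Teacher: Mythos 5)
Your overall strategy --- reduce the smallness of $\qobj$ to that of the summands $\qobj_i$ via the effective epimorphism $p$, with the cofinality bookkeeping controlled by a regular cardinal dominating $\cardi{I}$ and all the $\cardinal_i$ --- is the same as the paper's, and the injectivity half of your argument is fine. But the step you yourself single out as subtle (promoting a cocone at stage $j$ to a morphism out of $\qobj$) contains a genuine gap, and neither of the two repairs you sketch works. To factor $\chi\colon\underset{i\in I}\coprod\,\qobj_i\rightarrow\obj_j$ through $p$ you must equalize the two kernel-pair composites $\chi\circ d_0$ and $\chi\circ d_1$ at some finite stage, which requires injectivity of $\underset{j}\colim\,\hom_{\quabi}(\K,\obj_j)\rightarrow\hom_{\quabi}(\K,\obj_\infty)$ for the kernel-pair object $\K$. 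Your fix (a) --- choose $\cardinal$ ``closed under the relevant finite limit operations'' --- does not deliver this: there is no reason a pullback of small objects is small in a category that is not locally presentable, and the intended $\quabi=\coconbo$ is precisely not locally presentable. Your fix (b) presumes that $\K$ decomposes into (or is at least covered by) the pairwise pullbacks $\qobj_i\underset{\qobj}\times\qobj_{i'}$; this holds when coproducts are disjoint and universal, as in $\sets$, but fails in additive categories, where already $(\qobj_1\oplus\qobj_2)\underset{0}\times(\qobj_1\oplus\qobj_2)\not\cong\underset{i,i'}\bigoplus\,\qobj_i\underset{0}\times\qobj_{i'}$. Worse, even granting such a decomposition, each piece $\qobj_i\underset{\qobj}\times\qobj_{i'}$ is a pullback over $\qobj$ itself --- the very object whose smallness is being proved --- so you cannot equalize maps out of it at a finite stage without circularity.

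The missing idea is to apply the hypothesis of the lemma a \emph{second} time, to $\K$ itself: choose an effective epimorphism $e\colon\underset{k\in J}\coprod\,\qobj'_k\rightarrow\K$ with each $\qobj'_k$ small. Since $e$ is in particular an epimorphism, $\chi\circ d_0=\chi\circ d_1$ if and only if $\chi\circ d_0\circ e=\chi\circ d_1\circ e$, and the latter can be tested on each small summand $\qobj'_k$ and then at a common stage, after enlarging $\cardinal$ so that it also dominates $\cardi{J}$. Equivalently --- and this is how the paper organizes it --- $\qobj$ is the coequalizer of $\underset{k\in J}\coprod\,\qobj'_k\rightrightarrows\underset{i\in I}\coprod\,\qobj_i$, so by Lemma \ref{SmallProjectives} it suffices to show that a set-indexed coproduct of small objects is small, which is exactly your common-stage bookkeeping. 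With this one repair your argument closes up and essentially coincides with the paper's proof.
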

\hide{%
\begin{proof} Since every $\qobj\in\quabi$ is the co-equalizer of some $\underset{j\in\set'}\coprod\,\qobj'_j\rightrightarrows\underset{i\in\set}\coprod\,\qobj_i$, it is enough to show that each direct sum of small objects is small (Lemma \ref{SmallProjectives}).

By assumption $\forall i\in I$ there is a regular cardinal $\cardinal_i$, s.t.\@ $\qobj_i$ is $\cardinal_i$-small. Let $\cardinal$ be a regular cardinal, s.t.\@ $\cardinal\geq\cardi{\set}$ and $\cardinal\geq\cardinal_i$ for each $i\in\set$, and let $\siqu$ be an $\cardinal$-sequence in $\quabi$. We have an injective map
	\begin{equation}\label{InfiniteSum}\underset{j\in\cardint{\cardinal}}\colim\underset{i\in\set}\prod\hom_{\quabi}(\qobj_i,\siqu(j))\longrightarrow
	\underset{i\in\set}\prod\underset{j\in\cardint{\cardinal}}\colim\hom_{\quabi}(\qobj_i,\siqu(j)).\end{equation}
Indeed, let $\phi_j,\psi_j\colon\underset{i\in\set}\bigoplus\qobj_i\rightarrow\siqu(j)$ represent two elements on the left, that become equal on the right. This means for each $i$ the $i$-components of the images are equal, i.e.\@ for each $i$ there is $j_i$, s.t.\@ $\phi^i_{j_i}=\psi^i_{j_i}$. Since $\cardinal$ is big enough, there is $k\in\cardint{\cardinal}$, s.t.\@ $\phi_k=\psi_k$, i.e.\@ the two elements are equal also on the left.

Conversely, let $\{\phi^i\}_{i\in\set}$ be an element of the right side of (\ref{InfiniteSum}). Each component $\phi^i$ is represented by $\phi^i_{j_i}\colon\qobj_i\rightarrow\siqu(j_i)$ for some $j_i\in\cardint{\cardinal}$. Again using the fact that $\cardinal$ is big enough we can find a common bound $k\in\cardint{\cardinal}$, i.e.\@ $\{\phi^i\}_{i\in\set}$ is represented by $\{\phi^i_k\}_{i\in\set}$, and we see that (\ref{InfiniteSum}) is surjective.

Finally, using smallness of each $\qobj_i$ we see that the right hand side of (\ref{InfiniteSum}) equals $\hom_{\quabi}(\underset{i\in S}\coprod\qobj_i,\underset{\cardint{\cardinal}}\colim\,\siqu)$.\end{proof}}%

As $\forall\bornospace\in\coconbo$ lies in some $\recoco{\cardinal}$, it is a quotient of some $\underset{i\in I}\bigoplus\,\summable{\cardinal_i}$ with $\cardinal_i<\cardinal$ for each $i$. Thus the previous lemma implies the following.

\begin{proposition} Every object in $\coconbo$ is small. \end{proposition}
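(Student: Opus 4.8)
The plan is to obtain this as a direct consequence of Corollary~\ref{AllProjectivePresentable} together with the preceding lemma, so that no genuinely new argument is required; the task is just to assemble the pieces correctly. First I would fix an object $\bornospace\in\coconbo$. Since only a set of density characters occurs among the Banach pieces in the dissection of $\bornospace$, there is a cardinal with $\bornospace\in\recoco{\cardinal}$, and after replacing $\cardinal$ by its successor if necessary we may assume $\cardinal$ is regular and $\cardinal\geq\aleph_1$. By the definition of $\recoco{\cardinal}$ there is then a strict epimorphism $\underset{i\in I}\bigoplus\,\summable{\cardinal_i}\rightarrow\bornospace$, with $I$ a set and $\cardinal_i<\cardinal$ for every $i\in I$; and since $\coconbo$ is finitely complete and cocomplete, this strict epimorphism is an effective epimorphism.

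Next I would check that each summand is small relative to $\coconbo$. For $\cardinal_i<\cardinal$ we have $\dechar{\summable{\cardinal_i}}\leq\cardinal_i<\cardinal$, so Corollary~\ref{AllProjectivePresentable} shows that $\summable{\cardinal_i}$ is $\cardinal$-presentable in $\coconbo$. As $\cardint{\cardinal'}$ is $\cardinal$-directed for each regular $\cardinal'\geq\cardinal$ and $\coconbo$ is cocomplete, $\cardinal$-presentability forces $\summable{\cardinal_i}$ to be $\cardinal$-small relative to $\coconbo$, and in particular small. (Note that one really must argue through these Banach pieces: knowing only that $\bornospace$ lies in the locally presentable category $\recoco{\cardinal}$ gives presentability of $\bornospace$ within $\recoco{\cardinal}$, whereas smallness is required relative to all of $\coconbo$.)

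Finally I would invoke the preceding lemma with $\quabi=\coconbo$: this category is cocomplete and finitely complete, and for the arbitrary object $\bornospace$ we have just exhibited an effective epimorphism onto it from a set-indexed coproduct of small objects. The lemma then yields that every object of $\coconbo$ is small, which is the assertion. The only step that is not purely formal is buried in the proof of that lemma, namely that a set-indexed coproduct $\underset{i\in S}\bigoplus\qobj_i$ of small objects is again small: one picks a regular cardinal $\cardinal$ simultaneously bounding $\cardi{S}$ and all the individual smallness bounds, and then verifies, by the usual elementwise argument exploiting that $\cardinal$ is big enough, that $\yoneda{\bigoplus_{i}\qobj_i}{\coconbo}$ carries colimits of $\cardinal$-sequences to colimits in $\sets$. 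I do not anticipate any obstacle beyond that cardinal bookkeeping.
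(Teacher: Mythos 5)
Your argument is correct and follows the paper's own route: locate $\bornospace$ in some $\recoco{\cardinal}$ via the dissection, present it as an effective epimorphic image of a set-indexed coproduct of $\summable{\cardinal_i}$'s, use the $\cardinal$-presentability of these Banach spaces in $\coconbo$ (Cor.~\ref{AllProjectivePresentable}), and conclude with the preceding lemma. Your added remarks (justifying why $\bornospace$ lies in some $\recoco{\cardinal}$, and why presentability must be taken relative to all of $\coconbo$ rather than to $\recoco{\cardinal}$) are accurate and fill in details the paper leaves implicit.
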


\subsection{Model structures}\label{ModelStructures}

It is straightforward to define a model structure on simplicial objects in an additive category with enough projectives (\cite{Qu67}). Using Prop.\@ \ref{EffectiveStrict} we can proceed in a similar fashion also with the category of commutative monoids.

The approach of \cite{Qu67} is based on {\it evaluating morphisms} $\obj_1\rightarrow\obj_2$ at some special $\{\pobj_i\}_{i\in I}\subseteq\caty$. This means looking at $\{\hom(\pobj_i,\obj_1)\rightarrow\hom(\pobj_i,\obj_2)\}_{i\in I}$. 

\begin{proposition}\label{ModelStructureInGeneral}\begin{enumerate}\item Let $\monoca$ be a quasi-abelian category with enough projectives. Let $\{P_i\}_{i\in I}$ be a generating projective class (Def.\@ \ref{GeneratingProjectives}). The category $\simpo{\monoca}$ of simplicial objects in $\monoca$ has a simplicial model structure, where $\simo{C}\rightarrow\simo{D}$ is a weak equivalence/fibration, if $\hom_{\monoca}(P_i,\simo{C})\rightarrow\hom_{\monoca}(P_i,\simo{D})$ is a weak equivalence/fibration of simplicial sets for every $P_i$. In particular every $\simo{C}\in\simpo{\monoca}$ is fibrant.

\item If in addition $\monoca$ is closed symmetric monoidal, the category $\simpo{\ucori{\monoca}}$ of simplicial commutative monoids in $\monoca$ has a simplicial model structure, and the adjunction $\freecou\colon\monoca\rightleftarrows\ucori{\monoca}\colon\forget$ extends to a Quillen adjunction $\simpo{\monoca}\rightleftarrows\simpo{\ucori{\monoca}}$. A morphism $\simo{\monoid}\rightarrow\simo{\monoid'}$ in $\simpo{\ucori{\monoca}}$ is a weak equivalence/fibration, if 
	\begin{equation*}\hom_{\simpo{\ucori{\monoca}}}(\freecou(P_i),\simo{\monoid})\longrightarrow
	\hom_{\simpo{\ucori{\monoca}}}(\freecou(P_i),\simo{\monoid'})\end{equation*}
is a weak equivalence/fibration of simplicial sets for each $P_i$. In particular every object in $\simpo{\ucori{\monoca}}$ is fibrant. \end{enumerate}\end{proposition}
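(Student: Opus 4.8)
The plan is to reduce both statements to Quillen's classical theorem on simplicial objects in a category with enough projectives, applied via the ``evaluation at projectives'' recipe, and then to transport the model structure across the free/forgetful adjunction for monoids. First I would set up part (1): the functors $\hom_{\monoca}(P_i,-)\colon\simpo{\monoca}\to\ssets$ (applied degreewise) jointly detect the three classes of maps, and one checks that they carry $\simpo{\monoca}$ into a cofibrantly generated simplicial model category by the standard transfer/cotriple machinery of \cite{Qu67} (Chapter II, \S4) — the only inputs needed are that $\monoca$ is pointed, has finite limits and all small colimits (true by quasi-abelianness plus the cocompleteness established in the earlier sections), has enough projectives, and that $\{P_i\}$ is a generating projective class so that a map is a strict epimorphism iff every $\hom_{\monoca}(P_i,-)$ sends it to a surjection (this is exactly the content of Definition \ref{GeneratingProjectives} together with the observation that strict epimorphisms are the maps with the right lifting property against all projectives). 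Smallness of all objects (the final Proposition of Section \ref{LocalPresentabilitySmallObjects}) guarantees the small object argument runs, so the structure is genuinely cofibrantly generated. Fibrancy of every object is immediate because every object of $\ssets$ is a Kan complex under the relevant fibration definition — more precisely, a map is declared a fibration iff its evaluations are Kan fibrations, and $\simo C\to *$ evaluates to $\hom_{\monoca}(P_i,\simo C)\to *$, which is always a Kan fibration since $\hom_{\monoca}(P_i,\simo C)$ is the underlying simplicial set of a simplicial abelian group and hence a Kan complex.

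For part (2) the strategy is to transfer along $\freecou\colon\monoca\rightleftarrows\ucori{\monoca}\colon\forget$. I would define a map $\simo\monoid\to\simo{\monoid'}$ in $\simpo{\ucori{\monoca}}$ to be a fibration or weak equivalence iff $\forget$ applied to it is one in $\simpo{\monoca}$ (equivalently, by adjunction, iff $\hom_{\simpo{\ucori{\monoca}}}(\freecou(P_i),-)$ sends it to a Kan fibration / weak equivalence of simplicial sets for every $P_i$, which matches the stated description). The verification of the transfer criterion (Crans/Quillen: source category cocomplete, $\forget$ preserves filtered colimits, and every relative $\freecou(J)$-cell is a weak equivalence, where $J$ is the set of generating trivial cofibrations downstairs) again uses that all objects are small — so the small object argument applies in $\simpo{\ucori{\monoca}}$ — and uses Proposition \ref{EffectiveStrict}(2), which says effective epimorphisms of monoids are detected by $\forget$, to identify trivial cofibrations correctly and to run the usual path-object argument. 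Since $\forget$ creates the weak equivalences and fibrations by construction, the extended adjunction $\simpo{\monoca}\rightleftarrows\simpo{\ucori{\monoca}}$ is a Quillen adjunction essentially by definition ($\forget$ preserves fibrations and trivial fibrations). Fibrancy of every object follows as in part (1): $\forget$ of $\simo\monoid\to *$ is a fibration because everything in $\simpo{\monoca}$ is fibrant.

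The main obstacle I anticipate is the step in the transfer argument that shows pushouts of free trivial cofibrations are weak equivalences — equivalently, that the monad $\forget\freecou$ (the free commutative monoid, i.e.\@ the symmetric-algebra-type functor built from $\ctensor$) is sufficiently well-behaved that the acyclicity condition of the transfer theorem holds. In the abelian/additive setting this is usually handled by a filtration of the free monoid functor whose associated graded pieces are symmetric powers $(-)^{\otimes n}/\Sigma_n$, and one needs these symmetric powers to preserve the relevant weak equivalences; over $\mathbb{R}$ the $\Sigma_n$-coinvariants behave well (characteristic zero), but in the \emph{quasi-abelian} setting one must be careful that the relevant (co)limits are strict and that $\ctensor$ interacts correctly with the projective class — this is precisely where the earlier structural results (closedness of $\ctensor$, preservation of strict epimorphisms by $\ctensor$, Proposition \ref{EffectiveStrict}) do the real work. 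An alternative, possibly cleaner, route avoiding an explicit acyclicity check is to invoke a general existence theorem for model structures on simplicial algebras over a monad on a ``nice'' simplicial category (e.g.\@ via the simplicial resolution / free simplicial monoid comonad, cf.\@ the cotriple model structure of \cite{Qu67} Chapter II applied to the category $\simpo{\ucori{\monoca}}$ directly, using that $\freecou(P_i)$ generates under colimits); I would present whichever of these the subsequent sections are set up to support.
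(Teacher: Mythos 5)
Your part (1) matches the paper: both reduce to Quillen's theorem in \cite{Qu67} \S II.4, with fibrancy of every object coming from the fact that $\hom_{\monoca}(P_i,\simo{C})$ is a simplicial abelian group, and the reduction from all projectives to the generating class $\{P_i\}$ via retracts and direct products. One caveat: you lean on smallness of all objects and the small object argument to get cofibrant generation, but the proposition is stated for an arbitrary quasi-abelian category with enough projectives, where no smallness is assumed; the paper instead uses the branch of Quillen's Theorem 4 that only requires every object of $\simpo{\monoca}$ to be fibrant, so no smallness is needed here (cofibrant generation is established only later, for $\recoco{\cardinal}$, via Christensen--Hovey).

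For part (2) your primary route is a transfer along $(\freecou,\forget)$, and you correctly identify the acyclicity condition --- that pushouts of $\freecou$ applied to generating trivial cofibrations are weak equivalences --- as the hard step; but you leave it unverified, and for \emph{commutative} monoids this is precisely the step that can fail in a general monoidal model category, so the proposal as written has a genuine gap there. The paper avoids transfer altogether: using Prop.~\ref{EffectiveStrict} it shows that $\forget$ takes effective epimorphisms of monoids to strict epimorphisms, hence $\freecou(P)$ is projective in $\ucori{\monoca}$ and every monoid receives an effective epimorphism from some $\freecou(P)$, i.e.\@ $\ucori{\monoca}$ itself has sufficiently many projectives in Quillen's sense. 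Moreover every projective of $\ucori{\monoca}$ is a retract of some $\freecou(P)$, so $\hom_{\ucori{\monoca}}(\monoid',\simo{\monoid})$ is a retract of the simplicial abelian group $\hom_{\monoca}(P,\forget(\simo{\monoid}))$ and is therefore a Kan complex; this gives fibrancy of every object of $\simpo{\ucori{\monoca}}$ and lets Quillen's Theorem 4 apply \emph{directly} to $\ucori{\monoca}$, with no acyclicity check and no smallness hypothesis. The Quillen adjunction is then immediate, since by adjunction evaluation at $\freecou(P_i)$ is evaluation of $\forget(-)$ at $P_i$. This is essentially the ``alternative, cleaner route'' you gesture at in your last sentence; to complete your proof you should carry out that version (the two ingredients being Prop.~\ref{EffectiveStrict} and the retract-of-a-simplicial-abelian-group fibrancy argument) rather than the transfer.
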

\begin{proof} The first part is a straightforward application of \cite{Qu67} \S II.4. \hide{%
Since $\monoca$ has all finite limits and colimits effective epimorphisms and strict epimorphisms coincide. Then, having enough projective objects, $\monoca$ has sufficiently many projectives according to \cite{Qu67} \S II.4. Since $\monoca$ is an additive category every $\hom_{\monoca}(P,\simo{C})$ is a simplicial abelian group, hence fibrant. Therefore $\simpo{\monoca}$ is a simplicial model category with weak equivalences/fibrations determined by evaluating at all projectives $P$ (\cite{Qu67} Thm.\@ 4).

We claim it is enough to evaluate at $P_i$'s. Since the projective class in $\monoca$ is generated by $\{P_i\}$, every projective object is a retract of some $\bigoplus P_i$. Weak equivalences and fibrations are stable under retracts, hence it is enough to evaluate at direct sums of $P_i$'s. Any
	$$\hom_{\monoca}(\bigoplus P_i,\simo{C})\rightarrow\hom_{\monoca}(\bigoplus P_i,\simo{D})$$ 
given by $\simo{C}\rightarrow\simo{D}$ is a direct product of $\hom_{\monoca}(P_i,\simo{C})\rightarrow\hom_{\monoca}(P_i,\simo{D})$. Direct products of fibrations and weak equivalences between fibrant simplicial sets are morphisms of the same kind (\cite{Ho99} Cor.\@ 5.1.5), therefore it is enough to evaluate at $P_i$'s.}%
In the case of monoids Prop.\@ \ref{EffectiveStrict} tells us that $\forget$ maps effective epimorphisms in $\ucori{\monoca}$ to strict epimorphisms in $\monoca$. Therefore $\freecou(P)$ is projective in $\ucori{\monoca}$ for each projective $P$ in $\monoca$. As $\monoca$ has enough projectives, for any $\monoid\in\ucori{\monoca}$ there is a strict epimorphism $P\rightarrow\forget(\monoid)$. This strict epimorphism factors into $P\rightarrow\forget(\freecou(P))\rightarrow\forget(\monoid)$, thus $\forget(\freecou(P))\rightarrow\forget(\monoid)$ is a strict epimorphism (\cite{Schneiders99}, Prop.\@ 1.1.8). Prop.\@ \ref{EffectiveStrict} tells us then that $\freecou(P)\rightarrow\monoid$ is an effective epimorphism, i.e.\@ $\ucori{\monoca}$ has sufficiently many projectives (\cite{Qu67} \S II.4).

Let $\monoid$ be a projective object in $\ucori{\monoca}$. Choosing a strict epimorphism $P\rightarrow\forget(\monoid)$ we have an effective epimorphism $\freecou(P)\rightarrow\monoid$, and hence a section $\monoid\rightarrow\freecou(P)$. Thus every projective object in $\ucori{\monoca}$ is a retract of $\freecou(P)$ for some projective $P$ in $\monoca$. Therefore $\forall\simo{\monoid}\in\simpo{\ucori{\monoca}}$ and any projective $\monoid'\in\ucori{\monoca}$ $\hom_{\ucori{\monoca}}(\monoid',\simo{\monoid})$ is a retract of $\hom_{\monoca}(P,\forget(\simo{\monoid}))$ for some projective $P\in\monoca$. Since the latter simplicial sets are simplicial abelian groups, they are fibrant. Thus every object in $\simpo{\ucori{\monoca}}$ is fibrant, and hence $\simpo{\ucori{\monoca}}$ has a simplicial model structure (\cite{Qu67} Thm.\@ 4).

Fibrations and weak equivalences are determined by evaluating at all projective objects in $\ucori{\monoca}$. We claim it is enough to evaluate at $\freecou(P_i)$ for each $P_i$. Every projective object in $\ucori{\monoca}$ is a retract of $\freecou(P)$ for some projective $P\in\monoca$. In turn every such $P$ is a retract of some $\bigoplus P_i$, therefore it is enough to evaluate at $\freecou(\bigoplus P_i)$. For every $\simo{\monoid}$ $\hom_{\ucori{\monoca}}(\freecou(\bigoplus P_i),\simo{\monoid})\cong\hom_{\monoca}(\bigoplus P_i,\forget(\simo{\monoid}))$, thus it is enough to evaluate at $\freecou(P_i)$ for each $P_i$.\end{proof}%

\smallskip

We know that $\coconbo$ is quasi-abelian, complete and cocomplete, has enough projectives and carries a closed symmetric monoidal structure. Therefore we can apply the previous proposition to get the following.

\begin{corollary} Let $\simpo{\coconbo}$ be the category of simplicial complete convex bornological spaces over $\mathbb R$. It is a simplicial model category with $\simo{\bornospace}\rightarrow\simo{\bornospace'}$ being a weak equivalence/fibration, if $\hom_{\coconbo}(\summable{\cardinal},\simo{\bornospace})\rightarrow\hom_{\coconbo}(\summable{\cardinal},\simo{\bornospace'})$ is a weak equivalence/fibration of simplicial sets for every cardinal $\cardinal$. 

The category $\simpo{\ucori{\coconbo}}$ of simplicial commutative monoids in $\coconbo$ is a simplicial model category, with $\simo{\monoid}\rightarrow\simo{\monoid'}$ being a weak equivalence/fibration, if $\hom_{\coconbo}(\summable{\cardinal},\simo{\monoid})\rightarrow\hom_{\coconbo}(\summable{\cardinal},\simo{\monoid'})$ is a weak equivalence/fibration of simplicial sets for every $\cardinal$. All objects in $\simpo{\ucori{\coconbo}}$, $\simpo{\coconbo}$ are fibrant, the adjunction $\simpo{\coconbo}\rightleftarrows\simpo{\ucori{\coconbo}}$ is a Quillen adjunction.\end{corollary}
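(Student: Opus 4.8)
The plan is to obtain both assertions as an immediate application of Proposition~\ref{ModelStructureInGeneral} to $\monoca=\coconbo$, so the only genuine work is to verify its hypotheses and to single out a convenient generating projective class. First I would recall that the preceding sections establish that $\coconbo$ is quasi-abelian, complete and cocomplete, has enough projectives — every $\bornospace\in\coconbo$ is a quotient of some $\underset{i\in I}\bigoplus\,\summable{\set_i}$ — and carries the closed symmetric monoidal structure given by $\ctensor$, so all the hypotheses of Proposition~\ref{ModelStructureInGeneral} are in place.

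Next I would take the generating projective class to be $\mathcal P:=\{\summable{\cardinal}\}$ ranging over all cardinals $\cardinal$. From $\summable{\cardinal'}\oplus\summable{\cardinal''}\cong\summable{\cardinal'\sqcup\cardinal''}$ one sees that $\mathcal P$ is closed under finite direct sums, and since every projective object of $\coconbo$ is a retract of a small direct sum $\underset{i\in I}\bigoplus\,\summable{\cardinal_i}$, the class of all small direct sums of objects of $\mathcal P$ is sufficiently large; hence $\mathcal P$ is a generating projective class in the sense of Definition~\ref{GeneratingProjectives} (that $\mathcal P$ is a proper class rather than a set is harmless, since that definition permits classes). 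Then Proposition~\ref{ModelStructureInGeneral}(1) produces the simplicial model structure on $\simpo{\coconbo}$ with weak equivalences and fibrations detected by $\hom_{\coconbo}(\summable{\cardinal},-)$, and Proposition~\ref{ModelStructureInGeneral}(2) produces the simplicial model structure on $\simpo{\ucori{\coconbo}}$ together with the Quillen adjunction $\simpo{\coconbo}\rightleftarrows\simpo{\ucori{\coconbo}}$; the free--forgetful adjunction $\hom_{\ucori{\coconbo}}(\freecou(\summable{\cardinal}),-)\cong\hom_{\coconbo}(\summable{\cardinal},\forget(-))$ rewrites the condition for monoids into exactly the form stated. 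Finally, fibrancy of every object of both categories is immediate, because each $\hom_{\coconbo}(\summable{\cardinal},\simo{\bornospace})$ is a $\hom$-set into a simplicial object of the additive category $\coconbo$, hence a simplicial abelian group and in particular a Kan complex.

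I do not expect any substantive obstacle: everything needed has already been proved, so this is essentially bookkeeping. The one point requiring a little care is the reduction from testing against all projective objects of $\coconbo$ (resp.\@ $\ucori{\coconbo}$) to testing only against the $\summable{\cardinal}$ (resp.\@ $\freecou(\summable{\cardinal})$); this is precisely the retract-and-product argument internal to the proof of Proposition~\ref{ModelStructureInGeneral}, using closure of fibrations and of weak equivalences between Kan complexes under retracts and small products, together with $\hom_{\coconbo}(\underset{i}\bigoplus\summable{\cardinal_i},-)\cong\underset{i}\prod\hom_{\coconbo}(\summable{\cardinal_i},-)$.
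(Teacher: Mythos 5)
Your proposal is correct and follows the same route as the paper, which likewise obtains the corollary as a direct application of Proposition~\ref{ModelStructureInGeneral} after noting that $\coconbo$ is quasi-abelian, complete and cocomplete, has enough projectives, and is closed symmetric monoidal. The extra details you supply (that $\{\summable{\cardinal}\}$ is a generating projective class, and the retract/product reduction) are exactly the points already handled inside the proof of that proposition.
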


Using Prop.\@ \ref{OneProjective} we have the similar statement in the bounded case.

\begin{corollary}\label{ModelStructureBoundedCase} Let $\cardinal\geq\aleph_0$, let $\simpo{\recoco{\cardinal}}$ be the category of simplicial objects in $\recoco{\cardinal}$. It is a simplicial model category with $\simo{\bornospace}\rightarrow\simo{\bornospace'}$ being a weak equivalence/fibration, if $\hom_{\recoco{\cardinal}}(\summable{\cardinal'},\simo{\bornospace})\rightarrow\hom_{\recoco{\cardinal}}(\summable{\cardinal'},\simo{\bornospace'})$ is a weak equivalence/fibration of simplicial sets for every cardinal $\cardinal'<\cardinal$. 

The category $\simpo{\ucori{\recoco{\cardinal}}}$ of simplicial commutative monoids in $\recoco{\cardinal}$ is a simplicial model category, with $\simo{\monoid}\rightarrow\simo{\monoid'}$ being a weak equivalence/fibration, if $\hom_{\recoco{\cardinal}}(\summable{\cardinal'},\simo{\monoid})\rightarrow\hom_{\recoco{\cardinal}}(\summable{\cardinal'},\simo{\monoid'})$ is a weak equivalence/fibration of simplicial sets for every $\cardinal'<\cardinal$. 

All objects in $\simpo{\ucori{\recoco{\cardinal}}}$, $\simpo{\recoco{\cardinal}}$ are fibrant, the adjunction $\simpo{\recoco{\cardinal}}\leftrightarrows\simpo{\ucori{\recoco{\cardinal}}}$ is a Quillen adjunction.\end{corollary}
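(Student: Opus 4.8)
The plan is to obtain this statement as a direct application of Proposition \ref{ModelStructureInGeneral}, taking $\monoca := \recoco{\cardinal}$ and choosing as generating projective class $\{P_i\}_{i\in I} := \{\summable{\cardinal'}\}_{\cardinal'<\cardinal}$. So the first step is simply to check, one by one, that the hypotheses of Proposition \ref{ModelStructureInGeneral} hold for $\recoco{\cardinal}$. By Proposition \ref{OneProjective} the category $\recoco{\cardinal}$ is quasi-abelian, complete and cocomplete, has enough projectives, carries a closed symmetric monoidal structure given by $\ctensor$, and $\{\summable{\cardinal'}\}_{\cardinal'<\cardinal}$ is a generating projective class in the sense of Definition \ref{GeneratingProjectives}. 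That is precisely what is required in order to invoke both parts of Proposition \ref{ModelStructureInGeneral}.

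Next I would unpack the conclusion. Part 1 of Proposition \ref{ModelStructureInGeneral} produces the simplicial model structure on $\simpo{\recoco{\cardinal}}$ in which $\simo{\bornospace}\rightarrow\simo{\bornospace'}$ is a weak equivalence or fibration precisely when $\hom_{\recoco{\cardinal}}(\summable{\cardinal'},\simo{\bornospace})\rightarrow\hom_{\recoco{\cardinal}}(\summable{\cardinal'},\simo{\bornospace'})$ is such for every $\cardinal'<\cardinal$, and in which every object is fibrant. Part 2 applied to the closed symmetric monoidal category $\recoco{\cardinal}$ gives the simplicial model structure on $\simpo{\ucori{\recoco{\cardinal}}}$ with weak equivalences and fibrations detected by the simplicial sets $\hom_{\simpo{\ucori{\recoco{\cardinal}}}}(\freecou(\summable{\cardinal'}),-)$; using the adjunction $\freecou\dashv\forget$ one rewrites these as $\hom_{\recoco{\cardinal}}(\summable{\cardinal'},\forget(-))$, which is the description in the statement. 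Part 2 also records that every object of $\simpo{\ucori{\recoco{\cardinal}}}$ is fibrant and that $\freecou\colon\simpo{\recoco{\cardinal}}\rightleftarrows\simpo{\ucori{\recoco{\cardinal}}}\colon\forget$ is a Quillen adjunction. This is exactly the content of the corollary.

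The one small point that deserves an explicit remark is that the evaluation functors appearing here are unambiguous: since $\recoco{\cardinal}\hookrightarrow\coconbo$ is a full subcategory and each $\summable{\cardinal'}$ with $\cardinal'<\cardinal$ lies in $\recoco{\cardinal}$, the hom-sets $\hom_{\recoco{\cardinal}}(\summable{\cardinal'},-)$ are literally the restrictions of $\hom_{\coconbo}(\summable{\cardinal'},-)$, so the resulting model structure does not depend on whether one regards these test objects inside $\recoco{\cardinal}$ or inside $\coconbo$. Beyond this bookkeeping there is no real obstacle: all the genuine work — verifying quasi-abelianness, cocompleteness, enough projectives, the generating projective class, and closedness of $\ctensor$ — was already carried out in Propositions \ref{CoreflectiveRestriction}, \ref{RestrictedProjectiveClass} and \ref{OneProjective}, and the passage to simplicial objects and to commutative monoids is handled uniformly by Proposition \ref{ModelStructureInGeneral}.
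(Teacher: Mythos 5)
Your proposal is correct and is exactly the paper's argument: the corollary is obtained by feeding Proposition \ref{OneProjective} (quasi-abelian, complete and cocomplete, enough projectives, $\{\summable{\cardinal'}\}_{\cardinal'<\cardinal}$ a generating projective class, $\ctensor$ closed symmetric monoidal) into both parts of Proposition \ref{ModelStructureInGeneral}. The extra remark about the hom-functors agreeing under the full inclusion $\recoco{\cardinal}\hookrightarrow\coconbo$ is harmless bookkeeping and does not change the argument.
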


In fact a much stronger statement can be made, if we choose $\cardinal$ to be regular. Recall (e.g.\@ \cite{Du01} Def.\@ 2.1) that a model category $\caty$ is {\it combinatorial}, if it is cofibrantly generated and locally presentable.

\begin{theorem} Let $\cardinal\geq\aleph_0$ be a regular cardinal. The simplicial model structure on $\simpo{\recoco{\cardinal}}$ from Corollary \ref{ModelStructureBoundedCase} is combinatorial.\end{theorem}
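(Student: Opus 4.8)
The plan is to verify separately the two ingredients in the definition of a combinatorial model category: that the underlying category $\simpo{\recoco{\cardinal}}$ is locally presentable, and that the model structure of Corollary \ref{ModelStructureBoundedCase} is cofibrantly generated.

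For local presentability, observe that $\simpo{\recoco{\cardinal}}$ is the category of functors from the small category $\Delta^{\op}$ to $\recoco{\cardinal}$. By Theorem \ref{LocalPresentability} the category $\recoco{\cardinal}$ is locally $\cardinal$-presentable (this is the one point where regularity of $\cardinal$ enters), and a category of functors from a small category into a locally presentable category is again locally presentable (e.g.\@ \cite{AdamekRosicky94}, \S1). Hence $\simpo{\recoco{\cardinal}}$ is locally presentable, and in particular every one of its objects is small, so the small object argument will apply to any set of morphisms with no further size bookkeeping.

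For cofibrant generation I would exhibit explicit generating sets. For $P\in\recoco{\cardinal}$ and a simplicial set $K$, let $P\otimes K\in\simpo{\recoco{\cardinal}}$ denote the simplicial tensoring applied to the constant simplicial object $P$; concretely $(P\otimes K)_n=\underset{K_n}\bigoplus P$, which lies in $\recoco{\cardinal}$ because $\recoco{\cardinal}$ is closed under the colimits of $\coconbo$ (Prop.\@ \ref{CoreflectiveRestriction}). The simplicial adjunction provides a natural isomorphism $\hom_{\simpo{\recoco{\cardinal}}}(P\otimes K,\simo{X})\cong\hom_{\ssets}(K,\hom_{\recoco{\cardinal}}(P,\simo{X}))$, with $\hom_{\recoco{\cardinal}}(P,\simo{X})$ the simplicial set from Prop.\@ \ref{ModelStructureInGeneral}. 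Now set
\[
\gencof:=\{\,\summable{\cardinal'}\otimes(\boundary{n}\hookrightarrow\simplex{n})\ :\ \cardinal'<\cardinal,\ n\geq 0\,\},\qquad
\gentrico:=\{\,\summable{\cardinal'}\otimes(\horn{n}{k}\hookrightarrow\simplex{n})\ :\ \cardinal'<\cardinal,\ n\geq 1,\ 0\leq k\leq n\,\}.
\]
These are genuine sets, since the cardinals below $\cardinal$ form a set and there are only countably many simplicial indices.

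The main step is to identify the associated classes of injective maps. By the displayed adjunction, a morphism $\simo{X}\rightarrow\simo{Y}$ has the right lifting property with respect to $\gencof$ if and only if for each $\cardinal'<\cardinal$ the map of simplicial sets $\hom_{\recoco{\cardinal}}(\summable{\cardinal'},\simo{X})\rightarrow\hom_{\recoco{\cardinal}}(\summable{\cardinal'},\simo{Y})$ lifts against every $\boundary{n}\hookrightarrow\simplex{n}$, i.e.\@ is a trivial Kan fibration, equivalently a map that is at once a Kan fibration and a weak equivalence of simplicial sets. By Corollary \ref{ModelStructureBoundedCase} this holds for all $\cardinal'<\cardinal$ precisely when $\simo{X}\rightarrow\simo{Y}$ is both a fibration and a weak equivalence in $\simpo{\recoco{\cardinal}}$, i.e.\@ a trivial fibration. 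The same argument with horn inclusions in place of boundary inclusions identifies the $\gentrico$-injective morphisms with the fibrations of $\simpo{\recoco{\cardinal}}$.

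Finally I would assemble these observations. The model structure on $\simpo{\recoco{\cardinal}}$ already exists (Corollary \ref{ModelStructureBoundedCase}); we have exhibited sets $\gencof$, $\gentrico$ whose classes of injective maps are, respectively, the trivial fibrations and the fibrations; and the small object argument is available because all objects are small. Since in a model category the cofibrations are exactly the maps with the left lifting property against all trivial fibrations, while the $\gencof$-cofibrations are by definition the maps with the left lifting property against all $\gencof$-injective maps, the two classes coincide; likewise the trivial cofibrations coincide with the $\gentrico$-cofibrations. Thus the model structure is cofibrantly generated with generating cofibrations $\gencof$ and generating trivial cofibrations $\gentrico$, and together with local presentability this proves that $\simpo{\recoco{\cardinal}}$ is combinatorial. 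I do not expect a genuine obstacle here --- the substance was already packed into Theorem \ref{LocalPresentability}; the only point that needs care is the two-way translation between lifting properties in $\simpo{\recoco{\cardinal}}$ and (trivial) Kan fibrations of the mapping simplicial sets $\hom_{\recoco{\cardinal}}(\summable{\cardinal'},-)$, which rests on the simplicial adjunction together with the fact (Prop.\@ \ref{OneProjective}) that $\{\summable{\cardinal'}\}_{\cardinal'<\cardinal}$ is a generating projective class, so that fibrations and weak equivalences in $\simpo{\recoco{\cardinal}}$ are detected exactly by these functors.
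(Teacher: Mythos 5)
Your proposal is correct, and both halves take a genuinely different route from the paper while landing on the same generating sets. For local presentability the paper does not use the functor-category argument: it passes through the Dold--Kan equivalence $\simpo{\recoco{\cardinal}}\simeq\compo{\recoco{\cardinal}}$ and exhibits an explicit set of $\cardinal$-presentable generators, namely bounded complexes of Banach spaces, checking that every complex is a filtered colimit of these via dissection. Your appeal to the general fact that $\recoco{\cardinal}^{\Delta^{\op}}$ inherits local presentability from $\recoco{\cardinal}$ (Theorem \ref{LocalPresentability} plus \cite{AdamekRosicky94}) is shorter and equally valid, though it does not produce the concrete generators the paper's version displays. For cofibrant generation the paper cites \cite{ChristensenHovey02} Thm.\@ 6.3(**), whose hypotheses it must verify (the members of the projective class must be compact relative to split monomorphisms, which is why Example \ref{SmallMonomorphisms} matters); that theorem both rebuilds the model structure and hands over the generating sets. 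You instead take the model structure of Corollary \ref{ModelStructureBoundedCase} as given and verify directly, via the tensor--hom adjunction $\hom(\summable{\cardinal'}\otimes K,\simo{X})\cong\hom_{\ssets}(K,\hom(\summable{\cardinal'},\simo{X}))$, that the $\gencof$-injectives are the trivial fibrations and the $\gentrico$-injectives are the fibrations, with smallness of domains supplied by local presentability. This is self-contained and avoids the external compactness hypothesis altogether; the trade-off is that it only re-presents an existing model structure as cofibrantly generated, whereas the Christensen--Hovey machinery would also construct it. Both arguments are sound.
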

\begin{proof} We claim that $\simpo{\recoco{\cardinal}}$ is locally $\cardinal$-presentable. According to Prop.\@ \ref{DoldKan} $\simpo{\recoco{\cardinal}}$ is equivalent to the category $\compo{\recoco{\cardinal}}$ of non-positively graded complexes in $\recoco{\cardinal}$. Thm.\@ \ref{LocalPresentability} tells us that each Banach space $\banachspace\in\recoco{\cardinal}$ is $\cardinal$-presentable. It follows easily that each bounded complex $\banachspace_{k}\rightarrow\banachspace_{k+1}\rightarrow\ldots\rightarrow\banachspace_{k+n}$ of Banach spaces is $\cardinal$-presentable in $\compo{\recoco{\cardinal}}$. 

\hide{%
Indeed, for any $m\in\mathbb Z_{\leq 0}$ the functor $\bornospace^\bullet\mapsto\bornospace^m$ has a right adjoint given by $\bornospace^m\rightarrow\bornospace^m$ put in degrees $m-1$ and $m$. Therefore colimits in $\compo{\recoco{\cardinal}}$ are computed degree-wise, and $\banachspace\rightarrow\banachspace$, put in any degree, is $\cardinal$-presentable for every Banach space $\banachspace\in\recoco{\cardinal}$. As finite limits commute with filtered colimits in $\sets$, $\underset{0\leq i\leq n}\bigoplus(\banachspace_{k+i}\rightarrow\banachspace_{k+i})$ is $\cardinal$-presentable (we put $\banachspace_{k+i}\rightarrow\banachspace_{k+i}$ in degrees $k+i$ and $k+i+1$). The obvious projection
	\begin{equation*}\underset{0\leq i\leq n}\bigoplus(\banachspace_{k+i}\rightarrow\banachspace_{k+i})\longrightarrow
	(\banachspace_{k}\rightarrow\banachspace_{k+1}\rightarrow\ldots\rightarrow\banachspace_{k+n})\end{equation*}
is surjective in each degree, hence a strict epimorphism. The kernel of this projection is again a finite complex of Banach spaces from $\recoco{\cardinal}$. Thus, using Lemma \ref{SmallProjectives}, we conclude that $\banachspace_{k}\rightarrow\banachspace_{k+1}\rightarrow\ldots\rightarrow\banachspace_{k+n}$ is $\cardinal$-presentable.}%

Let $\bornospace^\bullet\in\compo{\recoco{\cardinal}}$. For any $m\in\mathbb Z_{\leq 0}$ and any Banach subspace $\banachspace\subseteq\bornospace^m$ we can choose a subcomplex $\banachspace^\bullet\subseteq\bornospace^\bullet$ consisting of Banach subspaces, s.t.\@ $\banachspace^{<m}=0$ and $\banachspace^m=\banachspace$. Varying $m$, $\banachspace$ and all other choices, we have a filtered diagram of subcomplexes, whose colimit is $\bornospace^\bullet$. Indeed, in each degree it is just the dissection. So $\compo{\recoco{\cardinal}}$ is locally $\cardinal$-presentable.

The model structure on $\simpo{\recoco{\cardinal}}$ from Cor.\@ \ref{ModelStructureBoundedCase} is exactly the same as in \cite{ChristensenHovey02} Thm.\@ 6.3(**). Moreover, since Banach spaces are compact with respect to all monomorphisms we can use this theorem in loc.\@ cit.\@,\footnote{It was kindly communicated to us by D.Christensen, that Thm.\@ 6.3(**) in \cite{ChristensenHovey02} is valid only under the assumption that members of the projective class are compact relative to split monomorphisms. The arXiv version of \cite{ChristensenHovey02} is updated.}  and conclude that the model structure on $\simpo{\recoco{\cardinal}}$ is cofibrantly generated by 
	\begin{equation}\gencof:=\{\summable{\cardinal'}\otimes\boundary{n}\rightarrow\summable{\cardinal'}\otimes\simplex{n}\}_{n\geq 0},\end{equation}
	\begin{equation}\gentrico:=
	\{\summable{\cardinal'}\otimes\horn{n}{k}\rightarrow\summable{\cardinal'}\otimes\simplex{n}\}_{0<n\geq k\geq 0},\end{equation}
where $\cardinal'$ runs over all cardinals $<\cardinal$.\footnote{If $\cardinal=(\cardinal')^+$, it is enough to take this one $\cardinal'$.}\end{proof}%

Next we consider the monoidal and model structures together. Their interaction is governed by two axioms: the pushout product axiom and the monoid axiom (\cite{SchwedeShipley00} Def.\@ 3.1 and Def.\@ 3.3).

\begin{proposition}\label{GoodMonoidalStructure} Let $(\quabi,\ctensor)$ be a closed symmetric monoidal quasi-abelian category having enough projectives. Let $\{\pobj_i\}_{i\in I}$ be a generating projective class (Def.\@ \ref{GeneratingProjectives}), s.t.\@ $I$ is a set, each $\pobj_i$ is compact with respect to split monomorphisms, and $\{\pobj_i\}_{i\in I}$ is closed with respect to $\ctensor$. Then the category $\simpo{\quabi}$ of simplicial objects in $\quabi$ is a simplicial cofibrantly generated monoidal model category satisfying the monoid axiom.\end{proposition}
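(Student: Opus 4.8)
The plan is to inherit the model structure and its cofibrant generation from the earlier results and then to verify the two compatibility axioms directly. By Proposition~\ref{ModelStructureInGeneral}(1), $\simpo{\quabi}$ carries a simplicial model structure in which $\simo{C}\to\simo{D}$ is a weak equivalence, resp.\ a fibration, precisely when $\hom_{\quabi}(\pobj_i,\simo{C})\to\hom_{\quabi}(\pobj_i,\simo{D})$ is one of simplicial sets for every $i\in I$. Exactly as in the combinatorial case treated above, since $I$ is a set and each $\pobj_i$ is compact relative to split monomorphisms, \cite{ChristensenHovey02} shows that this model structure is cofibrantly generated by
\begin{equation*}\gencof=\{\pobj_i\otimes(\boundary{n}\hookrightarrow\simplex{n})\}_{i\in I,\ n\geq 0},\qquad
\gentrico=\{\pobj_i\otimes(\horn{n}{k}\hookrightarrow\simplex{n})\}_{i\in I,\ 0<n\geq k\geq 0},\end{equation*}
where $\otimes\colon\simpo{\quabi}\times\ssets\to\simpo{\quabi}$ is the simplicial tensoring, $(P\otimes K)_m:=\underset{K_m}\bigoplus P_m$. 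What then remains is to check the pushout product axiom and the monoid axiom for the levelwise product $\ctensor$ on $\simpo{\quabi}$; the unit condition of a monoidal model category holds as soon as the monoidal unit is projective --- as it is in all cases of interest, e.g.\ $\monone\cong\summable{1}$, a retract of $\samcou$, for $\coconbo$ --- so that the constant simplicial object on it is cofibrant, and I would not dwell on it.

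For the pushout product axiom the key is the natural isomorphism $(P\otimes K)\ctensor(P'\otimes L)\cong(P\ctensor P')\otimes(K\times L)$, valid for all $P,P'\in\simpo{\quabi}$ and $K,L\in\ssets$ and checked levelwise using that $\ctensor$, belonging to a closed monoidal structure, commutes with the coproducts $\underset{K_m}\bigoplus$. Feeding in the generators, the pushout product of $\pobj_i\otimes(\boundary{n}\hookrightarrow\simplex{n})$ with $\pobj_j\otimes(\boundary{m}\hookrightarrow\simplex{m})$ becomes $(\pobj_i\ctensor\pobj_j)\otimes\bigl[(\boundary{n}\hookrightarrow\simplex{n})\,\square\,(\boundary{m}\hookrightarrow\simplex{m})\bigr]$, with $\square$ the cartesian pushout product of simplicial sets. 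Here $\pobj_i\ctensor\pobj_j$ again lies in the generating projective class by hypothesis, the bracketed map is a monomorphism of simplicial sets, and $\pobj_k\otimes(-)\colon\ssets\to\simpo{\quabi}$ is left adjoint to $\simo{C}\mapsto\hom_{\quabi}(\pobj_k,\simo{C})$, hence left Quillen, since by the very definition of the model structure that right adjoint preserves fibrations and weak equivalences. Therefore $\pobj_k\otimes(-)$ sends monomorphisms of $\ssets$ to cofibrations and trivial cofibrations to trivial cofibrations; since the pushout product preserves colimits in each variable, the standard reduction to generating (trivial) cofibrations settles this axiom.

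The substantive point, and the step I expect to be the main obstacle, is the monoid axiom --- the trouble being that $\simpo{\quabi}$ is not left proper (its objects are all fibrant but generally not cofibrant), so weak equivalences are not \emph{a priori} stable under cobase change. I would isolate the class $\mathcal{L}$ of morphisms of $\simpo{\quabi}$ that are levelwise split monomorphisms and weak equivalences, and prove: (a) $j\ctensor\id_X\in\mathcal{L}$ for every $j\in\gentrico$ and $X\in\simpo{\quabi}$; (b) $\mathcal{L}$ is closed under cobase change, transfinite composition and retracts; (c) $\mathcal{L}$ consists of weak equivalences. For (a) one uses the companion isomorphism $(P\otimes K)\ctensor X\cong(P\ctensor X)\otimes K$ to rewrite $j\ctensor\id_X$, with $j=\pobj_i\otimes(\horn{n}{k}\hookrightarrow\simplex{n})$, as $(\pobj_i\ctensor X)\otimes(\horn{n}{k}\hookrightarrow\simplex{n})$; this is a levelwise split monomorphism because $\horn{n}{k}\hookrightarrow\simplex{n}$ is levelwise an inclusion of finite sets. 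That it is a weak equivalence --- which in particular yields (c) --- is seen by applying $\hom_{\quabi}(\pobj_l,-)$: as the simplicial sets in question are levelwise finite this commutes with $\otimes$, and the claim reduces to the statement that for every simplicial abelian group $A$ the map $A\otimes(\horn{n}{k}\hookrightarrow\simplex{n})=A\otimes_{\mathbb{Z}}(\mathbb{Z}[\horn{n}{k}]\hookrightarrow\mathbb{Z}[\simplex{n}])$ is a weak equivalence. Here $\mathbb{Z}[\simplex{n}]/\mathbb{Z}[\horn{n}{k}]$ is a levelwise free simplicial abelian group with vanishing homotopy, hence under normalization a bounded-below acyclic --- therefore contractible --- complex of free abelian groups, so by Eilenberg--Zilber its tensor product with $A$ remains contractible; together with the levelwise split short exact sequence $0\to A\otimes\mathbb{Z}[\horn{n}{k}]\to A\otimes\mathbb{Z}[\simplex{n}]\to A\otimes(\mathbb{Z}[\simplex{n}]/\mathbb{Z}[\horn{n}{k}])\to 0$ and its long exact homotopy sequence, this gives the claim. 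More generally, a levelwise split monomorphism lies in $\mathcal{L}$ iff $\hom_{\quabi}(\pobj_l,-)$ of its levelwise cokernel has vanishing homotopy for every $l$.

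For (b), a cobase change of a levelwise split monomorphism is again a levelwise split monomorphism with the \emph{same} cokernel, so it remains in $\mathcal{L}$ by the cokernel criterion just noted; for a transfinite composition one invokes the hypothesis that each $\pobj_l$ is compact relative to split monomorphisms, so that $\hom_{\quabi}(\pobj_l,-)$ commutes with the colimit, together with the fact that homotopy groups of simplicial abelian groups commute with filtered colimits; closure under retracts is clear. Hence every $(\gentrico\ctensor\simpo{\quabi})$-cell complex lies in $\mathcal{L}$; and since, in a cofibrantly generated model category, every trivial cofibration is a retract of a $\gentrico$-cell complex while $(-)\ctensor\id_X$ preserves colimits and retracts, also every $(\{\text{trivial cofibrations}\}\ctensor\simpo{\quabi})$-cell complex lies in $\mathcal{L}$, a fortiori is a weak equivalence --- which is the monoid axiom. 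The crux, as anticipated, is part (b): it is the rigidity of levelwise split monomorphisms (their cokernel unchanged under cobase change) together with the compactness of the generators relative to split monomorphisms that stand in for the missing left properness.
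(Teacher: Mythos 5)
Your proposal is correct and follows the paper's skeleton almost exactly: the cofibrant generation is inherited from \cite{ChristensenHovey02} using the set-indexed, compact generating projectives; the pushout-product axiom is reduced to generators via the isomorphism $(\obj\otimes\sset)\ctensor(\obj'\otimes\sset')\cong(\obj\ctensor\obj')\otimes(\sset\times\sset')$ and the closure of $\{\pobj_i\}$ under $\ctensor$; and the monoid axiom is reduced, via \cite{SchwedeShipley00} Lemma 3.5, to showing that transfinite compositions of cobase changes of $\gentrico\ctensor\id$ are weak equivalences. The one place you genuinely diverge is the last step. The paper passes through Dold--Kan to $\compo{\quabi}$, where $\qobj\otimes(\horn{n}{k}\hookrightarrow\simplex{n})$ becomes the split inclusion $\qobj^\bullet\rightarrow\qobj^\bullet\oplus(\qobj\overset{=}\rightarrow\qobj)[n]$ with cokernel a two-term identity complex; cobase changes keep this form and transfinite compositions accumulate direct sums of such visibly acyclic summands. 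You instead stay in $\simpo{\quabi}$ and run the argument through your class $\mathcal L$ of levelwise split monomorphisms detected by the acyclicity of their cokernel under $\hom_{\quabi}(\pobj_l,-)$, proving the acyclicity of $\mathbb Z[\simplex{n}]/\mathbb Z[\horn{n}{k}]\otimes A$ by hand and invoking compactness of the $\pobj_l$ for the limit stages. The two arguments encode the same mechanism (split monomorphism with contractible cokernel, rigid under cobase change); the paper's normalization trick makes the acyclicity of the cokernel and of its transfinite accumulation immediate, while your version spells out the cobase-change and colimit bookkeeping that the paper compresses, and in fact states more carefully than the paper does that the relevant class must include $(\pobj_i\ctensor\simo{X})\otimes(\horn{n}{k}\hookrightarrow\simplex{n})$ for \emph{simplicial} $\simo{X}$, not merely constant ones. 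One cosmetic caveat: closure of $\mathcal L$ under transfinite composition is slightly more than you need and slightly more than you prove (the composite at a limit ordinal need not remain levelwise split, since the retractions need not be compatible); but only the weak-equivalence conclusion at the limit is used, and that follows from your filtered-colimit argument regardless.
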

\begin{proof} By requiring that $I$ is a set and each $\pobj_i$ is compact with respect to split monomorphisms, we have made the model structure on $\simpo{\quabi}$, given by Prop.\@ \ref{ModelStructureInGeneral}, coincide with the model structure in \cite{ChristensenHovey02} Thm.\@ 6.3(**). Therefore it is cofibrantly generated.

We recall a well known fact.

\begin{lemma} Let $(\caty,\ctensor)$ be a closed symmetric monoidal category having all finite coproducts, and let $\simpo{\caty}$ be the category of simplicial objects in $\caty$. For any $\obj,\obj'\in\caty$ and any $\sset,\sset'\in\ssets$ we have in $\simpo{\caty}$
	\begin{equation*}(\obj\otimes\sset)\ctensor(\obj'\otimes\sset')\cong(\obj\ctensor\obj')\otimes(\sset\times\sset'),\end{equation*}
which is natural both in simplicial sets and objects of $\caty$.\end{lemma}
\hide{%
\begin{proof} By definition $(\obj\otimes\sset)_n:=\underset{s\in\sset_n}\coprod\,\obj$ (\cite{Qu67} \S II.1 Prop.\@ 2), and since $\ctensor$ commutes with direct sums we have $((\obj\otimes\sset)\ctensor(\obj'\otimes\sset'))_n=\underset{s\in\sset_n,s'\in\sset_n}\coprod(\obj\ctensor\obj')=((\obj\ctensor\obj')\otimes(\sset\times\sset'))_n$. Naturality is obvious.\end{proof}
}%
Given $\pobj_i,\pobj_j\in\quabi$ and cofibrations $\sset_i\rightarrow\sset'_i$, $\sset_j\rightarrow\sset'_j$ in $\ssets$ and using the previous lemma, we have a commutative diagram
	\begin{equation*}{\small\xymatrix{(\pobj_i\otimes\sset_i)\ctensor(\pobj_j\otimes\sset'_j)\underset{(\pobj_i\otimes\sset_i)\ctensor(\pobj_j\otimes\sset_j)}
	\coprod(\pobj_i\otimes\sset'_i)\ctensor(\pobj_j\otimes\sset_j)\ar[r]\ar[d]_\cong & (\pobj_i\otimes\sset'_i)\ctensor(\pobj_j\otimes\sset'_j)\ar[d]^\cong\\
	(\pobj_i\ctensor\pobj_j)\otimes(\sset_i\times\sset'_j\underset{\sset_i\times\sset_j}\coprod\sset'_i\times\sset_j)\ar[r] &
	(\pobj_i\ctensor\pobj_j)\otimes(\sset'_i\times\sset'_j).}}\end{equation*}
As $\sset_i\rightarrow\sset'_i$, $\sset_j\rightarrow\sset'_j$ are cofibrations, so is $\sset_i\times\sset'_j\underset{\sset_i\times\sset_j}\coprod\sset'_i\times\sset_j\rightarrow\sset'_i\times\sset'_j$, and if in addition one of the former is a trivial cofibration, so is the latter (\cite{GJ99} Prop.\@ 3.11). Since $\pobj_i\ctensor\pobj_j\in\{\pobj_i\}_{i\in I}$ it is cofibrant as a constant simplicial object in $\quabi$, therefore the bottom arrow in the above diagram is a cofibration or correspondingly a trivial cofibration (\cite{GJ99} Prop.\@ 3.4). So $\simpo{\quabi}$ is a monoidal model category (\cite{SchwedeShipley00} Lemma 3.5). 

According to \cite{SchwedeShipley00} Lemma 3.5, to prove that $\simpo{\quabi}$ satisfies the monoid axiom it is enough to show that transfinite compositions of co-base changes of elements of $\mathcal M:=\{\qobj\otimes\horn{n}{k}\rightarrow\qobj\otimes\simplex{n}\}_{0<n\geq k\geq 0}$ are weak equivalences (here $\qobj$ runs over all objects of $\quabi$). Using Prop.\@ \ref{DoldKan} we switch to $\compo{\quabi}$. It is immediate to see that for all $\qobj\in\quabi$, $n>0$ and each $n\geq k\geq 0$ the morphism in $\compo{\quabi}$ corresponding to $\qobj\otimes\horn{n}{k}\rightarrow\qobj\otimes\simplex{n}$ is the canonical inclusion 
	\begin{equation*}\qobj^\bullet\longrightarrow\qobj^\bullet\coprod(\qobj\overset=\rightarrow\qobj)[n],\end{equation*} 
where $(\qobj\overset=\rightarrow\qobj)[n]$ is concentrated in degrees $-n,-n+1$. Therefore co-base changes of elements of $\mathcal M$ are of the form $\qobj'^\bullet\rightarrow\qobj'^\bullet\coprod(\qobj\overset=\rightarrow\qobj)[n]$. Transfinite compositions of such morphisms are weak equivalences since 
$\underset{j\in J}\bigoplus(\qobj_j\overset=\rightarrow\qobj_j)[n_j]$ is acyclic for any $J$, $\{\qobj_j\}_{j\in J}$, $\{n_j\}_{j\in J}$.\end{proof}%

Now we can use \cite{SchwedeShipley00} Thm.\@ 4.1 to conclude the following.

\begin{theorem} Let $\cardinal\geq\aleph_0$ be regular, and let $\simo{\monoid}\in\simpo{\ucori{\recoco{\cardinal}}}$. The category of $\simo{\monoid}$-modules in $\simpo{\recoco{\cardinal}}$ is a cofibrantly generated monoidal model category satisfying the monoid axiom. A morphism is a fibration/weak equivalence, if it is a fibration/weak equivalence as a morphism in $\simpo{\recoco{\cardinal}}$.\end{theorem}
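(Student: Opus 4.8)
The plan is to obtain the statement as a direct application of \cite{SchwedeShipley00} Thm.\@ 4.1: that result asserts that if $\monoca$ is a cofibrantly generated monoidal model category satisfying the monoid axiom and $\monoid$ is a commutative monoid in $\monoca$, then the category of $\monoid$-modules, equipped with the relative tensor product over $\monoid$, is again a cofibrantly generated monoidal model category satisfying the monoid axiom, with fibrations and weak equivalences created by the forgetful functor to $\monoca$. So the proof reduces to two points: identifying $\simpo{\recoco{\cardinal}}$ as such a category, and recognising $\simo{\monoid}$ as a commutative monoid in it.

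For the first point I would invoke Proposition \ref{GoodMonoidalStructure} with $\quabi=\recoco{\cardinal}$, checking its hypotheses. By Proposition \ref{OneProjective}, $\recoco{\cardinal}$ is a closed symmetric monoidal quasi-abelian category with enough projectives and with generating projective class $\{\summable{\cardinal'}\}_{\cardinal'<\cardinal}$; this class is indexed by a set; each $\summable{\cardinal'}$ is compact relative to split monomorphisms, because Banach spaces are compact relative to all of $\monobor$ (Example \ref{SmallMonomorphisms}) and split monomorphisms are monomorphisms; and the class is closed under $\ctensor$, since $\summable{\cardinal'}\ctensor\summable{\cardinal''}\cong\summable{\cardinal'\times\cardinal''}$ (as in the proof of Proposition \ref{CoreflectiveRestriction}) and $\cardi{\cardinal'\times\cardinal''}<\cardinal$. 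Proposition \ref{GoodMonoidalStructure} then yields precisely that $\simpo{\recoco{\cardinal}}$, with the model structure of Corollary \ref{ModelStructureBoundedCase}, is a cofibrantly generated monoidal model category satisfying the monoid axiom; regularity of $\cardinal$ enters only through the fact that it was needed to make this model structure combinatorial in the first place.

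For the second point I would use the standard identification of $\simpo{\ucori{\recoco{\cardinal}}}$ with the category of commutative monoid objects in $\simpo{\recoco{\cardinal}}$ for the levelwise tensor product, under which a $\simo{\monoid}$-module in $\simpo{\recoco{\cardinal}}$ is exactly a module over the corresponding monoid. Then \cite{SchwedeShipley00} Thm.\@ 4.1 applies verbatim and produces the asserted model structure on $\simo{\monoid}$-modules, together with monoidality for the relative tensor product over $\simo{\monoid}$, the monoid axiom, and the description of fibrations and weak equivalences as those morphisms that are fibrations and weak equivalences in $\simpo{\recoco{\cardinal}}$. Since the substantive work --- cofibrant generation, the pushout-product axiom, and the monoid axiom for $\simpo{\recoco{\cardinal}}$ --- was already carried out in the preceding results, there is no real obstacle left; the only care needed is the routine bookkeeping above, especially that $\{\summable{\cardinal'}\}_{\cardinal'<\cardinal}$ stays within the cardinality bound $\cardinal$ under $\ctensor$.
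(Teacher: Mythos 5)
Your proposal is correct and follows essentially the same route as the paper: the paper likewise deduces the theorem directly from \cite{SchwedeShipley00} Thm.\@ 4.1, with Prop.\@ \ref{GoodMonoidalStructure} (applied to $\recoco{\cardinal}$ with the generating projective class $\{\summable{\cardinal'}\}_{\cardinal'<\cardinal}$) supplying the pushout-product and monoid axioms, and local presentability supplying the smallness needed for that theorem. Your explicit verification of the hypotheses — set-indexing, compactness relative to split monomorphisms via Example \ref{SmallMonomorphisms}, and closure of the class under $\ctensor$ via $\summable{\cardinal'}\ctensor\summable{\cardinal''}\cong\summable{\cardinal'\times\cardinal''}$ — is exactly the bookkeeping the paper leaves implicit.
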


\begin{remark} For a simplicial commutative monoid $\simo{\monoid}$ in any closed monoidal quasi-abelian category $\quabi$, the monoidal structure on the category of $\simo{\monoid}$-modules is obtained by taking the co-equalizer of $\simo{\module}\ctensor\simo{\monoid}\ctensor\simo{\module'}\rightrightarrows\simo{\module}\ctensor\simo{\module'}$, where the arrows are the $\simo{\monoid}$-module structures on $\simo{\module}$ and $\simo{\module'}$ (e.g.\@ \cite{SchwedeShipley00} \S 4). 

Notice that $\modules{\simo{\monoid}}$ is a quasi-abelian category. Indeed, since $\ctensor$ is closed, the forgetful functor $\modules{\simo{\monoid}}\rightarrow\simpo{\quabi}$ has also a right adjoint, given by $\simo{\qobj}\mapsto\Hom_{\simpo{\quabi}}(\simo{\monoid},\simo{\qobj})$. Therefore $\modules{\simo{\monoid}}$ is additive, finitely complete and co-complete and pullbacks of strict epimorphisms/monomorphisms in $\modules{\simo{\monoid}}$  are stable under pullbacks/pushouts respectively.\end{remark}

Now we look at what happens, if we start with an adjunction between quasi-abelian categories. The first statement is obvious.

\begin{proposition}\label{JustAnAdjunction} Let $\adifu\colon\quabi\rightarrow\quabi'$ be a strictly exact functor between quasi-abelian categories with enough projectives. Suppose that $\adifu$ has a left adjoint $\adifu'$. Then $(\adifu',\adifu)$ induces a Quillen adjunction $\simo{\adifu'}\colon\simpo{\quabi'}\rightleftarrows\simpo{\quabi}\colon\simo{\adifu}$.\end{proposition}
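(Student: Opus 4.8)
The plan is to show that the degreewise prolongation $\simo{\adifu}\colon\simpo{\quabi}\rightarrow\simpo{\quabi'}$ (apply $\adifu$ in each simplicial degree) is a right Quillen functor for the model structures of Prop.~\ref{ModelStructureInGeneral}; its degreewise left adjoint is $\simo{\adifu'}$, and this will furnish the asserted Quillen adjunction. Concretely I would verify that $\simo{\adifu}$ preserves fibrations and trivial fibrations. Recall that in the model structure on $\simpo{\quabi'}$ (Quillen's construction, \cite{Qu67}~\S II.4, which Prop.~\ref{ModelStructureInGeneral} refines) a morphism is a fibration, resp.\ a weak equivalence, exactly when $\hom_{\quabi'}(P',-)$ carries it to a fibration, resp.\ weak equivalence, of simplicial sets for every projective $P'\in\quabi'$; testing against a generating projective class (Def.~\ref{GeneratingProjectives}) suffices, but it is harmless here to test against all projectives.

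The single substantive step, which I would isolate as a lemma, is: \emph{the left adjoint $\adifu'$ sends projective objects of $\quabi'$ to projective objects of $\quabi$}. This is where strict exactness of $\adifu$ is used. By Prop.~\ref{ConditionsStrictlyExact} a strictly exact functor preserves strict epimorphisms, so for any strict epimorphism $X\rightarrow Y$ in $\quabi$ the morphism $\adifu(X)\rightarrow\adifu(Y)$ is a strict epimorphism in $\quabi'$. If $P'\in\quabi'$ is projective then $\hom_{\quabi'}(P',\adifu(X))\rightarrow\hom_{\quabi'}(P',\adifu(Y))$ is surjective, and the adjunction isomorphism $\hom_{\quabi}(\adifu'(P'),-)\cong\hom_{\quabi'}(P',\adifu(-))$ turns this into surjectivity of $\hom_{\quabi}(\adifu'(P'),X)\rightarrow\hom_{\quabi}(\adifu'(P'),Y)$; hence $\adifu'(P')$ is projective.

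With the lemma in hand the remainder is formal. Given a fibration, resp.\ trivial fibration, $\simo{C}\rightarrow\simo{D}$ in $\simpo{\quabi}$ and a projective $P'\in\quabi'$, applying the adjunction in each degree yields a natural isomorphism of simplicial sets $\hom_{\quabi'}(P',\simo{\adifu}(\simo{C}))\cong\hom_{\quabi}(\adifu'(P'),\simo{C})$, and likewise for $\simo{D}$, compatibly with the maps induced by $\simo{C}\rightarrow\simo{D}$; since $\adifu'(P')$ is projective in $\quabi$, the right-hand side is a fibration, resp.\ trivial fibration, of simplicial sets by definition of the model structure on $\simpo{\quabi}$, hence so is the left-hand side, and this for every projective $P'$, which says $\simo{\adifu}(\simo{C})\rightarrow\simo{\adifu}(\simo{D})$ is a fibration, resp.\ trivial fibration, in $\simpo{\quabi'}$. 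I do not expect a genuine obstacle (the proposition is flagged as obvious); the one point to watch is whether one tests against \emph{all} projectives or only a fixed generating class. If one wants to stay with the generating classes $\{P_i\}\subseteq\quabi$ and $\{P'_j\}\subseteq\quabi'$, one replaces the last step by the observation that $\adifu'(P'_j)$, being projective, is a retract of some $\bigoplus_{k}P_{i_k}$, so that $\hom_{\quabi}(\adifu'(P'_j),\simo{C}\rightarrow\simo{D})$ is a retract of $\prod_{k}\hom_{\quabi}(P_{i_k},\simo{C}\rightarrow\simo{D})$, and products and retracts of (trivial) fibrations between fibrant simplicial sets are again (trivial) fibrations; the conclusion is unchanged. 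Adjointness of $\simo{\adifu}$ and $\simo{\adifu'}$ itself is immediate from degreewise adjointness.
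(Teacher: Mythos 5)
Your proposal is correct and follows essentially the same route as the paper: the key step in both is that the left adjoint $\adifu'$ sends projectives of $\quabi'$ to projectives of $\quabi$, proved exactly as you do via the adjunction isomorphism and the fact that the strictly exact $\adifu$ preserves strict epimorphisms, after which evaluation at $P'$ of $\simo{\adifu}(f)$ is identified with evaluation at $\adifu'(P')$ of $f$. The only cosmetic difference is that the paper checks preservation of fibrations directly (strict exactness gives levelwise strict epimorphisms) and uses the lemma for weak equivalences, whereas you run both fibrations and trivial fibrations uniformly through the adjunction; your handling of the generating-class-versus-all-projectives point via retracts and products is also the paper's.
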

\hide{\begin{proof} The fact that $(\simo{\adifu'},\simo{\adifu})$ is an adjunction is clear. We need to show that $\simo{\adifu}$ maps fibrations and trivial fibrations to morphisms of the same kind. Let $\simo{\qobj}\rightarrow\simo{\qobj'}$ be a fibration in $\simpo{\quabi}$. By definition this means that for every projective $\pobj\in\quabi$ the morphism of simplicial abelian groups
	\begin{equation*}\hom_{\quabi}(\pobj,\simo{\qobj})\longrightarrow\hom_{\quabi}(\pobj,\simo{\qobj'})\end{equation*}
is a fibration, i.e.\@ surjective in each simplicial dimension. Since this is supposed to hold for each projective $\pobj$, it is clear that $\simo{\qobj}\rightarrow\simo{\qobj'}$ is a strict epimorphism in each simplicial degree. Conversely, every morphism in $\simpo{\quabi}$ with this property is a fibration. Since by assumption $\adifu$ is strictly exact, it preserves strict epimorphisms, hence $\simo{\adifu}$ maps fibrations to fibrations.

Next we claim that $\adifu'$ maps projectives in $\quabi'$ to projectives in $\quabi$. Indeed, let $\pobj\in\quabi'$ be projective, and let $\qobj'\rightarrow\qobj''$ be a strict epimorphism in $\quabi$. Since $\adifu'$ is left adjoint to $\adifu$, we have a commutative diagram
	\begin{equation*}\xymatrix{\hom_{\quabi}(\adifu'(\pobj),\qobj')\ar[r]\ar[d]_\cong & \hom_{\quabi}(\adifu'(\pobj),\qobj'')\ar[d]^\cong\\
	\hom_{\quabi'}(\pobj,\adifu(\qobj'))\ar[r] & \hom_{\quabi'}(\pobj,\adifu(\qobj'')).}\end{equation*}
As $\adifu$ preserves strict epimorphisms and $\pobj$ is projective, the bottom row is surjective, hence so is the top row, i.e.\@ $\adifu'(\pobj)$ has to be projective in $\quabi$.

Now it is easy to see that $\simo{\adifu}$ maps weak equivalences to weak equivalences. Indeed, a morphism $\simo{\adifu}(\simo{\qobj}\rightarrow\simo{\qobj'})$ in $\simo{\quabi'}$ is a weak equivalence, iff its evaluation at each projective in $\quabi'$ is a weak equivalence of simplicial abelian groups. By adjunction this evaluation can be computed in $\quabi$, and since $\adifu'$ maps projectives to projectives, $\simo{\qobj}\rightarrow\simo{\qobj'}$ being a weak equivalence implies the same for $\simo{\adifu}(\simo{\qobj}\rightarrow\simo{\qobj'})$.\end{proof}}%

Sometimes such adjunctions are Quillen equivalences.

\begin{proposition}\label{QuillenEquivalence} Let $\adifu\colon\quabi\rightarrow\quabi'$ be a strictly exact fully faithful functor between quasi-abelian categories with enough projectives. Suppose that $\adifu$ has a left adjoint $\adifu'$, and for any $\qobj'\in\quabi'$ there is $\qobj\in\quabi$ and a strict epimorphism $\adifu(\qobj)\rightarrow\qobj'$. Then the Quillen adjunction $\simpo{\quabi'}\rightleftarrows\simpo{\quabi}$ from Prop.\@ \ref{JustAnAdjunction} is a Quillen equivalence.\end{proposition}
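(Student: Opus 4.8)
The plan is to verify the two conditions in Hovey's criterion for a Quillen equivalence (\cite{Ho99} Cor.\@ 1.3.16). Since every object of $\simpo{\quabi}$ and of $\simpo{\quabi'}$ is fibrant (Prop.\@ \ref{ModelStructureInGeneral}), so that $\simo{\adifu'}(\simo{\qobj'})$ needs no fibrant replacement, the Quillen adjunction $\simo{\adifu'}\colon\simpo{\quabi'}\rightleftarrows\simpo{\quabi}\colon\simo{\adifu}$ of Prop.\@ \ref{JustAnAdjunction} is a Quillen equivalence as soon as (A) $\simo{\adifu}$ reflects weak equivalences, and (B) for every cofibrant $\simo{\qobj'}\in\simpo{\quabi'}$ the unit $\eta_{\simo{\qobj'}}\colon\simo{\qobj'}\rightarrow\simo{\adifu}\simo{\adifu'}(\simo{\qobj'})$ is a weak equivalence.

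First I would collect three elementary consequences of the hypotheses. (i) As $\adifu$ is full and faithful, the counit $\varepsilon\colon\adifu'\adifu\rightarrow\id_\quabi$ is an isomorphism; by the triangle identity $\adifu(\varepsilon_\qobj)\circ\eta_{\adifu(\qobj)}=\id_{\adifu(\qobj)}$, so $\eta_{\adifu(\qobj)}$ is an isomorphism for every $\qobj\in\quabi$. (ii) If $\pobj'\in\quabi'$ is projective, a strict epimorphism $\adifu(\qobj)\rightarrow\pobj'$ (which exists by hypothesis) splits, exhibiting $\pobj'$ as a retract of $\adifu(\qobj)$; by naturality of $\eta$ the morphism $\eta_{\pobj'}$ is then a retract, in the arrow category, of the isomorphism $\eta_{\adifu(\qobj)}$, hence is itself an isomorphism. (iii) For a projective $\pobj\in\quabi$, choose a strict epimorphism $\pobj'\rightarrow\adifu(\pobj)$ with $\pobj'$ projective in $\quabi'$; since $\adifu'$ is a left adjoint it preserves cokernels, so $\adifu'(\pobj')\rightarrow\adifu'\adifu(\pobj)\cong\pobj$ is a strict epimorphism, which splits because $\pobj$ is projective; thus every projective of $\quabi$ is a retract of $\adifu'(\pobj')$ for some projective $\pobj'$ of $\quabi'$ (itself projective, as already seen in the proof of Prop.\@ \ref{JustAnAdjunction}).

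For (A): let $g\colon\simo{\obj}\rightarrow\simo{\obj'}$ in $\simpo{\quabi}$ be such that $\simo{\adifu}(g)$ is a weak equivalence. For each projective $\pobj'\in\quabi'$ the $\quabi$--$\quabi'$ adjunction gives, levelwise, a natural isomorphism $\hom_\quabi(\adifu'(\pobj'),\simo{\obj})\cong\hom_{\quabi'}(\pobj',\simo{\adifu}(\simo{\obj}))$ of simplicial abelian groups, and the right-hand side applied to $g$ is a weak equivalence (weak equivalences being detected on a generating projective class, and products and retracts of weak equivalences of simplicial abelian groups being weak equivalences). By (iii) every generating projective $\pobj$ of $\quabi$ is a retract of some such $\adifu'(\pobj')$, so $\hom_\quabi(\pobj,g)$ is a weak equivalence, whence $g$ is a weak equivalence (Prop.\@ \ref{ModelStructureInGeneral}). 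For (B): a cofibrant $\simo{\qobj'}\in\simpo{\quabi'}$ is levelwise projective --- e.g.\@ via Prop.\@ \ref{DoldKan} it corresponds to a retract of a non-positively graded complex of projectives, whose image under $\simchoo$ is degreewise a finite direct sum of projectives. By (ii) the unit is then an isomorphism on each $\qobj'_n$, so $\eta_{\simo{\qobj'}}$ is a levelwise isomorphism, hence an isomorphism in $\simpo{\quabi'}$, in particular a weak equivalence. Combining (A) and (B) yields the claim.

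The routine parts are the retract manipulations in (i)--(iii) and the standard fact that cofibrant simplicial objects over a quasi-abelian category with enough projectives are levelwise projective. The step I expect to require genuine care is observation (ii): one must use the surjectivity hypothesis to realise $\pobj'$ as a retract of an object in the image of $\adifu$, and then recognise $\eta_{\pobj'}$ as a retract of the isomorphism $\eta_{\adifu(\qobj)}$, invoking that a retract of an isomorphism is an isomorphism. An alternative route to (B), bypassing the explicit description of cofibrant objects, is to transport the whole question to $\compo{\quabi'}$ through Prop.\@ \ref{DoldKan}, using that $\adifu$ and $\adifu'$ are additive and that $\adifu$ preserves the finite limits defining $\normacho$, so that $\simo{\adifu}\simo{\adifu'}(\simo{\qobj'})$ is represented by the complex obtained from a complex of projectives representing $\simo{\qobj'}$ by applying $\adifu\adifu'$ degreewise --- on which, again by (ii), the unit is a degreewise isomorphism.
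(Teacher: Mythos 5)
Your proof is correct, and it rests on the same two pillars as the paper's: Hovey's unit/counit criterion for Quillen equivalences, and retract arguments with projectives that exploit full faithfulness of $\adifu$ together with the hypothesis that every object of $\quabi'$ admits a strict epimorphism from some $\adifu(\qobj)$. The execution differs in two places, both to your advantage in terms of explicitness. For the unit condition, the paper first shows that $\{\adifu(\pobj)\}$ is a sufficiently large projective class in $\quabi'$, produces for every $\simo{\qobj'}$ a cofibrant resolution of the form $\simo{\adifu}(\simo{\pobj})$ via Prop.\@ \ref{ProjectiveResolutions}, and then invokes $\adifu'\circ\adifu\cong\id$ and two-out-of-three; you instead observe via your retract argument (ii) that $\eta$ is already an isomorphism on every projective of $\quabi'$, hence a levelwise isomorphism on every cofibrant object of $\simpo{\quabi'}$ --- a slightly stronger and more direct conclusion that bypasses the resolution step. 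For the other half of the criterion, the paper disposes of it with the remark that the counit of $(\simo{\adifu'},\simo{\adifu})$ is invertible, whereas you spell out the condition actually required by Hovey's statement, namely that $\simo{\adifu}$ reflects weak equivalences, and prove it by showing (your step (iii)) that every projective of $\quabi$ is a retract of some $\adifu'(\pobj')$ and transporting the evaluation across the adjunction. This makes your write-up a more complete verification of the cited criterion than the paper's own; no step of your argument is gapped.
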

\begin{proof} First we show that $\adifu$ preserves projectives. Let $\pobj\in\quabi$ be projective, as $\quabi'$ has enough projectives there is a strict epimorphism $\pobj'\rightarrow\adifu(\pobj)$ where $\pobj'\in\quabi'$ is projective. By assumption $\exists\qobj\in\quabi$ and a strict epimorphism $\adifu(\qobj)\rightarrow\pobj'$ in $\quabi'$. Being a left adjoint $\adifu'$ preserves cokernels, thus 
	\begin{equation*}\qobj\cong\adifu'(\adifu(\qobj))\longrightarrow\adifu'(\pobj')\longrightarrow\adifu'(\adifu(\pobj))\cong\pobj\end{equation*}
are strict epimorphisms. Since $\pobj$ is projective there is a section $\pobj\rightarrow\qobj$, composing it with $\adifu(\qobj)\rightarrow\pobj'$ we see that $\adifu(\pobj)$ is a retract of $\pobj'$ and hence projective. From Cor.\@ \ref{ExplicitCofibrants} we conclude that $\simo{\adifu}$ preserves cofibrant objects.

By assumption every $\qobj'\in\quabi'$ is a quotient of some $\adifu(\qobj)$. Therefore, since $\adifu$ commutes with finite direct sums, $\{\adifu(\pobj)\}$, where $\pobj$ runs over all projective objects in $\quabi$, is a sufficiently large class of projectives in $\quabi'$ (Def.\@ \ref{GeneratingProjectives}). Applying Prop.\@ \ref{ProjectiveResolutions} and Cor.\@ \ref{ExplicitCofibrants} we see that every $\simo{\qobj'}\in\simpo{\quabi'}$ has a cofibrant resolution of the form $\simo{\adifu}(\simo{\pobj})$. Therefore, since $\adifu'\circ\adifu\cong\id_{\quabi}$, the two-out-of-three axiom implies that for any cofibrant $\simo{\qobj}\in\simpo{\quabi'}$ the natural $\simo{\qobj}\rightarrow\simo{\adifu}(\simo{\adifu'}(\simo{\qobj}))$ is a weak equivalence. By assumption the co-unit of $(\simo{\adifu'},\simo{\adifu})$ is invertible. Therefore $(\simo{\adifu'},\simo{\adifu})$ is a Quillen equivalence (\cite{Ho99} Prop.\@ 1.3.13).\end{proof}%

\begin{example} Let $\quabi$ be a quasi-abelian category with enough projectives. According to \cite{Schneiders99} Prop.\@ 1.3.24 the left abelian envelope $\abi$ of $\quabi$ has enough projectives.  Prop.\@ \ref{GoodEnvelopes}, \ref{QuillenEquivalence} tell us then that $\simpo{\abi}\rightleftarrows\simpo{\quabi}$ is a Quillen equivalence.\end{example}

\begin{example} The category $\coconbo$ of bornological spaces is a full reflective subcategory of the category $\indo{\banco}$ of Ind-Banach spaces (e.g.\@ \cite{Meyer2007} Thm.\@ 1.139), and every inductive system of Banach spaces is a quotient of an essentially monomorphic system (e.g.\@ \cite{BambozziBenBassat16} proof of Lemma 3.29). Therefore we have a Quillen equivalence $\simpo{\indo{\banco}}\rightleftarrows\simpo{\coconbo}$.\end{example}

\subsection{Cofibrant resolutions and flatness}\label{CofibrantResolutionsFlatness}

Let $\quabi$ be a quasi-abelian category with enough projectives. We have seen that $\simpo{\quabi}$ carries a model structure, defined by evaluating morphisms between objects in $\simpo{\quabi}$ at projective objects in $\quabi$ (Prop.\@ \ref{ModelStructureInGeneral}). We would like to have an explicit description of cofibrant objects and resolutions. This is easier done for cochain complexes, rather than simplicial objects.

\begin{definition}\label{ModelOnComplexes} Let $\quabi$ be a quasi-abelian category with enough projectives, let $\compo{\quabi}$ be the category of non-positively graded complexes in $\quabi$. A morphism $\mor\colon\como{Q}\rightarrow\como{Q'}$ is {\it a weak equivalence or fibration}, if for any projective $\pobj\in\quabi$ the morphism of complexes of abelian groups $\hom_{\quabi}(\pobj,\como{Q})\rightarrow\hom_{\quabi}(\pobj,\como{Q'})$ is a weak equivalence or a fibration respectively.\end{definition} 
Recall that we have the normalized complex functor $\normacho\colon\simpo{\quabi}\rightarrow\compo{\quabi}$. According to Prop.\@ \ref{DoldKan} each component of $\simo{Q}$ is a finite direct sum of degenerations of components of $\normach{\simo{Q}}$. Since finite direct sums in $\quabi$ equal finite direct products, the Dold--Kan correspondence for abelian groups implies that $\hom_{\quabi}(P,\simo{Q})\rightarrow\hom_{\quabi}(P,\simo{Q'})$ is a weak equivalence of simplicial abelian groups, if and only if $\hom_{\quabi}(P,\normach{\simo{Q}})\rightarrow\hom_{\quabi}(P,\normach{\simo{Q'}})$ is a weak equivalence of complexes of abelian groups.

Any morphism $\simo{Q}\rightarrow\simo{Q'}$ maps $\normach{\simo{Q}}\rightarrow\normach{\simo{Q'}}$ and degenerations to degenerations. Hence a morphism in $\simpo{\quabi}$ is a fibration, iff its image in $\compo{\quabi}$ is a fibration. Altogether we have the following.

\begin{corollary}\label{ModelStructuresEqual} Fibrations and weak equivalences from Def.\@ \ref{ModelOnComplexes} define a model structure on $\compo{\quabi}$, and the equivalence from Prop.\@ \ref{DoldKan} identifies this model structure with the one on $\simpo{\quabi}$.\end{corollary}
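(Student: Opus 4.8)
The plan is to transport the model structure from $\simpo{\quabi}$ to $\compo{\quabi}$ along the Dold--Kan equivalence $(\normacho,\simchoo)$ of Proposition \ref{DoldKan}, and then to check that the transported weak equivalences and fibrations are exactly those singled out in Definition \ref{ModelOnComplexes}.

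First, since $(\normacho,\simchoo)$ is an equivalence of categories and $\simpo{\quabi}$ carries the model structure of Proposition \ref{ModelStructureInGeneral}, declaring a morphism $\mor$ in $\compo{\quabi}$ to be a cofibration, weak equivalence, or fibration precisely when $\simchoo(\mor)$ has the corresponding property in $\simpo{\quabi}$ yields a model structure on $\compo{\quabi}$: an equivalence of categories preserves and reflects all limits, colimits, retracts and lifting properties, so each model category axiom transports verbatim, and by construction the resulting model structure is identified with the one on $\simpo{\quabi}$ by the equivalence. This already gives the last assertion of the corollary; what remains is to see that these transported classes agree with those of Definition \ref{ModelOnComplexes}.

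For weak equivalences, apply the discussion preceding the statement with $\simo{Q}:=\simchoo(\como{Q})$ and $\simo{Q'}:=\simchoo(\como{Q'})$. Since $\normacho\circ\simchoo\cong\id$, we have $\normach{\simchoo(\como{Q})}\cong\como{Q}$, so for each projective $\pobj\in\quabi$ the map $\hom_{\quabi}(\pobj,\simchoo(\como{Q}))\rightarrow\hom_{\quabi}(\pobj,\simchoo(\como{Q'}))$ is a weak equivalence of simplicial abelian groups if and only if $\hom_{\quabi}(\pobj,\como{Q})\rightarrow\hom_{\quabi}(\pobj,\como{Q'})$ is a weak equivalence of complexes of abelian groups, which is exactly the condition of Definition \ref{ModelOnComplexes}. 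For fibrations, one uses the observation --- also made before the statement --- that a morphism in $\simpo{\quabi}$ is a fibration if and only if its image under $\normacho$ is a fibration in the sense of Definition \ref{ModelOnComplexes}; together with $\normacho\circ\simchoo\cong\id$ this says precisely that $\simchoo(\mor)$ is a fibration in $\simpo{\quabi}$ if and only if $\mor$ satisfies the fibration condition of Definition \ref{ModelOnComplexes}. The cofibrations then agree automatically, being characterized by the left lifting property against trivial fibrations.

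In fact there is no real obstacle left: the one substantive point --- that the ``evaluate at a projective, then look in $\compo{\abel}$'' recipe on $\compo{\quabi}$ matches, under Dold--Kan, the ``evaluate at a projective, then look in $\ssets$'' recipe on $\simpo{\quabi}$ --- has already been dealt with in the two paragraphs preceding the statement, using the classical Dold--Kan correspondence for abelian groups together with the description of the $\normacho$-components as intersections of kernels of face maps (Proposition \ref{DoldKan}), so that $\hom_{\quabi}(\pobj,-)$ intertwines the two normalization functors. Thus the proof reduces to assembling these observations and invoking transport of a model structure along an equivalence of categories.
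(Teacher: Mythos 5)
Your proposal is correct and follows essentially the same route as the paper: the corollary has no separate proof there beyond the two paragraphs you cite, which establish exactly that $\hom_{\quabi}(\pobj,-)$ intertwines the normalization on $\simpo{\quabi}$ with the complex $\normach{\simo{Q}}$ (via preservation of finite direct sums and the classical Dold--Kan correspondence for abelian groups), after which the identification of weak equivalences and fibrations, and the transport of the model structure along the equivalence $(\normacho,\simchoo)$, are immediate. Your write-up merely makes explicit the transport-of-structure step and the matching of cofibrations by lifting properties, which the paper leaves implicit.
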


The following proposition is standard.

\begin{proposition}\label{ProjectiveResolutions} Let $\quabi$ be a quasi-abelian category with enough projectives, and let $\{\pobj_i\}_{i\in I}\subseteq\quabi$ be a sufficiently large class of projectives in $\quabi$ (Def.\@ \ref{GeneratingProjectives}). Let $\qobj^\bullet:=\ldots\rightarrow\qobj^{-1}\rightarrow\qobj^0$ be a complex in $\quabi$. There is a trivial fibration $\pobj^\bullet\rightarrow\qobj^\bullet$, s.t.\@ $\pobj^k\in\{\pobj_i\}$ for each $k\leq 0$.\end{proposition}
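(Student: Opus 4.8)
The plan is to build the resolution $\pobj^\bullet\to\qobj^\bullet$ by descending induction on the degree $k\leq 0$, at each stage covering the relevant object by a member of $\{\pobj_i\}$ and keeping track of the condition needed to make the map a trivial fibration. Recall that by Def.\@ \ref{ModelOnComplexes} a morphism $\pobj^\bullet\to\qobj^\bullet$ is a trivial fibration iff for every projective $\pobj\in\quabi$ the map of complexes of abelian groups $\hom_\quabi(\pobj,\pobj^\bullet)\to\hom_\quabi(\pobj,\qobj^\bullet)$ is a trivial fibration; since $\{\pobj_i\}$ is sufficiently large and every projective is a retract of a finite direct sum of $\pobj_i$'s, and since trivial fibrations of complexes of abelian groups are stable under retracts and finite products, it suffices to test against the $\pobj_i$. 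A trivial fibration of (non-positively graded, homologically indexed) complexes of abelian groups is exactly a map which is degreewise surjective and a quasi-isomorphism; equivalently, it is degreewise surjective and the induced map on kernels-of-differentials-mod-boundaries is an iso. The cleanest criterion to aim for is the following: $\pobj^\bullet\to\qobj^\bullet$ is a trivial fibration whenever $\pobj^0\to\qobj^0$ is a strict epimorphism and, for each $k\leq 0$, the canonical map $\pobj^{k-1}\to\pobj^k\underset{\qobj^k}{\times}\qobj^{k-1}$ is a strict epimorphism. This is the quasi-abelian analogue of the standard statement that a "degreewise split-surjective with acyclic kernel" map is a trivial fibration, and it is exactly what the inductive construction will produce.

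So the construction goes: first choose, using that $\{\pobj_i\}$ is sufficiently large, a strict epimorphism $\pobj^0\to\qobj^0$ with $\pobj^0\in\{\pobj_i\}$. Now suppose $\pobj^0,\dots,\pobj^{k}$ together with the differentials among them and the maps $\pobj^j\to\qobj^j$ have been constructed so that the relative-lifting condition holds in degrees $0,\dots,k+1$. Form the fiber product $\pobj^{k}\underset{\qobj^{k}}{\times}\qobj^{k-1}$ in $\quabi$ (it exists, $\quabi$ being finitely complete) and choose, again by sufficient largeness, a strict epimorphism $\pobj^{k-1}\to\pobj^{k}\underset{\qobj^{k}}{\times}\qobj^{k-1}$ with $\pobj^{k-1}\in\{\pobj_i\}$; composing with the two projections gives the differential $\pobj^{k-1}\to\pobj^{k}$ and the map $\pobj^{k-1}\to\qobj^{k-1}$, and commutativity of the fiber-product square guarantees that $\pobj^{k-1}\to\pobj^k\to\pobj^{k+1}$ is the zero map, so $\pobj^\bullet$ is genuinely a complex, and that the ladder to $\qobj^\bullet$ commutes. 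By construction the relative-lifting condition now holds in degree $k$ as well, so the induction continues.

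It remains to verify that the map produced has the stated trivial-fibration property, i.e.\@ to prove the criterion of the first paragraph. Applying $\hom_\quabi(\pobj,-)$ for projective $\pobj$: since $\pobj$ is projective, $\yoneda{\pobj}{\quabi}$ preserves finite limits (always) and strict epimorphisms (by projectivity), hence preserves the fiber products and sends each $\pobj^{k-1}\to\pobj^k\underset{\qobj^k}{\times}\qobj^{k-1}$ to a surjection $\hom(\pobj,\pobj^{k-1})\to\hom(\pobj,\pobj^k)\underset{\hom(\pobj,\qobj^k)}{\times}\hom(\pobj,\qobj^{k-1})$, and likewise $\hom(\pobj,\pobj^0)\to\hom(\pobj,\qobj^0)$ is onto. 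Thus the statement is reduced to the purely homological-algebra fact that a map of non-positively graded complexes of abelian groups which is surjective in degree $0$ and for which every relative map $C^{k-1}\to C^k\times_{D^k}D^{k-1}$ is surjective is a degreewise-surjective quasi-isomorphism; this is a standard diagram chase (the kernel complex is contractible because its differentials are split surjections onto the previous kernels). The only genuine point of friction is getting the exact form of the trivial-fibration criterion right in the quasi-abelian setting and checking that "strict epimorphism" is precisely what is both producible at each inductive step and testable via projectives — but Prop.\@ \ref{ConditionsStrictlyExact} together with the fact that $\yoneda{\pobj}{\quabi}$ is strictly exact for projective $\pobj$ handles this. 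I expect this last bookkeeping, rather than any deep idea, to be the main obstacle.
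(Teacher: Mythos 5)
Your overall strategy --- build $\pobj^\bullet$ degree by degree by covering a ``matching object'' with a member of $\{\pobj_i\}$, then verify the trivial-fibration property by applying $\hom_{\quabi}(\pobj_i,-)$ --- is sound and is essentially the paper's (Lemmas \ref{FillingShortExact} and \ref{AddKill}). But you take the wrong matching object: the fiber product $\pobj^{k}\underset{\qobj^{k}}\times\qobj^{k-1}$ over the full terms rather than over the cycles, and this breaks the argument at two separate points. First, a strict epimorphism $\pobj^{k-1}\rightarrow\pobj^{k}\underset{\qobj^{k}}\times\qobj^{k-1}$ does not make $\pobj^\bullet$ a complex: for a point $(p,q)$ of that fiber product one only knows that $d(p)$ maps to $d(d(q))=0$ in $\qobj^{k+1}$, not that $d(p)=0$ in $\pobj^{k+1}$, so your parenthetical ``commutativity of the fiber-product square guarantees $d\circ d=0$'' is false. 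Concretely, in $\abel$ with $\qobj^\bullet=0$: your recipe allows $\pobj^0=\mathbb Z$ (any map to $0$ is a strict epimorphism), the matching object at the next stage is then $\pobj^0$ itself, so you may take $\pobj^{-1}=\mathbb Z$ with $d=\id$, then $\pobj^{-2}=\mathbb Z$ with $d=\id$, and $d\circ d=\id\neq 0$. Second, the criterion itself is not a characterization of trivial fibrations: applying $\hom(\pobj,-)$ and feeding elements of the form $(c,0)$ into the fiber product shows that your condition forces the kernel complex $\hom(\pobj,\K^\bullet)$ to have surjective differentials in every degree, which together with $d^2=0$ forces it to vanish entirely; so your condition characterizes isomorphisms, not trivial fibrations, and no nontrivial resolution could ever satisfy it.

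The repair is the standard one: the fiber product must be taken over the cycles. With $Z^k:=\kernel{d\colon\pobj^k\rightarrow\pobj^{k+1}}$ and the analogous $Z^k_{\qobj}$, the correct matching object is $Z^{k}\underset{Z^k_{\qobj}}\times\qobj^{k-1}$ (the corestriction of $d_{\qobj}$ lands in cycles precisely because $\qobj^\bullet$ is a complex); a strict epimorphism onto it composed with $Z^k\hookrightarrow\pobj^k$ automatically squares to zero, and since $\hom_{\quabi}(\pobj_i,-)$ preserves kernels and fiber products and sends strict epimorphisms to surjections, the resulting condition becomes exactly the lifting property of $\hom(\pobj_i,\pobj^\bullet)\rightarrow\hom(\pobj_i,\qobj^\bullet)$ against the sphere-to-disk inclusions, i.e.\@ a degreewise surjective quasi-isomorphism of complexes of abelian groups. (Your phrase ``surjections onto the previous kernels'' suggests you had this version in mind, but it is not what the displayed criterion or the construction says.) This corrected construction is in substance what the paper does: Lemma \ref{FillingShortExact} covers the fiber product $\kernel{d'_{k+1}}\underset{\kernel{d_{k+1}}}\times\qobj^k$ and then adjoins a further projective summand surjecting onto the kernel of $\kernel{d'_{k+1}}\rightarrow\kernel{d_{k+1}}$, which is the invariant that keeps the induction of Lemma \ref{AddKill} running.
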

\begin{proof} In the context of quasi-abelian categories the standard procedure of adding and killing cycles is based on the following obvious lemma.

\begin{lemma}\label{FillingShortExact} Let $\quabi$ be a quasi-abelian category with enough projectives, and let $\{\pobj_i\}_{i\in I}\subseteq\quabi$ be a sufficiently large class of projectives in $\quabi$ (Def.\@ \ref{GeneratingProjectives}). Let $0\rightarrow\qobj_1\rightarrow\qobj_2\rightarrow\qobj_3\rightarrow 0$ be a strictly exact sequence, and let $\mor\colon\qobj\rightarrow\qobj_3$ be any morphism in $\quabi$. Then there is a commutative diagram
	\begin{equation*}\xymatrix{0\ar[r]\ar[d] & \K\ar[r]\ar[d]_{\emor} & \pobj\ar[r]\ar[d] & \qobj\ar[r]\ar[d]_{\mor} & 0\ar[d]\\
	0\ar[r] & \qobj_1\ar[r] & \qobj_2\ar[r] & \qobj_3\ar[r] & 0,}\end{equation*}
where the upper row is strictly exact, $\pobj\in\{\pobj_i\}$, and $\emor$ is a strict epimorphism.\end{lemma}
\begin{proof} Let $\qobj':=\qobj_2\underset{\qobj_3}\times\qobj$. Since $\qobj_2\rightarrow\qobj_3$ is a strict epimorphism and $\quabi$ is quasi-abelian, also $\qobj'\rightarrow\qobj$ is a strict epimorphism. Since $\{\pobj_i\}_{i\in I}$ is a sufficiently large class of projectives in $\quabi$ we can find $\pobj'\in\{\pobj_i\}$ and a strict epimorphism $\pobj'\rightarrow\qobj'$. Clearly $\pobj'\rightarrow\qobj$ is a strict epimorphism. Let $\K'\rightarrow\pobj'$ be the kernel of this epimorphism. We have a commutative diagram 
	\begin{equation*}\xymatrix{0\ar[r]\ar[d] & \K'\ar[r]\ar[d] & \pobj'\ar[r]\ar[d] & \qobj\ar[r]\ar[d]_{\mor} & 0\ar[d]\\
	0\ar[r] & \qobj_1\ar[r] & \qobj_2\ar[r] & \qobj_3\ar[r] & 0,}\end{equation*}
where the upper row is strictly exact, but $\K'\rightarrow\qobj_1$ does not have to be a strict epimorphism. We choose a strict epimorphism $\pobj''\rightarrow\qobj_1$ with $\pobj$ among $\{\pobj_i\}_{i\in I}$ and define $\K:=\K'\oplus\pobj''$, $\pobj:=\pobj'\oplus\pobj''$. Mapping $\pobj''\rightarrow 0\rightarrow\qobj$, $\pobj''\rightarrow\qobj_1\rightarrow\qobj_2$, we obtain a commutative diagram
	\begin{equation*}\xymatrix{0\ar[r]\ar[d] & \K\ar[r]\ar[d] & \pobj\ar[r]\ar[d] & \qobj\ar[r]\ar[d]_{\mor} & 0\ar[d]\\
	0\ar[r] & \qobj_1\ar[r] & \qobj_2\ar[r] & \qobj_3\ar[r] & 0.}\end{equation*}
Projection on a summand is a strict epimorphism, hence $\pobj\rightarrow\qobj$ is a strict epimorphism. The map $\pobj''\rightarrow\qobj_1$ factors through $\K\rightarrow\qobj_1$, hence the latter is a strict epimorphism as well (e.g.\@ \cite{Schneiders99}, Prop.\@ 1.1.8). Finally any map to $\pobj'\oplus\pobj''$ that composes to $0$ with $\pobj'\oplus\pobj''\rightarrow\qobj$ factors uniquely through $\K$, i.e.\@ the top arrow is strictly exact.\end{proof}%

Lemma \ref{FillingShortExact} immediately gives us the following add/kill procedure.

\begin{lemma}\label{AddKill} Let $\quabi$ be a quasi-abelian category with enough projectives, and let $\{\pobj_i\}_{i\in I}\subseteq\quabi$ be a sufficiently large class of projectives in $\quabi$ (Def.\@ \ref{GeneratingProjectives}). Consider a commutative diagram in $\quabi$
	\begin{equation*}\xymatrix{ & \pobj^{k+1}\ar[d]\ar[r]^{d'_{k+1}} & \pobj^{k+2}\ar[d]\\
	\qobj^k\ar[r]^{d_k} & \qobj^{k+1}\ar[r]^{d_{k+1}} & \qobj^{k+2},}\end{equation*}
where the bottom row is a null sequence, the vertical arrows and $\kernel{d'_{k+1}}\rightarrow\kernel{d_{k+1}}$ are strict epimorphisms. Then we can extend this diagram to 
	\begin{equation}\label{OneToTheLeft}\xymatrix{\pobj^k\ar[r]^{d'_k}\ar[d] & \pobj^{k+1}\ar[r]^{d'_{k+1}}\ar[d] & \pobj^{k+2}\ar[d]\\
	\qobj^k\ar[r]^{d_k} & \qobj^{k+1}\ar[r]^{d_{k+1}} & \qobj^{k+2},}\end{equation}
s.t.\@ $\pobj^k\in\{\pobj_i\}$, the top row is a null sequence, all vertical arrows are strict epimorphisms, $\forall i\in I$ $\Hom_\quabi(\pobj_i,\pobj^\bullet)\rightarrow\hom_{\quabi}(\pobj_i,\qobj^\bullet)$ is a weak equivalence in degree $k+1$, and $\kernel{d'_{k}}\rightarrow\kernel{d_{k}}$ is a strict epimorphism.\end{lemma}
\hide{%
\begin{proof} Let $\qobj'$ be the kernel of $\kernel{d'_{k+1}}\rightarrow\kernel{d_{k+1}}$. Consider
	\begin{equation*}\xymatrix{& & & \qobj^{k}\ar[d]^{d_k} & 
	\\ 0\ar[r] & \qobj'\ar[r] & \kernel{d'_{k+1}}\ar[r] & \kernel{d_{k+1}}\ar[r] & 0.}\end{equation*}
Using Lemma \ref{FillingShortExact} we can complete this diagram to
	\begin{equation*}\xymatrix{0\ar[d]\ar[r] & \K\ar[d]_{\emor}\ar[r] & \pobj^k\ar[d]\ar[r] & \qobj^{k}\ar[d]^{d_k}\ar[r] & 0\ar[d] 
	\\ 0\ar[r] & \qobj'\ar[r] & \kernel{d'_{k+1}}\ar[r] & \kernel{d_{k+1}}\ar[r] & 0,}\end{equation*}
s.t.\@ $\pobj^k\in\{\pobj_i\}$, the upper row is strictly exact, and $\emor$ is a strict epimorphism. Defining $d'_{k}\colon\pobj^k\rightarrow\kernel{d'_{k+1}}\rightarrow\pobj^{k+1}$ we get (\ref{OneToTheLeft}), where the top row is a null sequence and all vertical morphisms are strict epimorphisms. Since $\yoneda{\pobj_i}{\quabi}$ takes strictly exact sequences to exact sequences and $\emor$ is a strict epimorphism, we immediately see that $\Hom_\quabi(\pobj_i,\pobj^\bullet)\rightarrow\hom_{\quabi}(\pobj_i,\qobj^\bullet)$ is a weak equivalence in degree $k+1$. The same long exact sequence also shows that $\kernel{d'_{k}}\rightarrow\kernel{d_{k}}$ is a strict epimorphism.\end{proof}}%

Repeatedly applying Lemma \ref{AddKill} we obtain a proof of the proposition. \hide{
We denote $\qobj^1:=0$, and define $\pobj^1:=0$. Now suppose for a given $k\in\mathbb Z_{\leq 0}$ we have built a commutative diagram
	\begin{equation*}\xymatrix{& & \pobj^{k+1}\ar[r]^{d'_{k+1}}\ar[d] & \ldots\ar[r]^{d'_0} & \pobj^1\ar[d]\\
	\ar[r]^{d_{k-1}} & \qobj^k\ar[r]^{d_k} & \qobj^{k+1}\ar[r]^{d_{k+1}} & \ldots\ar[r]^{d_0} & \qobj^1,}\end{equation*}
s.t.\@ the top row is a complex, all vertical arrows are strict epimorphisms, $\forall i\in I$ $\hom_\quabi(\pobj_i,\pobj^\bullet)\rightarrow\hom_{\quabi}(\pobj_i,\qobj^\bullet)$ is a weak equivalence in degrees $>k+1$, and $\kernel{d'_{k+1}}\rightarrow\kernel{d_{k+1}}$ is a strict epimorphism. Using Lemma \ref{AddKill} we extend this diagram to 
	\begin{equation*}\xymatrix{& \pobj^k\ar[r]^{d'_k}\ar[d] & \pobj^{k+1}\ar[r]^{d'_{k+1}}\ar[d] & \ldots\ar[r]^{d'_0} & \pobj^1\ar[d]\\
	\ar[r]^{d_{k-1}} & \qobj^k\ar[r]^{d_k} & \qobj^{k+1}\ar[r]^{d_{k+1}} & \ldots\ar[r]^{d_0} & \qobj^1,}\end{equation*}
s.t.\@ $\pobj^k\in\{\pobj_i\}$, the top row is a complex, all vertical arrows are strict epimorphisms, $\forall i\in I$ $\hom_\quabi(\pobj_i,\pobj^\bullet)\rightarrow\hom_{\quabi}(\pobj_i,\qobj^\bullet)$ is a weak equivalence in degrees $>k$, and $\kernel{d'_{k}}\rightarrow\kernel{d_{k}}$ is a strict epimorphism.}\end{proof}%

Just as in the abelian case, the standard argument gives the following.

\begin{corollary}\label{ExplicitCofibrants} Let $\quabi$ be a quasi-abelian category with enough projectives, and let $\{\pobj_i\}_{i\in I}\subseteq\quabi$ be a sufficiently large class of projectives (Def.\@ \ref{GeneratingProjectives}). An object $\como{\qobj}\in\compo{\quabi}$ is cofibrant, iff each $\qobj^k$ is a retract of some $\pobj_k\in\{\pobj_i\}$. An object $\simo{\qobj}\in\simpo{\quabi}$ cofibrant, iff $\simo{\qobj}\cong\simcho{\como{\qobj}}$ for a cofibrant $\como{\qobj}\in\compo{\quabi}$.\end{corollary}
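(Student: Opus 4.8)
The plan is to establish the statement for $\compo{\quabi}$ first and then transport it to $\simpo{\quabi}$. For the transport, recall from Cor.\@ \ref{ModelStructuresEqual} that the Dold--Kan equivalence $(\normacho,\simchoo)$ of Prop.\@ \ref{DoldKan} identifies the two model structures; hence both $\normacho$ and $\simchoo$ preserve and reflect cofibrant objects, and they are mutually inverse. Thus $\simo{\qobj}\in\simpo{\quabi}$ is cofibrant iff $\normach{\simo{\qobj}}\in\compo{\quabi}$ is cofibrant, and $\simo{\qobj}\cong\simcho{\normach{\simo{\qobj}}}$; conversely $\simcho{\como{\qobj}}$ is cofibrant whenever $\como{\qobj}$ is. So the simplicial statement follows from the statement about complexes.

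The ``only if'' direction for complexes is the easy one. If $\como{\qobj}$ is cofibrant, then by Prop.\@ \ref{ProjectiveResolutions} there is a trivial fibration $\como{\pobj}\rightarrow\como{\qobj}$ with $\pobj^k\in\{\pobj_i\}$ for every $k\leq 0$; cofibrancy of $\como{\qobj}$ lets us lift $\mathrm{id}_{\como{\qobj}}$ along this trivial fibration, exhibiting $\como{\qobj}$ as a retract of $\como{\pobj}$ in $\compo{\quabi}$. Since retracts of complexes are formed termwise, each $\qobj^k$ is a retract of $\pobj^k\in\{\pobj_i\}$.

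For the ``if'' direction, suppose every $\qobj^k$ is a retract of some $\pobj_{i(k)}$, say via $s_k\colon\qobj^k\rightarrow\pobj_{i(k)}$ and $r_k\colon\pobj_{i(k)}\rightarrow\qobj^k$ with $r_ks_k=\mathrm{id}$. The morphisms $\widetilde d^{\,k}:=s_{k+1}\circ d^k\circ r_k$ turn $\como{\pobj}:=(\cdots\rightarrow\pobj_{i(-1)}\rightarrow\pobj_{i(0)})$ into a complex (one checks $\widetilde d^{\,k+1}\widetilde d^{\,k}=0$ using $r_{k+1}s_{k+1}=\mathrm{id}$ and $d^{k+1}d^k=0$), and $s_\bullet,r_\bullet$ into chain maps with $r_\bullet s_\bullet=\mathrm{id}$, so $\como{\qobj}$ is a retract of $\como{\pobj}$ in $\compo{\quabi}$. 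As cofibrant objects are closed under retracts, it suffices to prove that $\como{\pobj}$, a complex all of whose terms lie in $\{\pobj_i\}$, is cofibrant. For this one uses the standard cell-attachment argument: $\como{\pobj}$ is the colimit (computed termwise, hence existing) of the ascending chain of brutal truncations $\sigma^{\geq -n}\como{\pobj}:=(0\rightarrow\cdots\rightarrow 0\rightarrow\pobj^{-n}\rightarrow\cdots\rightarrow\pobj^0)$, and each inclusion $\sigma^{\geq -n}\como{\pobj}\hookrightarrow\sigma^{\geq -(n+1)}\como{\pobj}$ is the pushout, along $d^{-(n+1)}\colon\pobj^{-(n+1)}\rightarrow\pobj^{-n}$, of the inclusion of $\pobj^{-(n+1)}$ placed in degree $-n$ into the acyclic two-term complex $\pobj^{-(n+1)}\overset=\longrightarrow\pobj^{-(n+1)}$ placed in degrees $-(n+1)$ and $-n$; the first step $0\hookrightarrow\sigma^{\geq 0}\como{\pobj}$ is just $0\rightarrow\pobj^0$ in degree $0$. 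Because each $\pobj^{-(n+1)}$ and $\pobj^0$ lies in $\{\pobj_i\}$, these elementary inclusions are cofibrations of $\compo{\quabi}$: under the equivalence of Prop.\@ \ref{DoldKan} they are retracts of coproducts of the generating cofibrations $\gencof$, by the usual identification of the cells of the projective model structure on non-positively graded complexes. Hence $0\rightarrow\como{\pobj}$ is a transfinite composition of pushouts of cofibrations, so $\como{\pobj}$ is cofibrant, and we are done.

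The main obstacle is this last identification: verifying, in the quasi-abelian rather than abelian setting, that the brutal-truncation filtration of a complex with terms in $\{\pobj_i\}$ is genuinely a $\gencof$-cell decomposition, equivalently that the elementary inclusions above are cofibrations of $\compo{\quabi}$. This reduces to the classical computation of $\normach{\pobj_i\otimes\simplex{n}}$ and $\normach{\pobj_i\otimes\boundary{n}}$ and of how successive cells are attached; since $\compo{\quabi}$ is additive, its colimits are computed degreewise, and $(\normacho,\simchoo)$ is an additive equivalence, this runs exactly as in the abelian case, requiring no input beyond Prop.\@ \ref{DoldKan} and Cor.\@ \ref{ModelStructuresEqual}.
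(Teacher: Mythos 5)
Your proposal is correct in substance, but it establishes the key step --- that a complex $\como{\pobj}$ with all components in $\{\pobj_i\}$ is cofibrant --- by a genuinely different route from the paper. The paper verifies the left lifting property against a trivial fibration $\como{\emor}\colon\como{\qobj}\rightarrow\como{\qobj'}$ directly, by descending induction on the degree: projectivity of $\pobj^k$ gives a lift $\widetilde{\mor}^k$ of $\mor^k$ through the strict epimorphism $\emor^k$, the failure of $\widetilde{\mor}^k$ to commute with the differentials is a cycle valued in the kernel complex $\como{\K}$ of $\como{\emor}$, and acyclicity of $\hom_{\quabi}(\pobj^k,\como{\K})$ lets one correct $\widetilde{\mor}^k$ by a boundary. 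Your route instead filters $\como{\pobj}$ by brutal truncations and exhibits $0\rightarrow\como{\pobj}$ as an $\omega$-composition of pushouts of the elementary inclusions of $\pobj^{-(n+1)}$ in degree $-n$ into the acyclic complex $\pobj^{-(n+1)}\overset{=}\rightarrow\pobj^{-(n+1)}$ in degrees $-(n+1),-n$. Both work; the paper's argument is self-contained and never mentions generating cofibrations, while yours is more modular but pushes all the content into the claim that these elementary inclusions are cofibrations. The surrounding reductions --- the retract argument for the ``only if'' direction, assembling termwise retractions into a retraction of complexes via $\widetilde{d}^{\,k}:=s_{k+1}d^kr_k$, and the transport along the Dold--Kan equivalence using Cor.\@ \ref{ModelStructuresEqual} --- are all correct and match what the paper does implicitly.

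One caveat on that last claim: you justify it by saying the elementary inclusions are retracts of coproducts of the generating cofibrations $\gencof$. But $\gencof$ is constructed in this paper only for $\simpo{\recoco{\cardinal}}$ with $\cardinal$ regular, using compactness of the $\summable{\cardinal'}$; Corollary \ref{ExplicitCofibrants} is stated for an arbitrary quasi-abelian $\quabi$ with enough projectives, where no cofibrant generation is asserted and $I$ need not even be a set. As written, the appeal to $\gencof$ is out of scope. The repair is cheap and needs no cofibrant generation: for projective $\pobj$, a lifting square from the elementary inclusion to $\como{\emor}$ transposes under $\hom_{\quabi}(\pobj,-)$ (which preserves kernels, hence cycle objects) into a lifting problem for $\mathbb Z$ in degree $-n$ included into $\mathbb Z\overset{=}\rightarrow\mathbb Z$ against the trivial fibration $\hom_{\quabi}(\pobj,\como{\qobj})\rightarrow\hom_{\quabi}(\pobj,\como{\qobj'})$ of non-positively graded complexes of abelian groups, which is solvable by the classical abelian case; and morphisms with the left lifting property against all trivial fibrations are closed under pushouts and transfinite compositions for purely formal reasons. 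With that substitution your argument is complete.
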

\hide{
\begin{proof} According to Prop.\@ \ref{ProjectiveResolutions} for each $\como{\qobj}\in\compo{\quabi}$ there is a trivial fibration $\como{\pobj}\rightarrow\como{\qobj}$, where components of $\como{\pobj}$ belong to $\{\pobj_i\}$. Therefore it is enough to show that $\como{\pobj}$ is cofibrant, if each $\pobj^k$ is projective. Let $\como{\emor}\colon\como{\qobj}\rightarrow\como{\qobj'}$ be a trivial fibration, and let $\como{\mor}\colon\como{\pobj}\rightarrow\como{\qobj'}$ be any morphism.

Denoting $\qobj^{\geq 1}=\qobj'^{\geq 1}:=0$, suppose that we have constructed a lifting $\como{\mor'}\colon\como{\pobj}\rightarrow\como{\qobj}$ of $\como{\mor}$ in degrees $>k$ for some $k\leq 0$. As $\pobj^k$ is projective and $\emor^k$ is a strict epimorphism, we can choose a lifting $\widetilde{\mor}^{k}\colon\pobj^k\rightarrow\qobj^k$ of $\mor^k$. Denote $\delta^k:=\mor'^{k+1}\circ d_{\como{\pobj}}^k-d_{\como{\qobj'}}^k\circ\widetilde{\mor}^{k}$. Clearly $\emor^{k+1}\circ\delta^k=0$. We claim that $d_{\como{\qobj'}}^{k+1}\circ\delta^k=0$. Indeed, this immediately follows from ${\mor'}^{k+2}\circ d_{\como{\pobj}}^{k+1}-d_{\como{\qobj'}}^{k+1}\circ{\mor'}^{k+1}=0$.

Therefore $\delta^k$ factors through the kernel of $\K^{k+1}\rightarrow\K^{k+2}$, where $\como{\K}$ is the kernel of $\como{\emor}$. Since $\como{\emor}$ is a weak equivalence, $\hom_{\quabi}(\pobj^k,\como{\K})$ is an acyclic complex of abelian groups, i.e.\@ $\delta^k$ lifts to $\epsilon^k\colon\pobj^k\rightarrow\K^k$. Defining ${\mor'}^k:=\widetilde{\mor}^k+\epsilon^k$ we are done.\end{proof}}%

Similarly to Prop.\@ \ref{ProjectiveResolutions} we have a characterization of cofibrant objects in the category $\modules{\simo{\monoid}}$ of $\simo{\monoid}$-modules in $\simpo{\quabi}$ for a commutative monoid $\simo{\monoid}\in\simpo{\quabi}$. First we recall a standard definition (e.g.\@ \cite{GoerssSch04} Prop.\@ 4.2(3)).

\begin{definition}\label{DefAlmoFree} Let $(\monoca,\monote)$ be a closed symmetric monoidal finitely cocomplete category, and let $\simo{\monoid}\in\simpo{\monoca}$ be a simplicial commutative monoid in $\monoca$. An $\simo{\monoid}$-module $\simo{\module}$ is {\it almost freely generated by} $\{\obj_n\}_{n\geq 0}\subseteq\monoca$, if $\forall n\geq 0$
	\begin{equation}\label{AlmostFreeModule}\module_n\cong\underset{m\leq n}\coprod\,\underset{\finor{n}\twoheadrightarrow\finor{m}}\coprod
	\monoid_n\monote\obj_m\end{equation}
as $\monoid_n$-modules, and for any $\sigma\colon\finor{k}\twoheadrightarrow\finor{n}$ the corresponding map $\module_n\rightarrow\module_k$ is given on each summand in (\ref{AlmostFreeModule}) by composing $\finor{k}\rightarrow\finor{n}\rightarrow\finor{m}$ and mapping 
	\begin{equation*}\sigma^*\ctensor\id_{\obj_m}\colon\monoid_n\monote\obj_m\longrightarrow\monoid_k\monote\obj_m.\end{equation*}
\end{definition}
It is well known that, if we start with an abelian category with enough projectives, we can always construct resolutions of modules over simplicial monoids, that are almost freely generated by projectives. The same is true in the quasi-abelian case with essentially the same proof.

\begin{lemma}\label{AlmostFreeResolution} Let $(\quabi,\ctensor)$ be a closed symmetric monoidal quasi-abelian category with enough projectives, and let $\{\pobj_i\}_{i\in I}\subseteq\quabi$ be a sufficiently large class of projectives (Def.\@ \ref{GeneratingProjectives}). Let $\simo{\monoid}$ be a commutative monoid in $\simpo{\quabi}$, and let $\simo{\module}\in\modules{\simo{\monoid}}$. There is a sequence $\{\pobj_n\}_{n\geq 0}\subseteq\{\pobj_i\}_{i\in I}$ and a trivial fibration $\simo{\module'}\rightarrow\simo{\module}$, s.t.\@ $\simo{\module'}$ is almost freely generated by $\{\pobj_n\}_{n\geq 0}$ over $\simo{\monoid}$.\end{lemma}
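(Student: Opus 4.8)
I would follow the standard skeletal construction of an almost-free simplicial resolution (as in \cite{GoerssSch04} \S4), the only new point being that every homotopical statement is checked after applying the corepresentable functors $\hom_{\quabi}(\pobj_i,-)$, which are strictly exact by Prop.\@ \ref{ConditionsStrictlyExact} and hence send strictly exact sequences in $\quabi$ to exact sequences of abelian groups. Recall that $\modules{\simo{\monoid}}$ is cocomplete and that the forgetful functor $\forget\colon\modules{\simo{\monoid}}\rightarrow\simpo{\quabi}$ has a right adjoint; consequently the free-module functor $\simo{\monoid}\ctensor(-)\colon\simpo{\quabi}\rightarrow\modules{\simo{\monoid}}$ is left adjoint to $\forget$, preserves strict epimorphisms, and for $\obj\in\quabi$ and a simplicial set $\sset$ the object $\simo{\monoid}\ctensor(\obj\otimes\sset)$ is the free $\simo{\monoid}$-module on the copower $\obj\otimes\sset$. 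By adjunction, a morphism $\simo{\module'}\rightarrow\simo{\module}$ of $\simo{\monoid}$-modules becomes a surjective weak equivalence of simplicial abelian groups after $\hom_{\quabi}(\pobj_i,\forget(-))$ for every $i\in I$ --- this is the notion of trivial fibration we must produce --- exactly when, in each simplicial degree $n$, the canonical morphism $\module'_n\rightarrow\module_n\underset{\mathsf{M}_n(\simo{\module})}{\times}\mathsf{M}_n(\simo{\module'})$ into the fibre product over the $n$-th matching objects is a strict epimorphism in $\quabi$ and the morphism is a weak equivalence.

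The plan is to write $\simo{\module'}=\underset{n\geq 0}{\colim}\,F_n$, attaching one free cell per degree. Put $F_{-1}:=0$. Given $F_{n-1}\in\modules{\simo{\monoid}}$ with an augmentation $F_{n-1}\rightarrow\simo{\module}$, form the finite limit $\module_n\underset{\mathsf{M}_n(\simo{\module})}{\times}\mathsf{M}_n(F_{n-1})$ and, using that $\{\pobj_i\}_{i\in I}$ is sufficiently large (Def.\@ \ref{GeneratingProjectives}) while $\quabi$ has enough projectives, pick $\pobj_n\in\{\pobj_i\}_{i\in I}$ together with a strict epimorphism $\pobj_n\rightarrow\module_n\underset{\mathsf{M}_n(\simo{\module})}{\times}\mathsf{M}_n(F_{n-1})$. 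Its second component $\pobj_n\rightarrow\mathsf{M}_n(F_{n-1})$ is adjoint to a morphism of $\simo{\monoid}$-modules $\simo{\monoid}\ctensor(\pobj_n\otimes\boundary{n})\rightarrow F_{n-1}$, along which we define $F_n$ by the pushout
	\begin{equation*}\xymatrix{\simo{\monoid}\ctensor(\pobj_n\otimes\boundary{n})\ar[r]\ar[d] & F_{n-1}\ar[d]\\ \simo{\monoid}\ctensor(\pobj_n\otimes\simplex{n})\ar[r] & F_n,}\end{equation*}
the left arrow being induced by $\boundary{n}\hookrightarrow\simplex{n}$. The first component $\pobj_n\rightarrow\module_n$, which is compatible with $F_{n-1}\rightarrow\simo{\module}$ precisely because of the defining property of the fibre product, extends the augmentation to $F_n\rightarrow\simo{\module}$; passing to the colimit gives $\simo{\module'}\rightarrow\simo{\module}$. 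Since $\boundary{n}$ and $\simplex{n}$ agree below degree $n$, attaching an $n$-cell leaves all simplicial degrees $<n$ unchanged; hence in every fixed degree the tower $(F_n)$ is eventually constant, so the colimit is computed degreewise and commutes with $\hom_{\quabi}(\pobj_i,-)$ with no smallness hypothesis needed.

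It remains to verify two properties. For almost-freeness: the only nondegenerate simplex of $\simplex{n}$ not lying in $\boundary{n}$ is the top one, so the inclusion $\simo{\monoid}\ctensor(\pobj_n\otimes\boundary{n})\hookrightarrow\simo{\monoid}\ctensor(\pobj_n\otimes\simplex{n})$ is, in the language of Def.\@ \ref{DefAlmoFree}, the inclusion adding a single new generator $\pobj_n$ in degree $n$ (both sides share the same lower generators, since $\simplex{n}$ and $\boundary{n}$ coincide in degrees $<n$). As $\ctensor$ and the copowers commute with colimits and $\monoid_n\ctensor(-)$ preserves coproducts, a pushout of an almost-free module along such a cell is again almost-free with one extra generator $\pobj_n$ placed in degree $n$ --- this is the quasi-abelian version of \cite{GoerssSch04} Prop.\@ 4.2(3), proved by the same coproduct bookkeeping of \eqref{AlmostFreeModule}. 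Hence each $F_n$ is almost freely generated by $(\pobj_0,\dots,\pobj_n,0,0,\dots)$, and $\simo{\module'}$ by $\{\pobj_n\}_{n\geq 0}$. For the trivial fibration property: in degree $n$ the cell contributes, on top of $(F_{n-1})_n$, a summand $\monoid_n\ctensor\pobj_n$ whose generator surjects onto $\module_n\underset{\mathsf{M}_n(\simo{\module})}{\times}\mathsf{M}_n(F_{n-1})=\module_n\underset{\mathsf{M}_n(\simo{\module})}{\times}\mathsf{M}_n(\simo{\module'})$ (the matching object in degree $n$ depends only on the unchanged lower degrees), so $\module'_n\rightarrow\module_n\underset{\mathsf{M}_n(\simo{\module})}{\times}\mathsf{M}_n(\simo{\module'})$ is a strict epimorphism. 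Applying $\hom_{\quabi}(\pobj_i,-)$ and invoking its strict exactness, each stage becomes exactly the add/kill step for simplicial abelian groups, so by induction on $n$ --- as in the proof of Prop.\@ \ref{ProjectiveResolutions} (compare Lemma \ref{AddKill}) --- the map $\hom_{\quabi}(\pobj_i,\forget(F_n))\rightarrow\hom_{\quabi}(\pobj_i,\forget(\simo{\module}))$ is surjective on normalized chains through degree $n$ and an isomorphism on homology below degree $n$; in the degreewise-eventually-constant colimit this becomes a surjective weak equivalence for every $i\in I$.

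The main obstacle is the almost-freeness bookkeeping for the pushout: one must check that gluing the free cell $\simo{\monoid}\ctensor(\pobj_n\otimes\simplex{n})$ onto an almost-free module along $\simo{\monoid}\ctensor(\pobj_n\otimes\boundary{n})$ again yields an almost-free module, with the degeneracies acting on the new summands exactly according to the recipe \eqref{AlmostFreeModule}. Everything else --- the matching-object criterion for trivial fibrations, the inductive control of homology, and the interchange of the colimit with the corepresentable functors --- reduces, via the strict exactness of $\hom_{\quabi}(\pobj_i,-)$ and the eventual constancy of $(F_n)$ in each degree, to facts about simplicial abelian groups already used in Prop.\@ \ref{ProjectiveResolutions}.
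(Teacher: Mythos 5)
Your construction is correct, and at bottom it is the same construction the paper uses: in each simplicial degree one adjoins a single free summand, so that $\module'_k=(F_{k})_k\cong\module^{(k)}_k$ acquires exactly the extra pieces $\underset{\finor{k}\twoheadrightarrow\finor{n}}\bigoplus\monoid_k\ctensor\pobj_n$, with $\pobj_n$ chosen by a strict epimorphism onto the object recording what is still missing at stage $n$ (the hypotheses that $\quabi$ has enough projectives and that $\{\pobj_i\}_{i\in I}$ is sufficiently large give such a $\pobj_n$, since a strict epimorphism precomposed with a split epimorphism from some $\pobj_i$ is again strict). The difference is only in how the trivial-fibration property is verified: the paper passes through the normalized complex and runs Lemma \ref{AddKill}, controlling cocycles and homotopy groups degree by degree, whereas you target the fibre product $\module_n\underset{\mathsf{M}_n(\simo{\module})}{\times}\mathsf{M}_n(F_{n-1})$ over the matching objects and deduce the right lifting property against $\pobj_i\otimes\boundary{n}\rightarrow\pobj_i\otimes\simplex{n}$ directly; since $\hom_{\quabi}(\pobj_i,-)$ preserves limits and sends strict epimorphisms to surjections, and conversely a map that becomes surjective under all $\hom_{\quabi}(\pobj_i,-)$ factors a strict epimorphism and is therefore strict, the two criteria agree. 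Your route makes the lifting property immediate and renders the final homological induction redundant, while the paper's route keeps all the bookkeeping inside the Dold--Kan picture it reuses elsewhere; the one point you rightly flag as needing care --- that pushing out along $\simo{\monoid}\ctensor(\pobj_n\otimes\boundary{n})\hookrightarrow\simo{\monoid}\ctensor(\pobj_n\otimes\simplex{n})$ adds precisely the summands indexed by surjections $\finor{k}\twoheadrightarrow\finor{n}$ as in \eqref{AlmostFreeModule} --- does hold, because in each degree that inclusion is a split monomorphism whose complement consists exactly of the degeneracies of the top cell.
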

\begin{proof} This is just another straightforward application of Lemma \ref{AddKill}. \hide{%
Let $n\in\mathbb Z_{\geq 0}$ and suppose we have constructed $\simo{\module^{(n)}}\rightarrow\simo{\module}$ s.t.\@\begin{itemize}
\item[1.] $\simo{\module^{(n)}}$ is almost freely generated over $\simo{\monoid}$ by $\{\pobj_0,\ldots,\pobj_n,0,0,\ldots\}$,
\item[2.] $\forall i\in I,\forall k<n$ $\pi_k(\hom_{\quabi}(\pobj_i,\simo{\module^{(n)}}))\overset\cong\longrightarrow\pi_k(\hom_{\quabi}(\pobj_i,\simo{\module}))$,
\item[3.] $\forall k\leq n$ $\module^{(n)}_k\rightarrow\module_k$ is a strict epimorphism,
\item[4.] let $\K^{(n)}_n\rightarrow\module^{(n)}_n$, $\K_n\rightarrow\module_n$ be the kernels of $\normach{\simo{\module^{(n)}}}^{-n}\rightarrow\normach{\simo{\module^{(n)}}}^{-n+1}$, $\normach{\simo{\module}}^{-n}\rightarrow\normach{\simo{\module}}^{-n+1}$, then $\K^{(n)}_n\rightarrow\K_n$ is a strict epimorphism.\end{itemize}
Then we can extend this to $\simo{\module^{(n+1)}}$ having similar properties. Indeed, $\normach{\simo{\module^{(n)}}}\rightarrow\normach{\simo{\module}}$ is a morphism of complexes that in degrees $\geq-n$ consists of strict epimorphisms and the morphism on $-n$-cocycles is also a strict epimorphism. 

Applying Lemma \ref{AddKill} we can find $(-1)^{n+1}\boundaryo_{n+1}\colon\pobj_{n+1}\rightarrow\K^{(n)}_n$, $\pobj_{n+1}\rightarrow\normach{\simo{\module}}^{-n-1}$ with the latter being a strict epimorphism, s.t.\@ $\pobj_{n+1}\in\{\pobj_i\}_{i\in I}$, 
	\begin{equation*}\xymatrix{\ar[r] & \normach{\simo{\module^{(n)}}}^{-n-1}\oplus\pobj_{n+1}\ar[r]\ar[d] & \normach{\simo{\module^{(n)}}}^{-n}\ar[r]\ar[d] &
	\normach{\simo{\module^{(n)}}}^{-n+1}\ar[r]\ar[d] &\\
	\ar[r] & \normach{\simo{\module}}^{-n-1}\ar[r] & \normach{\simo{\module}}^{-n}\ar[r] & \normach{\simo{\module}}^{-n+1}\ar[r] & }\end{equation*}
is a weak equivalence in degrees $\geq-n$ and the morphism of $-n-1$-cocycles is a strict epimorphism. Now we define $\forall k\geq 0$
	\begin{equation*}\module^{(n+1)}_k:=\module^{(n)}_k\oplus
	\left(\underset{\finor{k}\twoheadrightarrow\finor{n+1}}\bigoplus\monoid_k\ctensor\pobj_{n+1}\right)\end{equation*}
with the degeneration maps as in Def.\@ \ref{DefAlmoFree}. For $k=n+1$ the only non-zero new boundary is $\monoid_{n+1}\ctensor\pobj_{n+1}\overset{\boundaryo_{n+1}\ctensor\boundaryo_{n+1}}\longrightarrow\monoid_n\ctensor\K^{(n)}_n\rightarrow\K^{(n)}_n\hookrightarrow\module_n^{(n)}$. This and simplicial identities define new boundary maps for each $k>n+1$. 

It is clear that $\simo{\module^{(n+1)}}$ is almost freely generated by $\{\pobj_0,\ldots,\pobj_{n+1},0,\ldots\}$. Being an intersection of kernels, $\normach{\simo{\module}}^{-n-1}$ inherits an $\monoid_{n+1}$-module structure from $\module_{n+1}$. Therefore we have a well defined $\monoid_{n+1}\ctensor\pobj_{n+1}\rightarrow\module_{n+1}$, that extends to a morphism of $\simo{\monoid}$-modules $\simo{\module^{(n+1)}}\rightarrow\simo{\module}$.

Let $\pobj\in\{\pobj_i\}_{i\in I}$. From the assumptions on $\simo{\module^{(n)}}$ we know that 
	\begin{equation}\label{BuildUp}\pi_k(\hom_{\quabi}(\pobj,\simo{\module^{(n+1)}}))\longrightarrow
	\pi_k(\hom_{\quabi}(\pobj,\simo{\module}))\end{equation}
is an isomorphism for $k<n$ and surjective for $k=n$, since $\K^{(n)}_n\rightarrow\K_n$ is a strict epimorphism. By construction an element in the kernel of (\ref{BuildUp}) for $k=n$ lifts to $\pobj\rightarrow\pobj_{n+1}$, and hence also to $\monoid_{n+1}\ctensor\pobj_{n+1}$. Thus (\ref{BuildUp}) is an isomorphism for $k\geq n$.

From $\pobj_{n+1}\rightarrow\normach{\simo{\module}}^{-n-1}$ being a strict epimorphism it follows immediately that $\module^{(n)}_{n+1}\rightarrow\module_{n+1}$ is also a strict epimorphism, and similarly for $\K^{(n+1)}_{n+1}\rightarrow\K_{n+1}$. Thus $\simo{\module^{(n+1)}}$ satisfies all $4$ conditions above.

The beginning of this inductive construction is with a choice of a strict epimorphism $\pobj_0\rightarrow\module_0$, s.t.\@ $\pobj_0\in\{\pobj_i\}_{i\in I}$. Since $\ctensor$ is closed the composite $\monoid_0\ctensor\pobj_0\rightarrow\monoid_0\ctensor\module_0\rightarrow\module_0$ is a strict epimorphism (the last arrow has a section given by the unit, so it is a strict epimorphism as well).}%
\end{proof}%

We have an immediate consequence.

\begin{corollary}\label{CofibrantRetract} Let $(\quabi,\ctensor)$, $\{\pobj_i\}_{i\in I}$ be as in Prop.\@ \ref{GoodMonoidalStructure}, and suppose every $\simo{\qobj}\in\simpo{\quabi}$ is small. Let $\simo{\monoid}\in\simpo{\quabi}$ be a commutative monoid. Any cofibrant $\simo{\monoid}$-module is a retract of an $\simo{\monoid}$-module that is almost freely generated by direct sums of elements of $\{\pobj_i\}_{i\in I}$. \end{corollary}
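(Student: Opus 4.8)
The plan is to realise the given cofibrant $\simo{\monoid}$-module as a retract of an almost free resolution produced by Lemma~\ref{AlmostFreeResolution}, using the lifting property of cofibrations against trivial fibrations. First I would pin down the ambient model structure. By the hypotheses inherited from Prop.~\ref{GoodMonoidalStructure}, $(\simpo{\quabi},\ctensor)$ is a cofibrantly generated monoidal model category satisfying the monoid axiom, and since we are assuming in addition that every object of $\simpo{\quabi}$ is small, \cite{SchwedeShipley00} Thm.~4.1 applies and equips $\modules{\simo{\monoid}}$ with a model structure in which a morphism is a fibration (resp.\ a weak equivalence) precisely when $\forget$ of it is one in $\simpo{\quabi}$. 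Concretely, a morphism of $\simo{\monoid}$-modules is a trivial fibration exactly when $\hom_{\quabi}(\pobj,-)$ carries it to a trivial fibration of simplicial abelian groups for every projective $\pobj\in\quabi$.

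Next I would choose the appropriate sufficiently large class of projectives. Since $\{\pobj_i\}_{i\in I}$ is a \emph{generating} projective class, the class $\mathcal P$ of all small direct sums of objects of $\{\pobj_i\}_{i\in I}$ is sufficiently large (Def.~\ref{GeneratingProjectives}) and closed under finite direct sums. Applying Lemma~\ref{AlmostFreeResolution} to the given cofibrant $\simo{\monoid}$-module $\simo{\module}$ with this class $\mathcal P$ yields a sequence $\{\pobj_n\}_{n\geq 0}\subseteq\mathcal P$ — so each $\pobj_n$ is a direct sum of elements of $\{\pobj_i\}_{i\in I}$ — together with a trivial fibration $p\colon\simo{\module'}\rightarrow\simo{\module}$ in $\modules{\simo{\monoid}}$ such that $\simo{\module'}$ is almost freely generated by $\{\pobj_n\}_{n\geq 0}$ over $\simo{\monoid}$ (Def.~\ref{DefAlmoFree}).

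Finally, cofibrancy of $\simo{\module}$ says exactly that the map $0\rightarrow\simo{\module}$ from the zero object of $\modules{\simo{\monoid}}$ is a cofibration, hence has the left lifting property against the trivial fibration $p$; lifting in the square whose lower edge is $\id_{\simo{\module}}$ produces a section $s\colon\simo{\module}\rightarrow\simo{\module'}$ with $p\circ s=\id_{\simo{\module}}$, so $\simo{\module}$ is a retract of $\simo{\module'}$, which is almost freely generated by direct sums of elements of $\{\pobj_i\}_{i\in I}$, as required. The only point needing attention is the first step: that the morphism delivered by Lemma~\ref{AlmostFreeResolution}, which is constructed by the add/kill procedure of Lemma~\ref{AddKill} and is a priori controlled only through evaluation at projectives of $\quabi$, is genuinely a trivial fibration in the transferred model structure on $\modules{\simo{\monoid}}$ — but this is immediate from the description of fibrations and weak equivalences there, so there is no real obstacle, and the remainder is the standard ``cofibrant objects are retracts of cell complexes'' argument.
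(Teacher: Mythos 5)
Your argument is correct and is essentially the paper's own proof: invoke \cite{SchwedeShipley00} Thm.\@ 4.1 to get the transferred model structure on $\modules{\simo{\monoid}}$, apply Lemma \ref{AlmostFreeResolution} with the sufficiently large class of small direct sums of the $\pobj_i$'s to obtain an almost free trivial fibration onto the given cofibrant module, and use cofibrancy to split it. Your closing remark about the compatibility of ``trivial fibration detected at projectives'' with the transferred model structure is exactly the point the paper leaves implicit, and you resolve it correctly.
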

\begin{proof} By requiring that $(\quabi,\ctensor)$, $\{\pobj_i\}_{i\in I}$ are as in Prop.\@ \ref{GoodMonoidalStructure} and each $\simo{\qobj}$ in $\simpo{\quabi}$ is small we ensure that the category of $\simo{\monoid}$-modules is a cofibrantly generated monoidal model category (\cite{SchwedeShipley00} Thm.\@ 4.1). From Lemma \ref{AlmostFreeResolution} we know then that every cofibrant $\simo{\monoid}$-module can be resolved by an almost free $\simo{\monoid}$-module generated by direct sums of $\pobj_i$'s. Since a trivial fibration into a cofibrant object always has a right inverse, we are done.\end{proof}

Using this characterization of cofibrant modules we can prove flatness.

\begin{definition} Let $(\quabi,\ctensor)$ be a closed monoidal quasi-abelian category. An object $\qobj\in\quabi$ is {\it flat} if the functor $-\ctensor\qobj\colon\quabi\rightarrow\quabi$ is strictly exact.\end{definition}
Since $\ctensor$ is closed, $-\ctensor\qobj$ preserves strict epimorphisms, and since $\quabi$ is additive, it also preserves finite direct products. Therefore $\qobj$ is flat, if and only if $-\ctensor\qobj$ preserves kernels, i.e.\@ strict monomorphisms. 

\begin{example}\label{FlatBanach} For any $\cardinal$ $\summable{\cardinal}$ is flat in $\banco$ (\cite{Grothendieck66} \S I.2.2 Cor.\@ 3). Then for any set $I$ $\underset{i\in I}\bigoplus\summable{\cardinal_i}$ is flat in $\coconbo$. Indeed, since $\ctensor$ commutes with direct sums $-\ctensor\left(\underset{i\in I}\bigoplus\summable{\cardinal_i}\right)$ takes strict monomorphisms to direct sums of strict monomorphisms. Such direct sum is injective, its image is closed (any bornologically convergent sequence in an arbitrary direct sum is contained in the sum of a finite subfamily) and the bornology on the domain is induced from the one on the target (both are direct sum bornologies). \end{example}
In proving flatness it is useful to have the following obvious facts.

\begin{proposition}\label{CofibrantFlat} Let $(\quabi,\ctensor)$ be a closed monoidal quasi-abelian category, and let $\qobj\in\quabi$ be flat. Then every retract of $\qobj$ is flat.

If every $\simo{\qobj}\in\simpo{\quabi}$ is small and there is a generating projective class $\{\pobj_i\}_{i\in I}\subseteq\quabi$ as in Prop.\@ \ref{GoodMonoidalStructure}, s.t.\@ all small direct sums of elements of $\{\pobj_i\}_{i\in I}$ are flat, then $\forall\simo{\monoid}\in\simpo{\ucori{\quabi}}$ any cofibrant $\simo{\monoid}$-module is flat.
\end{proposition}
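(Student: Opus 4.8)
The plan is to reduce the second and main assertion to the first one via Corollary~\ref{CofibrantRetract}, and then to verify flatness of an almost free module by a degree-wise computation.

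For the first assertion, let $\qobj$ be flat and $\qobj'$ a retract of $\qobj$. Idempotents split in the quasi-abelian category $\quabi$, so $\qobj\cong\qobj'\oplus\qobj''$; hence for any strict monomorphism $A\to B$ the morphism $A\ctensor\qobj\to B\ctensor\qobj$ is the block-diagonal direct sum of $A\ctensor\qobj'\to B\ctensor\qobj'$ and $A\ctensor\qobj''\to B\ctensor\qobj''$. Since kernels and cokernels commute with finite direct sums, a block-diagonal direct sum of morphisms is a strict monomorphism iff each summand is; flatness of $\qobj$ therefore forces $A\ctensor\qobj'\to B\ctensor\qobj'$ to be a strict monomorphism. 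As $-\ctensor\qobj'$ also preserves strict epimorphisms ($\ctensor$ is closed) and finite products (additivity), it is strictly exact, i.e.\@ $\qobj'$ is flat.

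For the second assertion I would first invoke Corollary~\ref{CofibrantRetract} --- its hypotheses, namely that $(\quabi,\ctensor)$ and $\{\pobj_i\}_{i\in I}$ are as in Proposition~\ref{GoodMonoidalStructure} and that every simplicial object of $\quabi$ is small, are precisely those assumed here. It gives that a cofibrant $\simo{\monoid}$-module is a retract, in the quasi-abelian closed monoidal category $\modules{\simo{\monoid}}$, of an $\simo{\monoid}$-module $\simo{\module'}$ almost freely generated by a sequence $\{\obj_n\}_{n\geq 0}$ in which each $\obj_n$ is a small direct sum of objects of $\{\pobj_i\}_{i\in I}$, and hence flat in $\quabi$ by hypothesis. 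By the first assertion (applied inside $\modules{\simo{\monoid}}$) it is enough to show that $\simo{\module'}$ is flat, i.e.\@ that $-\ctensor_{\simo{\monoid}}\simo{\module'}$ takes strict monomorphisms of $\simo{\monoid}$-modules to strict monomorphisms. The forgetful functor $\forget\colon\modules{\simo{\monoid}}\to\simpo{\quabi}$ has both adjoints, so it preserves and reflects kernels and cokernels; since these are computed degree-wise in $\simpo{\quabi}$, a morphism of $\simo{\monoid}$-modules is a strict monomorphism exactly when it is one in each simplicial degree in $\quabi$. The relative tensor product is a degree-wise coequalizer, so $(\simo{\module'}\ctensor_{\simo{\monoid}}\simo{N})_n\cong\module'_n\ctensor_{\monoid_n}N_n$; and since $\simo{\module'}$ is almost free, $\module'_n\cong\coprod_{m\leq n}\coprod_{\finor{n}\twoheadrightarrow\finor{m}}\monoid_n\ctensor\obj_m$ as $\monoid_n$-modules, whence the base-change identity $(\monoid_n\ctensor\obj_m)\ctensor_{\monoid_n}N_n\cong\obj_m\ctensor N_n$ yields, naturally in the $\monoid_n$-module $N_n$,
\[
\module'_n\ctensor_{\monoid_n}N_n\;\cong\;\coprod_{m\leq n}\ \coprod_{\finor{n}\twoheadrightarrow\finor{m}}\obj_m\ctensor N_n .
\]
Given a strict monomorphism $\simo{N}\to\simo{N'}$, each $N_n\to N'_n$ is a strict monomorphism in $\quabi$, each $\obj_m\ctensor N_n\to\obj_m\ctensor N'_n$ is a strict monomorphism by flatness of $\obj_m$, and the coproduct above is finite; so $(\simo{\module'}\ctensor_{\simo{\monoid}}\simo{N})_n\to(\simo{\module'}\ctensor_{\simo{\monoid}}\simo{N'})_n$ is a strict monomorphism for every $n$, and we are done.

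I do not expect a genuine obstacle; the one point that actually matters is the \emph{finiteness} used at the very end --- in each simplicial degree the almost free structure involves only the finitely many degeneracies out of $\finor{n}$, so a finite direct sum of strict monomorphisms suffices and one never needs exactness of infinite coproducts in $\quabi$, which may fail. This is also why the flatness hypothesis is placed on the small direct sums of the $\pobj_i$: it is used exactly to guarantee flatness of the generators $\obj_m$ produced by Corollary~\ref{CofibrantRetract}, everything else being routine degree-wise bookkeeping of kernels, cokernels and tensor products.
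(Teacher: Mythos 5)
Your proposal is correct and follows essentially the same route as the paper: reduce via Corollary~\ref{CofibrantRetract} to an almost free module and check flatness degree-wise using the base-change identity $(\monoid_n\ctensor\obj_m)\ctensor_{\monoid_n}N_n\cong\obj_m\ctensor N_n$ and the finiteness of the coproduct in each simplicial degree. The only cosmetic difference is in the retract step, where the paper avoids splitting the idempotent and instead argues directly that $\qobj'\ctensor\qobj_1\to\qobj\ctensor\qobj_1\to\qobj\ctensor\qobj_2$ is a strict monomorphism and then cancels (Schneiders, Prop.\@ 1.1.8); both arguments are fine.
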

\begin{proof} Let $\qobj'$ be a retract of $\qobj$, and let $\qobj_1\rightarrow\qobj_2$ be a strict monomorphism. We have a commutative diagram
	\begin{equation*}\xymatrix{\qobj'\ctensor\qobj_1\ar[r]\ar[d] & \qobj\ctensor\qobj_1\ar[r]\ar[d] & \qobj'\ctensor\qobj_1\ar[d]\\
	\qobj'\ctensor\qobj_2\ar[r] & \qobj\ctensor\qobj_2\ar[r] & \qobj'\ctensor\qobj_2,}\end{equation*}
where both horizontal compositions are identities and the middle vertical arrow is a strict monomorphism. Since $\qobj'\ctensor\qobj_1\rightarrow\qobj\ctensor\qobj_1$ factors the identity, it is a strict monomorphism (e.g.\@ \cite{Schneiders99} Prop.\@ 1.1.8). Then $\qobj'\ctensor\qobj_1\rightarrow\qobj\ctensor\qobj_2$ is a strict monomorphism and hence so is $\qobj'\ctensor\qobj_1\rightarrow\qobj'\ctensor\qobj_2$ (loc.\@ cit.\@).

According to Cor.\@ \ref{CofibrantRetract} every cofibrant $\simo{\monoid}$-module is a retract of an $\simo{\monoid}$-module that is almost freely generated by direct sums of elements of $\{\pobj_i\}_{i\in I}$. Therefore it is enough to prove that every such almost free $\simo{\monoid}$-module $\simo{\module}$ is flat. Let $\simo{\module'}\rightarrow\simo{\module''}$ be a strict monomorphism in $\modules{\simo{\monoid}}$. By assumption $\forall n\geq 0$ $\module_n\cong\monoid_n\ctensor\qobj_n$ as $\monoid_n$-modules, where $\qobj_n$ is flat as an object of $(\quabi,\ctensor)$. Then $\module'_n\underset{\monoid_n}\ctensor\module_n\rightarrow\module''_n\underset{\monoid_n}\ctensor\module_n$ is isomorphic to $\module'_n\ctensor\qobj_n\rightarrow\module''\ctensor\qobj_n$, which is a strict monomorphism in $\quabi$. As this is also a morphism of $\monoid_n$-modules, the quotient has an induced structure of an $\monoid_n$-module, implying that this morphism is a strict monomorphism in $\modules{\monoid_n}$ as well.\end{proof}%

\begin{example}\label{FlatnessOfBornologicalModules} Let $\cardinal\geq\cardinal_0$ be a regular, and let $\simo{\monoid}\in\simpo{\ucori{\recoco{\cardinal}}}$. Using Ex.\@ \ref{FlatBanach} and Prop.\@ \ref{CofibrantFlat} we see that every cofibrant $\simo{\monoid}$-module is flat.\end{example}
One of the uses of flat objects is the following standard fact.

\begin{proposition}\label{FlatnessOfCofibrants} Let $(\quabi,\ctensor)$, $\{\pobj_i\}_{i\in I}$ be as in Prop.\@ \ref{GoodMonoidalStructure}, and suppose every $\simo{\qobj}\in\simpo{\quabi}$ is small. Let $\simo{\monoid}$ be a simplicial commutative monoid in $\quabi$, and let $\simo{\module}$ be a flat $\simo{\monoid}$-module. Then $-\underset{\simo{\monoid}}\ctensor\simo{\module}$ preserves weak equivalences.\end{proposition}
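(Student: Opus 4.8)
The plan is to use the model structure on $\modules{\simo{\monoid}}$ provided by \cite{SchwedeShipley00} Thm.\@ 4.1 --- available here since, by Prop.\@ \ref{GoodMonoidalStructure} and the assumed smallness of all objects of $\simpo{\quabi}$, $\simpo{\quabi}$ is a cofibrantly generated monoidal model category satisfying the monoid axiom --- together with two properties of the functor $\adifu:=-\underset{\simo{\monoid}}\ctensor\simo{\module}$: it commutes with all colimits (being a coequalizer of colimit-preserving functors), and it is strictly exact because $\simo{\module}$ is flat. I would factor an arbitrary weak equivalence $f$ as a trivial cofibration followed by a trivial fibration and handle the two pieces separately; recall that a weak equivalence in $\modules{\simo{\monoid}}$ is one whose underlying morphism in $\simpo{\quabi}$ becomes a weak equivalence of simplicial abelian groups after $\hom_{\quabi}(\pobj,-)$ for every projective $\pobj$.

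Trivial cofibrations: since $\adifu$ preserves pushouts, transfinite compositions and retracts, it suffices to see that it sends every relative cell complex for the generating trivial cofibrations of $\modules{\simo{\monoid}}$ to a weak equivalence. A generating trivial cofibration of $\modules{\simo{\monoid}}$ is $\simo{\monoid}\ctensor j$ for $j$ a generating trivial cofibration of $\simpo{\quabi}$, where $\simo{\monoid}\ctensor(-)$ is the free $\simo{\monoid}$-module functor, and base change gives $(\simo{\monoid}\ctensor j)\underset{\simo{\monoid}}\ctensor\simo{\module}\cong j\ctensor\simo{\module}$ in $\simpo{\quabi}$. Hence, forgetting down to $\simpo{\quabi}$ (which preserves colimits), $\adifu$ takes such a relative cell complex to a transfinite composition of pushouts of maps $j\ctensor\simo{\module}$ with $j$ a generating trivial cofibration of $\simpo{\quabi}$; these are weak equivalences by the monoid axiom for $\simpo{\quabi}$ (Prop.\@ \ref{GoodMonoidalStructure}), and weak equivalences in $\modules{\simo{\monoid}}$ are detected in $\simpo{\quabi}$.

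Trivial fibrations: let $p\colon\simo{M}\rightarrow\simo{N}$ be a trivial fibration; it is degreewise a strict epimorphism, so with $\simo{K}:=\kernel{p}$ formed in $\modules{\simo{\monoid}}$ the sequence $0\rightarrow\simo{K}\rightarrow\simo{M}\rightarrow\simo{N}\rightarrow 0$ is strictly exact in every simplicial degree, and applying $\hom_{\quabi}(\pobj,-)$ and the long exact sequence shows $\simo{K}$ is weakly acyclic. Strict exactness of $\adifu$ makes $0\rightarrow\adifu(\simo{K})\rightarrow\adifu(\simo{M})\rightarrow\adifu(\simo{N})\rightarrow 0$ again degreewise strictly exact, so $\adifu(p)$ is a weak equivalence as soon as $\adifu(\simo{K})$ is weakly acyclic. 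I expect this to be the main obstacle: $\simo{K}$ need not be cofibrant, and strict exactness alone does not promote degreewise strict exactness to weak acyclicity. To get past it I would iterate cofibrant replacements, killing kernels at each stage (Lemma \ref{AlmostFreeResolution}), to build a resolution $\cdots\rightarrow\simo{C}_1\rightarrow\simo{C}_0\rightarrow\simo{K}$ in $\modules{\simo{\monoid}}$ that is strictly exact in every simplicial degree and has all $\simo{C}_j$ cofibrant. By two-out-of-three each $\simo{C}_j$ is weakly acyclic, hence --- being cofibrant, fibrant, and weakly equivalent to $0$ in the simplicial model category $\modules{\simo{\monoid}}$ --- simplicially contractible; since $\adifu$ is additive and commutes with $-\otimes\simplex{1}$ (a degreewise finite coproduct) it preserves simplicial homotopies, so each $\adifu(\simo{C}_j)$ is simplicially contractible, in particular weakly acyclic. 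Flatness keeps $\cdots\rightarrow\adifu(\simo{C}_1)\rightarrow\adifu(\simo{C}_0)\rightarrow\adifu(\simo{K})$ degreewise strictly exact, and applying $\hom_{\quabi}(\pobj,-)$ turns it into a resolution of a simplicial abelian group by acyclic ones; as all the gradings are bounded above, the spectral sequence of the associated double complex forces $\hom_{\quabi}(\pobj,\adifu(\simo{K}))$ to be acyclic for every projective $\pobj$, i.e.\@ $\adifu(\simo{K})$ is weakly acyclic.

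Finally, given any weak equivalence $f$, factor $f=p\circ i$ with $i$ a trivial cofibration and $p$ a fibration; two-out-of-three makes $p$ a weak equivalence, hence a trivial fibration, so $\adifu(f)=\adifu(p)\circ\adifu(i)$ is a composite of weak equivalences by the two cases above.
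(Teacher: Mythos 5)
Your proof is correct, but it is organized differently from the paper's. The paper does not factor weak equivalences: since every object of $\modules{\simo{\monoid}}$ is fibrant, it invokes Ken Brown's lemma to reduce at once to trivial fibrations, and then (exactly as you do) uses strict exactness of $-\underset{\simo{\monoid}}\ctensor\simo{\module}$ to reduce further to showing that acyclic modules are sent to acyclic modules; your separate treatment of trivial cofibrations via the monoid axiom is therefore extra work, though it is sound and has the mild virtue of not using flatness for that half. Where the two arguments genuinely diverge is in how they kill the acyclic kernel $\simo{K}$. The paper filters $\simo{K}$ \emph{internally} by simplicial dimension, exhibiting it as an iterated extension of explicit two-stage pieces $\simo{\module^{n,n+1}}$, each of which carries a simplicial contraction $\simo{\module^{n,n+1}}\otimes\simplex{1}\rightarrow\simo{\module^{n,n+1}}$ preserved by the tensor functor, with flatness preserving the short strictly exact sequences of the filtration and induction on $n$ finishing the argument. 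You instead resolve $\simo{K}$ \emph{externally} by cofibrant acyclic modules, observe that cofibrant--fibrant objects weakly equivalent to $0$ are simplicially contractible (Whitehead), push the contractions through the functor, and conclude with a first-quadrant double-complex spectral sequence. Both arguments rest on the same two pillars --- the functor commutes with $-\otimes\simplex{1}$ so contractibility survives, and flatness preserves degreewise strict exactness --- but your route trades the paper's hands-on (and somewhat laborious) construction of the pieces $\simo{\module^{n,n+1}}$ for the model-categorical machinery of cofibrant replacement plus a convergence argument; it is arguably cleaner, at the cost of invoking the model structure on $\modules{\simo{\monoid}}$ more heavily. Two small remarks: you could take cofibrant replacements directly by factoring $0\rightarrow\simo{K}_j$ in the model structure rather than citing Lemma \ref{AlmostFreeResolution}; and your phrase ``all the gradings are bounded above'' should read that the double complex is concentrated in a quadrant, which is what actually guarantees convergence.
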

\begin{proof} It is enough to show that $-\underset{\simo{\monoid}}\ctensor\simo{\module}$ preserves acyclic objects. \hide{%
Indeed, by definition every object in $\modules{\simo{\monoid}}$ is fibrant, therefore, according to Ken Brown's lemma (e.g.\@ \cite{Ho99} Lemma 1.1.12) it is enough to show that $-\underset{\simo{\monoid}}\ctensor\simo{\module}$ maps trivial fibrations to weak equivalences. Given a trivial fibration and taking its kernel we obtain a short strict exact sequence. Since $\simo{\monoid}$ is assumed to be flat, $-\underset{\simo{\monoid}}\ctensor\simo{\module}$ preserves short strict exact sequences. Therefore it is enough to show that $-\underset{\simo{\monoid}}\ctensor\simo{\module}$ preserves acyclic objects in $\modules{\simo{\monoid}}$.}%
Given an acyclic $\simo{\module'}$ and using induction on simplicial dimension $n\geq 0$ we have a strict short exact sequence of $\simo{\monoid}$-modules
	\begin{equation*}0\longrightarrow\simo{\module^+}\longrightarrow\simo{\module'}\longrightarrow\simo{\module^{n,n+1}}\longrightarrow 0,\end{equation*}
where $\module'_{<n}=0$, $\module^+_{<n+1}=0$ and $\simo{\module^{n,n+1}}$ is generated as an $\simo{\monoid}$-module by components in degrees $n$, $n+1$. \hide{%
Let $n\in\mathbb Z_{\geq 0}$ and let $\simo{\module'}$ be an acyclic object in $\modules{\simo{\monoid}}$, s.t.\@ $\module'_k=0$ for all $k<n$. For each $m\geq 0$ let $\module^+_m\rightarrow\module'_m$ be the intersection of kernels of $\{\module'_m\rightarrow\module'_{n}\}_{\finor{n}\hookrightarrow\finor{m}}$.\footnote{Alternatively we could define $\module^+_m$ as the intersection of kernels of maps corresponding to all $\finor{n}\rightarrow\finor{m}$. This would give us the same result, since $\module'_{<n}=0$.} As a morphism in $\quabi$ $\module^+_m\rightarrow\module'_m$ is isomorphic to
	\begin{equation}\label{HigherTail}\left(\underset{m\twoheadrightarrow n+1}\bigoplus\K_{n+1}\right)\oplus
	\left(\underset{m\geq k\geq n+2}\bigoplus\,\underset{m\twoheadrightarrow k}\bigoplus\normach{\simo{\module'}}^{-k}\right)
	\longrightarrow\module'_m,\end{equation}
where $\K_{n+1}\rightarrow\module'_{n+1}$ is the kernel of $\normach{\simo{\module'}}^{-n-1}\rightarrow\normach{\simo{\module'}}^{-n}$. Being an intersection of kernels, $\module^+_m\rightarrow\module'_m$ is a strict monomorphism of $\monoid_m$-modules $\forall m\geq 0$. Let $\phi\colon\finor{m_1}\rightarrow\finor{m_2}$ and consider the corresponding composition $\phi^*\colon\module^+_{m_2}\hookrightarrow\module'_{m_2}\rightarrow\module'_{m_1}$. Since $\forall\psi\colon\finor{n}\rightarrow\finor{m_1}$ the composition $(\phi\circ\psi)^*\colon\module^+_{m_2}\hookrightarrow\module'_{m_2}\rightarrow\module'_n$ is $0$, $\phi^*$ factors through $\module^+_{m_1}$, i.e.\@ we have a morphism of $\simo{\monoid}$-modules $\simo{\module^+}\rightarrow\simo{\module'}$. Let $\simo{\module^{n,n+1}}$ be the quotient of this morphism. }%
Moreover $\forall m\geq 0$ as objects of $\quabi$
	\begin{equation}\label{ModuleDecomposition}\module^{n,n+1}_m\cong\left(\underset{\finor{m}\twoheadrightarrow\finor{n}}\bigoplus\module'_n\right)
	\oplus\left(\underset{\finor{m}\twoheadrightarrow\finor{n+1}}\bigoplus\module''_n\right),\end{equation}
where $\module''_n$ is another copy of $\module'_n$, identified with $\module'_n$ by $\boun{n+1}\colon\module'_{n+1}\rightarrow\module'_n$. It is not difficult to see that (\ref{ModuleDecomposition}) is a decomposition into $\monoid_m$-submodules, and altogether this is a realization of $\simo{\module^{n,n+1}}$ as a simplicial $\monoid_n$-module. \hide{%
Indeed, $\underset{0\leq i\leq n}\sum(-1)^{i}\boun{i}\colon\module^{n,n+1}_{n+1}\rightarrow\module^{n,n+1}_n$ in a monomorphism when restricted to $\module''_n$ and $0$ when restricted to the other summands, similarly for any $0\leq k<n$ $\underset{0\leq i\leq k}\sum(-1)^{i}\boun{i}\colon\module^{n,n+1}_{n+1}\rightarrow\module^{n,n+1}_n$ is a monomorphism on $\degen{k}(\module'_n)$ and $0$ everywhere else. Therefore, using Prop.\@ \ref{ManipulationWithKernels} we see that in dimension $n+1$ (\ref{ModuleDecomposition}) is a decomposition into $\monoid_{n+1}$-modules. Moreover, all these $\monoid_{n+1}$-modules are isomorphic. Indeed, for every $0\leq i\leq n$ $\boun{i}$, $\boun{i+1}$ identify $\degen{i}(\module'_n)$ with $\module'_n$ as $\monoid_{n+1}$-modules. Therefore the two structures of an $\monoid_{n+1}$-module on $\module'_n$ given by $\boun{i},\boun{i+1}\colon\monoid_{n+1}\rightarrow\monoid_n$ are isomorphic to the one on $\degen{i}(\module'_n)$. Finally $\boun{n+1}$ identifies the $\monoid_{n+1}$-module structure on $\module''_n$ with the one on $\module'_n$ given by $\boun{n+1}\colon\monoid_{n+1}\rightarrow\monoid_n$.

Now we use induction on $l$ to show that (\ref{ModuleDecomposition}) is a decomposition into a direct sum of $\monoid_{n+l}$-modules also for $m=n+l$. For any $\phi\colon\finor{m+l}\twoheadrightarrow\finor n$ let $k_\phi\geq 0$ be the smallest number, s.t.\@ $\phi(k_\phi)=\phi(k_\phi+1)$. Let $\phi':=\phi\circ\degen{k_\phi}$, and let $\pi_{\phi'}\colon\module^{n,n+1}_{n+l-1}\rightarrow(\phi')^*(\module')$ be the projection on a summand. Then $\pi_{\phi'}\circ\underset{0\leq i\leq k_\phi}\sum(-1)^i\boun{i}$ is a monomorphism on $\phi^*(\module')$ and is $0$ on every other summand of $\module^{n,n+1}_{n+l}$. 

In a similar way, starting with $\psi\colon\finor{n+l}\twoheadrightarrow\finor{n+1}$ and using a projection on a degeneration of $\module''_n$ in $\module^{n,n+1}_{k+l-1}$, we see that we can use Prop.\@ \ref{ManipulationWithKernels} and realize each summand in $\module^{n,n+1}_{n+l}$ as an intersection of kernels. Altogether we see that the structure of an $\simo{\monoid}$-module on $\simo{\module^{n,n+1}}$ is induced from that of a simplicial $\monoid_n$-module.}%
It is clear that $\simo{\module^{n,n+1}}$ is acyclic. We would like to show that $\simo{\module^{n,n+1}}\underset{\simo{\monoid}}\ctensor\simo{\module}$ is acyclic as well. We notice that we have a homotopy $\simo{\module^{n,n+1}}\otimes\simplex{1}\rightarrow\simo{\module^{n,n+1}}$ from the identity to the $0$ morphism. \hide{%
Indeed, consider $\simo{\module^{n,n+1}}\otimes\simplex{1}$. By definition $\forall k\geq 0$
	\begin{equation*}(\simo{\module^{n,n+1}}\otimes\simplex{1})_k=\underset{\finor{k}\rightarrow\finor{1}}\bigoplus\module^{n,n+1}_k.\end{equation*}
For any $\phi\colon\finor{k}\rightarrow\finor{1}$ let $\finor{m}\hookrightarrow\finor{k}$ be the inclusion of the fiber over $0\in\finor{1}$, and let $\boundaryo_\phi\colon\module^{n,n+1}_k\rightarrow\module^{n,n+1}_m$ be the corresponding boundary. When the fiber is empty we define $\boundaryo_\phi\colon\module^{n,n+1}\rightarrow 0$. Let $\K'_\phi\rightarrow\module^{n,n+1}_k$ be the kernel of $\boundaryo_\phi$, this is a strict monomorphism of $\monoid_k$-modules. Altogether we have $\forall k\geq 0$
	\begin{equation*}\K'_k:=\underset{\phi\colon\finor{k}\rightarrow\finor{1}}\bigoplus\K'_\phi\longrightarrow
	(\simo{\module^{n,n+1}}\otimes\simplex{1})_k.\end{equation*}
We claim $\simo{\K'}\rightarrow\simo{\module^{n,n+1}}\otimes\simplex{1}$ a strict monomorphism of $\simo{\monoid}$-modules. To prove this we just need to show that $\simo{\K'}$ is stable with respect to simplicial structure maps on $\simo{\module^{n,n+1}}\otimes\simplex{1}$. For any $\psi\colon\finor{k}\rightarrow\finor{k'}$, $\phi\colon\finor{k'}\rightarrow\finor{1}$ we have the natural map $\finor{m}\rightarrow\finor{m'}$ where $\finor{m}$, $\finor{m'}$ are the fibers of $\phi\circ\psi$ and respectively $\phi$ over $0\in\finor{1}$. This immediately implies that $\psi^*$ maps $\K'_\phi\rightarrow\K'_{\phi\circ\psi}$. We denote
	\begin{equation*}\simo{\overline{\module}^{n,n+1}}:=(\simo{\module^{n,n+1}}\otimes\simplex{1})/\simo{\K'}.\end{equation*}
From the construction it is clear that $\forall k\geq 0$ as $\monoid_k$-modules
	\begin{equation*}\overline{\module}^{n,n+1}_k\cong\underset{n\leq i\leq k}\bigoplus\module^{n,n+1}_i,\end{equation*}
where the $\monoid_k$-module structure on $\module^{n,n+1}_i$ comes from $\monoid_k\rightarrow\monoid_i$ given by $\finor{i}\hookrightarrow\finor{k}$ mapping $j\mapsto j$. The index $i$ corresponds to $\sigma_i\colon\finor{k}\rightarrow\finor{1}$ that maps $\finor{i}\mapsto 0$ and the rest to $1$. 

Considered as a simplicial $\monoid_n$-module, $\simo{\overline{\module}^{n,n+1}}$ has only $4$ non-degenerate summands: $\module'_n$ in dimension $n$ and in dimension $n+1$ indexed by $\sigma_n$; and $\module''_n$ in dimension $n+1$ and in dimension $n+2$ indexed by $\sigma_{n+1}$. Each one of the summands lies in $\normach{\simo{\overline{\module}^{n,n+1}}}$, and the differential identifies the copies of $\module'_n$ and $\module''_n$. Therefore we have a well defined morphism in $\simpo{\modules{\monoid_n}}$: 
	\begin{equation}\label{TheEnd}\simo{\overline{\module}^{n,n+1}}\longrightarrow\simo{\module^{n,n+1}},\end{equation}
$\module'_n\overset{=}\rightarrow\module'_n$, $\module''_n\overset{=}\rightarrow\module''$ in dimensions $n$ and $n+1$; and $\module'_n\overset\cong\rightarrow\module''_n$, $\module''_n\overset\cong\rightarrow\degen{n+1}(\module''_n)$ in dimensions $n+1$ and $n+2$. Since the $\simo{\monoid}$-module structure on $\simo{\module^{n,n+1}}$ is induced from the simplicial $\monoid_n$-module structure, (\ref{TheEnd}) is a morphism of $\simo{\monoid}$-modules as well.}%
Therefore, since 
	\begin{equation*}\simo{\module}\ctensor\left(\simo{\module^{n,n+1}}\otimes\simplex{1}\right)\cong
	\left(\simo{\module}\ctensor\simo{\module^{n,n+1}}\right)\otimes\simplex{1}\end{equation*} 
we conclude that $\simo{\module}\ctensor\simo{\module^{n,n+1}}$ is acyclic. Using flatness of $\simo{\module}$ and continuing inductively on $n$ we are done.\end{proof}%

Putting together Ex.\@ \ref{FlatnessOfBornologicalModules} and Prop.\@ \ref{FlatnessOfCofibrants} we have the following.

\begin{theorem}\label{InvarianceOfModules} Let $\cardinal\geq\aleph_0$ be a regular cardinal. For any weak equivalence $\simo{\monoid}\rightarrow\simo{\monoid'}$ of commutative simplicial monoids in $(\recoco{\cardinal},\ctensor)$ the canonical adjunction $\modules{\simo{\monoid}}\rightleftarrows\modules{\simo{\monoid'}}$ is a Quillen equivalence. \end{theorem}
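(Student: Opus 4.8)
The plan is to recognize the adjunction in the statement as extension and restriction of scalars along the given morphism $\simo{\monoid}\rightarrow\simo{\monoid'}$, and then to verify the Quillen-equivalence criterion by reducing its one nontrivial clause to the flatness results of Section \ref{CofibrantResolutionsFlatness}. First I would set up the adjunction explicitly: the left adjoint is $\simo{\module}\mapsto\simo{\module}\underset{\simo{\monoid}}{\ctensor}\simo{\monoid'}$ and the right adjoint is restriction of scalars $\forget$. By the theorem preceding the statement, both $\modules{\simo{\monoid}}$ and $\modules{\simo{\monoid'}}$ are cofibrantly generated model categories whose fibrations and weak equivalences are exactly the morphisms that become fibrations, respectively weak equivalences, in $\simpo{\recoco{\cardinal}}$. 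Since restriction of scalars does not change the underlying simplicial bornological space of a module or of a morphism, it preserves and reflects all fibrations and all weak equivalences; in particular it is a right Quillen functor, so $(-\underset{\simo{\monoid}}{\ctensor}\simo{\monoid'},\forget)$ is a Quillen adjunction.

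To promote this to a Quillen equivalence I would invoke the standard criterion (e.g.\ \cite{Ho99} Prop.\ 1.3.13): it suffices that (i) $\forget$ reflects weak equivalences between fibrant objects, and (ii) for every cofibrant $\simo{\module}\in\modules{\simo{\monoid}}$ the unit morphism $\simo{\module}\rightarrow\forget\bigl(\simo{\module}\underset{\simo{\monoid}}{\ctensor}\simo{\monoid'}\bigr)$ is a weak equivalence, where, because all objects of $\modules{\simo{\monoid'}}$ are fibrant, the derived unit coincides with the ordinary one. Clause (i) is immediate from the previous paragraph, as $\forget$ reflects all weak equivalences. For clause (ii), the underlying morphism of the unit in $\simpo{\recoco{\cardinal}}$ is the canonical map $\simo{\module}\cong\simo{\monoid}\underset{\simo{\monoid}}{\ctensor}\simo{\module}\rightarrow\simo{\monoid'}\underset{\simo{\monoid}}{\ctensor}\simo{\module}$ obtained by applying $-\underset{\simo{\monoid}}{\ctensor}\simo{\module}$ to the given morphism $\simo{\monoid}\rightarrow\simo{\monoid'}$, viewed as a morphism of $\simo{\monoid}$-modules; it is a weak equivalence of $\simo{\monoid}$-modules since weak equivalences are detected in $\simpo{\recoco{\cardinal}}$ and it is one there by hypothesis. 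Now Ex.\ \ref{FlatnessOfBornologicalModules} says the cofibrant module $\simo{\module}$ is flat over $\simo{\monoid}$, and Prop.\ \ref{FlatnessOfCofibrants} then says $-\underset{\simo{\monoid}}{\ctensor}\simo{\module}$ preserves weak equivalences; hence the unit is a weak equivalence, and the criterion is met.

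The essential content here was already supplied in Section \ref{CofibrantResolutionsFlatness} — namely that cofibrant modules are flat and that tensoring with a flat module preserves weak equivalences — so what remains above is essentially bookkeeping with the Quillen-equivalence criterion; the hypotheses needed to apply Prop.\ \ref{FlatnessOfCofibrants} to $(\recoco{\cardinal},\ctensor)$ for regular $\cardinal$ (a generating projective class as in Prop.\ \ref{GoodMonoidalStructure} and smallness of all simplicial objects, via Prop.\ \ref{OneProjective} and Thm.\ \ref{LocalPresentability}) are exactly those already invoked for the preceding theorem. The one point that warrants a moment's care is the identification, in clauses (i) and (ii), of the underlying object of $\simo{\module}\underset{\simo{\monoid}}{\ctensor}\simo{\monoid'}$ and of the unit map: both are computed in $\simpo{\recoco{\cardinal}}$ and are left untouched by restriction of scalars, which is precisely what lets the two conditions be checked at the level of $\simpo{\recoco{\cardinal}}$-weak equivalences. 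There is no genuine obstacle beyond this translation; the work that could have been an obstacle — controlling $\underset{\simo{\monoid}}{\ctensor}$ on weak equivalences without cofibrancy of the underlying objects of the monoids — is exactly what the flatness of cofibrant modules buys us.
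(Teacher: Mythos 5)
Your argument is correct and is essentially the paper's: the paper proves this theorem by citing \cite{SchwedeShipley00} Thm.\ 4.3, whose hypotheses are exactly what Ex.\ \ref{FlatnessOfBornologicalModules} and Prop.\ \ref{FlatnessOfCofibrants} were set up to supply, and your write-up simply unpacks that citation via the standard criterion of \cite{Ho99} Prop.\ 1.3.13. The key step --- reducing the derived unit on a cofibrant module to the weak equivalence $\simo{\monoid}\rightarrow\simo{\monoid'}$ tensored with a flat module --- is precisely the content the paper relies on.
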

\begin{proof} \cite{SchwedeShipley00} Theorem 4.3.\end{proof}%

\section{Bornological rings of $\cinfty$-functions}

In this section we embed the category of simplicial $\cinfty$-rings into the category $\simpo{\ucori{\coubo}}$ of simplicial commutative monoids in the category $\coubo$ of locally separable complete bornological spaces.

\subsection{The pre-compact bornology}\label{PrecompactBornology}

The $\mathbb R$-vector space $\cinfty(\mathbb R^n)$ of smooth functions on $\mathbb R^n$ is a nuclear Fr\'echet space (e.g.\@ \cite{HM81} \S 5.3 Thm.\@ 2). A set $U\subseteq\cinfty(\mathbb R^n)$ is a neighbourhood of $0$, if there are $r,\epsilon\in\mathbb R_{>0}$, $\rqt\in\mathbb R^n$, $m\in\mathbb Z_{\geq 0}$ s.t.\@ $\wball{\rqt}{r}{\epsilon}{m}\subseteq U$, where
	\begin{equation*}\wball{\rqt}{r}{\epsilon}{m}:=
	\{f\in\cinfty(\mathbb R^n)\,|\,\underset{\norm{\rpt-\rqt}{}\leq r}\sup\norm{\jet{f}{m}(\rpt)}{}<\epsilon\}\end{equation*}
and $\norm{\jet{f}{m}(\rpt)}{}:=\underset{\norm{\mindex{k}}{}\leq m}\sum\norm{\diffo{f}{\mindex{k}}(\rpt)}{}$. Given a Fr\'echet space $\fpace$ we denote by $\cpt{\fpace}$ the {\it pre-compact complete bornological space on $\fpace$} (e.g.\@ \cite{Meyer2007} Def.\@ 1.15). Clearly $\cpt{\fpace}$ is locally separable (e.g.\@ \cite{Meyer2007} Prop.\@ 1.163).

\begin{proposition} (E.g.\@ \cite{Meyer2007} Thm.\@ 1.29, Example 1.76, Thm.\@ 1.87) The functor $\cpto$ is a full and faithful, fully exact,\footnote{{\it Full exactness} here means that $\cpto$ preserves and reflects strictly exact sequences.} symmetric monoidal functor from the category $(\fcati,\ctensor)$ of Fr\'echet spaces and completed projective tensor product to the category $(\coubo,\ctensor)$ of locally separable complete bornological spaces.\end{proposition}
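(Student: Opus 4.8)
The plan is to use the fact that $\cpto$ is the identity on underlying $\mathbb R$-vector spaces: it merely equips a Fr\'echet space $\fpace$ with the bornology whose bounded sets are the pre-compact subsets. Every assertion then becomes a statement about how pre-compact sets behave under linear maps, kernels, quotients and tensor products. First one checks that $\cpt{\fpace}$ really lies in $\coubo=\recoco{\aleph_1}$: in its dissection every Banach piece $\linearspan{\disk}{\fpace}$ ($\disk$ a pre-compact disk) is separable, since a pre-compact metrizable set is separable and $\linearspan{\disk}{\fpace}=\bigcup_{n}n\disk$ is then separable as well, so these Banach spaces have density character $\le\aleph_0<\aleph_1$; this is \cite{Meyer2007} Prop.\ 1.163. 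Faithfulness is then immediate, as a morphism in $\fcati$ or in $\coubo$ is determined by its underlying linear map.

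Fullness amounts to showing that a linear map $T\colon\fpace\to\fpace'$ that carries pre-compact sets to pre-compact sets is continuous. Since a linear map between metrizable locally convex spaces is continuous as soon as it is sequentially continuous at $0$, it suffices to show $T$ sends null sequences to null sequences. Given $x_n\to 0$ in $\fpace$, the standard Mackey argument over an increasing family of seminorms yields scalars $\lambda_n\to\infty$ with $\lambda_n x_n\to 0$; the set $\{\lambda_n x_n\}_{n}\cup\{0\}$ is compact, hence pre-compact, so $T$ maps it to a pre-compact — in particular bounded — subset of $\fpace'$, forcing $\sup_n p(\lambda_n T x_n)<\infty$ for every continuous seminorm $p$ on $\fpace'$, i.e.\ $p(Tx_n)\to 0$. (Alternatively one may simply quote that metrizable locally convex spaces are bornological, \cite{Meyer2007} Thm.\ 1.29.) Hence $\Hom_{\fcati}(\fpace,\fpace')\to\Hom_{\coubo}(\cpt{\fpace},\cpt{\fpace'})$ is a bijection onto the bounded linear maps.

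For full exactness one first observes that $\cpto$ preserves finite products and kernels: on a closed subspace $W\subseteq\fpace$ the pre-compact bornology coincides with the bornology induced from $\cpt{\fpace}$ (a subset of a pre-compact set is pre-compact, and pre-compactness passes to subspaces), so $\kernel{\cpt{f}}=\cpt{\kernel{f}}$. By Prop.\ \ref{ConditionsStrictlyExact} it then preserves strictly exact sequences once it preserves strict epimorphisms, and this is the one analytic input: if $\pi\colon\fpace\twoheadrightarrow\fpace'$ is a surjection of Fr\'echet spaces and $K'\subseteq\fpace'$ is pre-compact, then by a classical theorem of Grothendieck $K'$ lies in the closed absolutely convex hull of a null sequence $\{y_n\}$ in $\fpace'$, the open mapping theorem lifts $\{y_n\}$ to a null sequence $\{x_n\}$ in $\fpace$ with $\pi(x_n)=y_n$, and $K:=\overline{\Gamma}(\{x_n\})$ is pre-compact with $\pi(K)\supseteq K'$; hence $\cpt{\pi}$ is a strict epimorphism in $\coconbo$. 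Reflection follows from faithfulness: for a strictly exact pair $\cpt{\fpace_1}\xrightarrow{\cpt{f_1}}\cpt{\fpace_2}\xrightarrow{\cpt{f_2}}\cpt{\fpace_3}$, fullness makes $f_1,f_2$ continuous, the underlying sequence is exact, $f_1$ is injective with closed image by Prop.\ \ref{StrictMonomorphisms} hence strict, and $\fpace_1\to\kernel{f_2}$ is surjective because the strict epimorphism $\cpt{\fpace_1}\to\kernel{\cpt{f_2}}=\cpt{\kernel{f_2}}$ is surjective on underlying spaces — a surjection of Fr\'echet spaces being automatically a strict epimorphism, the pair is strictly exact in $\fcati$.

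Finally, symmetric monoidality: the unit is preserved since $\cpt{\mathbb R}=\mathbb R$, and for the tensor product one shows $\cpt{\fpace_1}\ctensor\cpt{\fpace_2}\cong\cpt{\fpace_1\ctensor\fpace_2}$ naturally and compatibly with the coherence isomorphisms. The non-completed projective tensor products $\cpt{\fpace_1}\otimes\cpt{\fpace_2}$ and $\cpt{\fpace_1\otimes_\pi\fpace_2}$ have the same underlying space, and the two bornologies agree by a second classical result of Grothendieck: every pre-compact subset of $\fpace_1\otimes_\pi\fpace_2$ lies in the closed absolutely convex hull of $K_1\otimes K_2$ for pre-compact $K_i\subseteq\fpace_i$, while conversely such hulls are pre-compact, being closed convex hulls of continuous images of the $K_1\times K_2$ in a complete space. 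Passing to completions, and using that $\cpto$ takes the bornological completion to the topological one, identifies $\cpt{\fpace_1}\ctensor\cpt{\fpace_2}$ with $\cpt{\fpace_1\ctensor\fpace_2}$; compatibility with the symmetric monoidal embedding $\banco\subset\coconbo$ (Prop.\ \ref{CoreflectiveRestriction}) then pins down the associators, symmetries and unitors. The main obstacle is exactly this pair of Grothendieck-type lemmas — lifting a pre-compact set through a surjective homomorphism of Fr\'echet spaces, and describing the pre-compact sets of a projective tensor product in terms of those of the factors — which are genuinely analytic (they fail for arbitrary bounded sets and rely on metrizability), and it is for this reason that the packaged statement is quoted from \cite{Meyer2007}, Thm.\ 1.29, Example 1.76 and Thm.\ 1.87.
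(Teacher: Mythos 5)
Your proposal is correct and matches the paper's treatment: the paper offers no proof beyond citing \cite{Meyer2007} Thm.\ 1.29 (metrizable spaces are bornological, giving fullness), Example 1.76 (lifting pre-compact sets through Fr\'echet surjections, giving preservation of strict epimorphisms and hence full exactness via Prop.\ \ref{ConditionsStrictlyExact}), and Thm.\ 1.87 (Grothendieck's description of pre-compact subsets of a projective tensor product, giving symmetric monoidality), and your argument is an accurate unpacking of exactly those three inputs. The only cosmetic remark is that in the reflection step the observation about $f_1$ being a strict monomorphism is superfluous, since strict exactness only requires $\fpace_1\rightarrow\kernel{f_2}$ to be a strict epimorphism, which you obtain correctly from surjectivity plus the open mapping theorem.
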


\begin{remark} The other natural choice of a bornology -- the von Neumann bornology -- produces the same result for $\cinfty(\mathbb R^n)$, since for nuclear Fr\'echet spaces the pre-compact and von Neumann bornologies coincide (e.g.\@ \cite{BBK15} Lemma 3.67).\end{remark}

A linear map $\cinfty(\mathbb R^n)\rightarrow\cinfty(\mathbb R^m)$ is a $\cinfty$-morphism, iff it is a continuous morphism of commutative Fr\'echet algebras (\cite{KKM87} Thm.\@ 2.4). Moreover, in $\fcati$ we have $\cinfty(\mathbb R^n)\ctensor\cinfty(\mathbb R^m)\cong\cinfty(\mathbb R^{n+m})$ as algebras (e.g.\@ \cite{GS03} Thm.\@ 6.12). Therefore, denoting by $\frifig$ the category of free finitely generated $\cinfty$-rings, we see that $\frifig\subset\ucori{\fcati}$ as a full subcategory, closed with respect to finite coproducts. 

\begin{corollary} The functor $\cpto\colon\frifig\rightarrow\ucori{\coubo}$ is full, faithful and preserves finite coproducts.\end{corollary}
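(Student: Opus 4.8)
The plan is to deduce everything from the already established fact that $\cpto\colon(\fcati,\ctensor)\rightarrow(\coubo,\ctensor)$ is full, faithful and (strong) symmetric monoidal, together with the identification $\frifig\subset\ucori{\fcati}$ recalled above. Since a symmetric monoidal functor sends commutative monoids to commutative monoids, $\cpto$ induces a functor $\ucori{\fcati}\rightarrow\ucori{\coubo}$, which I will again denote $\cpto$, commuting with the forgetful functors to $\fcati$ and $\coubo$. Because $\frifig$ is a \emph{full} subcategory of $\ucori{\fcati}$ which is closed under finite coproducts, it suffices to check that this induced functor is full, faithful, and finite-coproduct preserving, and then restrict.

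Faithfulness is immediate: the forgetful functor $\forget\colon\ucori{\coubo}\rightarrow\coubo$ is faithful and $\cpto$ is faithful on underlying objects, so $\forget\circ\cpto$ is faithful, hence so is $\cpto$ on monoids. For fullness I would take a morphism $\cmor\colon\cpto(A)\rightarrow\cpto(B)$ in $\ucori{\coubo}$ with $A,B\in\ucori{\fcati}$, lift $\forget(\cmor)$ along the full and faithful functor $\cpto\colon\fcati\rightarrow\coubo$ to a unique $\psi\colon A\rightarrow B$ in $\fcati$, and then verify that $\psi$ is a morphism of monoids. For this, write down the diagrams in $\coubo$ expressing that $\forget(\cmor)$ respects multiplication and unit; using the monoidal structure isomorphisms $\cpto(A)\ctensor\cpto(A)\cong\cpto(A\ctensor A)$ (and the unit identification $\monone\cong\cpto(\monone)$) these diagrams are exactly the images under $\cpto$ of the corresponding diagrams for $\psi$ in $\fcati$, which therefore already commute by faithfulness of $\cpto$. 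Hence $\psi\in\ucori{\fcati}$ and $\cpto(\psi)=\cmor$.

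For finite coproducts I would invoke the standard fact that in a symmetric monoidal category the coproduct of two commutative monoids is their tensor product (with multiplication built from the two multiplications and the braiding, and coprojections $A\cong A\ctensor\monone\rightarrow A\ctensor B\leftarrow\monone\ctensor B\cong B$), and that the initial commutative monoid is the unit $\monone$ with its canonical monoid structure (e.g.\@ \cite{Marty09} Prop.\@ 1.2.14 and the discussion thereafter). A strong symmetric monoidal functor preserves all of this structure up to the coherence isomorphisms, so $\cpto$ preserves the empty and the binary coproduct of commutative monoids, hence all finite coproducts. Restricting to $\frifig$ and using that the inclusion $\frifig\subset\ucori{\fcati}$ preserves finite coproducts finishes the argument; concretely $\cpto(\cinfty(\mathbb R^0))=\cpto(\mathbb R)=\mathbb R$ is the unit and hence initial in $\ucori{\coubo}$, while $\cpto(\cinfty(\mathbb R^{n+m}))\cong\cpto(\cinfty(\mathbb R^n))\ctensor\cpto(\cinfty(\mathbb R^m))$ exhibits the binary coproduct.

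The only point requiring any care is the passage from fullness, faithfulness and monoidality on the underlying categories to the corresponding statements on categories of commutative monoids; once one is careful to use that $\cpto$ is \emph{strong} (not merely lax) symmetric monoidal and that the forgetful functors from monoids are faithful, there is no genuine obstacle, and no new analytic input beyond the cited proposition is needed.
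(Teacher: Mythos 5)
Your proposal is correct and follows exactly the route the paper intends: the corollary is stated as an immediate consequence of the preceding proposition ($\cpto$ full, faithful and strong symmetric monoidal on $(\fcati,\ctensor)$) together with the identification of $\frifig$ as a full subcategory of $\ucori{\fcati}$ closed under finite coproducts via $\cinfty(\mathbb R^n)\ctensor\cinfty(\mathbb R^m)\cong\cinfty(\mathbb R^{n+m})$, and your argument is just the standard fleshing-out of that deduction. The points you flag as requiring care (strong versus lax monoidality, faithfulness of the forgetful functors, coproducts of commutative monoids being tensor products) are exactly the right ones, and none of them presents an obstacle.
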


For an arbitrary set $\set$ we define $\cinfty(\mathbb R^{\set})$ as a co-limit of $\{\cinfty(\mathbb R^{\set'})\}_{\set'\subseteq\set}$ (computed in the category of $\mathbb R$-vector spaces) where $\set'\subseteq\set$ runs over finite subsets. Equivalently $\cinfty(\mathbb R^{\set})$ can be defined as the set of functions $\mathbb R^{\set}\rightarrow\mathbb R$, that factor through a projection on an $\mathbb R^n\subseteq\mathbb R^{\set}$ and a smooth $\mathbb R^n\rightarrow\mathbb R$. 

Being a filtered union of $\cpt{\cinfty(\mathbb R^n)}$'s, $\cinfty(\mathbb R^{\set})$ is a complete bornological space, with a subset being bounded, if it is a bounded subset of some $\cinfty(\mathbb R^n)$. Clearly this is an object of $\coubo$, and we denote it by $\cpt{\cinfty(\mathbb R^{\set})}$.

\smallskip

Since $\cpt{\cinfty(\mathbb R^m)}\ctensor\cpt{\cinfty(\mathbb R^n)}\cong\cpt{\cinfty(\mathbb R^{m+n})}$, and $\ctensor$ commutes with inductive limits, we see that 
	\begin{equation*}\cpt{\cinfty(\mathbb R^{\set_1})}\ctensor\cpt{\cinfty(\mathbb R^{\set_2})}\cong\cpt{\cinfty(\mathbb R^{\set_1\sqcup\set_2})}.\end{equation*}
The product operation maps $\cinfty(\mathbb R^{n_1})\ctensor\cinfty(\mathbb R^{n_2})\rightarrow\cinfty(\mathbb R^{n_1+n_2})$ for all finite $n_1,n_2\leq\set$, hence it is bounded and $\cpt{\cinfty(\mathbb R^{\set})}$ is a monoid in $\coubo$.

\smallskip

By definition $\forall\set'\subseteq\set$ $\cpt{\cinfty(\mathbb R^{\set'})}\rightarrow\cpt{\cinfty(\mathbb R^{\set})}$ is injective, with the left inverse given by restriction. This restriction morphism is bounded, since a subset of $\cinfty(\mathbb R^{\set'})$ is bounded, if it is a bounded subset of some $\cinfty(\mathbb R^n)\subseteq\cinfty(\mathbb R^{\set'})$, and similarly for $\cinfty(\mathbb R^{\set})$. Therefore it is enough to consider the left inverse of $\cpt{\cinfty(\mathbb R^m)}\rightarrow\cpt{\cinfty(\mathbb R^{m+n})}$. In particular $\forall\set'\subseteq\set$ $\cpt{\cinfty(\mathbb R^{\set'})}\rightarrow\cpt{\cinfty(\mathbb R^{\set})}$ is a strict monomorphism.
\hide{%
Indeed, the property $f\notin\cinfty(\mathbb R^m)$ means there is at least one $\rpt\in\mathbb R^{m+n}\setminus\mathbb R^m$, s.t.\@ $f(\rpt)\neq f(\rqt)$, where $\rqt$ is the projection of $\rpt$ on $\mathbb R^m$. Consider $f+\wball{\rqt}{r}{\epsilon}{0}$, where $r=\|\rqt-\rpt\|$, $\epsilon<\frac{1}{2}|f(\rpt)-f(\rqt)|$. Clearly $\forall g\in f+\wball{\rqt}{r}{\epsilon}{0}$ we have $g(\rpt)\neq g(\rqt)$.}%

Since $\cpt{\cinfty(\mathbb R^{\set'})}$ is bornologically closed in $\cpt{\cinfty(\mathbb R^{\set})}$, for any $\cmor\colon\cpt{\cinfty(\mathbb R^{\set_1})}\rightarrow\cpt{\cinfty(\mathbb R^{\set_2})}$ in $\ucori{\coubo}$, and any finite $\set'_1\subseteq\set_1$ there is a finite $\set'_2\subseteq\set_2$, s.t.\@ $\cmor(\cpt{\cinfty(\mathbb R^{\set'_1})})\subseteq\cpt{\cinfty(\mathbb R^{\set'_2})}$.

Indeed, we can always find $\set'_2$ s.t.\@ $\cmor(\set'_1)\subseteq\cpt{\cinfty(\mathbb R^{\set'_2})}$. Then all polynomials in $\set'_1$ are mapped to $\cpt{\cinfty(\mathbb R^{\set'_2})}$. Every element of $\cinfty(\mathbb R^{\set'_1})$ is a limit of a bornological sequence consisting of polynomials in $\set'_1$ (e.g.\@ \cite{Meyer2007} Thm.\@ 1.36), and we use the fact that $\cpt{\cinfty(\mathbb R^{\set'_2})}$ is closed in $\cpt{\cinfty(\mathbb R^{\set_2})}$. 

Since morphisms $\cpt{\cinfty(\mathbb R^m)}\rightarrow\cpt{\cinfty(\mathbb R^n)}$ in $\ucori{\coubo}$ coincide with $\cinfty$-morphisms, we see that $\cmor$ is a diagram of $\cinfty$-morphisms between free finitely generated $\cinfty$-rings. By definition this makes $\cmor$ into a $\cinfty$-morphisms $\cinfty(\mathbb R^{\set_1})\rightarrow\cinfty(\mathbb R^{\set_2})$. Altogether we have the following.

\begin{proposition}\label{PreservationOfCoproducts} Let $\freci$ be the category of all free $\cinfty$-rings. We have 
	\begin{equation}\label{Embedding}\cpto\colon\freci\longrightarrow\ucori{\coubo}\end{equation} 
that is full, faithful and preserves finite coproducts.\end{proposition}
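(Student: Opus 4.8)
The plan is to assemble the facts established in the paragraphs immediately preceding the statement, reducing everything to the finitely generated case already handled by the preceding Corollary for $\frifig$. Recall that $\cinfty(\mathbb R^{\set})$ is, by construction, the colimit of the filtered diagram $\{\cinfty(\mathbb R^{\set'})\}$ indexed by finite $\set'\subseteq\set$, both as a $\cinfty$-ring and, after applying $\cpto$, as an object of $\coconbo$ (equivalently of $\coubo$). For faithfulness it suffices to note that $\cpto$ does not change the underlying $\mathbb R$-vector space, so the composite of $\cpto$ with the forgetful functor $\ucori{\coubo}\to\mathrm{Vect}_{\mathbb R}$ is the faithful forgetful functor on $\freci$; hence $\cpto$ is faithful.

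For fullness, let $\cmor\colon\cpt{\cinfty(\mathbb R^{\set_1})}\to\cpt{\cinfty(\mathbb R^{\set_2})}$ be a morphism in $\ucori{\coubo}$. As shown above, for each finite $\set'_1\subseteq\set_1$ there is a finite $\set'_2\subseteq\set_2$ with $\cmor(\cpt{\cinfty(\mathbb R^{\set'_1})})\subseteq\cpt{\cinfty(\mathbb R^{\set'_2})}$; since $\cpt{\cinfty(\mathbb R^{\set'_2})}\hookrightarrow\cpt{\cinfty(\mathbb R^{\set_2})}$ is a strict monomorphism (so the bornology on the source is induced from the target), the composite $\cpt{\cinfty(\mathbb R^{\set'_1})}\hookrightarrow\cpt{\cinfty(\mathbb R^{\set_1})}\to\cpt{\cinfty(\mathbb R^{\set_2})}$ factors through it by a morphism of $\ucori{\coubo}$, which by the preceding Corollary for $\frifig$ is $\cpto$ of a unique $\cinfty$-morphism $\cinfty(\mathbb R^{\set'_1})\to\cinfty(\mathbb R^{\set'_2})\hookrightarrow\cinfty(\mathbb R^{\set_2})$. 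By uniqueness these $\cinfty$-morphisms cohere as $\set'_1$ ranges over the directed poset of finite subsets of $\set_1$, so they assemble — using that $\cinfty(\mathbb R^{\set_1})$ is the filtered colimit of the $\cinfty(\mathbb R^{\set'_1})$ in the category of $\cinfty$-rings — into a $\cinfty$-morphism $\cinfty(\mathbb R^{\set_1})\to\cinfty(\mathbb R^{\set_2})$. Its image under $\cpto$ agrees with $\cmor$ on each $\cpt{\cinfty(\mathbb R^{\set'_1})}$, and since these strictly monomorphic pieces have colimit $\cpt{\cinfty(\mathbb R^{\set_1})}$, it equals $\cmor$.

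For preservation of finite coproducts, observe first that $\cpto$ sends the initial $\cinfty$-ring $\cinfty(\mathbb R^{\emptyset})=\mathbb R$ to the monoidal unit $\mathbb R$ of $\coubo$, which is the initial object of $\ucori{\coubo}$. For binary coproducts, the coproduct in $\ucori{\coubo}$ is computed by $\ctensor$ with its canonical monoid structure (as in any symmetric monoidal category), the coproduct in $\freci$ is $\cinfty(\mathbb R^{\set_1})\sqcup\cinfty(\mathbb R^{\set_2})\cong\cinfty(\mathbb R^{\set_1\sqcup\set_2})$, and we have already produced an isomorphism $\cpt{\cinfty(\mathbb R^{\set_1})}\ctensor\cpt{\cinfty(\mathbb R^{\set_2})}\cong\cpt{\cinfty(\mathbb R^{\set_1\sqcup\set_2})}$. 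This isomorphism is compatible with the coproduct cocones because both sides are filtered colimits of the corresponding finitely generated expressions, over which the claim is exactly the content of the preceding Corollary for $\frifig$ (and $\ctensor$ commutes with filtered colimits). Finite coproducts then follow by induction.

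The only genuinely delicate point is the gluing in the fullness argument: one must check that the locally defined $\cinfty$-morphisms cohere and that a coherent family indexed by the finite subsets of $\set_1$ determines a morphism out of $\cinfty(\mathbb R^{\set_1})$ in $\freci$, which rests on $\cinfty(\mathbb R^{\set_1})$ being a filtered colimit of free finitely generated $\cinfty$-rings \emph{as a $\cinfty$-ring}, not merely as a vector space, so that its universal property applies. Everything else is routine bookkeeping on top of the full faithfulness and symmetric monoidality already available over $\frifig$.
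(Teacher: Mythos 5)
Your proof is correct and takes essentially the same route as the paper, whose own argument for this proposition is precisely the assembly of the preceding paragraphs: reduction to finite generating subsets using that the finite pieces $\cpt{\cinfty(\mathbb R^{\set'})}$ are bornologically closed strict subobjects and that polynomials are bornologically dense, the full faithfulness and coproduct preservation already established for $\frifig$, and commutation of $\ctensor$ with filtered colimits. Your extra remarks on the coherence of the locally defined $\cinfty$-morphisms and on the nullary coproduct only make explicit what the paper leaves implicit.
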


\subsection{Model structure}\label{ModelStructureFunctions}

Now we look at the behaviour of (\ref{Embedding}) with respect to the model structure on simplicial $\cinfty$-rings. Here a morphism is a fibration or a weak equivalence if the underlying morphism of simplicial abelian groups is respectively a fibration or a weak equivalence (\cite{Qu67} \S II.4). As always we denote by $\simpo{\fcati}$ the category of simplicial objects in the category $\fcati$ of Fr\'echet spaces. We start with the following fact.

\begin{proposition}\label{SurjectiveFrechet} Let $\phi\colon\simo{\fpace}\rightarrow\simo{\fpace'}$ be a surjective morphism in $\simpo{\fcati}$. Then $\cpt{\simo{\fpace}}\rightarrow\cpt{\simo{\fpace'}}$ is a weak equivalence in $\simpo{\coubo}$, if and only if the underlying map of simplicial $\mathbb R$-spaces is a weak equivalence.\end{proposition}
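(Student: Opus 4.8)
The plan is to analyze what the functor $\cpto$ does to a levelwise surjection of simplicial Fréchet spaces, and to reduce the statement to a purely linear-algebraic fact about the homotopy of simplicial vector spaces. The key observation is that the model structure on $\simpo{\coubo}$ (Cor.~after Prop.~\ref{ModelStructureInGeneral}) detects weak equivalences by evaluating at $\samcou$, i.e.~by applying $\hom_{\coubo}(\samcou,-)$; and by Prop.~\ref{DoldKan} and Cor.~\ref{ModelStructuresEqual} this is equivalent to computing the homotopy of the normalized complex, whose terms are built from intersections of kernels of the face maps. So I would first pass from simplicial objects to non-positively graded complexes via $\normacho$, and reformulate ``$\cpt{\simo{\fpace}}\to\cpt{\simo{\fpace'}}$ is a weak equivalence'' as ``$\hom_{\coubo}(\samcou,\normach{\cpt{\simo{\fpace}}})\to\hom_{\coubo}(\samcou,\normach{\cpt{\simo{\fpace'}}})$ is a quasi-isomorphism of complexes of abelian groups''.

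Next I would exploit surjectivity. Since $\phi\colon\simo{\fpace}\to\simo{\fpace'}$ is levelwise surjective and morphisms of Fréchet spaces with closed image are strict, each $\phi_n$ is a strict epimorphism in $\fcati$; taking kernels levelwise gives a simplicial Fréchet space $\simo{\K}$ and a short exact sequence $0\to\simo{\K}\to\simo{\fpace}\to\simo{\fpace'}\to 0$ which is levelwise strictly exact. By full exactness of $\cpto$ (it preserves strictly exact sequences) we obtain a levelwise strictly exact sequence $0\to\cpt{\simo{\K}}\to\cpt{\simo{\fpace}}\to\cpt{\simo{\fpace'}}\to 0$ in $\coubo$. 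Because $\samcou$ is projective in $\coubo$, applying $\hom_{\coubo}(\samcou,-)$ yields a short exact sequence of simplicial abelian groups, hence a long exact sequence in homotopy. Therefore $\cpt{\simo{\fpace}}\to\cpt{\simo{\fpace'}}$ is a weak equivalence in $\simpo{\coubo}$ if and only if $\cpt{\simo{\K}}$ is homotopy equivalent to $0$, i.e.~$\hom_{\coubo}(\samcou,\cpt{\simo{\K}})$ is acyclic. The same argument runs verbatim on the underlying simplicial $\mathbb R$-vector spaces: $\phi$ underlying is a weak equivalence of simplicial $\mathbb R$-spaces iff the underlying simplicial vector space of $\simo{\K}$ is acyclic, i.e.~$\hom_{\mathbb R}(\mathbb R,\simo{\K})=\simo{\K}$ (as a simplicial set/vector space) is acyclic.

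So the heart of the matter is: \emph{for a simplicial Fréchet space $\simo{\K}$, the simplicial vector space underlying $\simo{\K}$ is acyclic iff the simplicial abelian group $\hom_{\coubo}(\samcou,\cpt{\simo{\K}})$ is acyclic.} One direction is easy: if $\hom_{\coubo}(\samcou,\cpt{\simo{\K}})$ is acyclic then, evaluating at the inclusion of any single basis vector $\mathbb R\hookrightarrow\samcou$, or more directly since the constant sequences give a retract exhibiting $\simo{\K}$-valued points, one recovers acyclicity of the underlying simplicial vector space — indeed $\hom_{\mathbb R}(\mathbb R,\simo{\K})$ is a retract of $\hom_{\coubo}(\samcou,\cpt{\simo{\K}})$ via the summable sequence $(1,0,0,\dots)$ and evaluation, and retracts of acyclic simplicial abelian groups are acyclic. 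For the converse — the main obstacle — I would argue as follows. A null-homotopy of the identity of the underlying simplicial vector space of $\simo{\K}$ is extra-degeneracy data $h_n\colon K_n\to K_{n+1}$ satisfying the usual simplicial identities; such a contraction, being given by finite $\mathbb R$-linear combinations of the structure maps of $\simo{\K}$, is automatically continuous, hence a morphism in $\simpo{\fcati}$, and therefore $\cpto$ applied to it gives a contraction of $\cpt{\simo{\K}}$ in $\simpo{\coubo}$, which $\hom_{\coubo}(\samcou,-)$ preserves. The subtlety is that acyclicity of a simplicial vector space over a field does \emph{not} a priori come with such a strict extra-degeneracy contraction; what one does get for free is that $\simo{\K}\simeq 0$ in the model category of simplicial $\mathbb R$-vector spaces, i.e.~the normalized complex $\normach{\simo{\K}}$ is an acyclic complex of vector spaces, hence \emph{contractible} as a complex of $\mathbb R$-modules (every acyclic complex of vector spaces, being a complex of projectives with vanishing homology, is contractible via a linear chain homotopy). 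Transporting this chain contraction back through Dold–Kan yields a simplicial homotopy $s$ contracting $\simo{\K}$, whose components are again $\mathbb R$-linear maps between the Fréchet spaces $K_n$ — but one must check these Dold–Kan transport maps are built only from finite sums and the (continuous) face/degeneracy maps of $\simo{\K}$ together with the chain contraction, and that the chain contraction itself can be chosen continuous. The cleanest route is: choose the chain contraction of $\normach{\simo{\K}}$ to be $\hom_{\coubo}(\samcou,-)$ of a chain contraction of $\normach{\cpt{\simo{\K}}}$ — but that is what we want to prove, so instead I would argue directly that any algebraic chain contraction can be promoted to a continuous one because each $\normach{\simo{\K}}^{-n}$ is a Fréchet space (closed subspace of $K_n$) and an acyclic complex of Fréchet spaces with strict differentials admits a continuous contraction by the open mapping theorem (splitting each strict short exact sequence $0\to Z^{-n}\to \normach{\simo{\K}}^{-n}\to B^{-n+1}=Z^{-n+1}\to 0$ continuously is \emph{not} automatic, so here one uses that over $\mathbb R$ the relevant spaces, being subquotients of nuclear Fréchet spaces, are tame enough — alternatively one restricts attention to the fact that $\normach{\simo{\K}}$ is a complex of Fréchet spaces which is \emph{split} acyclic once we know it is algebraically contractible and each term is Fréchet, via a bootstrapping argument on the strictness established above). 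Having a continuous contracting homotopy of $\normach{\cpt{\simo{\K}}}$, apply $\hom_{\coubo}(\samcou,-)$ to get acyclicity of $\hom_{\coubo}(\samcou,\cpt{\simo{\K}})$, completing the proof. The main obstacle, to summarize, is exactly this promotion of an algebraic contraction of the acyclic complex of vector spaces to a continuous (bornological) one; I expect it to go through because the surjectivity hypothesis forces all differentials in $\normach{\simo{\K}}$ to be strict, so each cycle space is complemented-after-completion and the standard splitting argument for contractible complexes can be carried out continuously.
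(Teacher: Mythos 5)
Your reduction to the acyclicity of the kernel is exactly the paper's first step, and the ``only if'' direction is fine. The gap is in the heart of your argument: you try to prove that $\hom_{\coubo}(\samcou,\cpt{\simo{\K}})$ is acyclic by producing a \emph{continuous} (bornological) contracting homotopy of $\normach{\simo{\K}}$, and you correctly sense that this is the weak point --- but it is not merely a subtlety, it is false in general. An algebraically exact complex of Fr\'echet spaces with strict differentials need not admit any continuous contraction: already for Banach spaces, a surjection $\summable{\mathbb N}\rightarrow c_0$ has a kernel that is not complemented in $\summable{\mathbb N}$ (every infinite-dimensional complemented subspace of $\summable{\mathbb N}$ is isomorphic to $\summable{\mathbb N}$, and $c_0$ is not), so the strictly exact complex $0\rightarrow \K\rightarrow \summable{\mathbb N}\rightarrow c_0\rightarrow 0$ is exact as vector spaces but not continuously split. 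Neither the open mapping theorem nor nuclearity rescues this, so the ``promotion of an algebraic contraction to a continuous one'' cannot be carried out.

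The fix is that no contraction is needed at all, and you already have every ingredient in your own first paragraph. Since the underlying complex of vector spaces of $\normach{\simo{\K}}$ is exact, the image of each differential equals the kernel of the next one, which is a closed subspace; a morphism of Fr\'echet spaces with closed image is strict, so $\normach{\simo{\K}}$ is a \emph{strictly exact} complex in $\fcati$. Full exactness of $\cpto$ then makes $\normach{\cpt{\simo{\K}}}$ strictly exact in $\coubo$, and applying the functor $\hom_{\coubo}(\samcou,-)$ --- which sends strictly exact sequences to exact sequences of abelian groups because $\samcou$ is projective --- gives acyclicity directly. This is precisely the paper's argument: strict exactness is detected by closedness of images, transported by $\cpto$, and then tested against the projective generator; no splitting of the complex is ever required.
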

\begin{proof} By definition $\cpt{\simo{\fpace}}\rightarrow\cpt{\simo{\fpace'}}$ is a weak equivalence, if it gives a weak equivalence, when evaluated at each $\summable{\cardinal}$, $\cardinal\leq\aleph_0$, and in particular for $\cardinal=1$. Therefore the only if direction is clear.

A morphism of Fr\'echet spaces is a strict epimorphism, iff it is surjective. Therefore it is enough to prove that $\cpt{\kernel{\phi}}$ is acyclic. Switching to $\compo{\coubo}$, it is enough then to show that complexes of Fr\'echet spaces, that are exact as complexes of $\mathbb R$-spaces, are strictly exact in $\coubo$. This is true because a morphism of Fr\'echet spaces is strict, iff its image is closed.\end{proof}%

Let $\simpo{\freci}$ be the category of simplicial objects in $\freci$, and let $\alfreci\subset\simpo{\freci}$ be the full subcategory consisting of almost free simplicial $\cinfty$-rings, i.e.\@ those whose degenerations map generators to generators. 

\begin{proposition}\label{TheEmbedding} The functor $\cpto\colon\alfreci\rightarrow\simpo{\ucori{\coubo}}$ is full, faithful and preserves finite coproducts. Moreover, a morphism $\simo{\cmor}$ in $\alfreci$ is a weak equivalence or a trivial fibration, if and only if $\cpt{\simo{\cmor}}$ is a weak equivalence or respectively a trivial fibration in $\simpo{\ucori{\coubo}}$.\end{proposition}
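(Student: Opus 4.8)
The plan is to establish the three statements about $\cpto\colon\alfreci\rightarrow\simpo{\ucori{\coubo}}$ in turn, bootstrapping from the results already proved for $\freci$ in Proposition \ref{PreservationOfCoproducts}. First, fullness and faithfulness: a simplicial object in $\freci$ is a functor $\Delta^{\op}\rightarrow\freci$, and likewise a simplicial object in $\ucori{\coubo}$ is a functor $\Delta^{\op}\rightarrow\ucori{\coubo}$. Since $\cpto$ applied degree-wise to $\simo{\cmor}$ is natural in the simplicial direction, a morphism $\cpt{\simo{\cring}}\rightarrow\cpt{\simo{\cring'}}$ in $\simpo{\ucori{\coubo}}$ is a compatible family of morphisms $\cpt{\cring_n}\rightarrow\cpt{\cring'_n}$; by Proposition \ref{PreservationOfCoproducts} each of these comes from a unique $\cinfty$-morphism $\cring_n\rightarrow\cring'_n$, and the compatibility with face and degeneracy maps transports back along the full and faithful $\cpto$. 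Hence $\cpto$ on $\simpo{\freci}$ (and a fortiori on the full subcategory $\alfreci$) is full and faithful. Preservation of finite coproducts follows because coproducts in both $\simpo{\freci}$ and $\simpo{\ucori{\coubo}}$ are computed degree-wise, and $\cpto$ preserves finite coproducts degree-wise again by Proposition \ref{PreservationOfCoproducts}.

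For the statement about weak equivalences and trivial fibrations, the key reduction is to the surjective case handled by Proposition \ref{SurjectiveFrechet}. A trivial fibration $\simo{\cmor}\colon\simo{\cring}\rightarrow\simo{\cring'}$ in $\alfreci$ is in particular a weak equivalence, and since fibrations of simplicial abelian groups are surjective in positive degrees and — being trivial — surjective in degree $0$ as well, the underlying map of simplicial Fr\'echet spaces is (degree-wise) surjective; a surjection of free $\cinfty$-rings is a strict epimorphism in $\fcati$ because a surjective morphism of Fr\'echet spaces is strict. So Proposition \ref{SurjectiveFrechet} applies and gives that $\cpt{\simo{\cmor}}$ is a weak equivalence in $\simpo{\coubo}$; that it is moreover a fibration there follows because $\cpto$ preserves strict epimorphisms (full exactness), so $\cpt{\simo{\cmor}}$ is a strict epimorphism in each simplicial degree, which is exactly what it means to be a fibration in $\simpo{\coubo}$. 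The converse direction is immediate: evaluating at $\summable{1}=\mathbb R$ recovers the underlying simplicial $\mathbb R$-space, so if $\cpt{\simo{\cmor}}$ is a weak equivalence (resp. trivial fibration), then $\hom_{\coubo}(\mathbb R,\cpt{\simo{\cring}})\rightarrow\hom_{\coubo}(\mathbb R,\cpt{\simo{\cring'}})$ is; but this is just the underlying map of simplicial $\mathbb R$-vector spaces of $\simo{\cmor}$, forgetting the $\cinfty$-structure, which is therefore a weak equivalence (resp. trivial fibration), i.e. $\simo{\cmor}$ is.

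The remaining and main point is to handle a general weak equivalence $\simo{\cmor}$ in $\alfreci$ that need not be surjective, reducing it to the surjective case. The standard device is to factor $\simo{\cmor}$ as a trivial cofibration followed by a trivial fibration in $\simpo{\freci}$; better, since we are inside $\alfreci$, one factors using the path object or a mapping-cylinder construction for almost free simplicial $\cinfty$-rings so that the first map is a (levelwise split) inclusion that is a weak equivalence and the second is a levelwise surjective weak equivalence. For the surjective factor we invoke the previous paragraph. For the inclusion factor, since it is an almost free extension along a simplicial $\cinfty$-ring that is levelwise a retract (a split monomorphism in each degree) and a weak equivalence, $\cpto$ sends split monomorphisms to split monomorphisms and, being fully exact, preserves the relevant strict short exact sequences degree-wise; then one checks the cokernel becomes acyclic exactly as in Proposition \ref{SurjectiveFrechet}, using that the complement in each degree is a free $\cinfty$-ring whose underlying Fr\'echet space sits as a closed (hence strict) subspace. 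The hard part, and the place I expect the real work to be, is exactly this: verifying that the ``non-surjective part'' of a weak equivalence of almost free simplicial $\cinfty$-rings is sent by $\cpto$ to a weak equivalence in $\simpo{\coubo}$ — i.e. controlling the interaction between the $\cinfty$-ring structure, the pre-compact bornology, and closedness of the relevant subspaces — since $\cpto$ is only left exact-ish in the naive sense and one must lean on its full exactness (preservation and reflection of strict exact sequences) together with the Dold--Kan translation (Prop. \ref{DoldKan}) to pass between simplicial objects and complexes where acyclicity is checked.
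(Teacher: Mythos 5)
The easy parts of your proposal (degree-wise fullness/faithfulness and preservation of coproducts, and reflection of weak equivalences by evaluating at $\mathbb R$) match the paper. The main part, however, has two genuine gaps. First, you repeatedly invoke Proposition \ref{SurjectiveFrechet}, which applies to morphisms of simplicial \emph{Fr\'echet} spaces, but objects of $\alfreci$ are almost free simplicial $\cinfty$-rings, i.e.\@ $\cinfty(\mathbb R^{\set_k})$ on possibly infinite generating sets $\set_k$; these are filtered colimits of Fr\'echet spaces and are not Fr\'echet. So your treatment of the surjective factor (and of trivial fibrations) does not go through as written. One must first reduce to the finitely generated case; the paper does this by writing any $\simo{\cring}\in\alfreci$ as a filtered union of almost free simplicial subrings finitely generated in each dimension and using that Banach spaces are compact relative to monomorphisms, so that any test map from $\samcou$ factors through a finitely generated piece. (For trivial fibrations the paper sidesteps the issue entirely: all objects of $\alfreci$ are cofibrant, so a trivial fibration has a section, and $\cpto$ of a split epimorphism is degree-wise a strict epimorphism, hence a fibration.)

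Second, your handling of the non-surjective part of a weak equivalence is not an argument. Talk of the ``complement'' and the ``cokernel'' of an inclusion of almost free simplicial $\cinfty$-rings does not make sense: coproducts of $\cinfty$-rings are completed tensor products, not direct sums, and full exactness of $\cpto$ on strict exact sequences of Fr\'echet spaces gives you nothing about ring extensions. The mechanism the paper actually uses is homotopy invariance: since every object of $\alfreci$ is cofibrant (and fibrant), a weak equivalence has a homotopy inverse, so it suffices to show $\cpto$ preserves the homotopy relation; this reduces to showing $\cpt{\sigma}$ is a weak equivalence for the collapse $\sigma\colon\simo{\cring}\otimes\simplex{1}\rightarrow\simo{\cring}$ of the cylinder (Lemma \ref{HomotopyInBornology}), whose proof is exactly the filtered-colimit reduction to finitely generated pieces followed by Proposition \ref{SurjectiveFrechet}. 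Your factorization strategy does not avoid this lemma --- a trivial cofibration between cofibrant--fibrant objects is handled by producing a retraction and a homotopy, so you would still need to know that $\cpto$ sends the cylinder collapse to a weak equivalence --- and that is precisely the step your sketch leaves unproved.
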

\begin{proof} As $\cpto\colon\freci\rightarrow\ucori{\coubo}$ is fully faithful and preserves finite coproducts, the same is true for simplicial diagrams. All objects of $\alfreci$ are cofibrant simplicial $\cinfty$-rings, hence trivial fibrations in $\alfreci$ have right inverses and $\cpto$ maps trivial fibrations in $\alfreci$ to fibrations in $\simpo{\ucori{\coubo}}$.

If $\cpt{\simo{\cmor}}$ is a weak equivalence in $\simpo{\ucori{\coubo}}$, its evaluation at $\mathbb R$ is a weak equivalence of simplicial abelian groups, i.e.\@ $\simo{\cmor}$ is a weak equivalence in $\alfreci$. It remains to show that $\cpto$ preserves weak equivalences. 

\begin{lemma}\label{HomotopyInBornology} For any $\simo{\cring}\in\alfreci$ let $\sigma\colon\simo{\cring}\otimes\simplex{1}\rightarrow\simo{\cring}$ be given by $\simplex{1}\rightarrow\simplex{0}$. Then $\cpt{\sigma}$ is a weak equivalence in $\simpo{\ucori{\coubo}}$.\end{lemma}
\begin{proof} Since $\sigma$ has a right inverse, it is enough to show that $\kernel{\cpt{\sigma}}$ is acyclic. By definition $\forall k\in\mathbb Z_{\geq 0}$ $\cring_k\cong\cinfty(\mathbb R^{\set_k})$ for some set $\set_k$, and the sets $\{\set_k\}_{k\geq 0}$ are stable under degenerations in $\simo{\cring}$. Choose $k\geq 0$, and let $\set'_k\subseteq\set_k$ be any finite subset. 

We can find a finite $\set'_{k-1}\subseteq\set_{k-1}$ s.t.\@ all boundaries of $\set'_k$ lie in $\cinfty(\mathbb R^{\set'_{k-1}})$. Continuing like this we find $\{\cinfty(\mathbb R^{\set'_m})\subseteq\cinfty(\mathbb R^{\set_m})\}_{m\leq k}$ that are stable under boundaries. Taking all degenerations of $\{\set'_{m}\}_{m\leq k}$ we obtain $\simo{\cring'}\subseteq\simo{\cring}$, s.t.\@ $\simo{\cring'}$ is an almost free $\cinfty$-ring finitely generated in each simplicial dimension, and $\cinfty(\mathbb R^{\set'_k})\subseteq\cring'_k$. Therefore $\simo{\cring}$ is a filtered union of almost free simplicial $\cinfty$-rings, finitely generated in each simplicial dimension.

By definition $\forall k\in\mathbb Z_{\geq 0}$ $(\simo{\cring}\otimes\simplex{1})_k\cong\cring_k^{\ctensor^{k+2}}$, with the simplicial structure maps given by boundaries and degenerations in $\simplex{1}$. Since taking the $\ctensor$-product of free $\cinfty$-rings corresponds to taking disjoint unions of the generators, it is clear that $\simo{\cring}\otimes\simplex{1}$ is a filtered union of $\{\simo{\cring'}\otimes\simplex{1}\}$, where $\simo{\cring'}\subseteq\simo{\cring}$ runs over all almost free simplicial $\cinfty$-subrings of $\simo{\cring}$, finitely generated in each simplicial dimension.

\smallskip

Let $k\in\mathbb Z_{\geq 0}$ and let $\alpha\colon\samcou\rightarrow\kernel{\cpt{\sigma}}_k$ be a morphism in $\coubo$. Since Banach spaces are compact relative to monomorphisms there is an almost free $\simo{\cring'}\subseteq\simo{\cring}$, s.t.\@ $\simo{\cring'}$ is finitely generated in each dimension, and $\alpha$ factors through $\cpt{\cring'_k\otimes\simplex{1}}\cap\kernel{\cpt{\sigma}}=\kernel{\cpt{\sigma'}}$, where $\sigma'\colon\simo{\cring'}\otimes\simplex{1}\rightarrow\simo{\cring'}$ is given by $\simplex{1}\rightarrow\simplex{0}$. 

So in proving that $\cpt{\sigma}$ is a weak equivalence, we can assume that $\simo{\cring}$ is finitely generated in each simplicial dimension. Then $\cpt{\sigma}$ is a surjective morphism of simplicial Fr\'echet spaces, and it is a weak equivalence, if and only if the underlying morphism of simplicial abelian groups is a weak equivalence (Prop.\@ \ref{SurjectiveFrechet}). Thus $\cpt{\sigma}$ is a weak equivalence.\end{proof}%

\smallskip

Now we prove that $\cpto$ preserves all weak equivalences. As each object of $\alfreci$ is a cofibrant simplicial $\cinfty$-ring, a weak equivalence $\simo{\cmor}\colon\simo{\cring}\rightarrow\simo{\cring'}$ has a quasi-inverse $\simo{\cmor'}\colon\simo{\cring'}\rightarrow\simo{\cring}$. If we prove that $\cpt{\simo{\cmor}\circ\simo{\cmor'}}$, $\cpt{\simo{\cmor'}\circ\simo{\cmor}}$ are homotopic to identities, it would imply that $\simo{\cmor}$, $\simo{\cmor'}$ are weak equivalences.

So let $\simo{\cmor},\simo{\cmor'}\colon\simo{\cring}\rightrightarrows\simo{\cring'}$ be two homotopic morphisms in $\alfreci$, we need to show that $\cpt{\simo{\cmor}}$, $\cpt{\simo{\cmor'}}$ are homotopic in $\simpo{\ucori{\coubo}}$. As $\simo{\cring}$ is a cofibrant $\cinfty$-ring, there is $\simo{\cmor''}\colon\simo{\cring}\otimes\simplex{1}\rightarrow\simo{\cring'}$, s.t.\@ $\simo{\cmor}=\simo{\cmor''}\circ\iota_1$, $\simo{\cmor'}=\simo{\cmor''}\circ\iota_2$, where $\iota_1,\iota_2\colon\simo{\cring}\rightarrow\simo{\cring}\otimes\simplex{1}$ are the canonical inclusions. We have seen (Lemma \ref{HomotopyInBornology}) that $\cpt{\sigma}$ is a weak equivalence, and since $\cpt{\sigma}\circ\cpt{\iota_1}=\cpt{\sigma}\circ\cpt{\iota_2}=\id_{\cpt{\simo{\cring}}}$, so are $\cpt{\iota_1}$, $\cpt{\iota_2}$, and moreover $\cpt{\iota_1}=\cpt{\iota_2}$ in the homotopy category of $\simpo{\ucori{\coubo}}$. Therefore $\cpt{\simo{\cmor}}=\cpt{\simo{\cmor'}}$ in this homotopy category.\end{proof}%

\subsection{Quasi-coherent sheaves}\label{QuasiCoherentSheaves}

In this section we use the functor $\cpto$ to bring categories of modules from simplicial bornological rings to simplicial $\cinfty$-rings. Proposition \ref{TheEmbedding} together with Thm.\@ \ref{InvarianceOfModules} immediately imply the following.

\begin{theorem} Let $\resolution{\simo{\cring}}\twoheadrightarrow\simo{\cring}$ be any functorial almost free resolution of simplicial $\cinfty$-rings. Associating $\simo{\cring}\mapsto\modules{\cpt{\resolution{\simo{\cring}}}}$ we get a (pseudo-)functor\footnote{This assignment is only a pseudo-functor due to associativity isomorphisms for $\ctensor$ on $\coubo$. We will ignore this distinction and treat this construction as a functor.} from $\simpo{\allrings}$ to the category of model categories and Quillen adjunctions. Weak equivalences of simplicial $\cinfty$-rings are mapped to Quillen equivalences. 

Let $\resolution{\simo{\cring''}}\leftarrow\resolution{\simo{\cring}}\rightarrow\resolution{\simo{\cring'}}$ be almost free morphisms of almost free simplicial $\cinfty$-rings. In the co-cartesian square
	\begin{equation}\label{BaseChangeDiagram}\xymatrix{\resolution{\simo{\cring}}\ar[d]_{\phi}\ar[rr]^{\psi} && \resolution{\simo{\cring'}}\ar[d]^{\phi'}\\
	\resolution{\simo{\cring''}}\ar[rr]_{\psi'} && \resolution{\simo{\cring'''}}}\end{equation}
the two functors 
	\begin{equation}\label{BaseChange}\xymatrix{\modules{\cpt{\resolution{\simo{\cring''}}}}
	\ar@<-.5ex>[r]_{\phi'^*\circ\psi'_*}\ar@<.5ex>[r]^{\psi_*\circ\phi^*} & 
	\modules{\cpt{\resolution{\simo{\cring'}}}}}\end{equation} 
are naturally equivalent. \end{theorem}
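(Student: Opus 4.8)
The first two assertions are essentially formal. Recall that $\coubo=\recoco{\aleph_1}$ and that $\aleph_1$ is regular, so for every simplicial $\cinfty$-ring $\simo{\cring}$ the simplicial commutative monoid $\cpt{\resolution{\simo{\cring}}}\in\simpo{\ucori{\coubo}}$ (Prop.~\ref{TheEmbedding}) has a cofibrantly generated monoidal model category of modules by the last theorem of Section~\ref{ModelStructures}. A morphism $\simo{\cring}\rightarrow\simo{\cring'}$ is sent, through the functorial resolution followed by $\cpto$, to a morphism $\cpt{\resolution{\simo{\cring}}}\rightarrow\cpt{\resolution{\simo{\cring'}}}$ of simplicial commutative monoids, and the associated extension/restriction-of-scalars pair is a Quillen adjunction because restriction of scalars preserves fibrations and weak equivalences (these being detected on the underlying objects of $\simpo{\coubo}$). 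Functoriality up to coherent isomorphism follows from functoriality of the chosen resolution, from $\cpto$ being full and faithful, and from the usual composition law for restriction/extension adjunctions; the only failure of strict functoriality is the associativity constraint of $\ctensor$, which we suppress. For the second assertion, a weak equivalence $\simo{\cring}\rightarrow\simo{\cring'}$ induces, by two-out-of-three applied to the trivial fibrations $\resolution{\simo{\cring}}\twoheadrightarrow\simo{\cring}$ and $\resolution{\simo{\cring'}}\twoheadrightarrow\simo{\cring'}$, a weak equivalence $\resolution{\simo{\cring}}\rightarrow\resolution{\simo{\cring'}}$ in $\alfreci$; by Prop.~\ref{TheEmbedding} its image $\cpt{\resolution{\simo{\cring}}}\rightarrow\cpt{\resolution{\simo{\cring'}}}$ is a weak equivalence of simplicial commutative monoids in $\coubo$, and Theorem~\ref{InvarianceOfModules} upgrades it to a Quillen equivalence of the corresponding module categories.

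For the base change statement the first step is to identify $\cpt{\resolution{\simo{\cring'''}}}$. Since colimits of simplicial objects are computed degreewise and (\ref{BaseChangeDiagram}) is a pushout of almost free $\cinfty$-rings along almost free morphisms, in each simplicial degree $\resolution{\simo{\cring'''}}_n$ is the free $\cinfty$-ring on the disjoint union, over the generators of $\resolution{\simo{\cring}}_n$, of the generator sets of $\resolution{\simo{\cring'}}_n$ and $\resolution{\simo{\cring''}}_n$. Using that $\cpto$ preserves finite coproducts (Prop.~\ref{TheEmbedding}) and that $\cpt{\cinfty(\mathbb R^{\set_1\sqcup\set_2})}\cong\cpt{\cinfty(\mathbb R^{\set_1})}\ctensor\cpt{\cinfty(\mathbb R^{\set_2})}$, one obtains a natural isomorphism of simplicial commutative monoids
\[\cpt{\resolution{\simo{\cring'''}}}\;\cong\;\cpt{\resolution{\simo{\cring'}}}\underset{\cpt{\resolution{\simo{\cring}}}}\ctensor\cpt{\resolution{\simo{\cring''}}},\]
so that the image under $\cpto$ of (\ref{BaseChangeDiagram}) is a pushout square of commutative monoids, the pushout of commutative monoids being the relative tensor product. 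Consequently, for $\simo{\module}\in\modules{\cpt{\resolution{\simo{\cring''}}}}$, associativity of $\ctensor$ gives a natural isomorphism of $\cpt{\resolution{\simo{\cring'}}}$-modules
\[\phi'^*\psi'_*\simo{\module}\;=\;\cpt{\resolution{\simo{\cring'''}}}\underset{\cpt{\resolution{\simo{\cring''}}}}\ctensor\simo{\module}\;\cong\;\cpt{\resolution{\simo{\cring'}}}\underset{\cpt{\resolution{\simo{\cring}}}}\ctensor\simo{\module}\;=\;\psi_*\phi^*\simo{\module}.\]

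It remains to pass from this isomorphism of the underived functors to a natural equivalence of the induced functors on homotopy categories, and this is where the real content lies. The restriction functors $\phi^*$ and $\phi'^*$ preserve all weak equivalences, hence coincide with their derived functors. For the left adjoints $\psi_*$, $\psi'_*$ the same holds as soon as $\cpt{\resolution{\simo{\cring'}}}$ is flat over $\cpt{\resolution{\simo{\cring}}}$: then $\psi_*=-\underset{\cpt{\resolution{\simo{\cring}}}}\ctensor\cpt{\resolution{\simo{\cring'}}}$ preserves weak equivalences by Prop.~\ref{FlatnessOfCofibrants}, the monoid $\cpt{\resolution{\simo{\cring'''}}}\cong\cpt{\resolution{\simo{\cring'}}}\underset{\cpt{\resolution{\simo{\cring}}}}\ctensor\cpt{\resolution{\simo{\cring''}}}$ is then flat over $\cpt{\resolution{\simo{\cring''}}}$ by base change of flatness, so $\psi'_*$ likewise preserves weak equivalences, and the displayed isomorphism is already an isomorphism of functors preserving weak equivalences, hence of their images on homotopy categories. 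Thus everything reduces to: for an almost free morphism $\resolution{\simo{\cring}}\rightarrow\resolution{\simo{\cring'}}$ the $\cpt{\resolution{\simo{\cring}}}$-module $\cpt{\resolution{\simo{\cring'}}}$ is flat. Degreewise $\cpt{\resolution{\simo{\cring'}}}_n\cong\cpt{\resolution{\simo{\cring}}}_n\ctensor\cpt{\cinfty(\mathbb R^{\set_n})}$ as a module, $\set_n$ being the set of new generators, and flatness is detected degreewise in $\simpo{\coubo}$; writing $\cpt{\cinfty(\mathbb R^{\set_n})}$ as a filtered colimit of $\cpt{\cinfty(\mathbb R^n)}$ along split strict monomorphisms and using that $\ctensor$ commutes with filtered colimits and that such colimits preserve strict monomorphisms in $\coubo$, the problem comes down to flatness of $\cpt{\cinfty(\mathbb R^n)}$ in $\coubo$, i.e. to the classical exactness of the completed projective tensor product with the nuclear Fréchet space $\cinfty(\mathbb R^n)$. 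This last point is the main obstacle of the argument; granting it, the two displayed isomorphisms together with Prop.~\ref{FlatnessOfCofibrants} yield the asserted natural equivalence $\psi_*\circ\phi^*\simeq\phi'^*\circ\psi'_*$.
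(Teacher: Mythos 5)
Your treatment of the first two assertions and your identification of $\cpt{\resolution{\simo{\cring'''}}}$ with the relative tensor product $\cpt{\resolution{\simo{\cring'}}}\underset{\cpt{\resolution{\simo{\cring}}}}\ctensor\cpt{\resolution{\simo{\cring''}}}$ via degreewise computation of the pushout and preservation of finite coproducts by $\cpto$ is exactly the paper's argument, and the displayed natural isomorphism $\phi'^*\psi'_*\cong\psi_*\phi^*$ coming from associativity of $\ctensor$ is precisely where the paper's proof stops. The theorem as stated asserts a natural equivalence of the two point-set-level functors between the module categories, and the associativity isomorphism already delivers that; nothing more is required.

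Your final paragraph, however, addresses a different statement --- a natural weak equivalence of \emph{derived} functors --- and the route you choose for it has a genuine gap. You reduce everything to flatness of $\cpt{\cinfty(\mathbb R^n)}$ as an object of $\coubo$, i.e.\@ to $-\ctensor\cpt{\cinfty(\mathbb R^n)}$ preserving strict monomorphisms (closed embeddings with induced bornology) against arbitrary objects of $\coubo$. This is not the ``classical'' nuclearity statement you invoke: nuclearity gives exactness of $\ctensor\,\cinfty(\mathbb R^n)$ within Fr\'echet spaces, not preservation of strict monomorphisms between arbitrary complete bornological spaces, and the paper nowhere establishes or uses such a claim. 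You yourself flag it as ``the main obstacle,'' and it remains unproven. The paper sidesteps this entirely: the derived functors are defined by pre-composing with cofibrant resolutions of the \emph{modules} (see the paragraph following the theorem), and cofibrant modules are flat by Ex.\@ \ref{FlatnessOfBornologicalModules} and Prop.\@ \ref{CofibrantFlat}, so Prop.\@ \ref{FlatnessOfCofibrants} applies without any flatness hypothesis on the monoids themselves. So: drop the last paragraph, or replace it with the observation that the derived comparison follows from the already-established flatness of cofibrant modules.
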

\begin{proof} \hide{%
Theorem \ref{InvarianceOfModules} tells us that weak equivalences of simplicial commutative bornological monoids translate to Quillen equivalences of the corresponding categories of simplicial modules. On the other hand, Prop.\@ \ref{TheEmbedding} tells us that $\cpto$ maps weak equivalences between almost free simplicial $\cinfty$-rings to weak equivalences of bornological monoids. Hence the first claim of the theorem.}%

The colimit in (\ref{BaseChangeDiagram}) is computed separately in each simplicial dimension, and since the morphisms are almost free, $\forall k\geq 0$ $\cring'''_k$ is obtained by taking a triple coproduct of $\cinfty$-rings. From Prop.\@ \ref{PreservationOfCoproducts} we know that $\cpto$ preserves finite coproducts. Therefore $\cpt{\resolution{\simo{\cring'''}}}\cong\cpt{\resolution{\simo{\cring''}}}\underset{\cpt{\resolution{\simo{\cring}}}}\ctensor\cpt{\resolution{\simo{\cring'}}}$ and the natural equivalence in (\ref{BaseChange}) is given by the associativity natural equivalence for $\ctensor$ on $\coubo$. \hide{%
Indeed, let $\monoid''\leftarrow\monoid\rightarrow\monoid'$ be morphisms of commutative monoids in $\coubo$, and let $\module$ be an $\monoid''$-module. Then $(\monoid''\underset{\monoid}\ctensor\monoid')\underset{\monoid''}\ctensor\module$ is obtained from $\monoid''\ctensor\monoid'\ctensor\module$ by dividing by the action of $\monoid$ on $\monoid''$ and $\monoid'$ and then by the action of $\monoid''$ on the result and on $\module$. Alternatively we can first divide by the action of $\monoid''$ on $\monoid''$ and on $\module$ to obtain $\monoid'\ctensor\module$ and then divide by the action of $\monoid$. This alternative way gives us the pullback to $\monoid$-modules followed by the push-forward to $\monoid'$-modules.}%
\end{proof}%

\smallskip

If we have a diagram $\simo{\cring''}\leftarrow\simo{\cring}\rightarrow\simo{\cring'}$ of simplicial $\cinfty$-rings where neither the rings nor the morphisms are almost free, we can compute the homotopy colimit of this diagram by resolving it into (\ref{BaseChangeDiagram}) and taking the usual colimit. The natural equivalence (\ref{BaseChange}) will correspond then to a natural {\it weak equivalence} between the {\it derived functors}, i.e.\@ functors obtained by pre-composing with cofibrant resolutions.

\smallskip

Now we look closer at the categories of modules in some special cases. In the previous section we had to make an additional effort to deal with $\cinfty$-rings that are not finitely generated, the reason being that such rings are not Fr\'echet. Sometimes we can do with Fr\'echet $\cinfty$-rings only.

\begin{definition} A simplicial $\cinfty$-ring $\simo{\cring}$ is {\it of finite presentation}, if there is a surjective weak equivalence of simplicial $\cinfty$-rings $\resolution{\simo{\cring}}\rightarrow\simo{\cring}$, s.t.\@ $\resolution{\simo{\cring}}$ is almost free and finitely generated in each simplicial dimension. \end{definition}
By requiring that each $\resolution{\cring_k}$ is a finitely generated $\cinfty$-ring we ensure that it is a Fr\'echet space.
\begin{proposition}\label{FrechetFinitePresentation} Let $\simo{\cring}$ be a simplicial $\cinfty$-ring of finite presentation, s.t.\@ $\forall k\geq 0$ $\cring_k$ is a finitely generated Fr\'echet $\cinfty$-ring, i.e.\@ $\cring_k\cong\cinfty(\mathbb R^{n_k})/\ideal_k$ for some $n_k\geq 0$ and a closed ideal $\ideal_k$. Let $\rho\colon\resolution{\simo{\cring}}\rightarrow\simo{\cring}$ be an almost free resolution, then $\cpt{\rho}\colon\cpt{\resolution{\simo{\cring}}}\rightarrow\cpt{\simo{\cring}}$ is a weak equivalence.\end{proposition}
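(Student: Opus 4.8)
The plan is to compare the given almost free resolution with a finitely generated one and to exhibit $\cpt{\rho}$ as a composite of two weak equivalences. First I would use the definition of finite presentation to fix a surjective weak equivalence $\rho'\colon\resolution{\simo{\cring}}'\rightarrow\simo{\cring}$ of simplicial $\cinfty$-rings with $\resolution{\simo{\cring}}'\in\alfreci$ finitely generated in each simplicial dimension. A degreewise surjection of simplicial abelian groups is a Kan fibration, so $\rho'$ is a trivial fibration in the model category $\simpo{\allrings}$ of simplicial $\cinfty$-rings. Since $\resolution{\simo{\cring}}$ is almost free it is cofibrant, so the lifting property gives a morphism $\theta\colon\resolution{\simo{\cring}}\rightarrow\resolution{\simo{\cring}}'$ with $\rho'\circ\theta=\rho$, and by the two-out-of-three axiom $\theta$ is a weak equivalence. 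Both the source and target of $\theta$ lie in $\alfreci$, so Proposition \ref{TheEmbedding} shows that $\cpt{\theta}$ is a weak equivalence in $\simpo{\ucori{\coubo}}$.

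Next I would apply $\cpto$ to $\rho=\rho'\circ\theta$, which by functoriality reduces the claim to showing that $\cpt{\rho'}$ is a weak equivalence. Here the finitely generated hypothesis enters: each $\resolution{\cring_k}'\cong\cinfty(\mathbb R^{n_k})$ is a Fréchet $\cinfty$-ring, and by assumption each $\cring_k\cong\cinfty(\mathbb R^{m_k})/\ideal_k$ with $\ideal_k$ closed is a Fréchet $\cinfty$-ring as well, so $\rho'$ is a degreewise surjective morphism in $\simpo{\fcati}$ and $\cpto$ applied to it computes $\cpt{\rho'}$. Then Proposition \ref{SurjectiveFrechet} applies and says that $\cpt{\rho'}$ is a weak equivalence in $\simpo{\coubo}$ if and only if the underlying morphism of simplicial $\mathbb R$-spaces is one; the latter holds because $\rho'$ is a weak equivalence of simplicial $\cinfty$-rings and these are created on the underlying simplicial abelian groups (\cite{Qu67} \S II.4). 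Finally, weak equivalences in $\simpo{\ucori{\coubo}}$ are detected by the forgetful functor to $\simpo{\coubo}$, so $\cpt{\rho'}$, and hence $\cpt{\rho}=\cpt{\rho'}\circ\cpt{\theta}$, is a weak equivalence.

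I expect the main obstacle to be bookkeeping rather than conceptual: one must check that $\cpt{\simo{\cring}}$ really is the pre-compact bornology of the simplicial Fréchet $\cinfty$-ring $\simo{\cring}$ — which is precisely what the ``finitely generated Fréchet'' hypothesis guarantees, and is exactly what licenses the use of Proposition \ref{SurjectiveFrechet} — and that the two a priori different recipes for $\cpto$, the one on $\alfreci$ from Proposition \ref{TheEmbedding} and the one on $\simpo{\fcati}$ from Proposition \ref{SurjectiveFrechet}, agree on the finitely generated free $\cinfty$-rings $\resolution{\cring_k}'$, so that the composite $\cpt{\rho'}\circ\cpt{\theta}$ is well formed and equals $\cpt{\rho}$. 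Once these compatibilities are recorded, the argument is entirely formal: a lifting argument in a model category together with the transfer of weak equivalences established in Propositions \ref{TheEmbedding} and \ref{SurjectiveFrechet}.
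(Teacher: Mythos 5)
Your proposal is correct and follows essentially the same route as the paper: the paper's proof reduces to a finitely generated almost free resolution by invoking Proposition \ref{TheEmbedding} (which is exactly what your lifting argument producing $\theta$ with $\rho'\circ\theta=\rho$ makes explicit) and then applies Proposition \ref{SurjectiveFrechet} to the resulting surjection of simplicial Fr\'echet spaces. The extra bookkeeping you flag is handled implicitly in the paper, but your version of it is accurate.
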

\begin{proof} From Prop.\@ \ref{TheEmbedding} we know that any two almost free resolutions of $\simo{\cring}$ are mapped by $\cpto$ to weakly equivalent simplicial bornological rings. Therefore we can assume that $\resolution{\simo{\cring}}$ is finitely generated in each simplicial dimension. Then $\cpt{\rho}$ comes from a surjective morphism of simplicial Fr\'echet spaces, and hence it is a weak equivalence (Prop.\@ \ref{SurjectiveFrechet}).\end{proof}%

\smallskip

Proposition \ref{FrechetFinitePresentation} implies that, given a simplicial $\cinfty$-ring of finite presentation $\simo{\cring}$, s.t.\@ each $\cring_k$ is Fr\'echet, instead of going through an almost free resolution $\resolution{\simo{\cring}}$ we can take $\modules{\cpt{\simo{\cring}}}$ and obtain a Quillen equivalent model category. As before, given a co-cartesian diagram in $\simpo{\allrings}$
	\begin{equation*}\xymatrix{\simo{\cring}\ar[d]_{f_2}\ar[rr]^{f_1} && \simo{\cring'}\ar[d]^{\widetilde{f}_2}\\ \simo{\cring''}\ar[rr]_{\widetilde{f}_1} && \simo{\cring'''}}\end{equation*}
of such simplicial $\cinfty$-rings, associativity of $\ctensor$ in $\coubo$ gives a natural equivalence of functors $(f_1)_*\circ f_2^*\rightarrow\widetilde{f}_2^*\circ(\widetilde{f}_1)_*$. In particular this applies to constant simplicial Fr\'echet $\cinfty$-rings of finite presentation, i.e.\@ ordinary Fr\'echet $\cinfty$-rings that have almost free resolutions with finitely many generators in each simplicial dimension.

Let $\cring$ be such a $\cinfty$-ring, and denote by $\simo{\cring}$ the corresponding constant simplicial $\cinfty$-ring. Then the category $\modules{\cpt{\cring}}$ of $\cpt{\cring}$-modules in $\coubo$ is quasi-abelian and $\simpo{\modules{\cpt{\cring}}}\cong\modules{\cpt{\simo{\cring}}}$. We have a functor $\hom_{\coubo}(\samcou,-)\colon\modules{\cpt{\simo{\cring}}}\longrightarrow\simpo{\abel}$, and we denote by $\qcoh{\cring}\subseteq\pi_0(\modules{\cpt{\simo{\cring}}})$ the full subcategory\footnote{By $\pi_0(-)$ of a model category we denote the corresponding homotopy category.} consisting of $\simo{\module}$ s.t.\@ $\pi_{>0}(\hom_{\coubo}(\samcou,\simo{\module})=0$. We have the following statement.

\begin{proposition}\label{GoodLeftHeart} Let $\cring$ be a Fr\'echet $\cinfty$-ring of finite presentation. Then $\qcoh{\cring}$ is equivalent to a left abelian envelope of $\modules{\cpt{\cring}}$ (Def.\@ \ref{LeftAbelianEnvelope}).\end{proposition}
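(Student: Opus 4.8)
The plan is to identify $\qcoh{\cring}$ with the heart of the standard $t$-structure on the stabilization of $\modules{\cpt{\simo\cring}}$, and then to invoke Schneiders' characterization of the left abelian envelope. First I would note that $\modules{\cpt{\cring}}$ is quasi-abelian (the remark after Theorem~\ref{InvarianceOfModules} gives this, since $\ctensor$ on $\coubo$ is closed), and by Prop.~\ref{TheEmbedding} and Prop.~\ref{FrechetFinitePresentation} the model category $\modules{\cpt{\resolution{\simo\cring}}}$ is Quillen equivalent to $\simpo{\modules{\cpt{\cring}}}$, so we may as well compute the homotopy category of simplicial objects in the quasi-abelian category $\quabi:=\modules{\cpt{\cring}}$. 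Using the Dold--Kan correspondence (Prop.~\ref{DoldKan}) and Cor.~\ref{ModelStructuresEqual}, $\pi_0(\simpo{\quabi})$ is the derived category $\K^{\leq 0}$ of non-positively graded complexes in $\quabi$, localized at the weak equivalences of Def.~\ref{ModelOnComplexes}, i.e.\@ the left-bounded derived category $D^{\leq 0}(\quabi)$ in the sense of \cite{Schneiders99}. The functor $\hom_{\coubo}(\samcou,-)$ is, up to the forgetful functor, the functor detecting weak equivalences: a complex in $\quabi$ is acyclic iff its image under $\hom_{\coubo}(\samcou,-)$ is acyclic, since $\samcou$ is the single generating projective of $\coubo$.

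Next I would recall from \cite{Schneiders99} that for a quasi-abelian category $\quabi$ with enough projectives, the left heart $\mathrm{LH}(\quabi)$ of the left $t$-structure on $D(\quabi)$ is precisely a left abelian envelope of $\quabi$ (this is \cite{Schneiders99} \S1.2, combined with Prop.~\ref{GoodEnvelopes}); concretely $\mathrm{LH}(\quabi)$ consists of two-term complexes $[\,Q^{-1}\to Q^0\,]$ with $Q^{-1}\to Q^0$ a monomorphism. The subcategory $\qcoh{\cring}\subseteq\pi_0(\modules{\cpt{\simo\cring}})$ is, by its definition via $\pi_{>0}(\hom_{\coubo}(\samcou,\simo\module))=0$, exactly the subcategory of objects concentrated in non-positive cohomological degrees $\geq -\infty$ with vanishing cohomology in degrees $<0$ after applying $\hom_{\coubo}(\samcou,-)$ — that is, the heart of the left $t$-structure restricted to the connective part. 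So the content of the proposition is: (i) every object of $\qcoh{\cring}$ is (quasi-isomorphic to) a two-term complex $[\,\module^{-1}\hookrightarrow\module^0\,]$ of $\cpt{\cring}$-modules with monomorphic differential, and (ii) this identification is an equivalence of categories matching morphisms.

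For (i) I would take $\simo\module\in\qcoh{\cring}$, represent it by a cofibrant complex $\pobj^\bullet$ with $\pobj^k$ a retract of direct sums of copies of $\samcou\underset{\cpt{\cring}}\ctensor(\text{free module})$ — using Prop.~\ref{ProjectiveResolutions} and Cor.~\ref{ExplicitCofibrants} for the category $\quabi=\modules{\cpt{\cring}}$, whose projectives are retracts of free modules on the generating projective $\samcou$ of $\coubo$. The vanishing $\pi_{>0}\hom_{\coubo}(\samcou,\pobj^\bullet)=0$ says this complex of projectives, viewed via $\hom_{\coubo}(\samcou,-)$, is exact in cohomological degrees $<0$; the standard truncation then produces a quasi-isomorphic complex with $\mathrm{coker}$ in degree $0$ and a single module in degree $-1$, and strictness of the differential (hence monomorphicity after passing to the left heart) follows because the truncation is built from kernels, which in $\quabi$ are strict monomorphisms, exactly as in the proof of Prop.~\ref{ProjectiveResolutions}. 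For (ii), fullness and faithfulness on morphisms is the usual computation that $\Hom$ in $D^{\leq0}(\quabi)$ between objects of the left heart agrees with $\Hom$ in $\mathrm{LH}(\quabi)$, which is part of the $t$-structure formalism. Finally I would check that the functor $\modules{\cpt{\cring}}\to\qcoh{\cring}$ sending $\module\mapsto[\,0\to\module\,]$ satisfies the three defining properties of a left abelian envelope (Def.~\ref{LeftAbelianEnvelope}): full faithfulness is clear, every monomorphism into $\abiono(\module)$ in $\qcoh{\cring}$ comes from a strict monomorphism of $\cpt{\cring}$-modules by Prop.~\ref{StrictMonomorphisms} applied in the module category, and every object of $\qcoh{\cring}$ is a quotient of $\abiono$ of its degree-$0$ term by construction of the two-term representative. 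By the uniqueness of left abelian envelopes (\cite{Schneiders99} Prop.~1.2.35) this gives the equivalence.

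\emph{Main obstacle.} The delicate point is step (i): showing that the connective, $\pi_{>0}$-vanishing objects are representable by \emph{two-term} complexes with a \emph{strict mono} differential, rather than merely by left-bounded complexes of projectives with the weaker exactness condition. This requires that truncation of a complex of $\cpt{\cring}$-modules along the functor $\hom_{\coubo}(\samcou,-)$ behaves well — i.e.\@ that $\hom_{\coubo}(\samcou,-)$ is exact enough on $\modules{\cpt{\cring}}$ and that kernels of maps of projective $\cpt{\cring}$-modules remain strict — which ultimately rests on $\samcou$ being projective in $\coubo$ together with the $\quabi$-version of the add/kill procedure (Lemma~\ref{AddKill}). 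The Fr\'echet and finite-presentation hypotheses on $\cring$ enter here only to guarantee (via Prop.~\ref{FrechetFinitePresentation}) that $\cpt{\cring}$ itself, not merely an almost free resolution, computes the right module category, so that $\modules{\cpt{\cring}}$ is an honest quasi-abelian category to which \cite{Schneiders99} applies directly.
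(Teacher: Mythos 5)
Your proposal is correct and follows essentially the same route as the paper: both identify $\qcoh{\cring}$ with Schneiders' left heart / left abelian envelope of $\modules{\cpt{\cring}}$ by truncating a connective simplicial module to a two-term complex $\widetilde{\module}_1\rightarrow\module_0$ with monomorphic differential (the paper takes $\widetilde{\module}_1=\kernel{\boun{0}}/(\kernel{\boun{0}}\cap\kernel{\boun{1}})$ with the map induced by $\boun{1}$) and then checking that weak equivalences correspond exactly to the squares inverted in the localization, namely the simultaneously cartesian and cocartesian ones. The only cosmetic differences are that you phrase the target as the heart of the left $t$-structure and propose to re-verify the three envelope axioms of Def.~\ref{LeftAbelianEnvelope} directly (for which Prop.~\ref{StrictMonomorphisms} is not really the relevant tool, since subobjects in the left heart are governed by monomorphisms in the module category rather than by the bornological characterization of strict monomorphisms), whereas the paper simply invokes the equivalent concrete description of the envelope from \cite{Schneiders99} Cor.~1.2.20 and Prop.~1.2.35 as the category of monomorphisms localized at bicartesian squares.
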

\begin{proof} According to Cor.\@ 1.2.20 and Prop.\@ 1.2.35 in \cite{Schneiders99} a left abelian envelope of $\modules{\cpt{\cring}}$ can be described as a localization of the following category: objects are monomorphisms $\module'\rightarrow\module$ in $\modules{\cpt{\cring}}$ and morphisms are commutative squares. The localization happens by inverting commutative squares that are simultaneously cartesian and co-cartesian. 

Let $\simo{\module'}$, $\simo{\module}\in\modules{\cpt{\simo{\cring}}}$ whose weak equivalence classes lie in $\qcoh{\cring}$. Let $\simo{\mu}\colon\simo{\module'}\rightarrow\simo{\module}$ be a weak equivalence. Switching first to $\simpo{\modules{\cpt{\cring}}}$ and then to $\compo{\modules{\cpt{\cring}}}$ and truncating we obtain
	\begin{equation}\label{CoCartesian}\xymatrix{\widetilde{\module}'_1\ar[d]\ar[rr] && \widetilde{\module}_1\ar[d]\\
	\module'_0\ar[rr] && \module_0,}\end{equation}
where $\widetilde{\module}'_1,\widetilde{\module}_1:=\kernel{\boun{0}}/(\kernel{\boun{0}\cap\kernel{\boun{1}}})$ in simplicial dimension $1$. The vertical arrows are given by $\boun{1}$ and hence are monomorphisms. 

Evaluating at $\samcou$ we see that $\pi_0(\simo{\mu})$ being injective implies (\ref{CoCartesian}) is cartesian in $\modules{\cpt{\cring}}$. \hide{%
Indeed, a morphism from $\samcou$ to $\module'_0\rightarrow\module_0\leftarrow\widetilde{\module}_1$ that does not factor through $\widetilde{\module}'_1$ represents a non-trivial element in $\pi_0(\simo{\module'})$ that becomes trivial in $\pi_0(\simo{\module})$.}%
Since $\pi_*(\simo{\mu})$ is an isomorphism, the dg $\cpt{\cring}$-module $\widetilde{\module}'_1\rightarrow\widetilde{\module}_1\oplus\module'_0\rightarrow\module_0$ (the cone of $\simo{\mu}$) is acyclic, i.e.\@ (\ref{CoCartesian}) is also co-cartesian. Conversely, suppose that by truncating a $\simo{\mu}$ as above we obtain (\ref{CoCartesian}) that is both cartesian and co-cartesian. The cartesian bit implies injectivity of $\pi_0(\simo{\mu})$ and the co-cartesian implies surjectivity.\end{proof}%

\begin{remark} Prop.\@ \ref{GoodLeftHeart} lets us view the left abelian envelope of $\modules{\cpt{\cring}}$ as a quotient of the category of cofibrant simplicial $\cpt{\simo{\cring}}$-modules with $\pi_{>0}=0$ by the homotopy relation on morphisms in $\modules{\cpt{\simo{\cring}}}$.\end{remark}

\begin{proposition} Let $\phi\colon\cring'\rightarrow\cring$ be a morphism of Fr\'echet $\cinfty$-rings of finite presentation. It induces an adjunction $\qcoh{\cring'}\rightleftarrows\qcoh{\cring}$.\end{proposition}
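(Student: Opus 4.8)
The plan is to realise the desired adjunction as the extension--restriction of scalars adjunction along $\cpt{\phi}$, transported through the identification of $\qcoh{-}$ with a left abelian envelope (Proposition \ref{GoodLeftHeart}). First I would reduce to working with $\cpt{\cring}$ and $\cpt{\cring'}$ themselves, with no intervening resolution: being Fr\'echet $\cinfty$-rings of finite presentation, $\cring$ and $\cring'$ are in particular finitely generated Fr\'echet $\cinfty$-rings, say $\cring\cong\cinfty(\mathbb R^{n})/\ideal$ with $\ideal$ closed, so by Proposition \ref{FrechetFinitePresentation} the categories $\qcoh{\cring}$ and $\qcoh{\cring'}$ may be computed from $\modules{\cpt{\cring}}$ and $\modules{\cpt{\cring'}}$ directly, and by Proposition \ref{GoodLeftHeart} they are left abelian envelopes (Definition \ref{LeftAbelianEnvelope}) of the quasi-abelian categories $\modules{\cpt{\cring}}$ and $\modules{\cpt{\cring'}}$. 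A $\cinfty$-morphism between finitely generated Fr\'echet $\cinfty$-rings is continuous --- lift it along a free presentation and apply \cite{KKM87} Thm.\@ 2.4, using that a closed ideal yields a Fr\'echet quotient --- hence bounded, so $\cpto$ turns $\phi$ into a morphism of commutative monoids $\cpt{\phi}\colon\cpt{\cring'}\rightarrow\cpt{\cring}$ in $\coubo$.

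Next I would write down the usual adjunction carried by the monoid morphism $\cpt{\phi}$,
\begin{equation*}\cpt{\phi}_{*}:=-\underset{\cpt{\cring'}}{\ctensor}\cpt{\cring}\ \colon\ \modules{\cpt{\cring'}}\ \rightleftarrows\ \modules{\cpt{\cring}}\ \colon\ \cpt{\phi}^{*},\end{equation*}
where the restriction functor $\cpt{\phi}^{*}$ leaves the underlying bornological space untouched and pulls the $\cpt{\cring'}$-action back along $\cpt{\phi}$. Since limits and colimits in modules over a monoid in a closed monoidal category are created by the forgetful functor, $\cpt{\phi}^{*}$ preserves and reflects strict monomorphisms and strict epimorphisms; having moreover both a left adjoint $\cpt{\phi}_{*}$ and a right adjoint $\Hom_{\modules{\cpt{\cring'}}}(\cpt{\cring},-)$ (the latter because $\ctensor$ is closed), it preserves all limits and colimits, hence is strictly exact, while $\cpt{\phi}_{*}$, being a left adjoint, is right exact (preserves cokernels, so strict epimorphisms) but need not preserve strict monomorphisms. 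Now I would descend both functors to the left abelian envelopes. Using the presentation of $\abion{\modules{\cpt{\cring}}}$ recalled in the proof of Proposition \ref{GoodLeftHeart} --- the localisation of the category of monomorphisms of $\modules{\cpt{\cring}}$ at the simultaneously cartesian and co-cartesian squares --- the strictly exact $\cpt{\phi}^{*}$ sends monomorphisms to monomorphisms and preserves both kinds of squares, so it descends to an exact functor $\cpt{\phi}^{*}\colon\qcoh{\cring}\rightarrow\qcoh{\cring'}$ compatible with the embeddings $\abiono$. For the left adjoint I would use the abelianisation of an adjoint pair: by Definition \ref{LeftAbelianEnvelope} every object of $\abion{\modules{\cpt{\cring'}}}$ has a presentation $\abiono_{\cring'}(\pobj^{-1})\rightarrow\abiono_{\cring'}(\pobj^{0})$ with the $\pobj^{i}$ projective in $\modules{\cpt{\cring'}}$, and I would declare $\cpt{\phi}_{*}$ of it to be the cokernel in $\qcoh{\cring}$ of $\abiono_{\cring}(\cpt{\phi}_{*}\pobj^{-1})\rightarrow\abiono_{\cring}(\cpt{\phi}_{*}\pobj^{0})$; full faithfulness of the $\abiono$'s (Proposition \ref{GoodEnvelopes}) together with $\cpt{\phi}_{*}\dashv\cpt{\phi}^{*}$ on the module categories then yield $\cpt{\phi}_{*}\dashv\cpt{\phi}^{*}$ between $\qcoh{\cring'}$ and $\qcoh{\cring}$, which is the asserted adjunction. (Alternatively, $\cpt{\phi}$ induces a Quillen adjunction $\modules{\cpt{\simo{\cring'}}}\rightleftarrows\modules{\cpt{\simo{\cring}}}$ just as for the base-change functors (\ref{BaseChange}); $\cpt{\phi}^{*}$ preserves all weak equivalences and leaves underlying simplicial bornological spaces unchanged, hence preserves the condition $\pi_{>0}(\hom_{\coubo}(\samcou,-))=0$ and restricts to $\qcoh{\cring}\to\qcoh{\cring'}$, while $\qcoh{\cring}$ is reflective in $\pi_{0}(\modules{\cpt{\simo{\cring}}})$ with reflector the passage to the $0$-th homotopy object regarded as a discrete simplicial module, and composing that reflector with the left derived functor of $\cpt{\phi}_{*}$ gives the left adjoint, the adjunction identity being formal.)

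The step I expect to be the main obstacle is precisely this last one: checking that restriction of scalars descends to the left abelian envelopes \emph{and} that the descended functor really has a left adjoint. Definition \ref{LeftAbelianEnvelope} supplies only a universal property, not functoriality, so one must argue through the monomorphism presentation of Proposition \ref{GoodLeftHeart} (or invoke the functoriality of the left heart from \cite{Schneiders99} \S1.2--1.3); and in the model-categorical variant the delicate point is the reflectivity of $\qcoh{\cring}$ inside $\pi_{0}(\modules{\cpt{\simo{\cring}}})$, i.e.\@ that forming $\pi_{0}$ in the quasi-abelian sense is compatible with $\hom_{\coubo}(\samcou,-)$ --- essentially a repackaging of what is already proved around Proposition \ref{GoodLeftHeart}. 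Everything else is routine bookkeeping.
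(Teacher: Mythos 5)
Your proposal is correct, but your primary route is genuinely different from the paper's, which is essentially the parenthetical ``alternative'' you sketch at the end. The paper stays entirely at the homotopy-category level: it takes the Quillen adjunction $\modules{\cpt{\simo{\cring'}}}\rightleftarrows\modules{\cpt{\simo{\cring}}}$, passes to $\pi_0$, observes that the truncation $\simo{\module}\mapsto(\widetilde{\module}_1\rightarrow\module_0)$ preserves weak equivalences and exhibits $\qcoh{\cring}$ as a reflective subcategory of $\pi_0(\modules{\cpt{\simo{\cring}}})$, and that restriction of scalars preserves $\pi_{>0}=0$; the desired adjunction is then the formal composite (reflector)$\circ$(derived push-forward)$\circ$(inclusion) against the restricted pullback. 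Your main route instead works in the underlying quasi-abelian categories $\modules{\cpt{\cring}}$, descends restriction of scalars to the left abelian envelopes via the monomorphism/bicartesian-square presentation, and builds the left adjoint as $L_0$ of extension of scalars on projective presentations. Both work; the paper's argument is shorter because the reflectivity of $\qcoh{\cring}$ does all the bookkeeping that you carry out by hand with presentations (well-definedness of $L_0$, preservation of projectives by $\cpt{\phi}_*$, reduction of the Hom-isomorphism to objects in the image of $\abiono$), while your route has the advantage of producing the adjunction intrinsically on the left hearts, independent of the simplicial model. One small correction to your alternative: the reflector is \emph{not} ``the $0$-th homotopy object regarded as a discrete simplicial module'' but the two-term truncation $\widetilde{\module}_1\rightarrow\module_0$; collapsing further to the discrete cokernel would land in the essential image of $\abiono$, which is a proper subcategory of the left abelian envelope, and the universal property of the reflection would fail. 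This does not affect your main argument.
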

\begin{proof} We have the Quillen adjunction $\modules{\cpt{\simo{\cring'}}}\rightleftarrows\modules{\cpt{\simo{\cring}}}$, that descends to an adjunction $\pi_0(\modules{\cpt{\simo{\cring'}}})\rightleftarrows\pi_0(\modules{\cpt{\simo{\cring}}})$ The truncation functor $\simo{\module}\mapsto(\widetilde{\module}_1\rightarrow\module_0)$ on $\modules{\cpt{\simo{\cring}}}$ preserves weak equivalences and hence descends to $\pi_0(\modules{\cpt{\simo{\cring}}})$, where it is left adjoint to the inclusion. Clearly $\pi_0(\modules{\cpt{\simo{\cring'}}})\leftarrow\pi_0(\modules{\cpt{\simo{\cring}}})$ preserves the property $\pi_{>0}=0$.\end{proof}%

\medskip

If we cared only for Fr\'echet $\cinfty$-rings of finite presentation this would have been enough. We could then define the corresponding derived category of $\qcoh{\cring}$ and investigate the functoriality properties. However, we would also like to work with $\cinfty$-rings that are not Fr\'echet (e.g.\@ $\cinfty(\mathbb R)/(e^{-\frac{1}{x^2}})$) or not of finite presentation (e.g.\@ $\mathbb R[[x]]$). This forces us to use almost free resolutions $\resolution{\simo{\cring}}$ and the corresponding categories of modules.

The full $\modules{\cpt{\resolution{\simo{\cring}}}}$ is not a good candidate for the {\it derived} category of quasi-coherent sheaves. One of the reasons lies with coherent objects. As usual a coherent sheaf should be defined as an object homotopically of finite presentation. All Banach bundles are coherent in this way, as well as {\it all} of their quotients. Then also their sub-objects should be coherent (e.g.\@ $\cinfty(X)$ for a compact $X$). This means we need to stabilize $\modules{\cpt{\resolution{\simo{\cring}}}}$. 

\begin{definition} Let $\simo{\cring}$ be a simplicial $\cinfty$-ring. {\it The derived category of quasi-coherent sheaves} over $\simo{\cring}$ is the stabilization of $\modules{\cpt{\resolution{\simo{\cring}}}}$ with respect to the suspension functor.\end{definition}
In order to keep this work within a reasonable size we postpone the analysis of stabilization, extraction of coherent objects and the corresponding functorial properties to another paper. 

We note however that already with the techniques developed in the current work we can construct categories of quasi-coherent sheaves on arbitrary sheaves on the site of $\cinfty$-rings and Zariski topology. As usual this is done by the left Kan extension. 

In particular we can apply this to de Rham spaces and obtain categories of $D$-modules. Our insistence on allowing all, not only Fr\'echet $\cinfty$-rings, lets us construct $D$-modules for both kinds of nilpotents considered in \cite{BK2017}.

\end{document}